\newtheorem{theorem}{Theorem}[section]
\newtheorem{corollary}[theorem]{Corollary}
\newtheorem{lemma}[theorem]{Lemma}
\newtheorem{proposition}[theorem]{Proposition}
\newtheorem{assumption}[theorem]{Assumption}
\theoremstyle{definition}
\newtheorem{remark}[theorem]{Remark}
\newcommand{\R}{\mathbb{R}}
\newcommand{\N}{\mathbb{N}}
\newcommand{\C}{\mathbb{C}}
\newcommand{\T}{\mathbb{T}}
\newcommand{\Z}{\mathbb{Z}}
\newcommand{\D}{\mathbb{D}}
\newcommand{\K}{\mathbb{K}}
\begin{document}

\author{Shinya Kinoshita}
\author{Akansha Sanwal}
\author[Robert Schippa]{Robert Schippa*}

\address[S.~Kinoshita]{Department of Mathematics, Tokyo Institute of Technology, Meguro-Ku, Tokyo 152-8551, Japan}
\email{kinoshita@math.titech.ac.jp}

\address[A.~Sanwal]{Universit\"at Innsbruck, Institut f\"ur Mathematik, Technikerstra{\ss}e 13, 6020 Innsbruck,
Austria}
\email{akansha.sanwal@uibk.ac.at}

\address[R.~Schippa]{UC Berkeley, Department of Mathematics, 847 Evans Hall
Berkeley, CA 94720-3840}
\email{rschippa@berkeley.edu}

\title[Improved well-posedness for KP-I equations]{Improved well-posedness for quasilinear and sharp local well-posedness for semilinear KP-I equations}

\begin{abstract}
We show new well-posedness results in anisotropic Sobolev spaces for dispersion-generalized KP-I equations with increased dispersion compared to the KP-I equation. We obtain the sharp dispersion rate, below which generalized KP-I equations on $\R^2$ and on $\R \times \T$ exhibit quasilinear behavior. In the quasilinear regime, we show improved well-posedness results relying on short-time Fourier restriction. In the semilinear regime, we show sharp well-posedness with analytic data-to-solution mapping.  On $\R^2$ we cover the full subcritical range, whereas on $\R \times \T$ the sharp well-posedness is strictly subcritical. 
Nonlinear Loomis-Whitney inequalities are one ingredient. These are presently proved for Borel measures with growth condition reflecting the different geometries of the plane $\R^2$, the cylinder $\R \times \T$, and the torus $\T^2$.
Finally, we point out that on tori $\T^2_\gamma$, KP-I equations are never semilinear.

\end{abstract}

\keywords{KP-I equation; short-time Fourier restriction; sharp well-posedness; Loomis--Whitney inequality}
\date{\today}
\thanks{*Corresponding author.}

\maketitle

\section{Introduction}

\subsection{Dispersion-generalized KP-I equations}

In this article, we show new low regularity well-posedness for the dispersion-generalized Kadomtsev Petviashvili-I (KP-I) equations:
\begin{equation}
\label{eq:FKPI}
\left\{ \begin{array}{cl}
\partial_t u - \partial_x D_x^\alpha u - \partial_{x}^{-1} \partial_{y}^2 u &= u \partial_x u, \quad (t,x,y) \in \R \times \D, \\
u(0) &= u_0,
\end{array} \right.
\end{equation}
where $\mathbb{D} = \R \times \mathbb{K}$, $\mathbb{K} \in \{ \R , \T = \R /(2 \pi \Z) \}$, and $\alpha \geqslant 2$. The dual variables of $(t,x,y)$ are denoted by $(\tau,\xi,\eta)$. We define $\partial_x^{-1}$ and $D_x^\alpha$ on $\D$ as Fourier multiplier
\begin{equation*}
\widehat{(\partial_x^{-1} f)} (\xi,\eta) = (i \xi)^{-1} \hat{f}(\xi,\eta), \quad \widehat{(D_x^{\alpha} f)} (\xi,\eta)= |\xi|^{\alpha} \hat{f}(\xi,\eta).
\end{equation*}
By local well-posedness we refer to existence, uniqueness, and continuity of the data-to-solution mapping assigning initial data from suitable Sobolev spaces to continuous curves on time intervals
\begin{equation*}
S_T : H^{s,0}(\D) \to X_T \hookrightarrow C([0,T], H^{s,0}(\D)), \quad u_0 \mapsto u
\end{equation*}
with $T=T(\| u_0 \|_{H^{s,0}})$ depending lower semicontinuously on the norm of the initial data and $T(u) \gtrsim 1$ for $u \downarrow 0$. We refer to the evolution as \emph{semilinear} if the solution mapping is real-analytic. This will be the case when the solution mapping is constructed via Picard iteration as a consequence of real analyticity of the nonlinearity. The evolution is referred to as \emph{quasilinear} if the data-to-solution mapping fails to be real analytic.

\medskip

For $\alpha = 2$, \eqref{eq:FKPI} becomes the original KP-I equation. The KP equations were introduced by Kadomtsev and Petviashvili to model two-dimensional water waves (see \cite{Kadomtsev1970} and \cite{AblowitzSegur1979}) with an emphasis on the transverse stability of solitons of the Korteweg-de Vries (KdV) equation:
\begin{equation*}
\partial_t u + \partial_{x}^3 u = u \partial_x u.
\end{equation*}
The KdV equation is a one-dimensional model of traveling waves in shallow water. Adding the weakly transverse approximation of the wave dispersion relation
\begin{equation*}
\omega_{\Box}(\xi,\eta) = \pm (\xi^2 + \eta^2)^{\frac{1}{2}} = \pm \xi \Big(1 + \frac{\eta^2}{ 2 \xi^2}\Big) + \mathcal{O}\Big(\frac{\eta^4}{\xi^3}\Big), \quad \xi > 0
\end{equation*}
to the KdV equation results in the KP evolution after changing the system of rest and mild dilation:
\begin{equation}
\label{eq:KPEquations}
\partial_t u + \partial_x^3 u + \kappa \partial_{x}^{-1} \partial_{y}^2 u = u \partial_x u, \quad \kappa \in \{1; - 1 \}.
\end{equation}
For $\kappa = -1$ the equation is known as KP-I equation, and for $\kappa=1$ the equation is referred to as KP-II.
Similarly, starting from dispersion-generalized versions of the KdV equation
\begin{equation*}
\partial_t u + \partial_x D_x^\alpha u = u \partial_x u, \quad \alpha \geqslant 2,
\end{equation*}
adding the weakly transverse perturbation leads to the dispersion-generalized KP equations:
\begin{equation*}
\partial_t u + \partial_x D_x^\alpha u + \kappa \partial_x^{-1} \partial_y^2 u = u \partial_x u.
\end{equation*}
In this paper we are only concerned with the analysis of the dispersion-generalized KP-I equations. The dispersion relation in this case is given by
\begin{equation*}
\omega_\alpha(\xi,\eta) = \xi |\xi|^\alpha + \frac{\eta^2}{\xi}.
\end{equation*}
The behavior of solutions to KP-II equations deviates significantly due to a nonlinear defocusing effect absent in the KP-I case, see below.

\medskip

On $\R^2$ we observe that for any solution $u$ to \eqref{eq:FKPI} with initial data $\phi$ the rescalings
\begin{equation*}
u_\lambda(t,x,y) = \lambda^{-\alpha} u(\lambda^{-(\alpha+1)}t, \lambda^{-1} x, \lambda^{-\frac{\alpha+2}{2}} y)
\end{equation*}
solve \eqref{eq:FKPI} with initial data
\begin{equation*}
\phi_\lambda(x,y)= \lambda^{-\alpha} \phi(\lambda^{-1} x, \lambda^{-\frac{\alpha+2}{2}} y).
\end{equation*}
Let $\dot{H}^{s_1,s_2}(\R^2)$ denote the anisotropic homogeneous Sobolev space with norm
\begin{equation*}
\| f \|^2_{\dot{H}^{s_1,s_2}(\R^2)} = \int_{\R^2} |\xi|^{2s_1} |\eta|^{2s_2} |\hat{f}(\xi,\eta)|^2 d\xi d\eta.
\end{equation*}
We compute
\begin{equation*}
\| \phi_\lambda \|_{\dot{H}^{s_1,s_2}(\R^2)} = \lambda^{-\frac{3 \alpha}{4} + 1 - s_1 - \big( \frac{\alpha}{2}+ 1 \big) s_2 } \| \phi \|_{\dot{H}^{s_1,s_2}(\R^2)}.
\end{equation*}
In the scale of the anisotropic Sobolev spaces with $s_2 = 0$, this distinguishes $\dot{H}^{s_c,0}$ with $s_c = 1 - \frac{3 \alpha}{4}$ as scaling critical space. Moreover, $s_2 = 0$ is distinguished as this is the lowest regularity in the $y$-variable, which still respects the Galilean invariance:
\begin{equation*}
\eta \to \eta + A \xi.
\end{equation*}
For $\alpha \in \R$, we have the following conserved quantities for real-valued solutions:
\begin{align}
\label{eq:MassKPI}
M(u)(t) &= \int_{\D} u^2(x,y) dx dy, \\
\label{eq:EnergyKPI}
E_\alpha(u)(t) &= \int_{\D} \big( \frac{1}{2} |D_x^{\frac{\alpha}{2}} u|^2 + \frac{1}{2} |\partial_{x}^{-1} \partial_y u|^2 + \frac{1}{6} u^3 \big) dx dy.
\end{align}
The energy space is given by
\begin{equation*}
E^{\frac{\alpha}{2}}(\D) = \Big\{ \phi \in L^2(\D) : \Big\| \big(1+|\xi|^{\frac{\alpha}{2}} + \frac{|\eta|}{|\xi|} \big) \hat{\phi}(\xi,\eta)\Big \|_{L^2} < \infty \Big\}. 
\end{equation*}
Since the energy spaces are smaller spaces than the anisotropic Sobolev spaces, we opt to work in the anisotropic Sobolev spaces $H^{s,0}$, which are defined by
\begin{equation*}
H^{s,0}(\D) = \{ f \in L^2(\D) : \| \langle \xi \rangle^{s}\hat{f}(\xi,\eta) \|_{L^2_{\xi,\eta}} < \infty \}.
\end{equation*}
Both KP equations \eqref{eq:KPEquations} are known to be completely integrable, as they admit a Lax pair (\cite{Dryuma1974}). Recently, Killip-Vi\c{s}an \cite{KillipVisan2019} introduced the ``method of commuting flows" taking advantage of the complete integrability to obtain sharp global well-posedness in standard Sobolev spaces for one-dimensional models (see also preceding works \cite{KillipVisanZhang2018} and Koch--Tataru \cite{KochTataru2018}). However, to the best of the authors' knowledge, these arguments could not be extended so far to models in higher dimensions, like the KP equations. Moreover, the dispersion-generalized models are not known to be completely integrable. 

\subsection{Nonlinear evolution: Resonance and transversality considerations}

The resonance relation is given by
\begin{equation*}
\begin{split}
\Omega_\alpha(\xi_1,\eta_1,\xi_2,\eta_2) &= \omega_\alpha(\xi_1+\xi_2,\eta_1+\eta_2) - \omega(\xi_1,\eta_1) - \omega(\xi_2,\eta_2) \\
&= \underbrace{(\xi_1+\xi_2) |\xi_1+\xi_2|^\alpha - \xi_1 |\xi_1|^\alpha - \xi_2 |\xi_2|^\alpha}_{\Omega_{\alpha,1}(\xi_1,\xi_2)} - \frac{(\eta_1 \xi_2 - \eta_2 \xi_1)^2}{\xi_1 \xi_2 (\xi_1+\xi_2)}.
\end{split}
\end{equation*}
For KP-I equations we need to consider the possibility
\begin{equation}
\label{eq:ResonantIntroduction}
|\Omega_\alpha(\xi_1,\eta_1,\xi_2,\eta_2)| \ll |\Omega_{\alpha,1}(\xi_1,\xi_2)|,
\end{equation}
and consequently, we cannot recover the derivative loss in a High$\times$Low-interaction (with regard to the $x$-frequencies) through the resonance. Indeed, Molinet--Saut--Tzvetkov \cite{MolinetSautTzvetkov2002Illposedness} showed that the data-to-solution mapping of the KP-I equation on $\R^2$ is not in $C^2$ in any anisotropic Sobolev space. They proved global well-posedness in the second energy space in \cite{MolinetSautTzvetkov2002}. 

\smallskip

Ionescu--Kenig--Tataru \cite{IonescuKenigTataru2008} showed global well-posedness in the first energy space in a seminal contribution, combining Fourier restriction analysis originating from Bourgain's work (see \cite{Bourgain1993KPII} and references therein) and the energy method (see Koch--Tzvetkov \cite{KochTzvetkov2003} for a preceding work combining frequency-de\-pen\-dent time localization and Strichartz estimates). They analyze the nonlinear interaction on frequency-dependent time intervals. Our analysis in the quasilinear case rests likewise on short-time Fourier restriction. We briefly explain our implementation of short-time Fourier restriction on $\R \times \T$ (the choice of frequency-dependent time localization depends on the domain) in Subsection \ref{subsection:ShorttimeFourierRestriction}.

\smallskip

We shall see that in case \eqref{eq:ResonantIntroduction} holds, we have
\begin{equation*}
\Big| \frac{(\eta_1 \xi_2 - \eta_2 \xi_1)^2}{\xi_1 \xi_2 (\xi_1+\xi_2)} \Big| \sim \big| \Omega_{\alpha,1}(\xi_1,\xi_2) \big|,
\end{equation*}
and we obtain favorable bounds for the full transversality. This allows us to obtain trilinear smoothing estimates as consequence of nonlinear Loomis--Whitney inequalities, on which we elaborate in Section \ref{subsection:NonlinearLW}. The trilinear smoothing estimate for the KP-I equation was the second key ingredient in \cite{IonescuKenigTataru2008}.

\smallskip

Further tools to control the nonlinear evolution are linear and bilinear Strichartz estimates, which improve on frequency-dependent time intervals. The basic bilinear Strichartz estimates follow from transversality arguments (see Proposition \ref{prop:BilinearStrichartzGeneral}, Lemma \ref{lem:AlternativeBilinearStrichartzEstimate}),  which are standard by now. We shall see that in case of small transversality we can use a variant of the C\'ordoba--Fefferman square function estimate, which leads to refined bilinear Strichartz estimates (see Lemma \ref{lem:CFBilinearStrichartz}).

\smallskip

We show linear Strichartz estimates using $\ell^2$-decoupling for elliptic hypersurfaces due to Bourgain--Demeter \cite{BourgainDemeter2015}. Increasing the dispersion parameter $\alpha$ on $\R \times \T$, $\ell^2$-decoupling captures a smoothing effect, which is not observed in the fully periodic case.
Increasing $\alpha$ the resonance relation and the nonlinear Loomis-Whitney inequality become likewise more favorable and on $\R^2$ and $\R \times \T$, the evolution becomes semilinear for $\alpha$ large enough. This is described in detail in Subsection \ref{subsection:LWPResults}.

\smallskip

We remark that the preceding description to control the nonlinear evolution does not depend on the underlying geometry $\R^2$ or $\R \times \T$. It is one purpose of the present work to emphasize robust perturbative methods to control nonlinear interactions and to compare the influence of the different geometries on the (multi)linear Strichartz estimates, which are presently described in a unified way.

\medskip

Lastly, we remark on KP-II equations. The resonance relation for KP-II equations is given by
\begin{equation*}
\Omega^{(II)}_\alpha(\xi_1,\eta_1,\xi_2,\eta_2) = \Omega_{\alpha,1}(\xi_1,\xi_2) + \frac{(\eta_1 \xi_2 - \eta_2 \xi_1)^2}{\xi_1 \xi_2 (\xi_1 + \xi_2)}
\end{equation*}
and observing that both terms are of the same sign, it follows that
\begin{equation*}
|\Omega^{(II)}_\alpha(\xi_1,\eta_1,\xi_2,\eta_2)| \gtrsim |\Omega_{\alpha,1}(\xi_1,\xi_2)|.
\end{equation*}
This reflects the aforementioned nonlinear defocusing effect. This was pointed out by Bourgain \cite{Bourgain1993KPII} who showed global well-posedness of the KP-II equation for real-valued solutions in $L^2(\T^2)$ and $L^2(\R^2)$. Global well-posedness and scattering for small initial data in the scaling-critical space $\dot{H}^{-\frac{1}{2},0}(\R^2)$ was proved by Hadac--Herr--Koch \cite{HadacHerrKoch2009}. In the recent work \cite{HerrSchippaTzvetkov2024} Bourgain's $L^2$-well-posedness on the torus is extended to some Sobolev space of negative order. The arguments similarly rely on Strichartz estimates from decoupling and frequency-dependent time localization.

\subsection{Nonlinear Loomis--Whitney inequalities with general measure}
\label{subsection:NonlinearLW}
The nonlinear Loomis-Whitney inequalities are convolution inequalities for functions supported on the  Pontryagin dual
\begin{equation*}
\D^* = (\R \times \K_1 \times \K_2)^* \text{ with } \R^* = \R \text{ and } \T^* = \Z.
\end{equation*}

The literature on Loomis--Whitney inequalities and the related Brascamp--Lieb inequalities is vast, and the following list of references is by no means exhaustive. We refer the interested reader to the references therein (\cite{BennettCarberyWright2005,BejenaruHerrTataru2010,
BennettBez2010,KinoshitaSchippa2021}) for further reading and to the recent overview by Bennett--Bez \cite{BennettBez2021}.
However, we remark that most statements in the literature are local and not immediately suitable for application to PDE since the ``small" support assumptions on the involved functions are not quantified.
 We shall be brief here and refer to Subsection \ref{subsection:NonlinearLWGeneral} for precise notions.

\medskip

Let $(S_i)_{i=1}^3 \subseteq \R^3$ denote $C^{1,\beta}$-hypersurfaces, $\beta >0$, which allow for a global graph parametrization. For $x_i \in S_i$ we denote with $\mathfrak{n}_i(x_i)$ the outer unit normal. We suppose that there is $A \geqslant 1$ such that for any $x_i \in S_i$:
\begin{equation}
\label{eq:Transversality}
A^{-1} \leqslant |\mathfrak{n}_1(x_1) \wedge \mathfrak{n}_2(x_2) \wedge \mathfrak{n}_3(x_3) | \leqslant 1 .
\end{equation}

For the (classical) nonlinear Loomis--Whitney inequality we endow $S_i$ with the surface measure. Let $f_i : S_i \to \R$, $i=1,2,3$ with $S_i \subseteq \R^3$ like above. $S_i$ carries the surface measure $\sigma_i$, and the global convolution estimate proved in \cite{KinoshitaSchippa2021} reads as follows:
\begin{equation*}
\int_{S_3} (f_1 * f_2) f_3 d\sigma_3 \lesssim A^{\frac{1}{2}} \prod_{i=1}^3 \| f_i \|_{L^2(S_i)}.
\end{equation*}

For our purposes when applying the convolution estimates, we shall thicken the hypersurfaces and consider functions $f_i \in L^2(S_i(\varepsilon),d\nu_\gamma)$, which are supported on the $\varepsilon$-neighbourhood $S_i(\varepsilon)$ of $S_i$, where $\nu_\gamma$ is a Borel measure on $\R^3$, which satisfies for any $r>0$ and $x \in \R^3$ the estimate:
\begin{equation}
\label{eq:gammaBorelMeasure}
\nu_\gamma(B(x,r)) \leqslant C_{\nu} r^\gamma \text{ for } 0 < r \leqslant \varepsilon.
\end{equation}

We formulate following version of global nonlinear Loomis-Whitney inequalities, which is related to convolution estimates for functions on product spaces.

\begin{theorem}[Nonlinear~Loomis-Whitney~inequalities~with~general~measure]
\label{thm:GenearlNLW}
Let $\varepsilon>0$, $(S_i)_{i=1,2,3}$ be a collection of hypersurfaces, which satisfy Assumption \ref{assumption:Surfaces}, $\nu_{\gamma}$ be a Borel measure, which satisfies \eqref{eq:gammaBorelMeasure}, and $f_i \in L^2(S_i(\varepsilon),d\nu_\gamma)$, $i=1,2$. Then the following estimate holds:
\begin{equation}
\label{eq:LoomisWhitneyMeasure}
\| f_1 * f_2 \|_{L^2(S_3(\varepsilon),d\nu_\gamma)} \lesssim A^{\frac{1}{2}} \varepsilon^{\frac{\gamma}{2}} \prod_{i=1}^2 \| f_i \|_{L^2(S_i(\varepsilon),d\nu_\gamma)}.
\end{equation}
\end{theorem}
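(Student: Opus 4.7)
The plan is to reduce Theorem \ref{thm:GenearlNLW} to the classical global nonlinear Loomis--Whitney inequality with surface measures from \cite{KinoshitaSchippa2021} via a localization argument at scale $\varepsilon$ combined with the ball condition \eqref{eq:gammaBorelMeasure}. By duality, it suffices to bound the trilinear form $\Lambda(f_1, f_2, f_3) = \int (f_1 * f_2) f_3 \, d\nu_\gamma$ by $A^{1/2} \varepsilon^{\gamma/2} \prod_i \|f_i\|_{L^2(\nu_\gamma)}$; the symmetric form makes the role of the ball condition symmetric across the three slots.

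The next step is localization at scale $\varepsilon$: I would cover $\mathbb{R}^3$ by a finitely overlapping family of balls $\{B_k\}$ of radius $\varepsilon$ with an associated smooth partition of unity, and decompose each $f_i = \sum_{k_i} f_{i, k_i}$ with $f_{i, k_i}$ supported in $B_{k_i} \cap S_i(\varepsilon)$. The trilinear form splits as $\Lambda = \sum_{(k_1, k_2, k_3)} \Lambda_{k_1, k_2, k_3}$, and by the support of the convolution only admissible triples (those with $B_{k_1} + B_{k_2}$ meeting $B_{k_3}$) contribute. The admissibility condition is essentially ``$k_1 + k_2 \approx k_3$'', so for fixed two of the three indices the third is determined up to bounded multiplicity; this structure lets the final summation close via three applications of the Cauchy--Schwarz inequality.

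The core is the local trilinear estimate on a single admissible triple. On a ball of radius $\varepsilon$, the $C^{1,\beta}$ hypersurface $S_i$ is within error $\mathcal{O}(\varepsilon^{1+\beta})$ of its tangent plane at the centre of $B_{k_i}$, and the transversality \eqref{eq:Transversality} transfers to these tangent planes with a comparable constant. After this flattening, I would invoke the classical Loomis--Whitney inequality for three transverse hyperplanes and then use the ball bound $\nu_\gamma(B_{k_i}) \leq C_\nu \varepsilon^\gamma$ to exchange local surface-measure norms for $L^2(\nu_\gamma)$ norms, aiming at
\begin{equation*}
|\Lambda_{k_1, k_2, k_3}| \lesssim A^{1/2} \varepsilon^{\gamma/2} \prod_{i=1}^{3} \|f_{i, k_i}\|_{L^2(\nu_\gamma)}.
\end{equation*}
Summing over admissible triples with Cauchy--Schwarz then produces the global estimate.

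The main obstacle I expect is the local exchange between the classical surface-measure $L^2$ norms (for which the base inequality in \cite{KinoshitaSchippa2021} is calibrated) and the $L^2(\nu_\gamma)$ norms appearing in the statement, with $\nu_\gamma$ only controlled through a ball condition of arbitrary exponent $\gamma$. The bridge is that at scale $\varepsilon$ the ball condition gives a uniform bound $C_\nu \varepsilon^\gamma$ on the $\nu_\gamma$-mass of each cap, so that ``$L^2(\nu_\gamma)$ against $L^\infty$'' trades cleanly; the delicate bookkeeping will be to distribute the $\varepsilon^{\gamma/2}$ factor symmetrically across the three slots and to absorb the curvature errors of $S_i$ into the $A^{1/2}$ constant without incurring additional losses that depend unfavourably on the global geometry.
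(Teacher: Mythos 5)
Your high-level scaffolding is right — reduce to the trilinear form by duality, localize to $\varepsilon$-balls, close the sum by Cauchy--Schwarz — but the core local step as proposed does not work, and it is also not what the paper does.

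The issue is your plan to flatten $S_i$ on each $\varepsilon$-ball, invoke the classical Loomis--Whitney inequality for three transverse hyperplanes, and then ``use the ball bound $\nu_\gamma(B_{k_i}) \leqslant C_\nu \varepsilon^\gamma$ to exchange local surface-measure norms for $L^2(\nu_\gamma)$ norms.'' That exchange is not available. The hypothesis \eqref{eq:gammaBorelMeasure} gives only an \emph{upper} bound on $\nu_\gamma$-masses of small balls; it does not imply any comparability between $\nu_\gamma$ and surface (or Lebesgue) measure in either direction ($\nu_\gamma$ could even vanish on large portions of $S_i$). So an estimate calibrated to $L^2(d\sigma)$ cannot be transferred to $L^2(d\nu_\gamma)$. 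Moreover, flattening would introduce dependence on the regularity parameters $b,\beta$ of Assumption \ref{assumption:Surfaces}, which the paper explicitly remarks do \emph{not} enter the final constant — a strong signal that no local Loomis--Whitney inequality (and no curvature consideration) should appear at the $\varepsilon$-scale.

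What the paper actually does at the local scale is entirely elementary: two applications of Cauchy--Schwarz. Decomposing only $f_3$ along $\varepsilon$-balls $B_{\varepsilon,j}$ meeting $S_3(\varepsilon)$, one has for each $j$
\begin{equation*}
\Big| \int_{S_{3,j}(\varepsilon)} (f_1*f_2)\, f_3\, d\nu_\gamma \Big| \leqslant \|f_1\|_{L^2(S_{1,j,\varepsilon},\nu_\gamma)} \|f_2\|_{L^2(S_{2,j,\varepsilon},\nu_\gamma)} \int_{S_{3,j}(\varepsilon)} |f_3|\,d\nu_\gamma,
\end{equation*}
by Cauchy--Schwarz in the convolution variable (using translation invariance of $\nu_\gamma$) together with the support localization, and then Cauchy--Schwarz plus \eqref{eq:gammaBorelMeasure} bounds the last factor by $\varepsilon^{\gamma/2}\|f_3\|_{L^2(S_{3,j}(\varepsilon),\nu_\gamma)}$. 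Note that the local estimate carries \emph{no} factor of $A$: the transversality enters only once, globally, through the counting estimate $\sum_{j} \chi_{S_{1,j,\varepsilon}\times S_{2,j,\varepsilon}}(\lambda_1,\lambda_2) \lesssim A$ from \cite{KinoshitaSchippa2021} (estimate \eqref{eq:LWTransversalitySummation}), which is exactly what makes the sum over $j$ close after one more Cauchy--Schwarz. Your ``admissibility'' observation points in that direction, but it needs to be upgraded from the qualitative ``for fixed two indices the third is determined up to bounded multiplicity'' to this quantitative geometric bound, which is where the $A^{1/2}$ in the theorem comes from; in your version you commit the $A^{1/2}$ to the local step, which both misplaces it and cannot be justified with only the ball condition in hand.
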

Its proof uses almost orthogonal decompositions like in \cite{KinoshitaSchippa2021}. Interestingly, the constant in the estimate \eqref{eq:LoomisWhitneyMeasure} does not depend on the regularity parameters $b$, $\beta$ of $(S_i)_{i=1,2,3}$ from Assumption \ref{assumption:Surfaces}, but it depends only on the constant $C_\nu$ from \eqref{eq:gammaBorelMeasure}.

\subsection{Improved local well-posedness results}
\label{subsection:LWPResults}
Here we state the new results concerning the well-posedness of KP-I equations. We emphasize that the arguments described above \emph{per se} do not depend on the geometry, but the constants in the linear and multilinear estimates do.

\subsubsection{Euclidean geometry}
On $\R^2$ Z. Guo \emph{et al.} \cite{GuoPengWang2010} applied the argument of Ionescu--Kenig--Tataru \cite{IonescuKenigTataru2008} to show local well-posedness of the KP-I equation in $H^{1,0}(\R^2)$. This is a significantly larger space than the energy space $E^{1}(\R^2)$ considered in \cite{IonescuKenigTataru2008}. The second and third author considered generalized KP equations \eqref{eq:FKPI} for $\alpha \geqslant 2$ in \cite{SanwalSchippa2023}. The analysis combined the nonlinear Loomis-Whitney inequality, bilinear Strichartz estimates and frequency dependent time localization in the quasilinear regime. In the present work we bring improved bilinear Strichartz estimates into the mix recorded in Lemma \ref{lem:CFBilinearStrichartz}, which take advantage of an orthogonality in case of low transversality.
The argument is reminiscent of the \emph{Córdoba--Fefferman square function estimate} (cf. \cite{Fefferman1973,Cordoba1979}). Estimating differences of solutions at negative Sobolev regularity with a suitable low frequency weight further improves the local well-posedness results.

\smallskip

We summarize the presently obtained results refining the arguments from \cite{SanwalSchippa2023}:
\begin{itemize}
\item We obtain the sharp dispersion parameter $\alpha_c = \frac{5}{2}$ such that \eqref{eq:FKPI} evolves in a semilinear way on $\R^2$. We show that the data-to-solution mapping fails to be $C^2$ for $\alpha < \alpha_c$ in Theorem \ref{thm:C2IllposednessR2}.
\item In the semilinear regime $\alpha \geqslant \alpha_c$ we show local well-posedness in the full subcritical range in $H^{s,0}(\R^2)$ for $s>1-\frac{3 \alpha}{4}$ in Theorem \ref{thm:SemilinearWPR2}. This improves \cite[Theorem~1.3]{SanwalSchippa2023}.
\item In the quasilinear case we improve the results in \cite{SanwalSchippa2023} in Theorem \ref{thm:ImprovedQuasilinearLWPFKPIR2}.
\item The present argument shows local well-posedness for the KP-I equation on $\R^2$ in $H^{s,0}(\R^2)$ for $s>\frac{1}{2}$; see Theorem \ref{thm:ImprovedQuasilinearLWPKPIR2}. Shortly before announcing our work, this was independently reported by Guo \cite{Guo2024}.
\end{itemize}

\begin{remark}
We remark that in 04/2024 Guo--Molinet \cite{GuoMolinet2024} reported unconditional\footnote{This means that solutions are constructed in $C_T H^{s,0}$ without intersecting with a smaller function space.} global well-posedness in the energy space and  local well-posedness in $H^{s,0}(\R^2)$ for $s>\frac{3}{4}$. Moreover, shortly before our results were announced in 08/2024, Guo \cite{Guo2024} independently reported local well-posedness of the KP-I equation on Euclidean space for $s>\frac{1}{2}$. Guo uses the same frequency-dependent time localization, but does not employ weighted spaces for the differences of solutions. To compare with our analysis in the dispersion-generalized case, we opted to include the details as well for the KP-I equation in Section \ref{section:LWPFKPIR2}. 
\end{remark}
%

\subsubsection{Partially periodic geometry} 

On $\R \times \T$, T. Robert \cite{Robert2018} showed global well-\-posed\-ness of the KP-I equation in the energy space and as a consequence stability of the line soliton. Here we show improved local well-posedness in the larger anisotropic Sobolev spaces for the dispersion-generalized equations. We remark that the present analysis yields global well-posedness in the energy space. Indeed, with the (improved) Strichartz estimates and Loomis-Whitney inequality for higher dispersion at hand, this is a direct consequence of the analysis in \cite{Robert2018}. We take the following global result for granted:
\begin{theorem}[\cite{Robert2018}]
\label{thm:GWPEnergySpace}
Let $\alpha \geqslant 2$. Then \eqref{eq:FKPI} posed on $\R \times \T$ is globally well-posed in $E^{\frac{\alpha}{2}}(\R \times \T)$.
\end{theorem}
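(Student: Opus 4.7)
The plan is to follow the blueprint of Robert \cite{Robert2018} for the KP-I equation ($\alpha=2$), but with the linear and multilinear estimates of the present paper plugged in to cover the full range $\alpha\geqslant 2$. The argument splits into two parts: first, local well-posedness in $E^{\alpha/2}(\R\times\T)$ via short-time Fourier restriction combined with an energy estimate; second, extension to a global solution using the two conserved quantities $M(u)$ and $E_\alpha(u)$ together with an anisotropic Gagliardo--Nirenberg inequality to absorb the cubic term in $E_\alpha$.

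For the local theory I would introduce frequency-dependent time localizations adapted to the dyadic $x$-frequency scales $N$, with time windows of length of the order indicated in Subsection \ref{subsection:ShorttimeFourierRestriction} for the partially periodic case, and the standard trio of function spaces $F^s$, $N^s$, $B^s$ built from $X^{s,b}$-blocks on these short intervals. The required nonlinear input is a short-time bilinear estimate $\|\partial_x(u v)\|_{N^s}\lesssim \|u\|_{F^s}\|v\|_{F^s}$, fed by the linear Strichartz estimates from $\ell^2$-decoupling, the bilinear Strichartz estimates from transversality together with their Córdoba--Fefferman refinement (Lemma \ref{lem:CFBilinearStrichartz}), and the trilinear smoothing estimate that follows from Theorem \ref{thm:GenearlNLW} with $\nu_\gamma$ chosen to mirror the cylinder geometry $\R\times\T$. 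Together with an energy estimate $\|u\|_{B^s}^2\lesssim \|u_0\|_{H^{s,0}}^2+\|u\|_{F^s}^3$ closed via the same trilinear smoothing, a contraction argument on a short time interval yields a solution in $E^{\alpha/2}(\R\times\T)$ with existence time depending on $\|u_0\|_{E^{\alpha/2}}$.

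To globalize I would exploit conservation of $M(u)$ to control the $L^2$ norm and of $E_\alpha(u)$ to control $\|D_x^{\alpha/2}u\|_{L^2}$ and $\|\partial_x^{-1}\partial_y u\|_{L^2}$. The cubic defect in $E_\alpha$ is handled by an anisotropic Gagliardo--Nirenberg inequality of the form
\[
\Big|\int_{\R\times\T} u^3\, dx\, dy\Big|\lesssim \|u\|_{L^2}^{a}\,\|D_x^{\alpha/2}u\|_{L^2}^{b}\,\|\partial_x^{-1}\partial_y u\|_{L^2}^{c},
\]
with $a+b+c=3$ and subcritical scaling throughout $\alpha\geqslant 2$. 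This yields an a priori bound on $\|u(t)\|_{E^{\alpha/2}}$ uniform in $t$, and the local theory can then be iterated to produce a global solution.

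The main obstacle is the local-in-time step. It requires carefully transplanting the short-time Fourier restriction framework of Ionescu--Kenig--Tataru to the cylinder, handling the discrete $\eta$-sum when invoking the nonlinear Loomis--Whitney inequality (which is exactly why the general-measure formulation of Theorem \ref{thm:GenearlNLW} is needed), and closing the energy estimate for the resonant $\mathrm{High}\times\mathrm{High}\to\mathrm{High}$ interactions where only the trilinear smoothing prevents a loss of derivative. Once the linear, bilinear, and trilinear bounds on $\R\times\T$ established in the present paper are in hand, they are uniformly at least as strong as those used in \cite{Robert2018} for $\alpha=2$, so Robert's scheme goes through verbatim for every $\alpha\geqslant 2$ and the conserved quantities deliver the global statement.
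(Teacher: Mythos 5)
Your proposal matches what the paper actually does for this statement: Theorem \ref{thm:GWPEnergySpace} is cited to Robert and explicitly \emph{taken for granted}, with only a remark that the improved Strichartz and Loomis--Whitney estimates at higher dispersion let Robert's energy-space analysis carry over to all $\alpha \geqslant 2$; your fleshed-out sketch is a reasonable expansion of that remark. One correction worth making: you claim the local step closes by a contraction argument on the short-time spaces, but for $\alpha < 5$ on $\R\times\T$ the flow is not even $C^2$ in any $H^{s_1,s_2}$ (Theorem \ref{thm:C2IllposedCylinder}), so a contraction cannot hold at energy-space regularity; Robert, following Ionescu--Kenig--Tataru, instead proves a priori bounds in $F^s$ and $B^s$, Lipschitz dependence of differences at a lower, weighted regularity, and concludes with a Bona--Smith argument — exactly the scheme used in the rest of this paper. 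Also, on the cylinder the C\'ordoba--Fefferman bilinear refinement you want is Lemma \ref{lem:CFBilinearCylinder}; Lemma \ref{lem:CFBilinearStrichartz}, which you cited, is the $\R^2$ version.
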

This facilitates our analysis in anisotropic Sobolev spaces because we can work with global solutions from the energy space. The data-to-solution mapping will then be extended to the larger space $H^{s,0}(\D)$.

Firstly, we obtain the sharp dispersion parameter $\alpha_c = 5$ for semilinear well-posedness. For $\alpha < \alpha_c$ we show that \eqref{eq:FKPI} fails to be $C^2$-wellposed in anisotropic Sobolev spaces $H^{s_1,s_2}(\R \times \T)$, and so it is not possible to solve it via the contraction mapping principle. This is recorded in Theorem \ref{thm:C2IllposedCylinder}.

\smallskip

To state our improved local well-posedness results, define
\begin{equation}
\label{eq:RegSolution}
s_1(\alpha,\varepsilon,\R \times \T) = \begin{cases}
 \frac{11}{6} - \frac{2 \alpha}{3} + \varepsilon, &\quad 2 < \alpha \leqslant \frac{11}{4} \text{ and } \varepsilon > 0,\\
  0, &\quad \frac{11}{4} < \alpha \leqslant 5.
  \end{cases}
\end{equation}

\begin{theorem}
\label{thm:WellposednessKPI}
Let $\alpha \in (2,5)$. The Cauchy problem \eqref{eq:FKPI} posed on $\D = \R \times \T$ is locally well-posed in $H^{s,0}(\D)$ for $s \geqslant s_1(\alpha,\varepsilon,\D) $ for $\varepsilon > 0$.
\end{theorem}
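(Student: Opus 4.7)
The plan is to carry out short-time Fourier restriction in the spirit of Ionescu--Kenig--Tataru \cite{IonescuKenigTataru2008}, adapted to $\R\times\T$ and to the dispersion parameter $\alpha\in(2,5)$. I would introduce the standard trio of Banach spaces $F^s$, $N^s$, $E^s$ built on frequency-localized $X^{s,b}$-pieces truncated to time windows of length $\sim N^{-\beta(\alpha)}$, where $N$ denotes the dyadic $x$-frequency and $\beta(\alpha)$ is chosen so that the resonance function $\Omega_\alpha$ behaves like $N^\alpha$ on these intervals; a choice of the form $\beta(\alpha)=\alpha-1$ (coinciding with Ionescu--Kenig--Tataru for $\alpha=2$) seems natural. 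Given these spaces, local well-posedness reduces to the three standard estimates
\begin{equation*}
\|u\|_{F^s_T}\lesssim \|u\|_{E^s_T}+\|\partial_x(u^2)\|_{N^s_T},\quad
\|\partial_x(u^2)\|_{N^s_T}\lesssim \|u\|_{F^s_T}^2,\quad
\|u\|_{E^s_T}^2\lesssim \|u_0\|_{H^{s,0}}^2+\|u\|_{F^s_T}^3,
\end{equation*}
together with an analogous pair of bounds for differences $v=u_1-u_2$ of solutions, run at a lower regularity with the weight $|\xi|^{-a}$ described at the end of Subsection~\ref{subsection:LWPResults}. The existence part then follows by the now-standard bootstrap: Theorem~\ref{thm:GWPEnergySpace} furnishes global smooth solutions emanating from regularized data, the energy estimate controls the $F^s$ norm on a time scale $T=T(\|u_0\|_{H^{s,0}})$, and a Bona--Smith argument extends the solution map continuously to $H^{s,0}$.

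The core work lies in the short-time bilinear estimate $\|\partial_x(u_1u_2)\|_{N^s}\lesssim \|u_1\|_{F^s}\|u_2\|_{F^s}$. I would split the dyadic product $P_{N_1}u_1\cdot P_{N_2}u_2$ into the standard cases $\mathrm{High}\times\mathrm{High}\to\mathrm{High}$, $\mathrm{High}\times\mathrm{Low}\to\mathrm{High}$ and $\mathrm{High}\times\mathrm{High}\to\mathrm{Low}$, and treat each according to whether the output has high or low modulation. In the low-modulation (resonant) case \eqref{eq:ResonantIntroduction}, transversality is forced by the identification $|(\eta_1\xi_2-\eta_2\xi_1)^2/(\xi_1\xi_2(\xi_1+\xi_2))|\sim |\Omega_{\alpha,1}|$, and I would invoke the nonlinear Loomis--Whitney inequality from Theorem~\ref{thm:GenearlNLW}, with the measure $\nu_\gamma$ chosen to reflect the $\R\times\Z$ geometry (so $\gamma=2$ after the usual rescaling), to extract the trilinear smoothing estimate that absorbs the derivative loss. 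In the non-resonant cases the modulation weight from $X^{s,b}$ already pays for the derivative, and it remains to apply the bilinear Strichartz estimates of Proposition~\ref{prop:BilinearStrichartzGeneral} and Lemma~\ref{lem:AlternativeBilinearStrichartzEstimate}, supplemented by the Córdoba--Fefferman refinement of Lemma~\ref{lem:CFBilinearStrichartz} when the separating modulations cause two transverse tubes to overlap only mildly. The linear Strichartz bounds coming from $\ell^2$-decoupling are the ingredient that makes the analysis sensitive to $\alpha$; they are what ultimately produces the two regimes in \eqref{eq:RegSolution}.

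The energy estimate $\|u\|_{E^s_T}^2\lesssim\|u_0\|_{H^{s,0}}^2+\|u\|_{F^s_T}^3$ is obtained by testing the equation with $\langle\xi\rangle^{2s}\hat{u}$, reducing to a trilinear form with a time-boundary correction that is absorbed via the $F^s$ structure. Its symmetric version for the difference equation, with nonlinearity $\partial_x((u_1+u_2)v)$, is estimated at a lower regularity $s-1$ together with the weight $|\xi|^{-a}$ for some small $a>0$; the point of the weight is to compensate for the loss of a derivative in $v$ when $v$ sits at a much lower frequency than $u_1+u_2$, which otherwise fatally damages the high $\times$ low estimate.

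The main obstacle, and the source of the split in \eqref{eq:RegSolution}, will be the balance $\mathrm{High}\times\mathrm{High}\to\mathrm{Low}$ on the cylinder, where the $y$-integration is discrete and only the $\eta$-fibers on the integer lattice are available. For $\alpha>11/4$ the improved linear Strichartz and the Loomis--Whitney yield enough smoothing to close at $s=0$; for $2<\alpha\leqslant 11/4$ the linear decoupling produces a logarithmic loss in $N$ and one has to pay $N^{11/6-2\alpha/3+\varepsilon}$ to sum over dyadic blocks, which is exactly $s_1(\alpha,\varepsilon,\R\times\T)$. Verifying that this loss is sharp, in the sense that every other bilinear case closes at or below this threshold, and combining the estimates with the energy estimate and the difference bound to close the bootstrap on a time interval $T=T(\|u_0\|_{H^{s,0}})$, completes the proof.
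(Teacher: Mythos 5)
Your high-level strategy matches the paper: short-time Fourier restriction with spaces $F^s,\mathcal{N}^s,E^s$, the three standard a priori/bilinear/energy estimates plus an analogous set for differences at negative regularity with a low-frequency weight, the nonlinear Loomis--Whitney inequality in the resonant regime, bilinear Strichartz (and its C\'{o}rdoba--Fefferman refinement) elsewhere, linear Strichartz from $\ell^2$-decoupling, and a Bona--Smith limiting argument. However, there is a concrete quantitative error that would prevent the estimates from closing with the claimed threshold.

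The proposed frequency-dependent time localization $T(N)=N^{-\beta(\alpha)}$ with $\beta(\alpha)=\alpha-1$ goes in the \emph{wrong direction} as $\alpha$ increases. The paper's choice on $\R\times\T$ is $T(N)=N^{-\frac{5-\alpha}{3}-\varepsilon(\alpha)}$, which interpolates between $N^{-1-\varepsilon}$ at $\alpha=2$ (close to Ionescu--Kenig--Tataru) and $T(N)\approx 1$ at $\alpha=5$: since $\alpha=5$ is the threshold for semilinear behavior (Theorem~\ref{thm:C2IllposedCylinder}), one should need \emph{less} time localization as $\alpha$ grows, and indeed the time localization is switched off entirely at $\alpha=5$. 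Your $\beta(\alpha)=\alpha-1$ would give $T(N)=N^{-4}$ at $\alpha=5$, a much stronger localization than is needed — and extra time localization is not free: when reassembling a unit interval one pays a factor $T^{-1}=N^\beta$, which with your choice overwhelms the smoothing for $\alpha$ near $5$ and does not produce the exponent $\frac{11}{6}-\frac{2\alpha}{3}$. Your stated rationale (choosing $\beta$ so the resonance $\Omega_\alpha$ ``behaves like $N^\alpha$'') also doesn't match what actually governs the choice, which is balancing the derivative loss $N_1$ against the Loomis--Whitney gain $N_1^{-\alpha/2}$, the $L^4$-Strichartz loss, and the $T^{-1}$ penalty. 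The threshold you state at the end ($N^{11/6-2\alpha/3+\varepsilon}$) is correct, but it emerges from the paper's exponent $\frac{5-\alpha}{3}$, not from $\alpha-1$.

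Two smaller points. First, the paper traces the regularity threshold primarily to the \emph{resonant} $\mathrm{High}\times\mathrm{High}\to\mathrm{Low}$ interaction, where the Loomis--Whitney estimate \eqref{eq:LoomisWhitneyCylinder} is applied after adding time localization of size $N_1^{\frac{5-\alpha}{3}+\varepsilon}$; see \eqref{eq:ResonantHighHigh} and the accompanying computation $\frac{4}{3}-\frac{2\alpha}{3}+\frac{1}{2}=s_1(\alpha,0)$. The $\ell^2$-decoupling Strichartz bound of Proposition~\ref{prop:StrichartzCylinder} enters in the \emph{non}-resonant cases and contributes only $N^\varepsilon$ losses; it is not what ``ultimately produces the two regimes.'' Second, the difference equation is not estimated at regularity $s-1$: the paper works at $\bar{H}^{-1/2}$ with a fixed low-frequency weight $(1+|\xi|^{-1})^{1+2\varepsilon}\langle\xi\rangle^{-1}$, uniformly over the whole range $s\ge s_1$, and this is essential for the Bona--Smith step (where the gain $\|P_{\geqslant H}u_0\|_{H^{-1/2,0}}$ must be traded against $\|u^H_0\|_{H^{2s+\frac12,0}}$).
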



\medskip

Arguing that the evolution is semilinear for $\alpha \geqslant \alpha_c$ is more delicate than the example in Theorem \ref{thm:C2IllposedCylinder} and based on combining the nonlinear Loomis-Whitney inequality with bilinear Strichartz estimates. The following arguments should only be understood morally. In the resonant interaction for $N_2 \ll N \sim N_1$ we find for $T \sim 1$:
\begin{equation*}
\begin{split}
&\quad \| P_N \partial_x (S_5(t) P_{N_1} u_0 S_5(t) P_{N_2} u_0) \|_{L_t^1([0,T],L^2_x(\R \times \T))} \\
 &\lesssim N_1 \big( \frac{N_1}{N_2} \big)^{\frac{1}{2}} N_1^{-\frac{5}{2}} \| P_{N_1} u_0 \|_{L^2} \| P_{N_2} u_0 \|_{L^2} \\
&\lesssim N_1^{-1} N_2^{-\frac{1}{2}} \| P_{N_1} u_0 \|_{L^2} \| P_{N_2} u_0 \|_{L^2}.
\end{split}
\end{equation*}
This shows that the estimate is favorable for $N_2 \gtrsim N_1^{-2}$. On the other hand for $N_2 \ll N_1^{-2}$, we find in the resonant case by a bilinear Strichartz estimate
\begin{equation*}
\begin{split}
\| P_N \partial_x (S_5(t) P_{N_1} u_0 S_5(t) P_{N_2} u_0) \|_{L^1([0,T],L^2_x)} &\lesssim N_2^{\frac{1}{2}} N_1 \| P_{N_1} u_0 \|_{L^2} \| P_{N_2} u_0 \|_{L^2} \\
&\lesssim \| P_{N_1} u_0 \|_{L^2} \| P_{N_2} u_0 \|_{L^2}.
\end{split}
\end{equation*}

The above line of argument indicates semilinear global well-posedness of \eqref{eq:FKPI} for $\alpha \geqslant 5$. We obtain local well-posedness sharp up to endpoints. Notably, the sharp regularity for local well-posedness is strictly above the scaling critical regularity:
\begin{theorem}
\label{thm:GWPFKPICylinder}
Let $\alpha \geqslant 5$, $\D = \R \times \T$, and $s^*(\alpha) = \frac{1-\alpha}{4}$. \eqref{eq:FKPI} is locally well-posed for complex valued initial data in $H^{s,0}(\D)$ for $s > s^*(\alpha)$. For real-valued initial data \eqref{eq:FKPI} is globally well-posed in $L^2(\D)$. Moreover, \eqref{eq:FKPI} is ill-posed in $H^{s,0}(\R \times \T)$ for $s<s^*(\alpha)$.
\end{theorem}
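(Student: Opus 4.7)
For $\alpha \geqslant 5$ the evolution is semilinear, so the plan is to run Picard iteration in Bourgain $X^{s,b}(\R\times\T)$-spaces adapted to the symbol $\omega_\alpha$ with $b=\tfrac{1}{2}+$. By the standard Duhamel machinery, local well-posedness in the range $s>s^*(\alpha)$ reduces to the bilinear estimate
\begin{equation*}
\|\partial_x(uv)\|_{X^{s,-1/2+}} \lesssim \|u\|_{X^{s,1/2+}}\,\|v\|_{X^{s,1/2+}}.
\end{equation*}
I would prove this estimate by dyadic decomposition in $x$-frequency $(N,N_1,N_2)$ and modulation $(L,L_1,L_2)$. Non-resonant frequency configurations (where the output modulation $L$ or one of $L_1,L_2$ is comparable to $|\Omega_\alpha|\sim|\Omega_{\alpha,1}|$) gain enough from modulation weights that they are standard; the heart of the matter is the resonant High-Low interaction with $N_2\ll N_1\sim N$ and $|\Omega_\alpha|\ll|\Omega_{\alpha,1}|$, where the identity in Subsection 1.2 forces the full transversality $|(\eta_1\xi_2-\eta_2\xi_1)^2/(\xi_1\xi_2(\xi_1+\xi_2))|\sim N_1^\alpha$.

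\textbf{The trilinear estimate.} In this resonant regime the argument sketched after Theorem 1.4 is the blueprint. For $N_2\gtrsim N_1^{-2}$, I would invoke the nonlinear Loomis--Whitney inequality of Theorem 1.1 applied to the three $\omega_\alpha$-characteristic surfaces, with $\varepsilon$ matched to the largest output modulation and measure $\nu_\gamma$ taken as the counting measure in $\eta\in\Z$ tensored with Lebesgue in $\xi$, which satisfies the growth condition \eqref{eq:gammaBorelMeasure} with $\gamma=2$ on balls of radius at most $1$. The transversality constant $A$ in \eqref{eq:Transversality} is controlled using the full size of the resonance, producing the bound $N_1^{-1}N_2^{-1/2}$ from the introductory sketch. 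For $N_2\ll N_1^{-2}$, where the Loomis--Whitney transversality degenerates, I would fall back on a bilinear Strichartz estimate between dyadic pieces of $u$ and $v$; on $\R\times\T$ this contributes the polynomial factor $N_2^{1/2}$ coming from the lattice sum in $\eta$, matching the first regime exactly at the crossover $N_2\sim N_1^{-2}$. After extracting the $\langle\xi\rangle^s$-weights from each factor, the crossover pins the sharp threshold $s^*(\alpha)=(1-\alpha)/4$. High-High and Low-High interactions are symmetric or strictly easier.

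\textbf{GWP in $L^2$ and ill-posedness.} Since $s^*(\alpha)\leqslant-1<0$ for $\alpha\geqslant 5$, LWP applies in particular at $s=0$ with a lifespan depending only on $\|u_0\|_{L^2}$; for real-valued data the mass $M(u)$ from \eqref{eq:MassKPI} is conserved, so iterating the local theory produces GWP in $L^2(\R\times\T)$. For sharpness at $s<s^*(\alpha)$, I would rule out a $C^2$ data-to-solution map (and \emph{a fortiori} analyticity) by showing that the second Picard iterate
\begin{equation*}
B(u_0,u_0)(t)=\int_0^t S_\alpha(t-t')\,\partial_x\bigl(S_\alpha(t')u_0\bigr)^2\,dt'
\end{equation*}
is unbounded as a map $H^{s,0}\to C([0,T];H^{s,0})$. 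The witness is constructed exactly at the critical balance $N_2\sim N_1^{-2}$, with $\eta_j\in\Z$ chosen so that $\eta_1\xi_2-\eta_2\xi_1$ is as small as the lattice allows; this realizes the worst case of the trilinear estimate and a standard computation shows that $\|B(u_0,u_0)\|_{H^{s,0}}/\|u_0\|_{H^{s,0}}^2\to\infty$ as $N_1\to\infty$ precisely when $s<s^*(\alpha)$. It is the integrality of $\eta$ on $\T$ that makes this obstruction rigid and forces $s^*(\alpha)$ to sit strictly above the scaling regularity $s_c=1-\tfrac{3\alpha}{4}$, in contrast with $\R^2$.

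\textbf{Main obstacle.} The sharp application of Theorem 1.1 with the mixed cylindrical measure: one must sharply compute the transversality $A$ in the near-degenerate regime $N_2\to N_1^{-2}$ and verify that the output $A^{1/2}\varepsilon^{\gamma/2}$ of \eqref{eq:LoomisWhitneyMeasure} meets the bilinear Strichartz bound at the crossover without slack, so that neither regime produces a regularity worse than $s^*(\alpha)$. The same family of frequency-localized data then serves double duty in the ill-posedness argument, which must be calibrated against the exact constants of the trilinear estimate to be sharp at the endpoint.
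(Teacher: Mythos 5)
Your overall strategy (Picard iteration in $X^{s,b}$, reducing to the bilinear estimate, then dyadic decomposition in $x$-frequency and modulation with Loomis--Whitney and bilinear Strichartz as the core inputs) is the same as the paper's, and your treatment of global well-posedness in $L^2$ (since $s^*(\alpha) < 0$) via mass conservation is exactly right. However, there is a genuine gap in how you locate the sharp threshold, and it propagates into your ill-posedness construction.

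You claim the threshold $s^*(\alpha) = \tfrac{1-\alpha}{4}$ is ``pinned'' by matching Loomis--Whitney against bilinear Strichartz at the crossover $N_2 \sim N_1^{-2}$ in the High $\times$ Low $\to$ High regime. That crossover is the heuristic explaining why the \emph{dispersion} threshold for semilinearity is $\alpha = 5$, but it does not determine the \emph{regularity} threshold. In the paper's proof of Proposition~\ref{prop:BilinearEstimateRT}, the High $\times$ Low $\to$ High case is treated under the frequency assumptions \eqref{eq:MainFrequencyAssumptions} and contributes a factor on the order of $N_1^{\frac{5}{4}-\frac{\alpha}{4}}$, which is summable for $\alpha \geqslant 5$ without any constraint on $s$; similar estimates handle the crossover at $N_2 \sim N_1^{\frac{1}{2}-\frac{\alpha}{2}} L_{\max}^{\frac{1}{2}}$ rather than $N_1^{-2}$. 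The binding constraint is the resonant High $\times$ High $\to$ Low interaction $N \ll N_1 \sim N_2$, $L_{\max} \lesssim N_1^{\alpha/2}$, where the dyadic sum carries the factor $N_1^{\frac{1}{2}-\frac{\alpha}{2}-2s}$; summability forces precisely $s > \frac{1-\alpha}{4}$. Accordingly, your ill-posedness witness ``at $N_2 \sim N_1^{-2}$'' is in the wrong regime. The paper's example (Theorem~\ref{thm:IllposednessForSmallS}) puts both input $x$-frequencies at $|\xi| \sim A$ with output $\xi \sim 1$ and chooses $\eta_1 = 0$, $\eta_2 = m \sim A^{\alpha/2}$ to achieve $|\Omega_\alpha| \sim 1$; this is precisely a resonant High $\times$ High $\to$ Low configuration, with the data staying in a fixed $H^{\bar s,0}$-ball so that \cite[Proposition~1]{BejenaruTao2006} applies.

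One further caution: applying Theorem~\ref{thm:GenearlNLW} with the counting-tensor-Lebesgue measure and $\gamma=2$ is morally the right picture, but it is not quite what the paper does for the cylinder. After the anisotropic rescaling, the Galilean transform used to center the $\eta$-support does not preserve the lattice structure, so the paper estimates the convolution by hand (counting $\eta'$ in skewed strips, as in the proof of Proposition~\ref{prop:StrichartzCylinder}) rather than quoting the Borel-measure Loomis--Whitney directly; that bookkeeping is exactly where the $\langle L / N_1^{\alpha/2} \rangle$-factors in \eqref{eq:LoomisWhitneyCylinder} come from. Your sketch glosses over this step.
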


\subsection{Short-time Fourier restriction}
\label{subsection:ShorttimeFourierRestriction}
Finally, we sketch the proof of local well-posedness in the quasilinear case via short-time Fourier restriction.  We shall be brief in the following since the argument from \cite{IonescuKenigTataru2008} is standard by now (see also \cite{rsc2019} for a detailed introduction). For definiteness we consider only the partially periodic domain $\R \times \T$. We make use of a function space $F^s(T)$ which measures the Fourier restriction norm after frequency-dependent time localization. This is accompanied by a ``dual'' space $\mathcal{N}^s(T)$, which captures the nonlinearity. A consequence of the frequency-dependent time localization is the necessity to introduce an energy space $E^s(T)$, which takes into account the $C_T H^{s,0}$ norm of dyadic frequency ranges separately.

\medskip

We show the following set of estimates for solutions to \eqref{eq:FKPI} on $\R \times \T$:
\begin{equation}
\label{eq:SolutionEstimatesIntro}
\left\{ \begin{array}{cl}
\| u \|_{F^s(T)} &\lesssim \| u \|_{E^s(T)} + \| \partial_x (u^2) \|_{\mathcal{N}^s(T)}, \\
\| \partial_x( u^2) \|_{\mathcal{N}^s(T)} &\lesssim T^{\delta(\alpha)} \| u \|^2_{F^s(T)}, \\
\| u \|^2_{E^s(T)} &\lesssim \| u_0 \|_{H^{s,0}}^2 + T^\delta \| u \|^3_{F^{s}(T)}
\end{array} \right.
\end{equation}
provided that $s \geqslant s_1(\alpha,\varepsilon,\R \times \T)$ and some $\delta(\alpha) > 0$, $\delta > 0$ for $\alpha \in (2,5)$.
 From \eqref{eq:SolutionEstimatesIntro} and a standard bootstrap argument follow a priori estimates in the same regularity range.
 
 \medskip

Secondly, analyzing the difference of solutions we show Lipschitz continuous dependence of the solutions 
in a space $\bar{H}^{-\frac{1}{2}}$ (with a low frequency weight) depending on solutions of higher regularity $s \geqslant s_1(\alpha,\varepsilon,\R \times \T)$. The corresponding short-time spaces are denoted by $\bar{F}^{-\frac{1}{2}}(T)$, $\bar{\mathcal{N}}^{-\frac{1}{2}}(T)$, and $\bar{B}^{-\frac{1}{2}}(T)$:
\begin{equation*}
\left\{ \begin{array}{cl}
\| v \|_{\bar{F}^{-\frac{1}{2}}(T)} &\lesssim \| v \|_{\bar{B}^{-\frac{1}{2}}(T)} + \| \partial_x (v(u_1+u_2)) \|_{\bar{\mathcal{N}}^{-\frac{1}{2}}(T)}, \\
\| \partial_x (v(u_1+u_2)) \|_{\bar{\mathcal{N}}^{-\frac{1}{2}}(T)} &\lesssim \| v \|_{\bar{F}^{-\frac{1}{2}}(T)} ( \| u_1 \|_{F^s(T)} + \| u_2 \|_{F^s(T)} ), \\
\| v \|^2_{\bar{B}^{-\frac{1}{2}}(T)} &\lesssim \| v(0) \|_{\bar{H}^{-\frac{1}{2}}}^2 + \| v \|_{\bar{F}^{-\frac{1}{2}}(T)}^2 ( \| u_1 \|_{F^s(T)} + \| u_2 \|_{F^s(T)} ).
\end{array} \right.
\end{equation*}

Lipschitz dependence of solutions in $H^{s,0}$ in the $H^{-\frac{1}{2},0}$-topology follows from the above. The proof of continuous dependence in $H^{s,0}$ is then concluded using the Bona--Smith argument.

\medskip

Our choice of frequency dependent time localization is given by interpolation between $T=T(N)=N^{-1}$ for $\alpha =2$ and $T=T(N)=1$ for $\alpha =5$. In the following we denote the linear propagation by
\begin{equation*}
\widehat{S_\alpha(t) u_0} (\xi,\eta) = e^{it (|\xi|^\alpha \xi - \frac{\eta^2}{\xi})} \hat{u}_0(\xi,\eta).
\end{equation*}

 For $\alpha=2$ on the time-scale $T=T(N)=N^{-1}$ the bilinear Strichartz estimate ameliorates the derivative loss in the resonant interaction: Suppose that $N \sim N_1 \gg N_2$. H\"older's inequality gives
\begin{equation*}
\begin{split}
&\quad \| P_N \partial_x (S_2(t) P_{N_1} u_0 S_2(t) P_{N_2} u_0) \|_{L_t^1([0,T],L^2_x(\R \times \T))} \\
&\lesssim T^{\frac{1}{2}} N_1 \| S(t) P_{N_1} u_0 S(t) P_{N_2} u_0 \|_{L^2_{t}([0,T],L^2_x(\R \times \T))}.
\end{split}
\end{equation*}
We will show the short-time bilinear Strichartz estimate in the resonant case for $T \gtrsim N^{-1}$:
\begin{equation*}
\| S_2(t) P_{N_1} u_0 S_2(t) P_{N_2} u_0 \|_{L^2_t([0,T],L^2_x(\R \times \T))} \lesssim (T N_1)^{\frac{1}{2}} N_1^{-\frac{1}{2}} N_2^{\frac{1}{2}} \| P_{N_1} u_0 \|_{L^2} \| P_{N_2} u_0 \|_{L^2},
\end{equation*}
which is based on showing the estimate in the above display for $T=T(N)=N^{-1}$. 

Taking the above estimates together we find
\begin{equation*}
\| P_N \partial_x (S_2(t) P_{N_1} u_0 S_2(t) P_{N_2} u_0) \|_{L_t^1([0,T],L^2_x(\R \times \T))} \lesssim T N_1 N_2^{\frac{1}{2}} \| P_{N_1} u_0 \|_{L^2} \| P_{N_2} u_0 \|_{L^2}.
\end{equation*}

This shows that indeed for $T=T(N) = N^{-1}$ the nonlinear derivative interaction can be controlled for the KP-I equation. We have already indicated above how $\alpha = 5$ can appear as threshold for semilinear local well-posedness, explaining the choice $T=T(N)=N^0$.

\medskip

\emph{Further remarks and thrust of the analysis.}

It can be expected that the presently proved multilinear estimates yield new local well-posedness results for the original KP-I equation on $\R \times \T$ since it appears that due to certain logarithmic divergences, the function spaces require modifications like in Robert's work \cite{Robert2018}. For this reason the analysis is presently not detailed.

\smallskip

Secondly, it is now straight-forward to prove global well-posedness for the disper\-sion-generalized KP-I equations posed on $\T^2_\gamma$ in the first energy space $E^{\frac{\alpha}{2}}(\T^2_\gamma)$:
\begin{equation*}
\left\{ \begin{array}{cl}
\partial_t u + \partial_x D_x^\alpha u + \partial_{x}^{-1} \partial_{y}^2 u &= u \partial_x u, \quad (t,x,y) \in \R \times \T^2_\gamma, \\
u(0) &= u_0 \in E^{\frac{\alpha}{2}}(\T^2_\gamma).
\end{array} \right.
\end{equation*}
The reason is that with the frequency-dependent time localization $T=T(N)=N^{-1}$ we recover the estimates proved by Zhang \cite{Zhang2016}, and for $\alpha > 2$ the energy space becomes smaller than for $\alpha = 2$. This will allow to cover the energy space with quasilinear local well-posedness. Global well-posedness is a consequence of energy conservation.

%
%

\medskip

\emph{Outline of the paper.} 
 In Section \ref{section:Notations} we introduce notations and function spaces to solve the nonlinear equation \eqref{eq:FKPI}.
In Section \ref{section:LinearStrichartz} we show linear Strichartz estimates using $\ell^2$-decoupling, and in Section \ref{section:Transversality} we analyze the interplay between resonance and transversality. In the resonant case we show bilinear Strichartz estimates and show a trilinear estimate based on the nonlinear Loomis--Whitney inequality. Here we moreover compare the nonlinear Loomis--Whitney inequalities on product spaces obtained for dispersion-generalized KP-I equations. 
In Section \ref{section:C2IllPosedness} we show that the data-to-solution mapping for \eqref{eq:FKPI} on $\R^2$ cannot be $C^2$ for $\alpha < \frac{5}{2}$. On $\R \times \T$ the mapping cannot be $C^2$ for $\alpha < 5$ and on tori, it cannot be expected to be $C^2$ for any $\alpha$. 
In Section \ref{section:QuasilinearLWPKPIR2} we show the new local well-posedness result for the KP-I equation on $\R^2$ based on frequency-dependent time localization, (multi)linear Strichartz estimates and the nonlinear Loomis--Whitney inequality. In Section \ref{section:LWPFKPIR2} we show the new well-posedness results for dispersion-generalized KP-I equations on $\R^2$ and in Section \ref{section:LWPFKPICylinder} the corresponding results on $\R \times \T$. Sharp results for the semilinear KP-I equations on $\R^2$ and $\R \times \T$ are proved in Section \ref{section:SemilinearWellposedness}.

\medskip

\textbf{Basic notations:}
\begin{itemize}
\item Time and space variables are denoted by $(t,x,y) \in \R \times \D$, $\D \in \{ \R^2, \R \times \T, \T^2 \}$. The dual variables are denoted by $(\tau,\xi,\eta) \in \R \times \D^*$. 
\item Capital letters $M,N,L,\ldots$ denote dyadic numbers in\\ $2^{\Z} := \{ \ldots ,1/4,1/2,1,2,4,8,\ldots\}$. 
\item We write $N_+ = \max(1,N)$. We also denote $a \vee b:=\max(a,b)$ and $a \wedge b:=\min(a,b)$ for $a,b\in \R$.
\end{itemize}

\section{Notations and Function spaces}
\label{section:Notations}

\subsection{Fourier transform preliminaries}

The (spatial) Fourier transform of $f: \D \to \C$ is denoted by
\begin{equation*}
\hat{f}(\xi,\eta) = (\mathcal{F}_{x,y} f)(\xi,\eta) = \frac{1}{(2\pi)^2} \int_{\D} e^{-i (\xi,\eta)\cdot (x,y)} f(x,y) dx dy.
\end{equation*}
The inverse Fourier transform of $g: \D^* \to \C$ is given by
\begin{equation*}
f(x,y) = (\mathcal{F}^{-1}_{x,y} g)(x,y) = \frac{1}{(2\pi)^2} \int_{\D^*} e^{i (\xi,\eta)\cdot (x,y)} g(\xi,\eta) d\xi d\eta.
\end{equation*}
The space-time Fourier transform of $u:\R \times \D \to \C$ is obtained from extending the definition to $\R \times \D$. We abuse notation and denote it as well as $\hat{u}$. In case $\D^* = \R \times \Z$ we indicate the counting measure by writing $(d \eta)_1$.

\subsection{Function spaces}

Next, we introduce notations and function spaces for the nonlinear analysis to solve \eqref{eq:FKPI}. Recall that the dispersion relation is given by $\omega_{\alpha}(\xi,\eta) = \xi |\xi|^\alpha + \frac{\eta^2}{\xi}$. Let $\zeta: \R \to \R$ denote a smooth cutoff, which is radially decreasing and satisfies $\zeta(\tau) = 1$ for $0\leqslant |\tau| \leqslant 1$, and $\zeta(\tau) = 0$ for $|\tau| \geqslant 2$. We denote $\zeta_{1} = \zeta$ and for $L \in 2^{\N}$ let
$\zeta_L(\tau) = \zeta(\tau / L) - \zeta(\tau/(L/2))$. For $N \in 2^{\Z}$ let $A_N = \{ \xi \in \R : N/4 \leqslant |\xi| \leqslant 4N \}$ denote the $N$-annulus on the real line. We let $A_{\leqslant 1}= \{ \xi \in \R : |\xi| \leqslant 2 \}$.  We shall write $\tilde{A}_N = \R \times A_N \times \K^* \subseteq \R^3$ and similarly for $\tilde{A}_{\leqslant 1}$.  We define
\begin{equation*}
    D_{\alpha, N, L}:= \{ (\tau,\xi,\eta) \in \R \times \D^* : \xi \in A_N, |\tau-\omega_{\alpha}(\xi,\eta)| \leqslant L\}.
\end{equation*}
We shall often omit the subscript $\alpha$ to lighten the notation.

\medskip

For functions $f:\R \times \D^* \to \C$ with $\text{supp}(f) \subseteq \tilde{A}_N$ and $N \in 2^{\Z}$ we define:
\begin{equation*}
\| f \|_{X_N} = \sum_{L \geqslant 1} L^{\frac{1}{2}} \| \eta_L(\tau - \omega_\alpha(\xi)) f \|_{L^2_{\tau,\xi,\eta}}.
\end{equation*}

We choose the frequency-dependent time localization depending on the domains:
\begin{itemize}
\item On $\R^2$ we choose
\begin{equation*}
T=T(N)=N^{-(5-2\alpha)-\varepsilon(\alpha)}.
\end{equation*}
This interpolates between $T=T(N)=N^{-1}$ for the KP-I equation and semilinear local well-posedness for $\alpha = \frac{5}{2}$, which corresponds to $T=T(N)=N^0$. 
\item On $\R \times \T$ we choose similarly
\begin{equation*}
T=T(N)=N^{-\frac{5-\alpha}{3}-\varepsilon(\alpha)}.
\end{equation*}
Since in this case the evolution becomes semilinear for $\alpha \geqslant 5$, the time localization is chosen slightly stronger accordingly.
\end{itemize}
Currently, we cannot abandon the additional frequency dependent time localization for $\alpha > 2$ to cover some endpoint cases in the estimates. To ease notation, we write in the following $\varepsilon$ for $\varepsilon(\alpha)$ for brevity.

\medskip

Let $u: \R \times \D \to \C$ with $\text{supp}(\hat{u}) \subseteq \tilde{A}_N$. Let $N_+ := \max(N,1)=: N \vee 1$. We define for $N \in 2^{\Z}$:
\begin{equation*}
\begin{split}
\| u \|_{F_N} &= \sup_{t_N \in \R} \| \mathcal{F}_{t,x,y}(\chi(N_+^{\frac{5-\alpha}{3}+\varepsilon} (t-t_N)) u) \|_{X_N}, \\
\| u \|_{\mathcal{N}_N} &= \sup_{t_N \in \R} \| (\tau - \omega_\alpha(\xi,\eta) + i N_+^{\frac{5-\alpha}{3}+\varepsilon})^{-1} \mathcal{F}_{t,x,y}(\chi(N_+^{\frac{5-\alpha}{3}+\varepsilon} (t-t_N)) u) \|_{X_N}.
\end{split}
\end{equation*}
These function spaces can be localized for $T \in (0,1]$:
\begin{equation*}
  E_N = \{f : \D \rightarrow \R: \hat{f} \text{ supported in } A_N, \; \|f\|_{E_N} = \| f \|_{L^2} \} 
\end{equation*}
Let $u \in C([0,T],E_N)$. Then we define
\begin{equation*}
\| u \|_{F_N(T)} = \inf_{\substack{ \tilde{u}: \R \times \D \to \C, \\ \tilde{u} \big\vert_{[0,T]} = u }}
\| \tilde{u} \|_{F_N}, \quad \| u \|_{\mathcal{N}_N(T)} = \inf_{\substack{ \tilde{u}: \R \times \D \to \C, \\ \tilde{u} \big\vert_{[0,T]} = u }}
\| \tilde{u} \|_{\mathcal{N}_N}.
\end{equation*}

We assemble the function spaces by Littlewood-Paley decomposition. Let $u \in C([0,T],L^2(\D))$. Then
\begin{equation*}
\| u \|^2_{F^s(T)} = \sum_{N \in 2^{\Z}} N_+^{2s} \|P_N u \|_{F_N(T)}^2, \quad \| u \|^2_{\mathcal{N}^s(T)} = \sum_{N \in 2^{\Z}} N_+^{2s} \| P_N u \|^2_{\mathcal{N}_N(T)}.
\end{equation*}

Moreover, we define
\begin{equation*}
\| u \|_{B^s(T)}^2 = \sum_{N \geqslant 1} N^{2s} \sup_{t \in [0,T]} \|P_N u(t) \|^2_{L^2} + \| P_{\leqslant 1} u(0) \|_{L^2}^2.
\end{equation*}

\medskip

The following function space properties are standard (see e.g. \cite{GuoOh2018}) and will be used in the following. For $K,L \in 2^{\N_0}$, and $f_N \in X_N$, the following estimate holds:
\begin{equation*}
\begin{split}
&\sum_{J \geqslant L} J^{\frac{1}{2}} \Big\| \zeta_J(\tau - \omega_\alpha(\xi,\eta)) \int_{\R} \big| f_N(\tau,\xi,\eta) \big| L^{-1} (1+L^{-1} |\tau - \tau'|^{-4} ) d \tau' \Big\|_{L^2_{\tau,\xi,\eta}} \\
&\quad + L^{\frac{1}{2}} \Big\| \zeta_{\leqslant L}(\tau-\omega_\alpha(\xi,\eta)) \int_{\R} |f_N(\tau',\xi,\eta) | L^{-1} (1 + L^{-1} |\tau - \tau'|)^{-4} d\tau' \Big\|_{L^2_{\tau,\xi,\eta}} \\
&\lesssim \| f_N \|_{X_N}
\end{split}
\end{equation*}
with implicit constant independent of $J$ and $L$.

\smallskip

For $\gamma \in \mathcal{S}(\R)$ and $K,L \in 2^{\N_0}$, and $t_0 \in \R$, it follows
\begin{equation*}
\| \mathcal{F}_{t,x,y}[\gamma(L(t-t_0)) \mathcal{F}^{-1}_{t,x,y}(f_k)] \|_{X_K}  \lesssim \| f_K \|_{X_K}.
\end{equation*}
The implicit constant is independent of $K$, $L$, and $t_0 \in \R$.

\medskip

Next, we introduce admissible time multiplication. Let $N \in 2^{\Z}$, and define the set $S_N$ of $N$-acceptable time multiplication factors as
	\begin{equation*}
		S_N  = \{ m_N: \R \to \R \, : \, \| m_N \|_{S_N} = \sum_{0 \leqslant j \leqslant 30} N_+^{-(\frac{5-\alpha}{3}+\varepsilon)j} \| \partial^j m_N \|_{L^\infty} < \infty \}.
	\end{equation*}
 The following estimates hold for any $s \in \R_{\geqslant 0}$ and $T \in (0,1]$ (cf. \cite[p.~273]{IonescuKenigTataru2008}):
	\begin{equation}\label{eq:TimeMult}
		\left\{ \begin{array}{cl} 
			\| \sum_{N \in 2^{\N_0}} (m_N(t) P_N u ) \|_{F^{s}(T)} &\lesssim \big( \sup\limits_{N \in 2^{\N_0}} \| m_N \|_{S_N} \big) \| u \|_{F^{s}(T)}, \\
			\| \sum_{N \in 2^{\N_0}} (m_N(t) P_N u ) \|_{\mathcal{N}^{s}(T)} &\lesssim \big( \sup\limits_{N \in 2^{\N_0}} \| m_N \|_{S_N} \big) \| u \|_{\mathcal{N}^{s}(T)}, \\
			\| \sum_{N \in 2^{\N_0}} (m_N(t) P_N u ) \|_{E^{s}(T)} &\lesssim \big( \sup\limits_{N \in 2^{\N_0}} \| m_N \|_{S_N} \big) \| u \|_{E^{s}(T)}.
		\end{array} \right.
	\end{equation}
	
	We recall the embedding $F^{s}(T) \hookrightarrow C([-T,T];H^{s,0})$ for short-time $X^{s,b}$-spaces.
	
	\begin{lemma}\label{lemma:Embed}
		Let $s \in \R_{\geqslant 0}$. For all $T \in (0,1]$ and $u\in F^{s}(T)$ we have
		\begin{equation}
			\sup_{t\in [-T,T]}\|u(t)\|_{H^{s,0} }\lesssim \|u\|_{F^{s}(T)}.
		\end{equation}
	\end{lemma}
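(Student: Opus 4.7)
The plan is to reduce the embedding to a Littlewood--Paley-localized statement, then exploit that on a window of length $\sim N_+^{-((5-\alpha)/3+\varepsilon)}$ the $X_N$-norm is strong enough to control the supremum in time, mimicking the classical $X^{s,1/2,1} \hookrightarrow C_t L^2$ embedding.

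First, I would square the left-hand side and decompose Littlewood--Paley:
\begin{equation*}
\sup_{t \in [-T,T]} \|u(t)\|_{H^{s,0}}^2 \;\lesssim\; \sum_{N \in 2^{\Z}} N_+^{2s} \sup_{t \in [-T,T]} \|P_N u(t)\|_{L^2}^2,
\end{equation*}
so it suffices to prove, for every dyadic $N$ and every $t \in [-T,T]$, the frequency-localized bound
\begin{equation*}
\|P_N u(t)\|_{L^2} \;\lesssim\; \|P_N u\|_{F_N(T)}.
\end{equation*}

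For this, fix $t \in [-T,T]$ and pick an extension $\tilde{u}_N : \R \times \D \to \C$ of $P_N u$ with $\|\tilde{u}_N\|_{F_N} \leqslant 2 \|P_N u\|_{F_N(T)}$, whose spatial Fourier transform is supported in $\tilde{A}_N$. Choose the free parameter $t_N$ in the definition of $F_N$ equal to $t$, and set $v_N(s,x,y) = \chi\bigl(N_+^{(5-\alpha)/3+\varepsilon}(s-t)\bigr)\,\tilde{u}_N(s,x,y)$. Since $\chi(0)=1$, we have $v_N(t,\cdot)=\tilde{u}_N(t,\cdot)=P_Nu(t)$. Now I invoke the standard embedding for the atomic modulation space $X_N$: by Fourier inversion in $\tau$, decomposition $\hat{v}_N = \sum_L \zeta_L(\tau-\omega_\alpha(\xi,\eta)) \hat{v}_N$, and Cauchy--Schwarz in $\tau$ over a set of measure $\lesssim L$, one obtains
\begin{equation*}
\|v_N(t,\cdot)\|_{L^2_{x,y}} \;\leqslant\; \sum_{L \geqslant 1} \Bigl\| \int_{\R} e^{i t\tau} \zeta_L(\tau-\omega_\alpha(\xi,\eta))\hat{v}_N(\tau,\xi,\eta)\, d\tau \Bigr\|_{L^2_{\xi,\eta}} \;\lesssim\; \|\hat{v}_N\|_{X_N}.
\end{equation*}
Taking the supremum over $t$ (equivalently, over $t_N$) on the right gives $\|\hat{v}_N\|_{X_N} \leqslant \|\tilde{u}_N\|_{F_N} \leqslant 2\|P_N u\|_{F_N(T)}$, which closes the frequency-localized bound and hence the desired embedding after summing in $N$.

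A minor point to tidy: one should justify that $t \mapsto P_N u(t)$ actually lands in $L^2$ continuously, not merely $L^\infty_t L^2$. This is automatic on the dense subclass where $\tilde{u}_N$ is Schwartz (the time-slice map is then continuous by dominated convergence applied to the Fourier representation above), and the uniform bound just established extends the conclusion to all of $F_N(T)$. The main (and only) obstacle is really bookkeeping: making sure the frequency-dependent time localization window $N_+^{-((5-\alpha)/3+\varepsilon)}$ never shrinks below the time $|t| \leqslant T \leqslant 1$ of interest in a way that would prevent the choice $t_N=t$ from being admissible in the $\sup$ defining $\|\cdot\|_{F_N}$; but since the sup in the definition of $F_N$ ranges over all $t_N \in \R$, this is not an issue and the proof is complete.
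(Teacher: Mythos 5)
Your proof is correct and coincides with the standard argument in the references the paper cites for this lemma (Ionescu--Kenig--Tataru; Herr--Sanwal--Schippa): Littlewood--Paley reduces the embedding to a frequency-localized estimate, and choosing $t_N = t$ in the supremum defining $F_N$ together with the Cauchy--Schwarz modulation decomposition yields the required $X_N \hookrightarrow L^\infty_t L^2_{x,y}$ bound on each dyadic block. The paper itself does not reproduce the proof, only citing it, so your argument is exactly the one intended.
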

	For variants of the lemma on different domains we cite \cite{IonescuKenigTataru2008,HerrSanwalSchippa2024}.

\medskip
	
	The following plays the role of an energy estimate in short-time Fourier restriction spaces (cf. \cite[Proposition~2.5.2]{rsc2019}):
	\begin{lemma}\label{lemma:LinShortTime}
		Let $s \in \R_{\geqslant 0}$. For all $ T \in (0,1]$ and (mild) solutions \\ $ u \in C([-T,T],E^\infty(\D))$ of
		\begin{equation*}
			\partial_t u - \partial_{x} D_x^\alpha u - \partial_{x}^{-1} \Delta_y u = \partial_x(u^2) \text{ in } \D_\lambda \times (-T,T),
		\end{equation*}
		the following estimate holds:
		\begin{equation*}
			\|u\|_{F^{s}(T)} \lesssim \|u\|_{E^{s}(T)} + \|\partial_x (u^2) \|_{\mathcal{N}^{s}(T)}.
		\end{equation*}
		
	\end{lemma}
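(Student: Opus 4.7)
This is the standard short-time Fourier-restriction energy estimate, proved frequency by frequency. Since $F^s(T)$, $E^s(T)$, and $\mathcal{N}^s(T)$ are $\ell^2$-based Littlewood--Paley sums, it suffices to establish for every $N \in 2^{\Z}$ the dyadic assertion
\[
\|P_N u\|_{F_N(T)} \lesssim \sup_{t \in [-T,T]} \|P_N u(t)\|_{L^2} + \|P_N \partial_x(u^2)\|_{\mathcal{N}_N(T)},
\]
and then to square, weight by $N_+^{2s}$, and sum. Fix $N$ and set $L_N := N_+^{(5-\alpha)/3 + \varepsilon}$, the natural modulation scale dictated by the frequency-dependent time localization.

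For any base point $t_N \in \R$, Duhamel's formula applied at time $t_N$ yields
\[
P_N u(t) = S_\alpha(t - t_N) P_N u(t_N) + \int_{t_N}^{t} S_\alpha(t - s)\, P_N \partial_x(u^2)(s)\, ds, \qquad t \in [-T,T].
\]
Pick an extension $\tilde g$ of $P_N \partial_x(u^2)$ to $\R \times \D$ nearly realizing the infimum in $\|P_N \partial_x(u^2)\|_{\mathcal{N}_N(T)}$, and define a global extension $\tilde u_N$ by the same Duhamel formula with $\tilde g$ replacing $P_N \partial_x(u^2)$. Then $\tilde u_N$ agrees with $P_N u$ on $[-T,T]$, is admissible for the infimum defining $\|P_N u\|_{F_N(T)}$, and it is enough to bound $\|\chi(L_N(t - t_N))\, \tilde u_N\|_{X_N}$ uniformly in $t_N$.

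I handle the two summands of $\tilde u_N$ separately. For the homogeneous piece, the explicit Fourier identity
\[
\mathcal{F}_{t,x,y}\!\bigl[\chi(L_N(t - t_N))\, S_\alpha(t - t_N) P_N u(t_N)\bigr] = c\, L_N^{-1}\, \hat\chi\!\bigl(L_N^{-1}(\tau - \omega_\alpha)\bigr)\, e^{-it_N \omega_\alpha}\, \widehat{P_N u(t_N)}(\xi,\eta)
\]
exhibits modulation localization at scale $L_N$ with Schwartz decay in the remaining variable, and summing $L^{1/2}\|\zeta_L(\tau - \omega_\alpha) \cdot\|_{L^2}$ over dyadic $L$ gives $\lesssim \|P_N u(t_N)\|_{L^2} \le \sup_{t \in [-T,T]} \|P_N u(t)\|_{L^2}$. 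For the Duhamel piece, the standard representation of the retarded propagator (writing $\mathbf{1}_{[t_N,t]}$ in frequency) produces, after multiplication by $\chi(L_N(t - t_N))$, a convolution in $\tau$ against a kernel of size $\lesssim L_N^{-1}(1 + L_N^{-1}|\tau - \tau'|)^{-4}$ acting on $\widehat{\tilde g}(\tau',\xi,\eta)\,(\tau' - \omega_\alpha + i L_N)^{-1}$. The two kernel bounds recorded immediately before the lemma statement then deliver
\[
\bigl\| \chi(L_N(t - t_N)) \int_{t_N}^{t} S_\alpha(t-s) \tilde g(s)\, ds \bigr\|_{X_N} \lesssim \|\tilde g\|_{\mathcal{N}_N} \lesssim \|P_N \partial_x(u^2)\|_{\mathcal{N}_N(T)}.
\]

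The only real obstacle is bookkeeping: verifying that the construction of $\tilde u_N$ can be carried out uniformly in $t_N$ so that the supremum defining $F_N(T)$ is controlled, and confirming that the Schwartz decay of $\hat\chi$ tames the tail of the dyadic $L$-sum for modulations $L \gg L_N$. This is routine in the short-time Fourier-restriction literature and follows \cite[Sect.~4]{IonescuKenigTataru2008} or \cite[Prop.~2.5.2]{rsc2019} essentially verbatim; no new idea is required beyond the convolution bounds already quoted in the excerpt.
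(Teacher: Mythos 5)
Your argument reproduces the standard short-time energy estimate from Ionescu--Kenig--Tataru and \cite[Proposition~2.5.2]{rsc2019}, which is exactly the source the paper cites (the paper gives no proof of its own, only the reference). The dyadic reduction, the split into homogeneous and retarded Duhamel pieces, and the use of the two convolution-kernel bounds stated just before the lemma are all the right ingredients.

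One place where the write-up is loose is the construction of the extension $\tilde u_N$. The norm $F_N(T)$ is an infimum over a \emph{single} extension, followed by a supremum over $t_N \in \R$; but as you phrase it, ``define a global extension $\tilde u_N$ by the same Duhamel formula'' with base point $t_N$ produces a different candidate extension for each $t_N$, and the homogeneous data $P_N u(t_N)$ is not even defined for $t_N \notin [-T,T]$. The clean fix — and the one actually used in \cite{IonescuKenigTataru2008,rsc2019} — is to fix once and for all the extension that equals $P_N u$ on $[-T,T]$ and evolves freely by $S_\alpha$ outside, so that windows centered at $|t_N| \gg T + L_N^{-1}$ see pure linear evolution (controlled by your explicit Fourier identity) and windows centered in $[-T,T]$ see the Duhamel representation with data $P_N u(t_N)$. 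A similarly small point: for the low-frequency block the energy space only records $\|P_{\leqslant 1} u(0)\|_{L^2}$, not $\sup_t \|P_{\leqslant 1}u(t)\|_{L^2}$, so the ``square, weight by $N_+^{2s}$, and sum'' step should run Duhamel from $t=0$ there rather than invoking a supremum. You flag both issues as routine and cite the correct references, so these are matters of precision rather than missing ideas.
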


For the large data result we recall how to trade regularity in the modulation variable for powers of the time localization. For $b \in \R$ define
\begin{equation*}
    \| f_N \|_{X^b_N} = \sum_{L \geq 1} \| \eta_L(\tau - \omega_\alpha(\xi,\eta)) f_N(\tau,\xi,\eta) \|_{L^2_{\tau,\xi,\eta}}
\end{equation*}
and the short-time function spaces $F_N^b$, $\mathcal{N}_N^b$, which are defined following along the above, but based on $X^b_N$.

We have the following:
\begin{lemma}{\cite[Lemma~3.4]{GuoOh2018}}
\label{lem:TradingModulationRegularity}
Let $T \in (0,1]$, and $b<\frac{1}{2}$. Then it holds:
\begin{equation*}
    \| P_N u \|_{F_N^b} \lesssim T^{\frac{1}{2}-b-} \| P_N u \|_{F_N}
\end{equation*}
for any function $u: [-T,T] \times \D \to \C$.
\end{lemma}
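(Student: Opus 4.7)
This is a standard $X^{s,b}$ time-localization estimate adapted to the $\ell^1$-based modulation spaces used here. The mechanism is to trade the weight gap $L^{1/2-b}$ between the $X_N$ and $X_N^b$ norms for a power of $T$, by forcing the time-support into an interval of length $T$.

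Fix an extension $\tilde u$ of $P_N u$ to $\R\times\D$ with $\|\tilde u\|_{F_N} \leqslant 2\|P_N u\|_{F_N(T)}$, and let $\psi\in C^\infty_c(\R)$ satisfy $\psi\equiv 1$ on $[0,1]$. Since $P_N u$ lives only on $[-T,T]$, the product $\psi(t/T)\tilde u$ is still an admissible extension. By the infimum definition of the short-time norms, it suffices to show
\[
\|\psi(t/T)\tilde u\|_{F_N^b} \lesssim T^{1/2-b-\delta}\|\tilde u\|_{F_N}
\]
for every $\delta>0$.

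On the Fourier side, multiplication by $\psi(t/T)$ corresponds to convolution with the Schwartz kernel $K_T(\tau) := T\hat\psi(T\tau)$ in the modulation variable. Setting $g := \mathcal{F}_{t,x,y}[\chi_N(t-t_N)\tilde u]$ and decomposing dyadically $g=\sum_M g_M$ with $g_M := \eta_M(\tau-\omega_\alpha)g$, the rapid decay of $\hat\psi$ gives
\[
\|\eta_L(\tau-\omega_\alpha)(K_T *_\tau g_M)\|_{L^2} \lesssim \langle T(L-M)\rangle^{-100}\|g_M\|_{L^2}.
\]
Split the $L$-sum at $T^{-1}$. In the high regime, the $M \sim L$ diagonal dominates and $b<1/2$ makes the resulting $L$-sum geometric from the smallest scale $L \sim T^{-1}$, producing
\[
\sum_{L\gtrsim T^{-1}} L^b\|\eta_L(K_T*g)\|_{L^2} \lesssim \sum_{L\gtrsim T^{-1}} L^{b-1/2}\cdot L^{1/2}\|g_L\|_{L^2} \lesssim T^{1/2-b}\|g\|_{X_N}.
\]
In the low regime, Cauchy--Schwarz in $L$ combined with the physical-side bound $\|\psi(t/T)\chi_N(t-t_N)\tilde u\|_{L^2_{t,x,y}} \lesssim T^{1/2}\|\chi_N(t-t_N)\tilde u\|_{L^\infty_t L^2_{x,y}}$ and the standard embedding of the $\ell^1$-based modulation space $X_N$ into $L^\infty_t L^2_{x,y}$ (cf.~Lemma~\ref{lemma:Embed}) yield
\[
\sum_{L\lesssim T^{-1}} L^b\|\eta_L(K_T*g)\|_{L^2} \lesssim \Big(\sum_{L\lesssim T^{-1}} L^{2b}\Big)^{1/2}\cdot T^{1/2}\|g\|_{X_N} \lesssim T^{1/2-b-\delta}\|g\|_{X_N}.
\]
Taking the supremum over the time center $t_N$ converts $\|g\|_{X_N}$ into $\|\tilde u\|_{F_N}$ and closes the estimate.

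The principal subtlety is to reconcile the external $T$-cutoff $\psi(t/T)$ with the frequency-dependent short-time cutoff $\chi_N(t-t_N)$ built into $F_N$: when $T \leqslant N_+^{-((5-\alpha)/3+\varepsilon)}$ the outer cutoff is finer and the inner one is harmless, while otherwise both cutoffs appear simultaneously and one must check that their product still behaves smoothly under the modulation-convolution analysis above, which it does since both are smooth bumps. The loss $T^{0-}$ in the statement, rather than the sharp $T^{0}$, absorbs the logarithmic factor produced by Cauchy--Schwarz over dyadic scales at borderline values of $b$.
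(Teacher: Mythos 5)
The paper does not prove this lemma but cites it from Guo--Oh, so there is no internal proof to compare against; I assess your argument on its own merits and against the standard proof of such statements.

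Your overall reduction is correct: pass to the extension $\psi(t/T)\tilde u$, note that on the Fourier side this is convolution with $K_T(\tau)=T\hat\psi(T\tau)$, and it suffices to prove $\|K_T*g\|_{X_N^b}\lesssim T^{1/2-b-}\|g\|_{X_N}$ uniformly over the inner time center $t_N$. The split of the $L$-sum at $T^{-1}$, the use of $L^{b-1/2}\leqslant T^{1/2-b}$ in the high regime, and the $T^{1/2}$ gain from the time support in the low regime are all the right ideas.

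However, the kernel estimate you state, namely
\[
\|\eta_L(\tau-\omega_\alpha)(K_T*_\tau g_M)\|_{L^2}\lesssim \langle T(L-M)\rangle^{-100}\|g_M\|_{L^2},
\]
is true (it is Young's inequality with the $L^1$ tail of $K_T$) but is too coarse to justify your assertion that the $M\sim L$ diagonal dominates in the high regime. Consider the off-diagonal block $M\lesssim T^{-1}\lesssim L$: there $|L-M|\sim L$, and summing your bound gives
\[
\sum_{L\gtrsim T^{-1}}\sum_{M\lesssim T^{-1}}L^b\langle TL\rangle^{-100}\|g_M\|_{L^2}\sim T^{-b}\sum_{M}\|g_M\|_{L^2}\lesssim T^{-b}\|g\|_{X_N},
\]
which loses a full factor $T^{1/2}$ relative to the target. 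The missing ingredient is that $g_M$ lives on a $\tau$-annulus of measure $\sim M$, so a Cauchy--Schwarz in $\tau'$ (equivalently $\|K_T\|_{L^2}\sim T^{1/2}$ and $\|g_M\|_{L^1}\lesssim M^{1/2}\|g_M\|_{L^2}$) upgrades the estimate to
\[
\|\eta_L(K_T*g_M)\|_{L^2}\lesssim T^{1/2}\bigl(\min(L,M,T^{-1})\bigr)^{1/2}\langle T(L-M)\rangle^{-100}\|g_M\|_{L^2},
\]
after which the sum over all $(L,M)$ yields $T^{1/2-b}\|g\|_{X_N}$ with no dichotomy needed. Alternatively, and more in the spirit of the paper, note that the high regime is exactly the time-multiplier stability $\|\mathcal{F}[\gamma(L(t-t_0))\mathcal{F}^{-1}f_K]\|_{X_K}\lesssim\|f_K\|_{X_K}$ recorded among the standard function space properties in Section~2: apply it to bound $\|K_T*g\|_{X_N}\lesssim\|g\|_{X_N}$ and then trade $L^{b-1/2}\leqslant T^{1/2-b}$; this avoids re-deriving the kernel estimate entirely.

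Two smaller remarks. Your low-regime Cauchy--Schwarz gives only $T^{1/2}$ for $b<0$ (the factor $(\sum_{L\lesssim T^{-1}}L^{2b})^{1/2}$ is $O(1)$ there), which is weaker than the claimed $T^{1/2-b-}$; the time-support bound $\|\eta_L(K_T*g)\|_{L^2}\lesssim(LT)^{1/2}\|\psi(t/T)\mathcal{F}^{-1}g\|_{L^2}$ summed against $L^b$ recovers $T^{1/2-b}$ for all $b\in(-1/2,1/2)$ and is the estimate actually needed if one insists on negative $b$; in the paper this is moot since the lemma is invoked only with $b$ slightly below $1/2$. Also $\psi\equiv 1$ should hold on $[-1,1]$, not $[0,1]$, to cover the two-sided time interval.
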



\section{Linear Strichartz estimates}
\label{section:LinearStrichartz}

\subsection{Linear Strichartz estimates on Euclidean space}

Hadac \cite{Hadac2008} proved linear Strichartz estimates for generalized KP-II equations in \cite[Theorem~3.1]{Hadac2008}. The proof extends to KP-I equations, which leads to the following:
\begin{proposition}
\label{prop:L4StrichartzEstimates}
Let $\alpha \geqslant 2$, $2<q \leqslant \infty$, $\frac{1}{r}+\frac{1}{q}= \frac{1}{2}$, and $s = (1-\frac{2}{r})(\frac{1}{2}-\frac{\alpha}{4})$. Then we have
\begin{equation*}
\|  S_\alpha(t) u_0 \|_{L_t^q(\R;L_{xy}^r(\R^2))} \lesssim \| u_0 \|_{\dot{H}^{s,0}(\R^2)}.
\end{equation*}
For $q=r=4$ we obtain the estimate
\begin{equation*}
\|  S_\alpha(t) u_0 \|_{L_t^4(\R;L_{xy}^4(\R^2))} \lesssim \| u_0 \|_{\dot{H}^{\gamma,0}(\R^2)}
\end{equation*}
with $\gamma = \frac{2-\alpha}{8}$.
\end{proposition}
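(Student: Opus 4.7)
The strategy is to identify the exponent $s$ by scaling, verify two endpoint cases, and interpolate; the sign of the $\eta^2/\xi$ term in $\omega_\alpha$ plays no role in the linear analysis since $S_\alpha(t)$ is a unitary Fourier multiplier in either case, so Hadac's KP-II argument transfers to KP-I with only cosmetic changes. Matching powers of $\lambda$ on both sides of the inequality after the anisotropic rescaling $u_0(x,y) \mapsto \lambda^{-\alpha} u_0(\lambda^{-1}x, \lambda^{-(\alpha+2)/2}y)$ forces $s = (1-\tfrac{2}{r})(\tfrac{1}{2} - \tfrac{\alpha}{4})$. The trivial endpoint $(q,r) = (\infty, 2)$ with $s = 0$ is just Plancherel, so by complex interpolation of mixed-norm spaces it suffices to treat the second endpoint $(q,r) = (4,4)$, where $s = \gamma = (2-\alpha)/8$.

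For the $L^4_{txy}$ endpoint I reduce, via Littlewood--Paley, to the frequency-localized estimate $\|S_\alpha(t) u_0\|_{L^4_{txy}} \lesssim N^{(2-\alpha)/8} \|u_0\|_{L^2}$ for $\hat{u}_0$ supported in $\{|\xi| \sim N\}$. Squaring and Plancherel convert this to a bilinear $L^2$ bound $\|(S_\alpha(t) u_0)^2\|_{L^2}^2 \lesssim N^{(2-\alpha)/2} \|u_0\|_{L^2}^4$. Writing out the space-time Fourier transform of $(S_\alpha u_0)^2$ and applying Cauchy--Schwarz together with the coarea formula reduces this in turn to a uniform bound
\begin{equation*}
\sup_{\tau,\xi,\eta} \int_{\Sigma_{\tau,\xi,\eta}} \frac{d\sigma(\xi_1,\eta_1)}{|\nabla_{(\xi_1,\eta_1)} \Phi|} \lesssim N^{(2-\alpha)/2}
\end{equation*}
on the one-dimensional level set $\Sigma_{\tau,\xi,\eta} = \{\Phi = \tau\}$ of the combined phase $\Phi(\xi_1, \eta_1) := \omega_\alpha(\xi_1, \eta_1) + \omega_\alpha(\xi - \xi_1, \eta - \eta_1)$. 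The essential input is the Hessian computation
\begin{equation*}
\det \mathrm{Hess}(\omega_\alpha)(\xi, \eta) = 2\alpha(\alpha+1) |\xi|^{\alpha - 2} \sim N^{\alpha - 2},
\end{equation*}
which both ensures non-degeneracy of the phase and, via a Morse-type argument near critical points, supplies the required gradient lower bound on $\Sigma$.

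The main obstacle is the degeneracy of $\omega_\alpha$ near $\xi = 0$ together with possible near-tangencies on $\Sigma$ when $\xi_1$ and $\xi - \xi_1$ have comparable magnitudes but opposite signs. I would handle these by a Whitney-type decomposition of the $\xi_1$ variable into dyadic blocks $|\xi_1| \sim M_1$, $|\xi - \xi_1| \sim M_2$ subordinate to the $|\xi| \sim N$ constraint: the transverse blocks admit the sharp Hessian bound, while the tangential ones contribute a set of measure small enough to be absorbed. Once the bilinear estimate is established, summing the dyadic pieces via Littlewood--Paley orthogonality and interpolating with the $L^\infty_t L^2_{xy}$ endpoint delivers the full range of Strichartz estimates with the scaling-dictated $s$.
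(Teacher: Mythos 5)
Your framing is sound: the linear propagator is insensitive to the sign of the transverse term, the anisotropic scaling fixes $s$, and the Hessian determinant $2\alpha(\alpha+1)|\xi|^{\alpha-2}$ is correct and, crucially, independent of $\eta$. The paper proves nothing here --- it simply cites Hadac, whose argument is the kernel bound $\|S_\alpha(t)P_N\|_{L^1\to L^\infty}\lesssim |t|^{-1}N^{(2-\alpha)/2}$ (a Fresnel integration in $\eta$ followed by one-dimensional van der Corput in $\xi$ using $\partial_\xi^2(\xi|\xi|^\alpha)\sim N^{\alpha-1}$) fed into $TT^*$, and this is indeed sign-insensitive.

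Your bilinear reduction, however, does not close, and the gap is genuine. Cauchy--Schwarz converts the $L^4$ estimate into the level-set bound $\sup_{\tau,\xi,\eta}\int_{\Sigma_{\tau,\xi,\eta}}\frac{d\sigma}{|\nabla\Phi|}\lesssim N^{(2-\alpha)/2}$, and this supremum is in fact infinite. Take $\tau=\eta=0$ and $\xi=\epsilon$ with $0<\epsilon\ll N$, with $\xi_1\in[N/2,N]$ so that $\xi-\xi_1\sim -N$. Because $\omega_\alpha$ is odd, $\Phi=\omega_\alpha(\xi_1,\eta_1)+\omega_\alpha(\xi-\xi_1,\eta-\eta_1)$ vanishes identically at $(\xi,\eta)=(0,0)$; near there one computes $\Phi\approx\epsilon\bigl[(\alpha+1)\xi_1^\alpha-\eta_1^2/\xi_1^2\bigr]$, the zero level set is the curve $\eta_1\sim\xi_1^{1+\alpha/2}$, on it $|\nabla\Phi|\sim\epsilon\xi_1^{\alpha-1}$, and the coarea integral evaluates to $\sim N^{2-\alpha/2}/\epsilon$, exceeding the target by the unbounded factor $N/\epsilon$. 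The underlying reason is that $\partial^2\Phi$ is the \emph{sum} of $\partial^2\omega_\alpha$ evaluated at $\xi_1$ and at $\xi-\xi_1$; when these have opposite signs the two terms nearly cancel, so the nondegeneracy of $\partial^2\omega_\alpha$ never transfers to $\Phi$ and the Morse-type argument has nothing to bite on. Nor can a Whitney decomposition ``subordinate to $|\xi|\sim N$'' rescue this: in the opposite-sign interaction the \emph{output} $\xi$ is precisely the small frequency, so that regime is never cut off. (A lesser issue: $(4,4)$ is an interior point of the admissible line $1/q+1/r=1/2$, not an endpoint, so interpolating with $(\infty,2)$ only yields $q\geqslant 4$.) The remedy is to drop the pointwise level-set reduction in favour of the dispersive estimate and $TT^*$, where the time convolution supplies exactly the averaging over $(\tau,\xi,\eta)$ that pointwise Cauchy--Schwarz throws away.
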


\subsection{Linear Strichartz estimates on cylinders}
In the following we use $\ell^2$-decoupling due to Bourgain--Demeter \cite{BourgainDemeter2015} for elliptic hypersurfaces to show linear Strichartz estimates on cylinders.
We state the special case of decoupling in $2+1$-dimensions for convenience:
\begin{theorem}[{$\ell^2$-decoupling~for~elliptic~hypersurfaces}]
\label{thm:l2Decoupling}
Let $\Lambda > 0$, $A \subseteq \R^2$ be compact, and $\varphi: A \to \R$ be a $C^2$-function such that
\begin{equation*}
S = \{(\xi,\varphi(\xi)) : \, \xi \in A \}
\end{equation*}
has principal curvatures in $[\Lambda^{-1},\Lambda]$.
Define the Fourier extension operator adapted to $S$ by
\begin{equation*}
\mathcal{E}_S f(x) = \int_A e^{i (x',x_3) \cdot (\xi, \varphi(\xi))} f(\xi) d\xi, \quad x = (x',x_3) \in \R^2 \times \R.
\end{equation*}
Let $R \geqslant 1$. The following estimate holds:
\begin{equation}
\label{eq:EllipticDecoupling}
\| \mathcal{E}_S f \|_{L^4_x(B(0,R))} \lesssim_\varepsilon R^\varepsilon \big( \sum_{\theta: R^{-\frac{1}{2}}-\text{ball}} \| \mathcal{E}_S f_\theta \|_{L^4(w_{B(0,R)})}^2 \big)^{\frac{1}{2}},
\end{equation}
where the sum ranges over a finitely overlapping collection of balls $\theta$ of radius $R^{-\frac{1}{2}}$ covering $A$. The implicit constant in \eqref{eq:EllipticDecoupling} depends on $S$, $\Lambda$, but not on $R$. 
\end{theorem}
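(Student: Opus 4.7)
The plan is to derive the theorem from the standard $\ell^2(L^4)$ decoupling for the unit paraboloid via a reduction, or more ambitiously, to re-prove the Bourgain--Demeter theorem from scratch by the broad-narrow plus induction-on-scales scheme. The first task is a reduction: since $\varphi$ is $C^2$ on the compact set $A$ with principal curvatures confined to $[\Lambda^{-1},\Lambda]$, a finite partition of $A$ combined with an affine change of variables and a parabolic rescaling conjugates each piece to a graph that lies uniformly $C^2$-close to the standard paraboloid $\xi \mapsto |\xi|^2/2$, with distortion constants depending only on $\Lambda$. Because the $\ell^2$-decoupling constant is invariant under affine transformations of $\R^3$ and behaves predictably under parabolic rescaling, it suffices to prove \eqref{eq:EllipticDecoupling} for the standard paraboloid piece, with constants allowed to depend on $\Lambda$.

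For the model case I would set $D_p(R)$ to be the smallest constant such that the decoupling inequality holds at scale $R$ for the unit paraboloid, and aim to show $D_4(R) \lesssim_\varepsilon R^\varepsilon$ for every $\varepsilon>0$. The engine is a multilinear Kakeya/restriction estimate of Bennett--Carbery--Tao: for caps $\tau_1,\tau_2,\tau_3$ whose unit normals span a parallelepiped of volume bounded below, the wave packets associated to $\mathcal{E}_S f_{\tau_j}$ enjoy a transversal $L^{3/2}$-type estimate with only $R^\varepsilon$ loss. Converting this, via the standard Bourgain--Guth rescaling of caps from the $R^{-1/2}$-scale down to intermediate scales, yields a trilinear $L^4$ decoupling estimate with acceptable loss.

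Next I would run a broad-narrow analysis on each $R$-ball $B$: either a large fraction of $|\mathcal{E}_S f|$ on $B$ is contributed by three mutually transversal caps at some intermediate scale (the broad case, handled by the trilinear estimate), or almost all of it is captured by a bounded cluster of caps inside a thin tube (the narrow case). The narrow case is rescaled by parabolic dilation back to the unit paraboloid at a smaller scale $R^{1-\rho}$, which feeds into the induction hypothesis on $D_4$. Combining the two cases gives a recursion roughly of the form $D_4(R) \leq C_\varepsilon R^\varepsilon + C\, D_4(R^{1-\rho})$, and the standard bootstrap (if $D_4(R) \leq C_\varepsilon R^{\alpha+\varepsilon}$ for some $\alpha$, the recursion forces a strictly smaller $\alpha$) closes to $D_4(R) \lesssim_\varepsilon R^\varepsilon$.

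The main obstacle is orchestrating the induction-on-scales cleanly: one must carry out the broad-narrow split at the correct intermediate scale so that parabolic rescaling produces genuinely smaller problems compatible with the inductive hypothesis, while simultaneously ensuring that the transversality constants in the multilinear step — which ultimately trace back to the lower bound $\Lambda^{-1}$ on the principal curvatures — do not degrade through the iteration. The weighted norm on the right-hand side, encoded by $w_{B(0,R)}$, also requires care in the narrow case, since rescaling the weight must be absorbed into the $L^4$-averaged statement without creating uncontrolled polynomial losses.
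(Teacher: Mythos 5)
This theorem is not proved in the paper at all: the authors explicitly introduce it with ``We state the special case of decoupling in $2+1$-dimensions for convenience'' and cite Bourgain--Demeter \cite{BourgainDemeter2015}. There is no ``paper's own proof'' to compare against; the result is a black box. Your proposal is therefore an attempt to re-derive the Bourgain--Demeter theorem from scratch, which is a much larger undertaking than anything the paper asks for.

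Evaluated on its own terms, your outline correctly names the ingredients of the Bourgain--Demeter argument: reduction to the paraboloid via affine invariance and parabolic rescaling, the multilinear input of Bennett--Carbery--Tao, a broad--narrow decomposition in the spirit of Bourgain--Guth, and induction on scales. But the step you describe as ``converting'' the multilinear restriction estimate into a trilinear decoupling inequality is stated as if it were routine; it is not. This conversion is the heart of the proof and requires the ball-inflation / multilinear decoupling machinery, a careful choice of nested scales $R^{1/2}, R^{1/4},\dots$, and a parabolic rescaling lemma that is proved by a separate induction. Without it, the multilinear restriction bound is an $L^2$-type estimate on products and does not by itself decouple anything. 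Relatedly, the recursion you write, $D_4(R) \leq C_\varepsilon R^\varepsilon + C\,D_4(R^{1-\rho})$, is an oversimplification of what the broad--narrow split actually produces: the Bourgain--Demeter iteration passes through a \emph{multilinear} decoupling constant that interpolates against the linear one, and the eventual improvement of the exponent comes from a multiplicative interplay between scales, not from a single additive recursion with a fixed $\rho$. Your bootstrap sentence (``if $D_4(R)\lesssim_\varepsilon R^{\alpha+\varepsilon}$ then $\alpha$ decreases'') is in the right spirit, but it is not derivable in the stated form from the recursion you wrote down without the missing ball-inflation step; a careful accounting shows the constants accumulate in a way your sketch does not control. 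In short: right toolbox, but the passage from multilinear restriction to decoupling and the precise inductive scheme are both missing, and for the purpose of this paper the theorem is simply cited, so no proof is required.
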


We now formulate its consequence for generalized KP-I equations:
\begin{proposition}
\label{prop:StrichartzCylinder}
Let $N \in 2^{\N_0}$, $A \in \R$, and $u_0: \R \times \T \to \C$ with $\text{supp}( \hat{u}_0) \subseteq \{(\xi,\eta) \in \R^2 : |\xi| \sim N, \; \big| \frac{\eta}{\xi} - A \big| \lesssim N^{\frac{\alpha}{2}} \}.$ Then the following estimate holds:
\begin{equation*}
\| P_N S_\alpha(t) u_0 \|_{L_t^4([0,1],L^4_{xy}(\R \times \T))} \lesssim_\varepsilon N^{\frac{2-\alpha}{8}+\varepsilon} \| u_0 \|_{L^2(\R \times \T)}.
\end{equation*}
\end{proposition}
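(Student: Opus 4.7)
The proof follows by applying Bourgain--Demeter's $\ell^2$-decoupling (Theorem \ref{thm:l2Decoupling}) after Galilean normalization and parabolic rescaling. First, using the Galilean invariance $\eta \mapsto \eta - A\xi$, which adjusts $\omega_\alpha$ only by terms linear in $(\xi, \eta)$ and hence produces a spatial translation of $S_\alpha(t)u_0$ leaving both sides of the inequality invariant, I would reduce to $A = 0$, so that $\operatorname{supp}\widehat{u_0} \subseteq \{|\xi|\sim N,\ |\eta|\lesssim N^{1+\alpha/2}\}$. The case $N \lesssim 1$ is immediate from Bernstein, so I would assume $N \gg 1$.

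For $N \gg 1$, I would rescale $\xi = N\xi'$, $\eta = N^{1+\alpha/2}\eta'$ with dual scaling $t = N^{-(\alpha+1)}t'$, $x = N^{-1}x'$, $y = N^{-(1+\alpha/2)}y'$. Under this change of variables the dispersion becomes $\tilde{\omega}(\xi',\eta') = \xi'|\xi'|^\alpha + \eta'^2/\xi'$ on the unit-scale region $|\xi'|\sim 1$, $|\eta'|\lesssim 1$. A direct Hessian computation gives $\det D^2\tilde{\omega} = 2\alpha(\alpha+1)|\xi'|^{\alpha - 2}\sim 1$ with bounded entries, so both principal curvatures of $\tilde{S} = \{(\xi',\eta', \tilde{\omega}(\xi',\eta'))\}$ lie in a compact interval depending only on $\alpha$, and $\tilde{S}$ is elliptic in the sense of Theorem \ref{thm:l2Decoupling}. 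The time interval $[0,1]$ becomes $[0, R]$ with $R = N^{\alpha+1}$, and the cylinder period in $y'$ becomes $2\pi N^{1+\alpha/2}\leqslant R$, so each ball of radius $R$ in the rescaled coordinates contains a full $y'$-period. Applying Theorem \ref{thm:l2Decoupling} on $B(z_0, R)$ yields a decomposition into $R^{-1/2}$-caps of $\tilde{S}$; on each cap $\theta$ one obtains $\|\mathcal{E}_{\tilde{S}}\tilde{f}_\theta\|_{L^4(w_B)} \lesssim \|\tilde{f}_\theta\|_{L^2}$ by interpolating $\|\mathcal{E}_{\tilde{S}}\tilde{f}_\theta\|_{L^\infty}\lesssim R^{-1/2}\|\tilde{f}_\theta\|_{L^2}$ with the local $L^2$-energy bound $\|\mathcal{E}_{\tilde{S}}\tilde{f}_\theta\|_{L^2(B)}^2\lesssim R\|\tilde{f}_\theta\|_{L^2}^2$. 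Summing in $\ell^2$ over caps and tiling in $x'$ over the remaining translates, then undoing the rescaling and tracking the Jacobian $N^{-(3+3\alpha/2)/4}$ from $L^4_{t,x,y}$ together with the factor $N^{1/2}$ coming from the $L^2$-normalization on the discrete $\eta$-lattice, recovers the exponent $N^{(2-\alpha)/8}$ together with the decoupling loss $R^\varepsilon = N^{(\alpha+1)\varepsilon}$, giving the claimed bound after relabeling $\varepsilon$.

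The main technical point to be careful about is reconciling the continuous $\ell^2$-decoupling statement with the discrete $\eta'$-lattice inherited from the $\mathbb{T}$-periodicity. After rescaling, the $\eta'$-spacing $N^{-(1+\alpha/2)}$ is strictly finer than the cap width $R^{-1/2} = N^{-(\alpha+1)/2}$, so each cap contains roughly $N^{1/2}$ lattice points. Because the $y'$-period $2\pi N^{1+\alpha/2}$ exceeds the dual cap scale $R^{1/2}$, I would handle this by smoothly thickening the discrete masses to a $o(R^{-1/2})$-neighbourhood before invoking Theorem \ref{thm:l2Decoupling}; the discrepancy is then controlled by rapidly decaying tails absorbed in the implicit constants, and the cap-level bounds hold uniformly once the $L^2$-norm on $\tilde{f}_\theta$ is interpreted with the counting measure in $\eta'$.
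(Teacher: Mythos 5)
Your argument is essentially the paper's for the case $|A| \lesssim N^{\alpha/2}$, but the opening reduction --- ``using the Galilean invariance $\eta \mapsto \eta - A\xi$ \ldots\ I would reduce to $A=0$'' --- does not work on $\R \times \T$, and this is precisely the delicate point the proposition is about. The Galilean boost acts in Fourier space by $\hat{u}_0(\xi,\eta) \mapsto \hat{u}_0(\xi,\eta+A\xi)$ (equivalently, the shear $(x,y)\mapsto(x-Ay,y)$ in physical space). On $\R^2$ this is a bijection of $\R^2$ preserving $L^2$ and $L^4$, but on $\R\times\T$ it destroys periodicity: if $\mathrm{supp}\,\hat{u}_0 \subseteq \R\times\Z$, then the boosted data is supported on $\{(\xi,\eta') : \eta' \in \Z - A\xi\}$, which is a $\xi$-dependent shift of $\Z$ and not a subset of $\R\times\Z$; in physical space, $u_0(x-Ay,y)$ is not $2\pi$-periodic in $y$ for generic $A$. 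So the boosted function is not a function on $\R\times\T$, both sides of the claimed estimate change meaning, and the reduction to $A=0$ is illegitimate.

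Your later lattice discussion (``each cap contains roughly $N^{1/2}$ lattice points'') implicitly assumes the rescaled $\eta'$-lattice is the unshifted one $\Z/N^{1+\alpha/2}$, which is only true when $|A| \lesssim N^{\alpha/2}$. For $|A| \gg N^{\alpha/2}$ the rescaled lattice is $\{\eta'' : \eta'' + A'\xi'' \in \Z/N^{\alpha/2+1}\}$, i.e.\ a lattice tilted by $A'$ in $\xi''$, and the caps $\theta$ of the decoupling sit obliquely to it. In the paper this is handled by carrying out decoupling on the continuous approximation as you do, but then \emph{not} undoing the Galilean map before the Bernstein step: one counts $\eta\in\Z$ in the preimage $F^{-1}(\bar\theta)$ of a tilted rectangle, obtaining $\#\{\eta\} \lesssim AN^{(1-\alpha)/2}$, with a compensating $\xi$-measure $\lesssim N^{1/2}/A$, so the product $(AN^{(1-\alpha)/2})^{1/4}(N^{1/2}/A)^{1/4} = N^{(2-\alpha)/8}$ is uniform in $A$. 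This $A$-cancellation is what makes the estimate work; it does not come out of a blind $A=0$ calculation, and without treating the case $|A|\gg N^{\alpha/2}$ separately your argument has a genuine gap.

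The rest of the architecture --- anisotropic rescaling, the Hessian computation showing ellipticity, periodic extension in $y$ to tile by balls of scale $N^{\alpha+1}$ before applying decoupling, the continuous mollification of the Dirac comb, and the cap-level Bernstein --- matches the paper and is sound.
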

\begin{proof}
First, we consider the case $|A| \lesssim N^{\frac{\alpha}{2}}$, in which case we find $|\eta| \lesssim N^{\frac{\alpha}{2}+1}$.
We use the anisotropic scaling
\begin{equation}
\label{eq:AnisotropicScalingDecoupling}
x \to N x, \quad y \to N^{\frac{\alpha}{2}+1} y, \quad t \to N^{\alpha+1} t, \qquad \xi \to \frac{\xi}{N}, \quad \eta \to \frac{\eta}{ N^{\frac{\alpha}{2}+1}}
\end{equation}
to find\small
\begin{equation*}
\begin{split}
&\quad \| P_N S_\alpha(t) u_0 \|^4_{L^4_{t,x,y}([0,1], \R \times \T))} = N^{-(\alpha+1)} N^{-4(1-\frac{1}{4})} N^{-(\frac{\alpha}{2}+1)} \\
&\quad \quad \Big\| \int_* e^{i(x' \xi' + y' \eta' + t' \omega_{\alpha}(\xi',\eta'))} \hat{u}_0(N \xi', N^{\frac{\alpha}{2}+1} \eta') d\xi' d\eta' \Big\|_{L^4_{t'} ([0,N^{\alpha+1}],L^4_{x'y'}(\R \times \R / (2 \pi N^{\frac{\alpha}{2}+1} \Z))}^4.
\end{split}
\end{equation*}
\normalsize
Note that in the rescaled support of $\hat{u}_0$ it holds $|\xi'| \sim 1$ and $|\eta'| \lesssim 1$. 

\smallskip

Now we enlarge the $y$-domain of integration $\R / (2 \pi N^{\frac{\alpha}{2}+1} \Z)$ perceived as an interval in $\R$ to an interval of size $N^{\alpha+1}$. By spatial periodicity this amounts to a factor of $(N^{\alpha+1} / N^{\frac{\alpha}{2}+1} )^{-1} = N^{- \alpha/2}$, and we continue the above as \small
\begin{equation}
\label{eq:RescaledDecoupling}
\begin{split}
&\; N^{-(\alpha+1)} N^{-3} N^{-\big( \frac{\alpha}{2}+1 \big)} N^{- \frac{\alpha}{2}} \\
&\times \Big\| \int_* e^{i(x' \xi'+ y' \eta' + t'(|\xi'|^\alpha \xi' + (\eta')^2/\xi'))} \hat{u}_0(N \xi', N^{\frac{\alpha}{2}+1} \eta') d\xi' d\eta' \Big\|^4_{L^4_{t'}([0,N^{\alpha+1}],L^4_{x' y'}(\R \times [0,N^{\alpha+1}]))}.
\end{split}
\end{equation}
\normalsize
We invoke $\ell^2$-decoupling for elliptic surfaces in two dimensions. Here we note that the Hessian is for $\xi' \neq 0$ given by
\begin{equation*}
\partial^2 \omega_\alpha(\xi',\eta') =
\begin{pmatrix}
\alpha (\alpha + 1) \text{sgn}(\xi') |\xi'|^{\alpha-1} + 2 \frac{(\eta')^2}{(\xi')^3} & - \frac{2 \eta'}{(\xi')^2} \\
- \frac{2 \eta'}{(\xi')^2} & \frac{2}{\xi'}
\end{pmatrix}
.
\end{equation*}
It is easy to see that $\partial^2 \omega_\alpha$ is uniformly elliptic for $|\xi'| \sim 1$ and $|\eta'| \lesssim 1$. Indeed, we compute
\begin{equation*}
\det \partial^2 \omega_\alpha(\xi',\eta') = \alpha (\alpha-1) |\xi'|^{\alpha-2}, \quad |\text{tr} (\partial^2 \omega_\alpha)| \sim 1.
\end{equation*}
We infer for $\xi' \sim 1$ two positive eigenvalues of size comparable to $1$, and for $\xi' \sim -1$ two negative eigenvalues of size comparable to $-1$. The latter case can be reduced to the elliptic case by time reversal.

Few remarks about decoupling are in order: To apply decoupling as formulated in Theorem \ref{thm:l2Decoupling}, we break the domain of integration $[0,N^{\alpha+1}] \times \R \times [0,N^{\alpha+1}]$ into finitely overlapping balls of size $N^{\alpha+1}$. The shift in $x$ is admissible by translation invariance.
Secondly, decoupling requires a continuous approximation, i.e., approximating the exponential sum with a Fourier extension operator by mollifying the Dirac comb. 

We give the details: Firstly, we break the $x$-integration into balls of size $N^{\alpha+1}$.
\small
\begin{equation*}
\begin{split}
&\quad \big\| \int e^{i(x' \xi' + y' \eta' + t' \omega_\alpha(\xi',\eta'))} \hat{u}_0(N \xi', N^{\frac{\alpha}{2}+1} \eta') d\xi (d \eta')_{N^{-\frac{\alpha}{2}-1}} \big\|_{L^4_{t',x',y'}([0,N^{\alpha+1}] \times \R \times [0,N^{\alpha+1}])}^4 \\
&\lesssim \sum_{B_{N^{\frac{\alpha}{2}+1}}} \big\| \int e^{i(x' \xi' + y' \eta' + t' \omega_\alpha(\xi',\eta'))} \hat{u}_{0N} d\xi (d \eta')_{N^{-\frac{\alpha}{2}-1}} \big\|_{L^4_{t',x',y'}([0,N^{\alpha+1}] \times B_{N^{\alpha+1}} \times [0,N^{\alpha+1}])}^4.
\end{split}
\end{equation*}
\normalsize
Next, by a linear change of variables $x' \to x' + c(B_{N^{\frac{\alpha}{2}+1}})$ with $c(B)$ denoting the center of the ball, we can suppose that the ball is centered at the origin. The phase factor is absorbed into $\hat{u}_{0N}$.

For the continuous approximation, we consider a sequence of smooth functions $\hat{f}_\lambda$ such that for $|x'|,|t'|,|y'| \lesssim N^{\alpha+1}$ we have
\small
\begin{equation*}
\int e^{i(x' \xi' + y' \eta' + t' \omega_\alpha(\xi',\eta'))} \hat{f}_\lambda(\xi',\eta') d\xi' d\eta' \to \int e^{i(x' \xi' + y' \eta' + t' \omega_\alpha(\xi',\eta'))} \hat{u}_{0N} d\xi' (d\eta')_{N^{-(\frac{\alpha}{2}+1)}}.
\end{equation*}
\normalsize
More specifically, we can choose $\text{supp}(\hat{f}_\lambda) \subseteq \mathcal{N}_{\lambda^{-1}}(\text{supp}(\hat{u}_{0N}))$ in a $\lambda^{-1}$-neigh\-bour\-hood. Then, choosing $\lambda$ large enough, we can estimate by the theorem of dominated convergence:
\begin{equation*}
\begin{split}
&\quad \big\| \int e^{i(x' \xi' + y' \eta' + t' \omega_\alpha(\xi',\eta'))} \hat{u}_{0N} d\xi (d \eta')_{N^{-\frac{\alpha}{2}-1}} \big\|_{L^4_{t',x',y'}([0,N^{\alpha+1}] \times B_{N^{\frac{\alpha}{2}+1}} \times \T_{N^{\frac{\alpha}{2}+1}})} \\
&\lesssim \big\| \int e^{i(x' \xi' + y' \eta' + t' \omega_\alpha(\xi',\eta'))} \hat{f}_\lambda(\xi',\eta') d\xi' d\eta'  \big\|_{L^4_{t',x',y'}(B_{N^{\alpha+1}})}.
\end{split}
\end{equation*}
The final expression is amenable to decoupling as recorded and after reversing the continuous approximation by letting $\lambda \to \infty$, we obtain an $\ell^2$-sum into balls on the frequency side of size $N^{- \frac{\alpha+1}{2}}$. On this scale the phase function can be trivialized by Taylor expansion and we obtain from integration in time after reversing the continuous approximation:
\small
\begin{equation*}
\begin{split}
&\quad \big\| \int_* e^{i(x' \xi' + y' \eta' + t'(|\xi'|^\alpha \xi' + (\eta')^2/\xi'))} \hat{u}_{0 \theta}(N \xi', N^{\frac{\alpha}{2}+1} \eta') d\xi' d\eta' \big\|^2_{L_{t'}^4([0,N^{\alpha+1}],L^4_{x' y'}(B_{N^{\alpha+1}})} \\
&\lesssim N^{\frac{\alpha+1}{2}} \big\| \int_* e^{i(x' \xi' + y' \eta')} \hat{u}_{0 \theta}(N \xi', N^{\frac{\alpha}{2}+1} \eta') d\xi' d\eta' \big\|^2_{L^4_{x' y'}(B_{{N^{\alpha+1}}})}.
\end{split}
\end{equation*}
\normalsize
Now we reverse the scaling in $x$ and $y$, which compensates the scaling factors $N^{-3}$ and $N^{-(\frac{\alpha}{2}+1)}$ in \eqref{eq:RescaledDecoupling}.

It remains to estimate
\begin{equation*}
\big\| \int_* e^{i(x \xi + y \eta)} \hat{u}_{0 \theta}(\xi,\eta) d\xi d\eta \big\|^2_{L^4_{xy}}
\end{equation*}
with $\theta$ denoting a rectangle of size $N^{\frac{1-\alpha}{2}} \times N^{\frac{1}{2}}$. We can use Bernstein's inequality to find
\begin{equation}
\label{eq:BernsteinsInequalityStrichartz}
\big\| \int_* e^{i(x \xi + y \eta)} \hat{u}_{0 \theta}(\xi,\eta) d\xi d\eta \big\|_{L^4_{xy}} \lesssim N^{\frac{1-\alpha}{8}} N^{\frac{1}{8}} \| u_{0 \theta} \|_{L^2_{xy}}.
\end{equation}

We summarize the estimates as follows: Scaling to unit frequencies and enlarging the spatial domain to $N^{\alpha+1}$ incurs a factor of
\begin{equation*}
N^{-(\alpha+1)} N^{-3} N^{-(\frac{\alpha}{2} + 1)} N^{- \frac{\alpha}{2}}.
\end{equation*}
 
Invoking $\ell^2$-decoupling incurs a factor of $N^\varepsilon$ and decouples the integral into balls of size $N^{-\frac{\alpha+1}{2}}$. Carrying out the time integral yields a factor $N^{\alpha+1}$ and reversing the scaling in $x$ and $\xi$, and $y$ and $\eta$ gives factors $N^3$ and $N^{\frac{\alpha}{2}+1}$, respectively. Moreover, we use periodicity to decrease the spatial domain to $\T$ again after inverting the scaling. This incurs a factor $N^{\frac{\alpha}{2}}$.

We have proved so far
\begin{equation}
\label{eq:DecouplingI}
\| P_N S_\alpha(t) u_0 \|^4_{L_{t,x,y}^4([0,1], \R \times \T)} \lesssim_\varepsilon N^{\varepsilon} \big( \sum_{\theta} \| u_{0 \theta} \|^2_{L^4_{xy}} \big)^{2}
\end{equation}
with the sum over $\theta$ being over essentially disjoint rectangles of size $N^{\frac{1-\alpha}{2}} \times N^{\frac{1}{2}}$.
Plugging \eqref{eq:BernsteinsInequalityStrichartz} into \eqref{eq:DecouplingI} yields
\begin{equation*}
\| P_N S_\alpha(t) u_0 \|^4_{L^4_{t,x,y}} \lesssim_\varepsilon N^{\varepsilon + \frac{2-\alpha}{2}} \| u_0 \|_{L^2_{xy}}^4,
\end{equation*}
which completes the proof in case $|A| \lesssim N^{\frac{\alpha}{2}}$.

\medskip

Now we turn to the case $|A| \gg N^{\frac{\alpha}{2}}$: In the first step we apply the scaling \eqref{eq:AnisotropicScalingDecoupling}, which maps the Fourier support to the set
\begin{equation*}
|\xi'| \sim 1, \quad \big| \frac{\eta'}{\xi'} - A' \big| \lesssim 1
\end{equation*}
for $A' = A/N^{\frac{\alpha}{2}}$. Like above, by periodic extension in $y'$, we reduce to estimate the expression \small
\begin{equation*}
\big\| \int e^{i(x'\xi' + y' \eta' + t' \omega_{\alpha}(\xi',\eta'))} \hat{u}_0(N \xi', N^{\frac{\alpha}{2}+1} \eta') d\xi' (d\eta')_{N^{-\frac{\alpha}{2}-1}} \big\|_{L^4_{t'}([0,N^{\alpha+1}], L^4_{x' y'}(\R \times B_{N^{\alpha+1}}))}.
\end{equation*}

\normalsize
We can apply the Galilean invariance 
\begin{equation*}
(\xi'',\eta'') = F'(\xi',\eta') = (\xi',\eta' - A' \xi')
\end{equation*}
to obtain
\begin{equation*}
|\xi''| \sim 1, \quad \big| \frac{\eta''}{\xi''} \big| \lesssim 1.
\end{equation*}
Note that this does not change the domain of integration $\R \times B_{N^{\alpha+1}}$, but it is important to note that in general $(\xi'',\eta'') \notin \R \times \Z$. Now we again break the domain of integration $[0,N^{\alpha+1}] \times \R \times B_{N^{\alpha+1}}$ into balls of size $B_{N^{\alpha+1}}$ and use translation invariance.

\smallskip

We have by continuous approximation
\small
\begin{equation*}
\begin{split}
&\quad \big\| \int d \xi'' \sum_{\eta'' \in \Z - A' \xi''} e^{i(x' \xi''+y' \eta'' + t' \omega_\alpha(\xi'',\eta''))} \hat{u}_{0N}(\xi'',\eta''+ A' \xi') \big\|_{L^4_{t'}([0,N^{\alpha+1}], L^4_{x' y'}(B_{N^{\alpha+1}}))} \\
&\lesssim \big\| \int_{|\xi''| \sim 1, \, |\eta''| \lesssim 1} e^{i(x' \xi'' + y' \eta'' + t' \omega_\alpha(\xi'',\eta''))} \hat{f}_\lambda(\xi'',\eta'') d\xi'' d\eta'' \big\|_{L^4_{t',x',y'}(B_{N^{\alpha+1}})}.
\end{split}
\end{equation*}
\normalsize
Now we can apply $\ell^2$-decoupling and reverse the continuous approximation, which gives the estimate
\small
\begin{equation*}
\begin{split}
&\quad \big\| \int d \xi'' \sum_{\eta'' \in \Z - A' \xi''} e^{i(x' \xi'' + y' \eta'' + t' \omega_\alpha(\xi'',\eta'')} \hat{u}_{0,N}(\xi'',\eta'' + A' \xi'') \big\|_{L^4_{t',x',y'}(B_{N^{\alpha+1}})} \\
&\lesssim_\varepsilon N^{\varepsilon} \big( \sum_{\theta \in \mathcal{B}_{N^{-\frac{\alpha+1}{2}}}} \big\| \int d\xi'' \sum_{\eta'' \in \Z - A' \xi''} e^{i(x' \xi'' + y' \eta'' + t' \omega_\alpha(\xi'',\eta''))}\\
&\quad \quad \times \chi_{\theta}(\xi'',\eta'') \hat{u}_{0N}(\xi'',\eta''+A'\xi'') \big\|_{L^4_{t',x',y'}(w_{B_{N^{\alpha+1}}})}^2 \big)^{\frac{1}{2}}.
\end{split}
\end{equation*}
\normalsize
The sum over $\theta$ is over essentially disjoint $N^{-\frac{\alpha+1}{2}}$-balls, which cover the set $\{|\xi''| \sim 1, |\eta''| \lesssim 1 \}$.

Summing over the $B_{N^{\alpha+1}}$-balls in the $x$-coordinate and reversing all linear transformations and the scalings, we arrive at the expression
\small
\begin{equation*}
\begin{split}
&\quad \big\| \int d\xi \sum_{\eta \in \Z} e^{i(x\xi + y \eta + t \omega_\alpha(\xi,\eta))} \hat{u}_0(\xi,\eta) \big\|_{L_t^4([0,1], L^4_{xy}(\R \times \T))} \\
&\lesssim_\varepsilon N^\varepsilon \big( \sum_{\theta \in \mathcal{B}_{N^{-\frac{\alpha+1}{2}}}} \big\| \int d\xi \sum_{\eta \in \Z} e^{i(x \xi + y \eta + t \omega_\alpha(\xi,\eta))} \\
&\quad \quad \quad \times \chi_{\theta}(N^{-1} \xi, N^{-(\frac{\alpha}{2}+1)}(\eta - A \xi)) \hat{u}_0(\xi,\eta) \big\|^2_{L_t^4(w_1,L^4_{xy}(\R \times \T))}.
\end{split}
\end{equation*}
\normalsize

Since there are no oscillations of $\omega_\alpha(\xi,\eta)$ for $(N^{-1} \xi, N^{-(\frac{\alpha}{2}+1)}(\eta-A\xi))$ contained in $\theta$, we can carry out the integration in time without loss. 

\smallskip

Let $\bar{\theta} = I_{N^{\frac{1-\alpha}{2}}} \times I_{N^{\frac{1}{2}}}$ be the anisotropic dilation of $\theta$. Let
\begin{equation*}
(\bar{\xi},\bar{\eta}) = F(\xi,\eta) = (\xi,\eta - A \xi)
\end{equation*}
denote the Galilean transform without anisotropic dilation.

To conclude the argument, we apply Bernstein's inequality for which we need to understand $F^{-1}(\bar{\theta})$. Since $\xi \in I_{N^{\frac{1-\alpha}{2}}}$ the condition $\eta - A \xi \in I_{N^{\frac{1}{2}}}$ yields the bound 
\begin{equation*}
\# \{ \eta \in \Z \; | \; \exists \xi \in I_{N^{\frac{1}{2}}}: \eta - A \xi \in I_{N^{\frac{1}{2}}} \} \lesssim N^{\frac{1}{2}} + A N^{\frac{1-\alpha}{2}} \sim A N^{\frac{1-\alpha}{2}}.
\end{equation*}
The final estimate is due to the size assumption on $A$. For fixed $\eta$, we estimate the measure of $\xi$ such that $F(\xi,\eta) \in \bar{\theta}$ as
\begin{equation*}
\text{meas}( \{ \xi \in \R : F(\xi,\eta) \in \bar{\theta} \}) \lesssim N^{\frac{1}{2}} / A.
\end{equation*}
Then it is a consequence of Bernstein's inequality that
\begin{equation*}
\begin{split}
&\quad \big\| \int_{F(\xi,\eta) \in \bar{\theta}} e^{i(x \xi + y \eta)} \hat{f}(\xi,\eta) d\xi d\eta \big\|_{L^4_{xy}(\R \times \T)} \\
&\lesssim (A N^{\frac{1-\alpha}{2}})^{\frac{1}{4}} (N^{\frac{1}{2}} / A)^{\frac{1}{4}} \| \int_{(\xi,\eta) \in F^{-1}(\bar{\theta})} e^{i(x\xi + y \eta)} \hat{u}_0(\xi,\eta) \|_{L^2_{xy}} \\
&\lesssim N^{\frac{2-\alpha}{8}} \| \int_{(\xi,\eta) \in F^{-1}(\bar{\theta})} e^{i(x\xi + y \eta)} \hat{u}_0(\xi,\eta) \|_{L^2_{xy}}.
\end{split}
\end{equation*}
The proof is then concluded by the essential disjointness of $F^{-1}(\bar{\theta})$.
\end{proof}

\begin{remark}
We remark that the argument does not yield an improved estimate when considering frequency-dependent time intervals $T=T(N)=N^{-\kappa}$. Moreover, a comparison of the estimate on cylinders with the Strichartz estimates on Euclidean space indicates sharpness of the derivative loss up to endpoints.
\end{remark}

\subsection{Linear Strichartz estimates on tori}

The above arguments yield the following result on tori:
\begin{proposition}
Let $N \in 2^{\N_0}$, $\alpha \geqslant 2$, and $\text{supp}(\hat{u}_0) \subseteq \{(\xi,\eta) \in \R^2 : |\xi| \sim N, \quad |\eta| \lesssim N^{\frac{\alpha}{2}+1} \} = A_{N,N^{\frac{\alpha}{2}+1}}$. Then the following estimate holds:
\begin{equation}
\label{eq:LinearStrichartzTorus}
\| S_\alpha(t) u_0 \|_{L_t^4([0,1],L^4_{xy}(\T^2_{\gamma}))} \lesssim_\varepsilon N^{\frac{1}{8}+\varepsilon} \| u_0 \|_{L^2_{xy}(\T^2_\gamma)}.
\end{equation}
\end{proposition}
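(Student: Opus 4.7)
The plan is to mirror the proof of the cylinder Strichartz estimate, with the only substantive change occurring at the Bernstein step. Since the Fourier support assumption $|\eta| \lesssim N^{\alpha/2+1}$ already places us in the regime $|A| \lesssim N^{\alpha/2}$ treated there (with $A=0$), no Galilean-transform argument is required.

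First, I would apply the anisotropic scaling $x \to Nx$, $y \to N^{\alpha/2+1} y$, $t \to N^{\alpha+1} t$, reducing to $|\xi'| \sim 1$, $|\eta'| \lesssim 1$ on a torus with rescaled side lengths $\sim N$ and $\sim N^{\alpha/2+1}$. By spatial periodicity in both $x'$ and $y'$, I would then enlarge each spatial domain to an interval of length $N^{\alpha+1}$, paying the factors $N^{-\alpha}$ and $N^{-\alpha/2}$ inside the fourth power of the $L^4$-norm; this is the only place where the chain of scaling factors differs quantitatively from the cylinder case. After breaking into space-time balls of radius $N^{\alpha+1}$ and performing a continuous approximation of the lattice Fourier series by a Fourier extension operator, the $\ell^2$-decoupling theorem applies -- the Hessian of $\omega_\alpha$ is uniformly elliptic on $|\xi'|\sim 1$, $|\eta'|\lesssim 1$, as computed in the cylinder proof -- yielding decoupling into essentially disjoint $N^{-(\alpha+1)/2}$-caps at the cost $N^\varepsilon$. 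Trivialising the phase on each cap by Taylor expansion, integrating in $t'$ to gain $N^{\alpha+1}$, and reversing the scalings and periodic enlargements back to $\T^2_\gamma$ leaves, after cancellation of the extra $N^{-\alpha}$ from the $x'$-enlargement against the $N^{\alpha}$ from the corresponding $x$-compression, the estimate
\begin{equation*}
\| P_N S_\alpha(t) u_0 \|^4_{L^4_{t,x,y}([0,1] \times \T^2_\gamma)} \lesssim_\varepsilon N^{\varepsilon} \Big( \sum_{\theta} \| u_{0 \theta} \|^2_{L^4_{xy}(\T^2_\gamma)} \Big)^{2},
\end{equation*}
where $\theta$ runs over essentially disjoint rectangles of size $N^{(1-\alpha)/2} \times N^{1/2}$ in the original frequencies.

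The key difference with the cylinder case arises in the final Bernstein step. Both frequency variables now lie in $\Z$, and since $\alpha \geqslant 2$ forces $N^{(1-\alpha)/2} \leqslant N^{-1/2} < 1$, each rectangle $\theta$ contains $O(1)$ lattice points in $\xi$ and $\sim N^{1/2}$ in $\eta$, so $|\theta \cap \Z^2| \lesssim N^{1/2}$. Bernstein's inequality on $\T^2_\gamma$ therefore gives $\| u_{0\theta}\|_{L^4_{xy}} \lesssim N^{1/8} \|u_{0\theta}\|_{L^2_{xy}}$, a loss of $N^{(\alpha-1)/8}$ relative to the cylinder. Combining with Plancherel $\sum_\theta \|u_{0\theta}\|_{L^2}^2 \lesssim \|u_0\|_{L^2}^2$ yields the claimed $\|S_\alpha(t) u_0\|_{L^4} \lesssim_\varepsilon N^{1/8+\varepsilon} \|u_0\|_{L^2}$. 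The main obstacle will be the careful audit of scaling, enlargement, time-integration and decoupling-cap factors -- in particular verifying that the additional periodic enlargement in $x'$, absent in the cylinder proof, is precisely balanced by the matching compression after decoupling, leaving only the $N^\varepsilon$ from decoupling and the $N^{1/8}$ from Bernstein in the final estimate.
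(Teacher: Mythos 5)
Your proof is correct and takes essentially the same route as the paper: anisotropic rescaling, periodic enlargement, continuous approximation plus $\ell^2$-decoupling, time integration, and a final Bernstein step on rectangles of size $N^{(1-\alpha)/2}\times N^{1/2}$. Your identification of the single substantive change — that the $\xi$-direction now carries counting measure, so a cap with $\xi$-width $N^{(1-\alpha)/2}<1$ contains $O(1)$ lattice points in $\xi$ and $\sim N^{1/2}$ in $\eta$, making Bernstein cost $N^{1/8}$ instead of $N^{(2-\alpha)/8}$ — is exactly the paper's observation, and your bookkeeping remark about the additional $x'$-periodic enlargement cancelling against the reverse compression is accurate.
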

\begin{proof}
We can follow along the arguments of the proof of Proposition \ref{prop:L4StrichartzEstimates}, i.e., employ the anisotropic scaling \eqref{eq:AnisotropicScalingDecoupling}, use a continuous approximation to invoke $\ell^2$-decoupling, and finally reverse the continuous approximation and scaling. After carrying out the integration in time, we arrive at the expression:
\begin{equation*}
\| S_\alpha(t) u_0 \|_{L_t^4([0,1],L^4_{xy}(\T^2_\gamma))} \lesssim_\varepsilon N^\varepsilon \big( \sum_{\theta \in \mathcal{R}_{N^{\frac{1-\alpha}{2}} \times N^{\frac{1}{2}}}} \| u_{0 \theta} \|^2_{L^4_{xy}(\T^2_\gamma)} \big)^{\frac{1}{2}},
\end{equation*}
where the sum is carried out over essentially disjoint rectangles $\theta$ of size $N^{\frac{1-\alpha}{2}} \times N^{\frac{1}{2}}$, which cover $A_{N,N^{\frac{\alpha}{2}+1}}$. An application of Bernstein's inequality yields
\begin{equation*}
\| u_{0 \theta} \|_{L^4_{xy}(\T^2_\gamma)} \lesssim N^{\frac{1}{8}} \| u_{0 \theta} \|_{L^2 (\T^2_\gamma)},
\end{equation*}
since presently the estimate is carried out with counting measure compared to the previous section. The proof is concluded by essential disjointness of $\theta$.
\end{proof}

\begin{remark}[Sharpness~of~the~Strichartz~estimate]
The Strichartz estimate is sharp, which can be seen from considering the initial data
\begin{equation*}
\hat{u}_0(\xi,\eta) = \delta_{\xi,N} 1_{[1,N^{\frac{1}{2}}]}(\eta), \quad \delta_{N}(\xi) 1_{[1,N^{\frac{1}{2}}]}(\eta).
\end{equation*}
In this case, there are no oscillations, for which reason
\begin{equation*}
\| S_\alpha(t) u_0 \|_{L_t^4([0,1],L^4_{xy}(\T^2_{\gamma})} \sim \| u_0 \|_{L^4_{xy}(\T^2_\gamma)}.
\end{equation*}
Next, we observe by $u_0(0) = N^{\frac{1}{2}}$ and the uncertainty principle that
\begin{equation*}
\| u_0 \|_{L^4_{xy}(\T^2_\gamma)} \gtrsim N^{\frac{1}{2}} (N^{-\frac{1}{2}})^{\frac{1}{4}} \sim N^{\frac{3}{8}}.
\end{equation*}
Since $\| u_0 \|_{L^2_{xy}} \sim N^{\frac{1}{4}}$, this example exhausts \eqref{eq:LinearStrichartzTorus} up to the endpoint.

This points out that for the KP-I dispersion relation the Strichartz estimates on tori deviate significantly from the Strichartz estimates on Euclidean space, which is not the case for the Schr\"odinger equation. For the Schr\"odinger evolution, $\ell^2$-decoupling recovers Euclidean Strichartz estimates on finite times up to arbitrarily small derivative loss (see \cite{BourgainDemeter2015}).

%

\end{remark}

\section{Resonance, Transversality, and multilinear estimates}
\label{section:Transversality}

In this section we show bilinear Strichartz estimates and trilinear convolution estimates for approximate solutions to dispersion-generalized KP-I equations.

\subsection{Resonance and bilinear Strichartz estimates}
We recall resonance and transversality identities from \cite{SanwalSchippa2023}. The resonance function is given by
\begin{equation*}
\begin{split}
\Omega_{\alpha}(\xi_1,\xi_2,\eta_1,\eta_2) &= \omega_\alpha(\xi_1+\xi_2,\eta_1+\eta_2) - \omega_\alpha(\xi_1,\eta_1) - \omega_\alpha(\xi_2,\eta_2) \\
&= \underbrace{|\xi_1+\xi_2|^\alpha (\xi_1+\xi_2) - |\xi_1|^\alpha \xi_1 - |\xi_2|^\alpha \xi_2}_{\Omega_{\alpha,1}} - \frac{(\eta_1 \xi_2 - \eta_2 \xi_1)^2}{\xi_1 \xi_2 (\xi_1+\xi_2)}.
\end{split}
\end{equation*}
We say that we are in the \emph{resonant case}, if
\begin{equation}
\label{eq:ResonanceCondition}
|\Omega_\alpha| \ll \big| \Omega_{\alpha,1} \big|. 
\end{equation}
Suppose that $|\xi_1+\xi_2| \sim |\xi_1| \sim N_{\max} \gtrsim N_{\min} \sim |\xi_2|$: Applying the mean-value theorem gives
\begin{equation*}
|\Omega_{\alpha,1}| \sim N_{\max}^\alpha N_{\min}.
\end{equation*}
We see that in the non-resonant case the derivative loss is readily recovered, e.g., using standard Fourier restriction spaces (cf. \cite{Bourgain1993B,Bourgain1993KPII}) because $|\Omega_{\alpha,1}|^{\frac{1}{2}} \gtrsim N^{\frac{\alpha}{2}}_{\max}  N_{\min}^{\frac{1}{2}}$.

\medskip

We focus on the resonant case. Here we obtain a bound for the transversality quantified by the difference of the group velocity:
\begin{equation*}
\nabla \omega_{\alpha}(\xi,\eta) = ( (\alpha+1) |\xi|^\alpha - \frac{\eta^2}{\xi^2}, \frac{2 \eta}{\xi} ).
\end{equation*}
It follows
\begin{equation}
\label{eq:BoundGroupVelocityResonantCase}
|\partial_\eta \omega_{\alpha}(\xi_1,\eta_1) - \partial_\eta \omega_\alpha(\xi_2,\eta_2) | \gtrsim \big| \frac{\eta_1}{\xi_1} - \frac{\eta_2}{\xi_2} \big| \sim N_{\max}^{\frac{\alpha}{2}}.
\end{equation}
This is a key ingredient for the bilinear Strichartz estimates in the resonant case. 
Define
\begin{equation}
\label{eq:ConstantBilinearStrichartz}
C_{1,\D}(L,N) = \begin{cases}
(L / N^{\frac{\alpha}{2}} )^{\frac{1}{2}}, \; &\D = \R^2, \\
\langle L / N^{\frac{\alpha}{2}} \rangle^{\frac{1}{2}}, \; &\D = \R \times \T.
\end{cases}
\end{equation}

For later use we show the following more general bilinear Strichartz estimate:
\begin{proposition}
\label{prop:BilinearStrichartzGeneral}
Let $\D \in \{ \R^2, \R \times \T \}$. Let $f_i: \R \times \D^* \to \C$, $i=1,2$ satisfy the support conditions $\pi_{\xi} (\text{supp}(f_j)) \subseteq I_j$ with $|I_j| \lesssim N_{\min}$ and
\begin{equation}
\label{eq:GroupVelocityGeneral}
|\partial_{\eta} \omega_\alpha(\xi_1,\eta_1) - \partial_\eta \omega_\alpha(\xi - \xi_1, \eta - \eta_1)| \gtrsim N_{\max}^{\frac{\alpha}{2}}
\end{equation}
for $(\xi_1,\eta_1) \in \pi_{\xi,\eta}(\text{supp}(f_1))$, $(\xi-\xi_1,\eta-\eta_1) \in \pi_{\xi,\eta}(\text{supp}(f_2))$, and $|\tau_i - \omega_\alpha(\xi_i,\eta_i) | \lesssim L_i$ for $(\tau_i,\xi_i,\eta_i) \in \text{supp}(f_i)$. Then the following estimate holds:
\begin{equation}
\label{eq:BilinearStrichartzGeneral}
\| f_{1,N_1,L_1} * f_{2,N_2,L_2} \|_{L^2_{\tau,\xi,\eta}} \lesssim N_{\min}^{\frac{1}{2}} L_{\min}^{\frac{1}{2}} C_{1,\D}(L_{\max},N_{\max}) \prod_{i=1}^2 \| f_{i,N_i,L_i} \|_{L^2_{\tau,\xi,\eta}}.
\end{equation}
\end{proposition}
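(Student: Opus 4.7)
The plan is to combine Cauchy--Schwarz with a transversality-based bound on the measure of the interaction set, which is the standard route to bilinear Strichartz estimates in the Fourier restriction framework. Writing the convolution pointwise and applying Cauchy--Schwarz in the integration variables gives
\begin{equation*}
|(f_1 * f_2)(\tau,\xi,\eta)|^2 \leqslant |E(\tau,\xi,\eta)| \cdot \bigl(|f_1|^2 * |f_2|^2\bigr)(\tau,\xi,\eta),
\end{equation*}
where $E(\tau,\xi,\eta)$ collects those $(\tau_1,\xi_1,\eta_1)$ for which both $f_1(\tau_1,\xi_1,\eta_1)$ and $f_2(\tau-\tau_1,\xi-\xi_1,\eta-\eta_1)$ lie in their respective supports. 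Integration in $(\tau,\xi,\eta)$ and Fubini reduce the claim to a uniform bound on $\sup_{\tau,\xi,\eta}|E(\tau,\xi,\eta)|$, which we then multiply by $\|f_{1,N_1,L_1}\|_{L^2}^2\|f_{2,N_2,L_2}\|_{L^2}^2$.

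Fixing $(\tau,\xi,\eta)$, the $\tau_1$-fiber over each admissible $(\xi_1,\eta_1)$ has length at most $\lesssim L_{\min}$, since both $|\tau_1 - \omega_\alpha(\xi_1,\eta_1)| \lesssim L_1$ and $|\tau-\tau_1 - \omega_\alpha(\xi-\xi_1,\eta-\eta_1)| \lesssim L_2$ must hold. Adding these, any admissible $(\xi_1,\eta_1)$ satisfies $|\tau - \Phi(\xi_1,\eta_1)| \lesssim L_{\max}$ with $\Phi(\xi_1,\eta_1) := \omega_\alpha(\xi_1,\eta_1) + \omega_\alpha(\xi-\xi_1,\eta-\eta_1)$. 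The hypothesis \eqref{eq:GroupVelocityGeneral} is precisely the lower bound $|\partial_{\eta_1}\Phi(\xi_1,\eta_1)| \gtrsim N_{\max}^{\alpha/2}$ on the relevant region, and the support condition $\xi_1 \in I_1$ forces $\xi_1$ into an interval of length $\lesssim N_{\min}$.

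On $\R^2$ the $\eta_1$-fiber is then an interval of length $\lesssim L_{\max}/N_{\max}^{\alpha/2}$ by a one-dimensional sublevel-set estimate, and integrating over $\xi_1 \in I_1$ yields
\begin{equation*}
|E(\tau,\xi,\eta)| \lesssim L_{\min} \cdot N_{\min} \cdot L_{\max}/N_{\max}^{\alpha/2},
\end{equation*}
matching $C_{1,\R^2}(L_{\max},N_{\max})^2 = L_{\max}/N_{\max}^{\alpha/2}$. On $\R \times \T$ the variable $\eta_1$ is integer-valued, so the fiber must be counted rather than integrated; transversality combined with the mean-value theorem shows that consecutive admissible integers are $\gtrsim N_{\max}^{\alpha/2}/L_{\max}$ apart when that is large, while for $L_{\max} \lesssim N_{\max}^{\alpha/2}$ at most one integer lies in the fiber. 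The count is thus $\lesssim \langle L_{\max}/N_{\max}^{\alpha/2}\rangle$, which produces the correct $C_{1,\R\times\T}^2$.

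The main technical point I anticipate is justifying the $\eta_1$-sublevel set estimate from a pointwise lower bound on $|\partial_{\eta_1}\Phi|$. Since $\omega_\alpha(\xi,\eta) = \xi|\xi|^\alpha + \eta^2/\xi$ is smooth and strictly convex in $\eta$ on each $\xi$-fiber away from $\xi = 0$, the map $\eta_1 \mapsto \Phi(\xi_1,\eta_1)$ is $C^1$ with monotone derivative on the relevant region, so the estimate reduces to a standard change of variables; the counting version on $\R\times\T$ then follows from the same monotonicity. No structural input beyond the transversality hypothesis \eqref{eq:GroupVelocityGeneral} and the support conditions $|I_j|\lesssim N_{\min}$ is needed.
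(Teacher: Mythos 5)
Your proof is correct and takes essentially the same route as the paper: Cauchy--Schwarz reduces the $L^2$ convolution bound to a uniform estimate on the measure of the interaction set, which you then factor as $\xi_1$-interval $\times$ $\eta_1$-fiber $\times$ $\tau_1$-fiber, using the transversality hypothesis for the $\eta_1$-fiber (sublevel set on $\R^2$, integer count on $\R\times\T$) and the smaller modulation for the $\tau_1$-fiber. One small imprecision: $\omega_\alpha$ is quadratic in $\eta$ with $\partial_\eta^2\omega_\alpha=2/\xi$, so it is convex only for $\xi>0$ and concave for $\xi<0$; what the sublevel-set/counting step actually needs is that $\partial_{\eta_1}\Phi$ is affine in $\eta_1$, so the admissible region where $|\partial_{\eta_1}\Phi|\gtrsim N_{\max}^{\alpha/2}$ splits into at most two intervals on each of which the mean-value theorem gives the bound.
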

\begin{proof}
Suppose that $L_1 = \min(L_1,L_2)$ and $L_2 = \max(L_1,L_2)$ by symmetry. We obtain by the Cauchy-Schwarz inequality:
\begin{equation*}
\begin{split}
&\quad \| (f_{1,N_1,L_1} * f_{2,N_2,L_2}) \|_{L^2_{\tau,\xi,\eta}} \\
&\leqslant \sup_{(\tau,\xi,\eta) \in \R^3} \text{meas}( S= \{ (\tau_1,\xi_1,\eta_1) \in \text{supp}(f_{1,N_1,L_1}), \; \xi_1 \in I_1 :  \\
&\quad \quad (\tau-\tau_1,\xi-\xi_1,\eta-\eta_1) \in \text{supp}(f_{2,N_2,L_2}), \; \xi - \xi_1 \in I_2 \} )^{\frac{1}{2}} \prod_{i=1}^2 \|f_{i,N_i,L_i} \|_{L_{\tau,\xi,\eta}^2}.
\end{split}
\end{equation*}
The assumption \eqref{eq:GroupVelocityGeneral} yields
\begin{equation*}
\big| \frac{\eta_1}{\xi_1} - \frac{\eta - \eta_1}{\xi - \xi_1} \big| \gtrsim N_{\max}^{\frac{\alpha}{2}}.
\end{equation*}
Note moreover that
\begin{equation}
\label{eq:RangeEta1General}
\big| \big( \tau_1 - \omega_\alpha(\xi_1,\eta_1) \big) + \big( (\tau - \tau_1) - \omega_\alpha(\xi - \xi_1,\eta- \eta_1) \big) \big| \lesssim L_2.
\end{equation}
We estimate the measure of $S$ by fixing $\xi_1$, which amounts to a factor $|I_1|$, and counting $\tau_1$ and $\eta_1$. From \eqref{eq:GroupVelocityGeneral} and \eqref{eq:RangeEta1General} follows that for fixed $\tau$, $\xi_1$, $\xi$, $\eta$ that on $\R^2$ the measure of $\eta_1$ is estimated by 
$\big( \frac{L_2}{ N_{\max}^{\frac{\alpha}{2}}} \big)^{\frac{1}{2}}$. On $\R \times \T$ there are at most $\langle \frac{L_2}{ N_{\max}^{\frac{\alpha}{2}}} \rangle^{\frac{1}{2}}$ values of $\eta_1$ such that $(\xi_1,\eta_1,\tau_1) \in S$. This is a consequence of the mean-value theorem. Finally, with $(\xi_1,\eta_1)$ fixed, we trivially estimate 
\begin{equation*}
\text{meas} ( \{ \tau_1 : |\tau_1 - \omega_\alpha(\xi_1,\eta_1)| \leqslant L_1 \} ) \leqslant L_1.
\end{equation*}

In conclusion we obtain
\begin{equation*}
\begin{split}
\text{meas} (S) &\leqslant |I_1| \cdot \# \{ \eta_1 : \eqref{eq:GroupVelocityGeneral} \text{ and } \eqref{eq:RangeEta1General} \text{ holds } \} \cdot \big| \{ \tau_1 : |\tau_1 - \omega_\alpha(\xi_1,\eta_1) \big| \leqslant L_1 \} \big| \\
 &\lesssim N_{\min} \cdot C_{1,\D}(L_2,N_{\max}) \cdot L_1.
\end{split}
\end{equation*}
with $C_{1,\D}$ defined in \eqref{eq:ConstantBilinearStrichartz}. This finishes the proof.
\end{proof}

We shall see that the support assumptions are satisfied in the resonant case, which implies:
\begin{corollary}
\label{prop:BilinearStrichartzResonant}
Let $\D \in \{ \R^2, \R \times \T \}$, $N_3 \sim N_1 \gtrsim N_2$ and for $i=1,2$: $\text{supp}(f_{i,N_i,L_i}) \subseteq D_{N_i,L_i}$ with $L_i \ll N_1^\alpha N_2$ for $i=1,2,3$. Then the following estimate holds with $C_{1,\D}$ defined in \eqref{eq:ConstantBilinearStrichartz}:
\begin{equation}
\label{eq:BilinearStrichartzEstimate}
\begin{split}
&\quad \| 1_{D_{\alpha,N_3,L_3}} ( f_{1,N_1,L_1} * f_{2,N_2,L_2} ) \|_{L^2_{\tau,\xi,\eta}(\R \times \D^*)} \\
&\lesssim (L_1 \wedge L_2)^{\frac{1}{2}} N_{\min}^{\frac{1}{2}} C_{1,\D}(L_1 \vee L_2, N_{\max}) \prod_{i=1}^2 \| f_{i,N_i,L_i} \|_{L^2_{\tau,\xi,\eta}(\R \times \D^*)}.
\end{split}
\end{equation}
\end{corollary}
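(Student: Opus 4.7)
The plan is to deduce Corollary~\ref{prop:BilinearStrichartzResonant} from Proposition~\ref{prop:BilinearStrichartzGeneral} by using the output restriction $1_{D_{\alpha,N_3,L_3}}$ to supply the transversality hypothesis \eqref{eq:GroupVelocityGeneral} via the resonance relation, followed by an almost-orthogonal decomposition in $\xi_1$ to match the $|I_j| \lesssim N_{\min}$ hypothesis.

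\emph{Step 1 (Resonance implies transversality).} Fix $X_3 = (\tau,\xi,\eta) \in D_{\alpha,N_3,L_3}$, $X_1 = (\tau_1,\xi_1,\eta_1) \in \mathrm{supp}(f_{1,N_1,L_1})$, and $X_2 := X_3 - X_1 = (\tau_2,\xi_2,\eta_2) \in \mathrm{supp}(f_{2,N_2,L_2})$. Using $\tau_1+\tau_2 = \tau$ we rewrite
\begin{equation*}
\Omega_\alpha(\xi_1,\xi_2,\eta_1,\eta_2) = \bigl(\omega_\alpha(\xi,\eta)-\tau\bigr) + \bigl(\tau_1 - \omega_\alpha(\xi_1,\eta_1)\bigr) + \bigl(\tau_2 - \omega_\alpha(\xi_2,\eta_2)\bigr).
\end{equation*}
The modulation supports and the hypothesis $L_i \ll N_1^\alpha N_2$ give $|\Omega_\alpha| \leq L_1+L_2+L_3 \ll N_1^\alpha N_2$, while the mean value theorem yields $|\Omega_{\alpha,1}(\xi_1,\xi_2)| \sim N_1^\alpha N_2$ under $|\xi_1| \sim N_1$, $|\xi_2| \sim N_2$, $|\xi_1+\xi_2| \sim N_1$. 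Hence the resonance condition \eqref{eq:ResonanceCondition} holds, and the lower bound \eqref{eq:BoundGroupVelocityResonantCase} supplies
\begin{equation*}
|\partial_\eta \omega_\alpha(\xi_1,\eta_1) - \partial_\eta \omega_\alpha(\xi_2,\eta_2)| \sim N_{\max}^{\alpha/2},
\end{equation*}
which is precisely \eqref{eq:GroupVelocityGeneral}.

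\emph{Step 2 (Reduction of the $\xi_1$-support).} If $N_1 \sim N_2$ the $\xi$-supports already satisfy $|A_{N_i}| \sim N_{\min}$ and Proposition~\ref{prop:BilinearStrichartzGeneral} applies directly. If $N_1 \gg N_2$, I cover $A_{N_1}$ by essentially disjoint intervals $(I_k)_k$ of length $N_2 = N_{\min}$ and write $f_{1,N_1,L_1} = \sum_k g_k$ with $g_k := \mathbf{1}_{I_k}(\xi_1)\, f_{1,N_1,L_1}$. For any fixed $X_3$, the support constraint $\xi - \xi_1 \in A_{N_2}$ confines $\xi_1$ to an interval of length $\sim N_2$, so at most $O(1)$ indices $k$ contribute to $(f_1 * f_2)(X_3) = \sum_k (g_k * f_{2,N_2,L_2})(X_3)$. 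Cauchy--Schwarz in $k$ and integration over $D_{\alpha,N_3,L_3}$ give
\begin{equation*}
\|1_{D_{\alpha,N_3,L_3}} (f_{1,N_1,L_1} * f_{2,N_2,L_2})\|_{L^2}^2 \lesssim \sum_k \|1_{D_{\alpha,N_3,L_3}}(g_k * f_{2,N_2,L_2})\|_{L^2}^2.
\end{equation*}

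\emph{Step 3 (Conclusion).} Proposition~\ref{prop:BilinearStrichartzGeneral} applies to each pair $(g_k, f_{2,N_2,L_2})$: both $\xi$-supports have length $\lesssim N_{\min}$, and the transversality from Step~1 holds on the joint support relevant for outputs in $D_{\alpha,N_3,L_3}$. This yields
\begin{equation*}
\|g_k * f_{2,N_2,L_2}\|_{L^2}^2 \lesssim L_{\min}\, N_{\min}\, C_{1,\D}(L_{\max},N_{\max})^2\, \|g_k\|_{L^2}^2\, \|f_{2,N_2,L_2}\|_{L^2}^2,
\end{equation*}
and summing over $k$ with the orthogonality relation $\sum_k \|g_k\|_{L^2}^2 = \|f_{1,N_1,L_1}\|_{L^2}^2$ produces \eqref{eq:BilinearStrichartzEstimate} after taking square roots.

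The only nontrivial step is Step~1, which reduces to the algebraic decomposition of $\Omega_\alpha$ and the resonant identity $|(\eta_1\xi_2 - \eta_2\xi_1)^2/(\xi_1\xi_2(\xi_1+\xi_2))| \sim |\Omega_{\alpha,1}|$. Steps~2 and~3 are standard almost-orthogonal bookkeeping, so I do not anticipate any further obstacle.
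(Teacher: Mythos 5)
Your proof is correct and follows essentially the same route as the paper: derive the transversality bound \eqref{eq:GroupVelocityGeneral} from the output modulation restriction via the resonance relation, then apply Proposition~\ref{prop:BilinearStrichartzGeneral}. The paper is terser (it simply asserts ``the assumptions of Proposition~\ref{prop:BilinearStrichartzGeneral} are satisfied''); your Step~2 makes explicit an almost-orthogonal $\xi_1$-decomposition to satisfy the formal hypothesis $|I_j|\lesssim N_{\min}$ when $N_1\gg N_2$, whereas the paper implicitly relies on the fact that, in the proof of Proposition~\ref{prop:BilinearStrichartzGeneral}, the $\xi_1$-measure of the set $S$ is automatically bounded by $|I_2|\lesssim N_{\min}$ via the constraint $\xi-\xi_1\in A_{N_2}$, so the decomposition is not strictly necessary. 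One small notational slip in Step~3: the display should read $\|1_{D_{\alpha,N_3,L_3}}(g_k*f_{2,N_2,L_2})\|_{L^2}^2\lesssim\cdots$ rather than $\|g_k*f_{2,N_2,L_2}\|_{L^2}^2\lesssim\cdots$, since the transversality hypothesis is only available on the portion of the joint support that produces outputs in $D_{\alpha,N_3,L_3}$ (you say as much in words just before the display), and Step~2 supplies precisely this restricted quantity. With that correction the argument closes cleanly.
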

\begin{proof}
Since
\begin{equation*}
\begin{split}
&\quad |\Omega_\alpha(\xi_1,\eta_1,\xi-\xi_1,\eta-\eta_1)| \\
&= |\tau - \omega_\alpha(\xi,\eta) - (\tau_1 - \omega_\alpha(\xi_1,\eta_1)) - ((\tau-\tau_1) - \omega_\alpha(\xi-\xi_1,\eta-\eta_1)) | \\
&\ll N_{\max}^\alpha N_{\min},
\end{split}
\end{equation*}
we are in the resonant case \eqref{eq:ResonanceCondition} and we have by \eqref{eq:BoundGroupVelocityResonantCase}
\begin{equation}
\label{eq:GroupVelocityDifferenceBilinear}
\big| \frac{\eta_1}{\xi_1} - \frac{\eta - \eta_1}{\xi - \xi_1} \big| \sim N_{\max}^{\frac{\alpha}{2}}.
\end{equation}
The assumptions of Proposition \ref{prop:BilinearStrichartzGeneral} are satisfied, which yields the claim.
%
\end{proof}

We have the following alternative bilinear Strichartz estimate, which is based on the second order transversality:
\begin{equation*}
|\partial^2_{\eta} ( \omega_\alpha(\xi_1,\eta_1) + \omega_\alpha(\xi - \xi_1,\eta- \eta_1) | = 2 \Big| \frac{1}{\xi_1} + \frac{1}{\xi - \xi_1} \Big|.
\end{equation*}

 The estimate was observed by Bourgain \cite{Bourgain1993KPII} in the context of the KP-II equation. This will serve to estimate several boundary cases, in case the low frequency is very small or the resonance is very large. Define
 \begin{equation}
 \label{eq:ConstantAlternativeBilinearStrichartz}
 C_{2,\D}(L,N) = 
 \begin{cases}
 	(L N)^{\frac{1}{4}}, \quad &\D = \R^2, \\
 	\langle L N \rangle^{\frac{1}{4}}, \quad &\D = \R \times \T.
 \end{cases}
 \end{equation}
 
\begin{lemma}[{{\cite{Bourgain1993KPII}}}]
\label{lem:AlternativeBilinearStrichartzEstimate}
Let $\D \in \{ \R^2, \R \times \T \}$, and $f_{i,N_i,L_i} : \R \times \D^* \to \R_{\geqslant 0}$ with $\text{supp}(f_{i,N_i,L_i}) \subseteq D_{N_i,L_i}$ and $\pi_{\xi}(\text{supp}(f_{i,N_i,L_i})) \subseteq I_i$ with $|I_i| \leqslant K$. Then the following estimate holds:
\begin{equation}
\label{eq:AlternativeBilinearStrichartz}
\| f_{1,N_1,L_1} * f_{2,N_2,L_2} \|_{L^2_{\tau,\xi,\eta}} \lesssim K^{\frac{1}{2}} (L_1 \wedge L_2)^{\frac{1}{2}} C_{2,\D}(L_1 \vee L_2, N_{\min}) \prod_{i=1}^2 \| f_{i,N_i,L_i} \|_{L^2}.
\end{equation}
\end{lemma}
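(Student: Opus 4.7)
The plan is to adapt the slicing argument from Proposition \ref{prop:BilinearStrichartzGeneral}, replacing the first-order transversality of $\omega_\alpha$ in $\eta$ by the second-order one. A pointwise Cauchy--Schwarz bound on the convolution integral reduces matters to a uniform measure estimate:
\[
\| f_{1,N_1,L_1} * f_{2,N_2,L_2} \|_{L^2_{\tau,\xi,\eta}}^2 \;\leqslant\; \sup_{(\tau,\xi,\eta)} | S_{\tau,\xi,\eta} | \;\prod_{i=1}^2 \| f_{i,N_i,L_i} \|_{L^2}^2,
\]
where $S_{\tau,\xi,\eta}$ is the set of $(\tau_1,\xi_1,\eta_1) \in \text{supp}(f_{1,N_1,L_1})$ such that the reflected triple $(\tau-\tau_1,\xi-\xi_1,\eta-\eta_1)$ lies in $\text{supp}(f_{2,N_2,L_2})$. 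It therefore suffices to bound $|S_{\tau,\xi,\eta}|$ uniformly.

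I would slice $S_{\tau,\xi,\eta}$ in the order $\xi_1 \to \eta_1 \to \tau_1$. The $\xi_1$-integration is restricted to $I_1$ and contributes a factor $\leqslant K$. With $\xi_1$ fixed, for each admissible $\eta_1$ the $\tau_1$-slice is the intersection of two intervals of lengths $2L_1$ and $2L_2$, hence has measure $\leqslant 2(L_1 \wedge L_2)$. Adding the two modulation constraints enforces the compatibility condition
\[
|g(\eta_1) - \tau| \;\lesssim\; L_1 \vee L_2, \qquad g(\eta_1) \,:=\, \omega_\alpha(\xi_1,\eta_1) + \omega_\alpha(\xi-\xi_1,\eta-\eta_1),
\]
and since $\xi_1$, $\xi - \xi_1$ are fixed on this slice, $g$ is a quadratic polynomial in $\eta_1$ with constant second derivative
\[
g''(\eta_1) \;=\; \frac{2}{\xi_1} + \frac{2}{\xi-\xi_1} \;=\; \frac{2\,\xi}{\xi_1(\xi-\xi_1)}.
\]

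In the relevant geometric configurations (see below) this satisfies $|g''| \gtrsim N_{\min}^{-1}$. A standard sublevel-set bound for quadratics then gives that $\{\eta_1 : |g(\eta_1)-\tau|\leqslant L_1\vee L_2\}$ is an interval of length $\lesssim ((L_1\vee L_2)\,N_{\min})^{1/2}$. On $\R^2$ this is precisely the Lebesgue measure of admissible $\eta_1$; on $\R \times \T$, counting integers in such an interval yields $\lesssim 1 + ((L_1 \vee L_2) N_{\min})^{1/2} \sim \langle(L_1\vee L_2)N_{\min}\rangle^{1/2}$. Either way this is exactly $C_{2,\D}(L_1\vee L_2, N_{\min})^2$ as defined in \eqref{eq:ConstantAlternativeBilinearStrichartz}. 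Multiplying the three slicing factors and extracting a square root yields \eqref{eq:AlternativeBilinearStrichartz}.

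The one subtle point — and the only thing beyond routine bookkeeping — is the lower bound $|g''| \gtrsim N_{\min}^{-1}$. When $\xi_1$ and $\xi-\xi_1$ have the same sign this is immediate since $|g''| \sim 1/N_1 + 1/N_2 \sim 1/N_{\min}$, and the same holds whenever $N_1 \not\sim N_2$. The degenerate case is $N_1 \sim N_2 \sim N_{\min}$ with $\xi_1, \xi-\xi_1$ of opposite signs (so $|\xi| \ll N_{\min}$); this is precisely the regime where Proposition \ref{prop:BilinearStrichartzGeneral} already provides a strong bound via the first-order transversality, and an additional dyadic decomposition in the output variable $\xi$ (combined with that alternative estimate on the degenerate output-pieces) absorbs the loss. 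This mirrors Bourgain's original treatment in \cite{Bourgain1993KPII}, and I expect the verification to consist only in checking that these cases can be summed without degrading the final bound.
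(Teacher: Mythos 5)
Your reduction --- Cauchy--Schwarz down to a uniform measure bound, the slicing $\xi_1 \to \eta_1 \to \tau_1$, and the quadratic sublevel-set estimate driven by $g''(\eta_1) = 2/\xi_1 + 2/(\xi-\xi_1)$ --- is the right framework and reproduces the constant correctly once one has $|g''| \gtrsim N_{\min}^{-1}$. You also correctly isolate the degenerate configuration: $N_1 \sim N_2$ with $\xi_1$ and $\xi-\xi_1$ of opposite signs, so that $g'' = 2\xi/(\xi_1(\xi-\xi_1))$ degenerates for small output $|\xi|$. But the fix you propose does not close the gap. Proposition~\ref{prop:BilinearStrichartzGeneral} is not available here: it carries the hypothesis $|\partial_\eta\omega(\xi_1,\eta_1) - \partial_\eta\omega(\xi-\xi_1,\eta-\eta_1)| \gtrsim N_{\max}^{\alpha/2}$, and in the degenerate configuration this quantity is $|g'(\eta_1)|\sim |\eta|/|\xi_1|$, which is small for small output $\eta$; at $(\xi,\eta)=0$ both $g'$ and $g''$ vanish, so neither first- nor second-order transversality in $\eta_1$ gives any bound. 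Nor does a dyadic decomposition in the output $\xi$ by itself recover the constant: for output $|\xi|\sim M \ll N$ the sublevel-set measure is $\sim (L_{\max}N^2/M)^{1/2}$, a loss of $(N/M)^{1/2}$ that cannot be compensated by the small $\xi$-measure of the degenerate region since $|f_i|^2$ may concentrate precisely there.

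The missing ingredient is the standard conjugation (co-convolution) trick. With $u_i = \mathcal{F}^{-1}f_i$ one has $|u_1 u_2| = |u_1\overline{u_2}|$ pointwise, so Plancherel gives the exact identity
\[
\| f_1 * f_2 \|_{L^2} \;=\; \| f_1 \,\widetilde{*}\, f_2 \|_{L^2}, \qquad (f_1 \,\widetilde{*}\, f_2)(z) := \int f_1(z_1)\, f_2(z_1 - z)\, dz_1
\]
(using that the $f_i$ are real-valued, as in the statement). For the co-convolution the two modulation constraints combine to $|\tau - g(\eta_1)| \leq L_1 + L_2$ with $g(\eta_1) = \omega(\xi_1,\eta_1) - \omega(\xi_1-\xi,\eta_1-\eta)$, so now $g'' = 2/\xi_1 - 2/(\xi_1-\xi)$. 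When $\xi_1$ and $\xi_1-\xi$ have opposite signs (exactly the configuration in which the inputs to the original convolution had opposite signs) the two terms add, giving $|g''| = 2/|\xi_1| + 2/|\xi_1-\xi| \gtrsim N_{\min}^{-1}$. So one uses $*$ when the input $\xi$-supports share a sign and $\widetilde{*}$ when they do not; in both cases your sublevel-set argument then applies verbatim and yields \eqref{eq:AlternativeBilinearStrichartz}, on $\R^2$ and on $\R\times\T$ alike.
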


We have the following consequence of the above bilinear Strichartz estimates and the C\'ordoba--Fefferman square function estimate:
\begin{lemma}
\label{lem:CFBilinearStrichartz}
Let $\alpha \geqslant 1$, $N_1 \in 2^{\N_0}$, $N_2 \in 2^{\Z}$, $N_1 \gg N_2$, $L_1, L_2 \in 2^{\N_0}$. For $i=1,2$, let $f_{i,N_i,L_i} : \R \times \R^2 \to \R_{\geqslant 0}$ with
$\text{supp}(f_{i,N_i,L_i}) \subseteq D_{N_i,L_i}$.

Then the following estimate holds:
\begin{equation*}
\| f_{1,N_1,L_1} * f_{2,N_2,L_2} \|_{L^2_{\tau,\xi,\eta}} \lesssim \frac{N_2^{\frac{1}{2}}}{N_1^{\frac{\alpha}{4}}} \prod_{i=1}^2 L_i^{\frac{1}{2}} \| f_{i,N_i,L_i} \|_{L^2}.
\end{equation*}
\end{lemma}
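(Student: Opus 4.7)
The target bound in Lemma~\ref{lem:CFBilinearStrichartz} coincides with the transverse bilinear Strichartz bound of Corollary~\ref{prop:BilinearStrichartzResonant}, but dispenses with the resonance hypothesis $|\Omega_\alpha|\ll|\Omega_{\alpha,1}|$. My plan is to recover the missing transversality from the resonance identity
\[
\Omega_\alpha = \Omega_{\alpha,1} - \frac{(\eta_1\xi_2-\eta_2\xi_1)^2}{\xi_1\xi_2(\xi_1+\xi_2)}
\]
via a dyadic decomposition of the output modulation $|\tau-\omega_\alpha(\xi,\eta)|\sim L_3$, which converts output-modulation control into transversality control in the spirit of Córdoba--Fefferman.

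I first split according to the size of $L_{\max}:=L_1\vee L_2$ relative to $|\Omega_{\alpha,1}|\sim N_1^\alpha N_2$. When $L_{\max}\geq N_1^\alpha N_2$, the claim is a consequence of Lemma~\ref{lem:AlternativeBilinearStrichartzEstimate}: decompose $f_{1,N_1,L_1}=\sum_k f_1^{(k)}$ into $\xi_1$-strips of length $N_2$, so that the output $\xi$-supports of $f_1^{(k)}*f_{2,N_2,L_2}$ are finitely overlapping, giving $\|f_1*f_2\|_2^2\lesssim\sum_k\|f_1^{(k)}*f_2\|_2^2$ by nonnegativity. Applying Lemma~\ref{lem:AlternativeBilinearStrichartzEstimate} with $K=N_2$ to each piece and summing yields $L_{\min}^{1/2}L_{\max}^{1/4}N_2^{3/4}\|f_1\|_2\|f_2\|_2$, which is majorized by the target in this regime because $L_{\max}\geq N_1^\alpha N_2$ forces $N_2^{1/4}\leq L_{\max}^{1/4}/N_1^{\alpha/4}$.

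In the harder regime $L_{\max}<N_1^\alpha N_2$, on each output shell $|\tau-\omega_\alpha(\xi,\eta)|\sim L_3$ the contributing quadruples satisfy $|\Omega_\alpha|\lesssim L_3+L_{\max}$. For shells with $L_3\leq|\Omega_{\alpha,1}|/4$ one therefore has $|\Omega_{\alpha,1}-\Omega_\alpha|\gtrsim N_1^\alpha N_2$, so that $(\eta_1\xi_2-\eta_2\xi_1)^2/(\xi_1\xi_2(\xi_1+\xi_2))\sim N_1^\alpha N_2$ and the transversality condition $|\partial_\eta\omega_\alpha(\xi_1,\eta_1)-\partial_\eta\omega_\alpha(\xi-\xi_1,\eta-\eta_1)|\sim N_1^{\alpha/2}$ required by \eqref{eq:GroupVelocityGeneral} is automatic. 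After the same $\xi_1$-strip decomposition of $f_1$ (to meet the $|\pi_\xi\text{supp}(f_i)|\lesssim N_{\min}$ hypothesis), Proposition~\ref{prop:BilinearStrichartzGeneral} then delivers the per-shell bound $L_{\min}^{1/2}L_{\max}^{1/2}N_2^{1/2}/N_1^{\alpha/4}$.

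The main obstacle is the top range of shells $L_3\in[|\Omega_{\alpha,1}|/4,|\Omega_{\alpha,1}|]$ where $\Omega_\alpha$ can align with $\Omega_{\alpha,1}$ and transversality may degenerate to zero. On such shells the contributing configurations are confined to a neighborhood of the diagonal $\eta_1/\xi_1\approx\eta_2/\xi_2$ with $|\eta_1/\xi_1-\eta_2/\xi_2|^2\lesssim(|\Omega_{\alpha,1}|-L_3)/N_2$, and must be treated by a direct Cauchy--Schwarz argument exploiting the second-order transversality $|\partial_{\eta_1}^2(\omega_\alpha(\xi_1,\eta_1)+\omega_\alpha(\xi-\xi_1,\eta-\eta_1))|\sim 1/N_2$ that underlies Lemma~\ref{lem:AlternativeBilinearStrichartzEstimate}. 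Absorbing the dyadic summation over shells into the target constant without a spurious logarithmic factor, and dovetailing it with the $\xi_1$-strip orthogonality, is the technical crux of the argument.
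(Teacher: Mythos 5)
Your case $L_{\max}\gtrsim N_1^\alpha N_2$ matches the paper: Lemma~\ref{lem:AlternativeBilinearStrichartzEstimate} with $K=N_2$ (after finite-overlap orthogonality in $\xi$) gives $N_2^{3/4}L_{\min}^{1/2}L_{\max}^{1/4}$, which is dominated by the target when $L_{\max}\gtrsim N_1^\alpha N_2$.

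In the regime $L_{\max}\ll N_1^\alpha N_2$ there is a genuine gap. The paper's decomposition is a Whitney decomposition in the transversality parameter $D\sim|\eta_1/\xi_1-\eta_2/\xi_2|$, not in the output modulation $L_3$. The two are not interchangeable here. First, as you yourself flag, on each ``good'' shell $L_3\ll|\Omega_{\alpha,1}|$ the bound you obtain from Proposition~\ref{prop:BilinearStrichartzGeneral} with $\xi_1$-strips of width $N_2$ is $N_2^{1/2}L_{\min}^{1/2}(L_{\max}/N_1^{\alpha/2})^{1/2}$, which contains no decay in $L_3$; summing over $\log$-many shells in $L^2$ still costs $\sqrt{\log}$. (This part can be salvaged: since the transversality bound $\gtrsim N_1^{\alpha/2}$ holds for \emph{all} quadruples contributing to the output region $|\tau-\omega_\alpha|\ll N_1^\alpha N_2$, one should apply Proposition~\ref{prop:BilinearStrichartzGeneral} once to the restriction of $f_1*f_2$ to that region rather than shell-by-shell.)

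The real obstruction is the ``bad'' region where transversality degenerates. Your description of it is not right: for $L_3\sim|\Omega_{\alpha,1}|$ the constraint $|\Omega_{\alpha,1}-\Omega_{\alpha,2}|\lesssim L_3+L_{\max}$ allows $\Omega_{\alpha,2}$ to range over $[0,C\Omega_{\alpha,1}]$, so the transversality $D$ runs over all of $[0,N_1^{\alpha/2}]$ rather than being confined to a small window. More importantly, a direct Cauchy--Schwarz with second-order transversality --- that is, Lemma~\ref{lem:AlternativeBilinearStrichartzEstimate} with $\xi$-strips of length $N_2$ --- yields $N_2^{3/4}L_{\min}^{1/2}L_{\max}^{1/4}$, which is \emph{strictly worse} than the target $N_2^{1/2}N_1^{-\alpha/4}L_{\min}^{1/2}L_{\max}^{1/2}$ throughout the regime $L_{\max}\ll N_1^\alpha N_2$ (compare: the ratio is $(N_1^\alpha N_2/L_{\max})^{1/4}\geq 1$). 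The missing idea in the paper's proof is the C\'ordoba--Fefferman orthogonality in $\xi$: after fixing a transversality scale $D\ll N_1^{\alpha/2}$ and subtracting the $\eta$-dependent piece from the convolution constraint, one shows $\xi_i'=\xi_{i+2}'+\mathcal{O}(D^2N_2/N_1^{\alpha+1})$, so the $\xi_i$-supports effectively localize to intervals of length $D^2N_2/N_1^\alpha$, far smaller than $N_2$ when $D\ll N_1^{\alpha/2}$. It is this finer $\xi$-localization, combined with Proposition~\ref{prop:BilinearStrichartzGeneral} at transversality $D$, that produces the $D^{1/2}$-decaying per-scale bound
\begin{equation*}
\|f_{1,N_1,L_1}^{I_1}*f_{2,N_2,L_2}^{I_2}\|_{L^2}\lesssim \frac{D^{1/2}N_2^{1/2}}{N_1^{\alpha/2}}\,(L_{\min}L_{\max})^{1/2}\,\prod_{i=1}^2\|f^{I_i}_{i,N_i,L_i}\|_{L^2},
\end{equation*}
making the geometric sum over $D\lesssim N_1^{\alpha/2}$ converge to the target without logarithms. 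Your outline never recovers this $\xi$-localization at scale $D^2N_2/N_1^\alpha$, and without it the near-diagonal contribution cannot be closed.
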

\begin{proof}
If $L_{\max} \gtrsim N_1^\alpha N_2$, the estimate is a consequence of Lemma \ref{lem:AlternativeBilinearStrichartzEstimate}:
\begin{equation*}
\begin{split}
\| f_{1,N_1,L_1} * f_{2,N_2,L_2} \|_{L^2_{\tau,\xi,\eta}} &\lesssim N_2^{\frac{3}{4}} L_{\min}^{\frac{1}{2}} L_{\max}^{\frac{1}{4}} \prod_{i=1}^2 \| f_{i,N_i,L_i} \|_2 \\
&\lesssim \frac{N_2^{\frac{1}{2}}}{N_1^{\frac{\alpha}{4}}} \prod_{i=1}^2 L_i^{\frac{1}{2}} \| f_{i,N_i,L_i} \|_2.
\end{split}
\end{equation*}

In the following we suppose that $L_{\max} \ll N_1^\alpha N_2$. We carry out a dyadic decomposition (Whitney) in the transversality parameter:
\begin{equation*}
D \sim \big| \frac{\eta_1}{\xi_1} - \frac{\eta_2}{\xi_2} \big|
\end{equation*}
with the range $D \in [\big( \frac{L_{\max}}{N_2} \big)^{\frac{1}{2}}, \infty )$:
\begin{equation*}
\| f_{1,N_1,L_1} * f_{2,N_2,L_2} \|_{L^2_{\tau,\xi,\eta}} \leqslant \sum_{D \geqslant \big( \frac{L_{\max}}{N_2} \big)^{\frac{1}{2}}} \sum_{I_1^D \sim I_2^D} \| f_{1,N_1,L_1}^{I_1^D} * f_{2,N_2,L_2}^{I_2^D} \|_{L^2_{\tau,\xi,\eta}}.
\end{equation*}
Here we have broken the support of $f_{i,N_i,L_i}$ into intervals $I_i^D$ of $(\eta_i/\xi_i)$ of length $\sim D$ with
\begin{equation*}
\big| \frac{\eta_1}{\xi_1} - \frac{\eta_2}{\xi_2} \big| \sim D \text{ for } \frac{\eta_i}{\xi_i} \in I_i^D.
\end{equation*}
The contribution of $D \gtrsim N_1^{\frac{\alpha}{2}}$ is handled by Proposition \ref{prop:BilinearStrichartzGeneral}:
\begin{equation*}
\begin{split}
\sum_{D \gtrsim N_1^{\frac{\alpha}{2}}} \sum_{I_1^D \sim I_2^D} \| f_{1,N_1,L_1}^{I_1^D} * f_{2,N_2,L_2}^{I_2^D} \|_{L^2_{\tau,\xi,\eta}} &\lesssim \sum_{D \gtrsim N_1^{\frac{\alpha}{2}}} \sum_{I_1^D \sim I_2^D} \frac{N_2^{\frac{1}{2}}}{D^{\frac{1}{2}}} \prod_{i=1}^2 L_i^{\frac{1}{2}} \| f_{i,N_i,L_i}^{I_i^D} \|_{L^2_{\tau,\xi,\eta}} \\
&\lesssim \frac{N_2^{\frac{1}{2}}}{N_1^{\frac{\alpha}{4}}} \prod_{i=1}^2 L_i^{\frac{1}{2}} \| f_{i,N_i,L_i} \|_{L^2_{\tau,\xi,\eta}}.
\end{split}
\end{equation*}

For $D \ll N_1^{\frac{\alpha}{2}}$ we can use an almost orthogonal decomposition to effectively reduce the $\xi$-support of $f_{i,N_i,L_i}$. Without loss of generality we can suppose that $\pi_{\xi}(f_{i,N_i,L_i}) \subseteq \R_{>0}$ by complex conjugation and symmetry of the dispersion relation.

The convolution constraint reads
\begin{equation*}
\left\{ \begin{array}{cl}
\xi_1 + \xi_2 &= \xi_3 + \xi_4, \\
\eta_1 + \eta_2 &= \eta_3 + \eta_4, \\
\xi_1^{\alpha+1} + \frac{\eta_1^2}{\xi_1} + \xi_2^{\alpha+1} + \frac{\eta_2^2}{\xi_2} &= \xi_3^{\alpha+1} + \frac{\eta_3^2}{\xi_3} + \xi_4^{\alpha+1} + \frac{\eta_4^2}{\xi_4} + \mathcal{O}(L_{\max}).
\end{array} \right.
\end{equation*}

We rescale to unit $\xi$-frequencies by $\xi \to \frac{\xi}{N_1}$, $\eta \to \frac{\eta}{N_1^{\frac{\alpha}{2} + 1}}$ to find the following for the renormalized frequencies: \small
\begin{equation*}
\left\{ \begin{array}{cl}
\xi'_1 + \xi'_2 &= \xi'_3 + \xi'_4, \\
\eta'_1 + \eta'_2 &= \eta'_3 + \eta'_4, \\
(\xi'_1)^{\alpha+1} + \frac{(\eta'_1)^2}{\xi'_1} + (\xi'_2)^{\alpha+1} + \frac{(\eta'_2)^2}{\xi'_2} &= (\xi'_3)^{\alpha+1} + \frac{(\eta'_3)^2}{\xi'_3} \\
&\quad + (\xi'_4)^{\alpha+1} + \frac{(\eta'_4)^2}{\xi'_4} + \mathcal{O}(L_{\max}/N_1^{\alpha+1}).
\end{array} \right.
\end{equation*}
\normalsize
We subtract
\begin{equation*}
\frac{(\eta_1' + \eta_2')^2}{\xi_1'+\xi_2'} = \frac{(\eta_3'+\eta_4')^2}{\xi_3'+\xi_4'}
\end{equation*}
from the third equation to find
\begin{equation*}
\left\{ \begin{array}{cl}
\xi_1' + \xi_2' &= \xi_3' + \xi_4', \\
(\xi_1')^{\alpha+1} + (\xi_2')^{\alpha+1}- \frac{(\eta_1' \xi_2' - \eta_2' \xi_1')^2}{\xi_1' \xi_2' (\xi_1'+\xi_2')} &= (\xi_3')^{\alpha+1} + (\xi_4')^{\alpha+1} \\
&\quad \quad - \frac{(\eta_3' \xi_4' - \eta_4' \xi_3')^2}{\xi_3' \xi_4' (\xi_3'+\xi_4')} + \mathcal{O}(L_{\max}/N_1^{\alpha+1}).
\end{array} \right.
\end{equation*}
This yields
\begin{equation*}
\left\{ \begin{array}{cl}
\xi_1' + \xi_2' &= \xi_3' + \xi_4', \\
(\xi_1')^{\alpha+1} + (\xi_2')^{\alpha+1} &= (\xi_3')^{\alpha+1} + (\xi_4')^{\alpha+1} + \mathcal{O}(\frac{D^2 N_2}{N_1^{\alpha+1}} + \frac{L_{\max}}{N_1^{\alpha+1}}).
\end{array} \right.
\end{equation*}

Note that for our Whitney decomposition we always have $D^2 N_2 \gtrsim L_{\max}$. 
Note that the curve $\xi' \mapsto (\xi')^{\alpha+1}$ degenerates at the origin for $\alpha > -1$. To still find an almost orthogonality resembling the Córdoba--Fefferman square function estimate, we rewrite the second line as
\begin{equation*}
f'(\xi_{1*}) (\xi_1' - \xi_3') + f'(\xi_{2*})(\xi_2'-\xi_4') = \mathcal{O}\big( \frac{D^2 N_2}{N_1^\alpha N_1} \big).
\end{equation*}
We have $f'(\xi_{2*}) = f'(\xi_{1*}) + \Delta$ with $|\Delta| \gtrsim 1$, for which reason
\begin{equation*}
f'(\xi_{1*}) (\xi_1' - \xi_3' + \xi_2' - \xi_4') + \Delta (\xi_2'-\xi_4') = \mathcal{O}\big( \frac{D^2 N_2}{N_1^\alpha N_1} \big).
\end{equation*}
Consequently, $\xi_i' = \xi_{i+2}' + \mathcal{O}\big( \frac{D^2 N_2}{N_1^\alpha N_1} \big)$ for $i \in \{1,2\}$, and we obtain an almost orthogonal decomposition of
\begin{equation*}
\| f'_{1,N_1,L_1} * f'_{2,N_2,L_2} \|^2_{L^2_{\tau,\xi,\eta}} \lesssim \sum_{I_1', I'_2} \| f'_{1,N_1,L_1,I'_1} * f'_{2,N_2,L_2,I'_2} \|^2_{L^2}
\end{equation*}
with $\xi'$-intervals of length $\mathcal{O}(\frac{D^2 N_2}{N_1^\alpha N_2})$.
After rescaling we obtain a decomposition into intervals of length $D^2 N_2 / N_1^\alpha$.

First we handle the contribution $D \gg L_{\max}/N_2$. Applying the bilinear Strichartz estimate Proposition \ref{prop:BilinearStrichartzGeneral} gives
\begin{equation*}
\| f^{I_1}_{1,N_1,L_1} * f^{I_2}_{2,N_2,L_2} \|_{L^2_{\tau,\xi,\eta}} \lesssim \frac{D N_2^{\frac{1}{2}}}{N_1^{\frac{\alpha}{2}}} D^{-\frac{1}{2}} \prod_{i=1}^2 L_i^{\frac{1}{2}} \| f^{I_i}_{i,N_i,L_i} \|_2.
\end{equation*}
Summation in the intervals can be carried out by almost orthogonality. Summation in $L_{\max} / N_2 \ll D \lesssim N_1^{\alpha/2}$ gives
\begin{equation*}
\sum_{L_{\max} / N_2 \ll D \lesssim N_1^{\alpha/2}} \frac{D^{\frac{1}{2}} N_2^{\frac{1}{2}}}{N_1^{\frac{\alpha}{2}}} \lesssim \frac{N_2^{\frac{1}{2}}}{N_1^{\frac{\alpha}{4}}}.
\end{equation*}

For the contribution $D \sim \big( \frac{L_{\max}}{N_2} \big)^{\frac{1}{2}}$ we use Lemma \ref{lem:AlternativeBilinearStrichartzEstimate} after almost orthgonal decomposition into intervals of length $\big( \frac{L_{\max}}{N_1^{\alpha}} \big)^{\frac{1}{2}}$:
\begin{equation*}
\| f_{1,N_1,L_1}^{I_1} * f^{I_2}_{2,N_2,L_2} \|_{L^2_{\tau,\xi,\eta}} \lesssim L_{\min}^{\frac{1}{2}} \big( \frac{L_{\max}}{N_1^\alpha} \big)^{\frac{1}{2}} N_2^{\frac{1}{4}} L_{\max}^{\frac{1}{4}} \prod_{i=1}^2 \| f_{i,N_i,L_i}^{I_i} \|_2.
\end{equation*}
By the upper bound for $L_{\max}$ this gives
\begin{equation*}
\| f_{1,N_1,L_1}^{I_1} * f^{I_2}_{2,N_2,L_2} \|_{L^2_{\tau,\xi,\eta}} \lesssim \frac{N_2^{\frac{1}{2}}}{N_1^{\frac{\alpha}{4}}} \prod_{i=1}^2 \| f_{i,N_i,L_i}^{I_i} \|_2.
\end{equation*}
This handles all possible transversalities $D \in [\big( \frac{L_{\max}}{N_2} \big)^{\frac{1}{2}}, N_1^\alpha]$. The proof is complete.
\end{proof}

We have the following analog on $\R \times \T$:
\begin{lemma}
\label{lem:CFBilinearCylinder}
Let $N_i \in 2^{\Z}$, $N_2 \ll N_1$, $N_1 \gtrsim 1$, $L_i \in 2^{\N_0}$, $i=1,2$, $D^* \in 2^{\N}$. Let $\text{supp}(f_{i,N_i,L_i}) \subseteq D_{N_i,L_i}$, $i=1,2$ with
\begin{equation*}
\big| \frac{\eta_1}{\xi_1} - \frac{\eta_2}{\xi_2} \big| \lesssim D^*
\end{equation*}
for $(\xi_i,\eta_i) \in \pi_{\xi,\eta}(\text{supp}(f_{i,N_i,L_i}))$. Then the following estimate holds:
\begin{equation}
\label{eq:RefinedBilinearCylinder}
\| f_{1,N_1,L_1} * f_{2,N_2,L_2} \|_{L^2_{\tau,\xi,\eta}} \lesssim \log(D^*) N_2^{\frac{1}{2}} L_{\min}^{\frac{1}{2}} \langle L_{\max} / N_1^{\frac{\alpha}{2}} \rangle^{\frac{1}{2}} \prod_{i=1}^2 \| f_{i,N_i,L_i} \|_2.
\end{equation}
\end{lemma}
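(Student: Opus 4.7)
The plan is to adapt the argument of Lemma~\ref{lem:CFBilinearStrichartz} to the partially periodic setting, with one additional dyadic summation in the transversality parameter producing the $\log(D^*)$ factor. I would first dispose of the large-modulation regime $L_{\max}\gtrsim N_1^{\alpha}N_2$ by a direct application of Lemma~\ref{lem:AlternativeBilinearStrichartzEstimate} with $K\sim N_2$: the resulting bound $N_2^{3/4}L_{\min}^{1/2}L_{\max}^{1/4}$ is easily dominated, under this modulation assumption, by $N_2^{1/2}L_{\min}^{1/2}\langle L_{\max}/N_1^{\alpha/2}\rangle^{1/2}$, and the $\log(D^*)$ factor is not needed in this case.

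Suppose henceforth $L_{\max}\ll N_1^{\alpha}N_2$. I would perform a Whitney-type dyadic decomposition in the transversality
\begin{equation*}
D\sim\bigl|\eta_1/\xi_1-(\eta-\eta_1)/(\xi-\xi_1)\bigr|,\qquad D\lesssim D^*,
\end{equation*}
and treat the resulting Whitney pieces in two regimes. When $D\geqslant N_1^{\alpha/2}$, Proposition~\ref{prop:BilinearStrichartzGeneral} applies directly on each Whitney pair (with the quantity $N_{\max}^{\alpha/2}$ in its hypothesis replaced by the transversality $D$), producing the per-piece bound $N_2^{1/2}L_{\min}^{1/2}\langle L_{\max}/D\rangle^{1/2}$. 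When $D<N_1^{\alpha/2}$, I would reproduce the C\'ordoba--Fefferman-type almost orthogonal decomposition used in Lemma~\ref{lem:CFBilinearStrichartz}: after the anisotropic rescaling to unit $\xi$-frequencies, the convolution identity combined with the Taylor expansion of $\xi'\mapsto(\xi')^{\alpha+1}$ and the Whitney constraint on $D$ forces the paired $\xi$-components to agree modulo intervals of length $\delta\sim D^2N_2/N_1^{\alpha}$; Proposition~\ref{prop:BilinearStrichartzGeneral} applied with $N_{\min}$ replaced by $\delta$ and transversality $D$ then yields $(DN_2^{1/2}/N_1^{\alpha/2})L_{\min}^{1/2}\langle L_{\max}/D\rangle^{1/2}$ per piece. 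Summations over Whitney strips at fixed $D$ and over the CF-decomposed $\xi$-intervals are carried out by Cauchy--Schwarz and (almost) orthogonality, translating these per-piece bounds into the same bounds with $\prod_{i=1}^{2}\|f_{i,N_i,L_i}\|_{2}$ on the right.

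Finally, I would sum dyadically over $D$. In the CF regime $D\leqslant N_1^{\alpha/2}$, inspection of the cases $L_{\max}\leqslant D$ and $L_{\max}>D$ shows the summand $(D/N_1^{\alpha/2})\langle L_{\max}/D\rangle^{1/2}$ is non-decreasing in $D$, so its dyadic sum is geometrically dominated by its value at the largest $D$, which a short case analysis identifies with $\langle L_{\max}/N_1^{\alpha/2}\rangle^{1/2}$ (up to an $O(1)$ factor, whether or not $D^*\leqslant N_1^{\alpha/2}$). In the complementary regime $D\in(N_1^{\alpha/2},D^*]$, monotonicity of $\langle L_{\max}/D\rangle^{1/2}$ in $D$ bounds every summand by $\langle L_{\max}/N_1^{\alpha/2}\rangle^{1/2}$, and the $O(\log D^*)$ dyadic values produce the $\log(D^*)$ factor in the claim. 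The main technical point will be the faithful reproduction of the C\'ordoba--Fefferman orthogonality in the cylinder setting: one has to verify that the Taylor-expansion step identifying paired $\xi$-components goes through for $\eta\in\Z$, an issue only affecting the counting step in Proposition~\ref{prop:BilinearStrichartzGeneral} (whence the replacement of the Euclidean factor by $\langle\cdot\rangle^{1/2}$ via $C_{1,\R\times\T}$). Modulo this bookkeeping, the argument transfers mechanically from Lemma~\ref{lem:CFBilinearStrichartz}.
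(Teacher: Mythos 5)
Your proposal matches the paper's argument essentially step-for-step: the same disposal of the large-modulation regime $L_{\max}\gtrsim N_1^\alpha N_2$ via Lemma~\ref{lem:AlternativeBilinearStrichartzEstimate}, the same Whitney decomposition in the transversality $D$, the same C\'ordoba--Fefferman almost-orthogonal localization of $\xi$ to intervals of length $D^2N_2/N_1^\alpha$ in the small-$D$ regime, and the same observation that the range $D\in[N_1^{\alpha/2},D^*]$ is the sole source of the $\log(D^*)$ factor. The one slight imprecision is at the lowest Whitney piece $D\sim(L_{\max}/N_2)^{1/2}$, where the transversality is only $\lesssim D$ rather than $\gtrsim D$, so Proposition~\ref{prop:BilinearStrichartzGeneral} is not directly applicable and the paper instead invokes Lemma~\ref{lem:AlternativeBilinearStrichartzEstimate} on the CF-decomposed intervals; the resulting per-piece bound happens to agree with your stated formula at that endpoint, so your monotone dyadic summation argument still closes.
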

\begin{proof}
First, we suppose that $L_{\max} \gtrsim N_1^\alpha N_2$. In this case we apply Lemma \ref{lem:AlternativeBilinearStrichartzEstimate} to find
\begin{equation*}
\| f_{1,N_1,L_1} * f_{2,N_2,L_2} \|_{L^2} \lesssim N_2^{\frac{1}{2}} L_{\min}^{\frac{1}{2}} \langle L_{\max} N_2 \rangle^{\frac{1}{4}} \prod_{i=1}^2 \| f_{i,N_i,L_i} \|_2,
\end{equation*}
which implies \eqref{eq:RefinedBilinearCylinder}.

Next, suppose that $L_{\max} \ll N_1^\alpha N_2$.
We carry out a Whitney decomposition in the transversality parameter
\begin{equation*}
D \sim \big| \frac{\eta_1}{\xi_1} - \frac{\eta_2}{\xi_2} \big|
\end{equation*}
in the range $D \in [ \big( \frac{L_{\max}}{N_2} \big)^{\frac{1}{2}}, D^*]$:
\begin{equation*}
\| f_{1,N_1,L_1} * f_{2,N_2,L_2} \|_{L^2_{\tau,\xi,\eta}} \leqslant \sum_{D \in \big[ \big( \frac{L_{\max}}{N_2}\big)^{\frac{1}{2}}, D^* ]} \sum_{I_1^D \sim I_2^D} \| f_{1,N_1,L_1}^{I_{1}^D} * f_{2,N_2,L_2}^{I_{2}^D} \|_{L^2_{\tau,\xi,\eta}}.
\end{equation*}
By the argument from the proof of Lemma \ref{lem:CFBilinearStrichartz} we can effectively localize the $\xi$-support to intervals of length $\mathcal{O}(\frac{D^2 N_2}{N_1^\alpha} \big)$. We obtain for $D= \big( L_{\max} / N_2 \big)^{\frac{1}{2}}$ by applying Lemma \ref{lem:AlternativeBilinearStrichartzEstimate}
\begin{equation}
\label{eq:AuxBilStrCylI}
\| f^{I_{1}^D}_{1,N_1,L_1} * f^{I_{2}^D}_{2,N_2,L_2} \|_{L^2_{\tau,\xi,\eta}} \lesssim L_{\min}^{\frac{1}{2}} \frac{L_{\max}^{\frac{1}{2}}}{N_1^{\frac{\alpha}{2}}} \langle L_{\max} N_2 \rangle^{\frac{1}{4}} \prod_{i=1}^2 \| f_{i,N_i,L_i}^{I_{i}^D} \|_{L^2}.
\end{equation}
This is sufficient for $L_{\max} \ll N_1^\alpha N_2$.

Next, we handle the contribution $D \in \big( \big( \frac{L_{\max}}{N_2} \big)^{\frac{1}{2}}, N_1^{\alpha/2} \big]$. After localizing the $\xi$-support to intervals of length $\mathcal{O}\big( D^2 N_2 / N_1^\alpha \big)$ by almost orthogonality, we can apply Proposition \ref{prop:BilinearStrichartzGeneral} to find
\begin{equation}
\label{eq:AuxBilStrCylII}
\begin{split}
&\quad \sum_{D \in \big( \big( \frac{L_{\max}}{N_2}\big)^{\frac{1}{2}}, N_1^{\frac{\alpha}{2}} \big) ]} \sum_{I_1^D \sim I_2^D} \| f_{1,N_1,L_1}^{I_{1}^D} * f_{2,N_2,L_2}^{I_{2}^D} \|_{L^2_{\tau,\xi,\eta}} \\
&\lesssim L_{\min}^{\frac{1}{2}} \sum_{D \in \big( \big( \frac{L_{\max}}{N_2}\big)^{\frac{1}{2}}, N_1^{\frac{\alpha}{2}} \big) ]}  \frac{D N_2^{\frac{1}{2}}}{N_1^{\frac{\alpha}{2}}} \langle L_{\max} / D \rangle^{\frac{1}{2}} \prod_{i=1}^2 \| f_{i,N_i,L_i} \|_2 \\
&\lesssim L_{\min}^{\frac{1}{2}} N_2^{\frac{1}{2}} \langle L_{\max} / N_1^{\alpha/2} \rangle^{\frac{1}{2}} \prod_{i=1}^2 \| f_{i,N_i,L_i} \|_2.
\end{split}
\end{equation}

Finally, we turn to the contribution with large transversality. Here we apply Proposition \ref{prop:BilinearStrichartzGeneral} directly to find
\begin{equation}
\label{eq:AuxBilStrCylinderIII}
\begin{split}
&\quad \sum_{D \in [ N_1^{\frac{\alpha}{2}} , D^* ]} \sum_{I_1^D \sim I_2^D} \| f_{1,N_1,L_1}^{I_{1}^D} * f_{2,N_2,L_2}^{I_{2}^D} \|_{L^2_{\tau,\xi,\eta}} \\
&\lesssim N_2^{\frac{1}{2}} L_{\min}^{\frac{1}{2}} \sum_{D \in [ N_1^{\frac{\alpha}{2}} , D^* ]} \langle L_{\max} / D \rangle^{\frac{1}{2}} \prod_{i=1}^2 \| f_{i,N_i,L_i} \|_2 \\
&\lesssim \log(D^*) N_2^{\frac{1}{2}} \langle L_{\max} / N_1^{\alpha/2} \rangle^{\frac{1}{2}} \prod_{i=1}^2 \| f_{i,N_i,L_i} \|_2.
\end{split}
\end{equation}

Collecting contributions \eqref{eq:AuxBilStrCylI}-\eqref{eq:AuxBilStrCylinderIII} the proof is complete.
\end{proof}

We record the following simple estimate, which is a consequence of the Cauchy-Schwarz inequality. This will serve as substitute when \eqref{eq:GroupVelocityDifferenceBilinear} fails:
\begin{lemma}
\label{lem:CauchySchwarz}
Let $f_{i,N_i,L_i} : \R \times \D^* \to \C$ satisfy support properties $\pi_{\xi} \text{supp}(f_i) \subseteq I_i$, $i=1,2$ with $|I_i| \sim N_i$ and $\pi_{\eta} \text{supp}(f_i) \subseteq J_i$ with $|J_i| \lesssim M_i$ and $|\tau_i - \omega_\alpha(\xi_i,\eta_i)| \lesssim L_i$ for $(\tau_i,\xi_i,\eta_i) \in \text{supp}(f_{i,N_i,L_i})$ and $i=1,2$.
The following estimate holds:
\begin{equation*}
\| f_{1,N_1,L_1} * f_{2,N_2,L_2} \|_{L^2_{\tau,\xi,\eta}} \lesssim (N_{\min} M_{\min} L_{\min})^{\frac{1}{2}} \prod_{i=1}^2 \| f_{i,N_i,L_i} \|_{L^2}.
\end{equation*}
\end{lemma}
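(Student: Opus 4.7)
The plan is a direct Cauchy--Schwarz bound on the convolution at each point, followed by a measure count on the integration set and an application of Fubini.

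First, I would write out the convolution as
\begin{equation*}
(f_{1,N_1,L_1}*f_{2,N_2,L_2})(\tau,\xi,\eta) = \int f_{1,N_1,L_1}(\tau_1,\xi_1,\eta_1)\, f_{2,N_2,L_2}(\tau-\tau_1,\xi-\xi_1,\eta-\eta_1)\, d\tau_1\, d\xi_1\, (d\eta_1),
\end{equation*}
where the $\eta_1$-integral is interpreted as a sum on $\R\times\T$, and then apply the Cauchy--Schwarz inequality in the three integration variables. The square of the convolution is then bounded by the measure of the set
\begin{equation*}
S(\tau,\xi,\eta) = \{(\tau_1,\xi_1,\eta_1) : (\tau_1,\xi_1,\eta_1)\in\mathrm{supp}(f_{1,N_1,L_1}),\; (\tau-\tau_1,\xi-\xi_1,\eta-\eta_1)\in\mathrm{supp}(f_{2,N_2,L_2})\}
\end{equation*}
times the corresponding $L^2$-integral of the integrand squared.

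The second step is to estimate $\mathrm{meas}(S(\tau,\xi,\eta))$ uniformly in $(\tau,\xi,\eta)$ by treating each coordinate independently. In $\xi_1$, the conditions $\xi_1\in I_1$ and $\xi-\xi_1\in I_2$ force $\xi_1$ to lie in a set of measure at most $\min(|I_1|,|I_2|)\lesssim N_{\min}$. In $\eta_1$, the analogous conditions $\eta_1\in J_1$ and $\eta-\eta_1\in J_2$ give a set of (Lebesgue or counting) measure bounded by $\min(|J_1|,|J_2|)\lesssim M_{\min}$. Finally, in $\tau_1$, the modulation bounds give either $|\tau_1-\omega_\alpha(\xi_1,\eta_1)|\lesssim L_1$ or $|(\tau-\tau_1)-\omega_\alpha(\xi-\xi_1,\eta-\eta_1)|\lesssim L_2$, whichever is smaller, so the $\tau_1$-measure is $\lesssim L_{\min}$. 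Since these three constraints decouple coordinatewise, I conclude $\mathrm{meas}(S(\tau,\xi,\eta))\lesssim N_{\min}M_{\min}L_{\min}$.

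The last step is to integrate in $(\tau,\xi,\eta)$ and apply Fubini to the remaining $L^2$-integral, yielding
\begin{equation*}
\|f_{1,N_1,L_1}*f_{2,N_2,L_2}\|_{L^2}^2 \lesssim N_{\min}M_{\min}L_{\min}\, \|f_{1,N_1,L_1}\|_{L^2}^2\, \|f_{2,N_2,L_2}\|_{L^2}^2,
\end{equation*}
and then take square roots. There is no real obstacle here: the only point to watch is that on $\R\times\T$ the $\eta_1$-integration is a sum against counting measure, so one should note that the translation-invariance argument still bounds the number of admissible $\eta_1$ by $\lesssim M_{\min}$ (which is the content of the support hypothesis $|J_i|\lesssim M_i$ interpreted as a cardinality bound in the periodic case). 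No transversality of the characteristic surfaces is used, which is exactly the point of the lemma as a fallback when the bilinear estimates based on \eqref{eq:GroupVelocityDifferenceBilinear} degenerate.
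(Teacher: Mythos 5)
Your proof is correct and is exactly the standard Cauchy--Schwarz argument that the paper has in mind; the paper states Lemma \ref{lem:CauchySchwarz} without proof, describing it only as ``a consequence of the Cauchy-Schwarz inequality,'' so there is nothing to compare beyond that. The one phrase I would tighten is ``these three constraints decouple coordinatewise'': the $\tau_1$-constraint depends on $(\xi_1,\eta_1)$, so the constraints do not literally decouple; rather, one applies Fubini, bounding the $\tau_1$-slice by $\lesssim L_{\min}$ uniformly over the $(\xi_1,\eta_1)$-section, which itself has measure $\lesssim N_{\min} M_{\min}$. Your argument already does this implicitly, but stating it as a Fubini iteration is cleaner than asserting decoupling.
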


\subsection{Nonlinear Loomis-Whitney inequalities with measure}
\label{subsection:NonlinearLWGeneral}

In preparation for the proof of trilinear convolution estimates for approximate solutions to dispersion-generalized KP-I equations, we prove Theorem \ref{thm:GenearlNLW}. We impose the following assumptions on the hypersurfaces $(S_i)_{i=1,2,3}$ following \cite{KinoshitaSchippa2021,KochSteinerberger2015}:
\begin{assumption}
\label{assumption:Surfaces}
For $i=1,2,3$ there exist $0 < \beta \leqslant 1$, $b>0$, $A \geqslant 1$, $F_i \in C^{1,\beta}(\mathcal{U}_i)$, where the $\mathcal{U}_i$ denote open and convex sets in $\R^2$ and $G_i \in O(3)$ such that
\begin{itemize}
\item[(i)] the oriented surfaces $S_i$ are given by
\begin{equation*}
S_i = G_i \text{gr}(F_i), \quad \text{gr}(F_i) = \{ (x,y,z) \in \R^3 : z = F_i(x,y), \; (x,y) \in \mathcal{U}_i \};
\end{equation*}
\item[(ii)] the vector field $\mathfrak{n}_i : S_i \to \mathbb{S}^2$ of outward unit normals on $S_i$ satisfies 
the H\"older condition:
\begin{equation*}
\sup_{\sigma,\tilde{\sigma}} \frac{|\mathfrak{n}_i(\sigma) - \mathfrak{n}_i(\tilde{\sigma})|}{|\sigma - \tilde{\sigma}|^\beta} + \frac{|\mathfrak{n}_i(\sigma).(\sigma - \tilde{\sigma})|}{|\sigma - \tilde{\sigma}|^{1+\beta}} \leqslant b;
\end{equation*}
\item[(iii)] there is $A \geqslant 1$ such that for all $\sigma_i \in S_i$, $i=1,2,3$ the following estimate holds:
\begin{equation*}
A^{-1} \leqslant |\mathfrak{n}_1(\sigma_1) \wedge \mathfrak{n}_2(\sigma_2) \wedge \mathfrak{n}_3(\sigma_3) | \leqslant 1.
\end{equation*}
\end{itemize}
\end{assumption}

For convenience, we recall the statement of Theorem \ref{thm:GenearlNLW}:
\begin{theorem}[Nonlinear~Loomis--Whitney~inequalities~with~general~measure]
Let $\varepsilon>0$, $(S_i)_{i=1,2,3}$ be a collection of hypersurfaces, which satisfy Assumption \ref{assumption:Surfaces}, $\nu_{\gamma}$ be a Borel measure, which satisfies \eqref{eq:gammaBorelMeasure}, and $f_i \in L^2(S_i(\varepsilon),d\nu_\gamma)$, $i=1,2$. Then the following estimate holds:
\begin{equation*}
\| f_1 * f_2 \|_{L^2(S_3(\varepsilon),d\nu_\gamma)} \lesssim A^{\frac{1}{2}} \varepsilon^{\frac{\gamma}{2}} \prod_{i=1}^2 \| f_i \|_{L^2(S_i(\varepsilon),d\nu_\gamma)}.
\end{equation*}
\end{theorem}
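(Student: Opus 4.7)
The plan is to follow the almost-orthogonal decomposition strategy of \cite{KinoshitaSchippa2021}, replacing the surface measure by the general measure $\nu_\gamma$. First I would dualize \eqref{eq:LoomisWhitneyMeasure} to the symmetric trilinear estimate
\[
\Big| \int_{\R^3} (f_1 \ast f_2) f_3 \, d\nu_\gamma \Big| \lesssim A^{1/2} \varepsilon^{\gamma/2} \prod_{i=1}^3 \|f_i\|_{L^2(S_i(\varepsilon), d\nu_\gamma)}
\]
for an arbitrary test function $f_3$ supported in $S_3(\varepsilon)$. Next I would cover each $S_i(\varepsilon)$ by essentially disjoint caps $\tau_{i,k}$, each contained in a Euclidean ball of radius comparable to $\varepsilon$. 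By the H\"older condition (ii) in Assumption \ref{assumption:Surfaces}, the unit normal varies by at most $b\varepsilon^\beta = o(1)$ on each cap, so $\tau_{i,k}$ is sandwiched between two parallel $O(\varepsilon)$-thick slabs around a reference plane $P_{i,k}$ with constant normal $\mathfrak{n}_{i,k}$, and the transversality (iii) carries over to the reference normals with comparable constant $A$. The measure bound \eqref{eq:gammaBorelMeasure} gives $\nu_\gamma(\tau_{i,k}) \leqslant C_\nu \varepsilon^\gamma$ for every cap.

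The key step is the flat-cap estimate: for each admissible triple $(\tau_{1,k_1},\tau_{2,k_2},\tau_{3,k_3})$, the transversality of the reference normals means the change of variables $(y,z) \mapsto (y, y+z)$ sending a product of two slabs onto a neighborhood of the third has Jacobian bounded below by $A^{-1}$; combined with Cauchy--Schwarz against $d\nu_\gamma$ on a single cap (whose total mass is $\leqslant C_\nu \varepsilon^\gamma$), this yields
\[
|T(f_{1,k_1}, f_{2,k_2}, f_{3,k_3})| \lesssim A^{1/2} \varepsilon^{\gamma/2} \prod_{i=1}^3 \|f_{i,k_i}\|_{L^2(d\nu_\gamma)},
\]
where $T$ denotes the trilinear form and $f_{i,k} = f_i \chi_{\tau_{i,k}}$. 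Crucially, this estimate does not rely on any finer decomposition at scales $< \varepsilon$, which is precisely why the final constant depends on $C_\nu$ but not on the H\"older parameters $b$ and $\beta$ from Assumption \ref{assumption:Surfaces}.

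The main obstacle, and the step demanding the most care, is verifying the flat-cap inequality with the sharp exponent $\varepsilon^{\gamma/2}$ when $\nu_\gamma$ is not assumed translation invariant beyond scale $\varepsilon$: since \eqref{eq:gammaBorelMeasure} only holds at radii $r \leqslant \varepsilon$, the Cauchy--Schwarz must be localized to a single cap rather than applied to the full slab, and the $A^{1/2}$ factor has to be extracted from the slab Jacobian without invoking a global change of coordinates. Once this flat-cap bound is in place, almost orthogonality completes the argument: expanding $T = \sum_{k_1,k_2,k_3} T(f_{1,k_1}, f_{2,k_2}, f_{3,k_3})$, the sum $\tau_{1,k_1} + \tau_{2,k_2}$ lies in a ball of radius $O(\varepsilon)$ so only $O(1)$ indices $k_3$ contribute for each fixed $(k_1,k_2)$; summing via Cauchy--Schwarz in the cap indices with this $O(1)$-sparsity then assembles the cap-wise bounds into the claimed global estimate \eqref{eq:LoomisWhitneyMeasure}.
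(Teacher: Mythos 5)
Your overall strategy --- dualize, cover at scale $\varepsilon$, a flat-cap Cauchy--Schwarz bound at scale $\varepsilon$, then sum by almost orthogonality --- is the same as the paper's. However, the accounting of where the factor $A^{1/2}$ enters is wrong in two places that feed off each other, and the summation as you state it does not close.

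First, the flat-cap estimate has no $A^{1/2}$ factor and should not. With $f_i$ supported on caps $\tau_{i}$ each contained in an $\varepsilon$-ball, the bound
\begin{equation*}
|T(f_{1},f_{2},f_{3})|
\leq \|f_1\|_{L^1(d\nu_\gamma)}\,\|f_2\|_{L^2(d\nu_\gamma)}\,\|f_3\|_{L^2(d\nu_\gamma)}
\leq C_\nu^{1/2}\,\varepsilon^{\gamma/2}\prod_i\|f_i\|_{L^2(d\nu_\gamma)}
\end{equation*}
follows from Cauchy--Schwarz in $\lambda_2$ and the measure bound \eqref{eq:gammaBorelMeasure} applied to the single cap $\tau_1$; no transversality is used or can be used at this scale, since everything already lives in a ball of radius $\varepsilon$. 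The Jacobian of a shear $(\lambda_1,\lambda_2)\mapsto(\lambda_1,\lambda_1+\lambda_2)$ is $1$, and there is no room to extract $A^{-1}$ from three fixed $\varepsilon$-caps.

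Second, the summation step is where $A^{1/2}$ actually appears, and the ``$O(1)$ indices $k_3$ for each fixed $(k_1,k_2)$'' sparsity is not the bound that closes it. If you Cauchy--Schwarz in $(k_1,k_2)$ against $\|f_{3,k_3(k_1,k_2)}\|$ you are forced to control $\#\{(k_1,k_2): k_3(k_1,k_2)=k_3\}$ for fixed $k_3$, and this count is \emph{unbounded}: the preimage is a tube along the intersection curve $S_1\cap(\tau_{3,k_3}-S_2)$, which can meet arbitrarily many $\varepsilon$-caps of $S_1(\varepsilon)$. The quantity that is actually bounded is the \emph{overlap count}: for fixed $\lambda_1\in S_1(\varepsilon)$ and $\lambda_2\in S_2(\varepsilon)$, the number of $\varepsilon$-balls $B_{\varepsilon,j}$ that simultaneously meet $S_3(\varepsilon)$, $\lambda_1+S_2(\varepsilon)$ and $\lambda_2+S_1(\varepsilon)$ is $\lesssim A$, by the three-fold transversality. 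In the paper this is exactly the key lemma \eqref{eq:LWTransversalitySummation}, $\sum_{j}\chi_{S_{1,j,\varepsilon}\times S_{2,j,\varepsilon}}(\lambda_1,\lambda_2)\lesssim A$, imported from \cite{KinoshitaSchippa2021}, and the $A^{1/2}$ falls out of the final Cauchy--Schwarz in $j$. Your proposal never articulates this counting bound, and the $O(1)$-sparsity you invoke is simply the wrong direction of bounded degree. If one took your cap-estimate with $A^{1/2}$ at face value and then corrected the summation with the $O(A)$ overlap count, the result would be $A\varepsilon^{\gamma/2}$, which is worse than claimed.

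Note also that the paper does not decompose $S_1(\varepsilon)$ and $S_2(\varepsilon)$ into caps at all; it only covers $S_3(\varepsilon)$ by $\varepsilon$-balls $B_{\varepsilon,j}$ and controls the resulting tubes $S_{1,j,\varepsilon},S_{2,j,\varepsilon}$ directly, with the single-ball Cauchy--Schwarz applied to the output variable. This is slightly cleaner than a three-fold cap decomposition, though both can be made to work once the transversality overlap lemma is in place. Your proof can be repaired by dropping the $A^{1/2}$ from the cap estimate and replacing the ``$O(1)$-sparsity'' argument with the overlap count $\lesssim A$; without that lemma there is a genuine gap.
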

\begin{proof}
We follow the argument in \cite{KinoshitaSchippa2021}.
By duality it suffices to show the trilinear expression
\begin{equation*}
\int_{S_3(\varepsilon)} (f_1 * f_2) f_3 d \nu_\gamma \lesssim A^{\frac{1}{2}} \varepsilon^{\frac{\gamma}{2}} \prod_{i=1}^3 \| f_i \|_{L^2(S_i(\varepsilon),d \nu_\gamma)}.
\end{equation*}
To this end, let $(B_{\varepsilon,j})_{j \in \N}$ be an essentially disjoint cover of $\R^3$ with balls of size $\varepsilon$. Set 
\begin{equation*}
J_{3,\varepsilon} = \{ j \in \N : B_{\varepsilon,j} \cap S_3(\varepsilon) \neq \emptyset \} \text{ and } S_{3,j}(\varepsilon) = B_{\varepsilon,j} \cap S_3(\varepsilon).
\end{equation*}
By Minkowski's inequality we find
\begin{equation*}
\big| \int_{\R^3} (f_1 * f_2)(\lambda) f_3(\lambda) d\nu_\gamma(\lambda) \big| \leqslant \sum_{j \in J_{3,\varepsilon}} \big| \int_{\R^3} (f_1 * f_2)(\lambda) f_3 \big\vert_{S_{3,j}(\varepsilon)}(\lambda) d\nu_\gamma(\lambda) \big|.
\end{equation*}
By H\"older's inequality and \eqref{eq:gammaBorelMeasure}, we find
\begin{equation*}
\big| \int_{\R^3} (f_1 * f_2)(\lambda) f_3 \big\vert_{S_{3,j_3}(\varepsilon)}(\lambda) d\nu_\gamma (\lambda) \big| \lesssim \varepsilon^{\frac{\gamma}{2}} \prod_{i=1}^3 \| f_i \|_{L^2(S_{i,j,\varepsilon})}.
\end{equation*}
Above we denote
\begin{align*}
S_{1,j,\varepsilon} &= \{ \lambda_1 \in S_1(\varepsilon) : \exists \lambda' \in B_{\varepsilon,j} : \lambda' - \lambda_1 \in S_2(\varepsilon) \}, \\
S_{2,j,\varepsilon} &= \{ \lambda_2 \in S_2(\varepsilon) : \exists \lambda' \in B_{\varepsilon,j} : \lambda' - \lambda_2 \in S_1(\varepsilon)\}.
\end{align*}
The following estimate was proved in \cite[Eq.~(16),~p.~13]{KinoshitaSchippa2021}:
\begin{equation}
\label{eq:LWTransversalitySummation}
\sum_{j \in J_{3,\varepsilon}} \chi_{S_{1,j,\varepsilon} \times S_{2,j,\varepsilon}}(\lambda_1,\lambda_2) \lesssim A.
\end{equation}
With this at hand, we can conclude as follows:
\begin{equation*}
\begin{split}
&\quad \sum_{j \in J_{3,\varepsilon}} \big| \int_{\R^3} (f_1 * f_2)(\lambda) f_3 \big\vert_{S_{3,j}(\varepsilon)}(\lambda) d \nu_\gamma(\lambda) \big| \\
&\lesssim \varepsilon^{\frac{\gamma}{2}} \sum_{j \in J_{3,\varepsilon}} \| f_1 \|_{L^2(S_{1,j,\varepsilon}, d\nu_\gamma)} \| f_2 \|_{L^2(S_{2,j,\varepsilon},d \nu_\gamma)} \| f_3\|_{L^2(S_{3,j}(\varepsilon),d\nu_\gamma)} \\
&\lesssim \varepsilon^{\frac{\gamma}{2}} \big( \sum_{ j \in J_{3,\varepsilon}} \| f_3 \|_{L^2(S_{3,j}(\varepsilon),d\nu_\gamma)}^2 \big)^{\frac{1}{2}} \big( \sum_{j \in J_{3,\varepsilon}} \| f_1 \|^2_{L^2(S_{1,j,\varepsilon}, d\nu_\gamma)} \| f_2 \|_{L^2(S_{2,j,\varepsilon},d\nu_\gamma)}^2 \big)^{\frac{1}{2}} \\
&\lesssim \varepsilon^{\frac{\gamma}{2}} \| f_3 \|_{L^2(S_3(\varepsilon),d\nu_\gamma)} \big( \int_{\R^3 \times \R^3} \sum_{j \in J_{3,\varepsilon}} \chi_{S_{1,j,\varepsilon} \times S_{2,j,\varepsilon}}(\lambda_1,\lambda_2)  \\
&\quad \qquad \quad \times |f_1(\lambda_1)|^2 |f_2(\lambda_2)|^2 d\nu_\gamma(\lambda_1) d\nu_\gamma(\lambda_2) \big)^{\frac{1}{2}} \\
&\lesssim \varepsilon^{\frac{\gamma}{2}} A^{\frac{1}{2}} \prod_{i=1}^3 \| f_i \|_{L^2(S_i(\varepsilon)}.
\end{split}
\end{equation*}
\end{proof}

\subsection{Resonance and the nonlinear Loomis--Whitney inequality}

In addition to the lower bound for the difference in group velocity in the resonant case, we shall see that we can compute the full transversality.
To show a trilinear estimate in the resonant case, consider
\begin{equation*}
\int (f_{1,N_1,L_1} * f_{2,N_2,L_2} ) f_{3,N_3,L_3} d \tau d \xi d \eta
\end{equation*}
with $\text{supp}(f_{i,N_i,L_i}) \subseteq D_{N_i, \leqslant L_i}$ with $L_i \ll N_1^\alpha N_2$. Let $N = \max(N_i)$. We suppose that $N_1 = N$ and $N_3 \sim N_1 \gtrsim N_2$. We use the scaling 
\begin{equation}
\label{eq:AnisotropicScaling}
\tau \to \frac{\tau}{ N^{\alpha+1}}, \; \xi \to \frac{\xi}{N}, \; \eta \to \frac{\eta}{ N^{\frac{\alpha}{2}+1}}
\end{equation}
to reduce to $1 \sim |\xi_1'| \sim |\xi_3'| \gtrsim |\xi_2'| \sim \frac{N_2}{N_1}$. The smallness of $|\xi_2'|$ allows us to localize $\xi_i'$ to intervals of length $\frac{N_2}{N_1}$ using almost orthogonality.

 In the resonant case, it holds, moreover
\begin{equation*}
\big| \frac{\eta_1'}{\xi_1'} - \frac{\eta_2'}{\xi_2'} \big| \sim 1.
\end{equation*}
This induces an almost orthogonal decomposition in $\eta_i'/\xi_i'$ (cf. \cite{IonescuKenigTataru2008}), by which we can suppose that $\eta_i'/\xi_i'$ is localized to intervals of size $\sim 1$. Now we use a Galilean invariance:
\begin{equation*}
\frac{\eta_i'}{\xi_i'} \to \frac{\eta_i'}{\xi_i'} - A.
\end{equation*}
This localizes $|\frac{\eta_i'}{\xi_i'}| \lesssim 1$. Considering $i=2$, we can suppose that $|\eta_2'| \lesssim N_2/N_1$. This induces an almost orthogonal decomposition in $\eta_i'$, by which we can suppose that $(\xi_i',\eta_i')$ are localized to balls of size $c(N_2/N_1)$ for some $c \ll 1$ to be chosen later. We stress that like in the proof of Proposition \ref{prop:StrichartzCylinder} for $(\xi',\eta') \in \R \times \Z/(N_1^{\frac{\alpha}{2}+1})$, we have in general for the transformed variables $(\xi',\eta' - A \xi') \notin \R \times \Z/(N_1^{\frac{\alpha}{2}+1})$ and some care is required on the cylinder when estimating the measure.


To compute the full transversality, we need to consider the wedge product of normals
\begin{equation*}
|\mathfrak{n}(\xi_1,\eta_1) \wedge \mathfrak{n}(\xi_2,\eta_2) \wedge \mathfrak{n}(\xi_3,\eta_3) |.
\end{equation*}
We compute
\begin{equation*}
\mathfrak{n}(\xi',\eta') = 
\begin{pmatrix}
-(\alpha+1) |\xi'|^{\alpha} + (\eta')^2 / (\xi')^2 \\
-2 \eta' / \xi' \\
1
\end{pmatrix}
.
\end{equation*}
The normals have modulus $\sim 1$ after rescaling and supposing that $|\eta'| \lesssim 1$. This is clear for $\mathfrak{n}(\xi_i',\eta_i')$ for $i=1,3$ since $|\xi_i'|\sim 1$. In the resonant case it holds moreover
\begin{equation*}
\big| \frac{\eta_1'}{\xi_1'} - \frac{\eta_2'}{\xi_2'} \big| \sim 1,
\end{equation*}
for which reason we have $|\mathfrak{n}(\xi_2',\eta_2')| \sim 1$.

Let $C_\alpha = \alpha+1$. We find under the convolution constraint:\small
\begin{equation*}
\begin{split}
&\quad \det(\mathfrak{n}(\xi_1,\eta_1), \mathfrak{n}(\xi_2,\eta_2), \mathfrak{n}(\xi_1+\xi_2,\eta_1+\eta_2)) \\
 &= 
\begin{vmatrix}
-C_\alpha |\xi_1|^{\alpha} + \frac{\eta_1^2}{\xi_1^2} & -C_\alpha |\xi_2|^\alpha + \frac{\eta_2^2}{\xi_2^2} & -C_\alpha |\xi_1+\xi_2|^\alpha + \frac{(\eta_1+\eta_2)^2}{(\xi_1+\xi_2)^2} \\
- \frac{2 \eta_1}{\xi_1} & - \frac{2 \eta_2}{\xi_2} & - \frac{2(\eta_1+\eta_2)}{\xi_1+\xi_2} \\
1 & 1 & 1
\end{vmatrix}
\\
&= \frac{-2}{\Omega_{KdV}} \begin{vmatrix}
-C_\alpha \xi_1 |\xi_1|^\alpha + \frac{\eta_1^2}{\xi_1} & -C_\alpha \xi_2 |\xi_2|^\alpha + \frac{\eta_2^2}{\xi_2} & - C_\alpha |\xi_1+\xi_2|^\alpha (\xi_1+\xi_2) + \frac{(\eta_1+\eta_2)^2}{\xi_1 + \xi_2} \\
\eta_1 & \eta_2 & \eta_1 + \eta_2 \\
\xi_1 & \xi_2 & \xi_1 + \xi_2
\end{vmatrix}
.
\end{split}
\end{equation*}
\normalsize
where $\Omega_{KdV} = \xi_1\xi_2(\xi_1+\xi_2)$. Now we subtract the first column and the second column from the third column to find
\small
\begin{equation*}
\begin{split}
&\quad \begin{vmatrix}
-C_\alpha \xi_1 |\xi_1|^\alpha + \frac{\eta_1^2}{\xi_1} & -C_\alpha \xi_2 |\xi_2|^\alpha + \frac{\eta_2^2}{\xi_2} & - C_\alpha |\xi_1+\xi_2|^\alpha (\xi_1+\xi_2) + \frac{(\eta_1+\eta_2)^2}{\xi_1 + \xi_2} \\
\eta_1 & \eta_2 & \eta_1 + \eta_2 \\
\xi_1 & \xi_2 & \xi_1 + \xi_2
\end{vmatrix}
\\
&=
\begin{vmatrix}
-C_\alpha \xi_1 |\xi_1|^\alpha + \frac{\eta_1^2}{\xi_1} & -C_\alpha \xi_2 |\xi_2|^\alpha + \frac{\eta_2^2}{\xi_2} & - C_\alpha \Omega_{\alpha,1} + \frac{(\eta_1+\eta_2)^2}{\xi_1 + \xi_2} - \frac{\eta_1^2}{\xi_1} - \frac{\eta_2^2}{\xi_2} \\
\eta_1 & \eta_2 & 0 \\
\xi_1 & \xi_2 & 0
\end{vmatrix}
\\
&= (\eta_1 \xi_2 - \eta_2 \xi_1) (- C_\alpha \Omega_{\alpha,1} + \frac{(\eta_1+\eta_2)^2}{\xi_1 + \xi_2} - \frac{\eta_1^2}{\xi_1} - \frac{\eta_2^2}{\xi_2}) \\
&= -(\eta_1 \xi_2 - \eta_2 \xi_1) \big( C_\alpha \Omega_{\alpha,1} + \frac{(\eta_1 \xi_2 - \eta_2 \xi_1)^2}{(\xi_1+\xi_2) \xi_1 \xi_2} \big).
\end{split}
\end{equation*}
\normalsize

This can be summarized as:
\begin{equation*}
\begin{split}
&\quad |\mathfrak{n}(\xi_1',\eta_1') \wedge \mathfrak{n}(\xi_2',\eta_2') \wedge \mathfrak{n}(\xi_1'+\xi_2',\eta'_1+\eta_2') | \\
&= \frac{2 |\eta_1' \xi_2' - \eta_2' \xi_1'|}{|\xi_1' \xi_2' (\xi_1' + \xi_2')|} \big| C_\alpha \Omega_{\alpha,1} (\xi') + \frac{(\eta_1' \xi_2' - \eta_2' \xi_1')^2}{\xi_1' \xi_2' (\xi_1'+\xi_2')} \big|.
\end{split}
\end{equation*}
Note that $\Omega_{\alpha,1}$ and $\frac{(\eta_1' \xi_2' - \eta_2' \xi_1')^2}{\xi_1' \xi_2' (\xi_1' + \xi_2')}$ have the same sign and by the resonance condition a comparable modulus:
\begin{equation*}
|\mathfrak{n}(\xi_1',\eta_1') \wedge \mathfrak{n}(\xi_2',\eta_2') \wedge \mathfrak{n}(\xi_1'+\xi_2',\eta_1'+\eta_2')| \sim \frac{|\eta_1' \xi_2' - \eta_2' \xi_1'|}{|\xi_1' \xi_2' (\xi_1'+\xi_2')|} \cdot \frac{N_2}{N_1} \sim \frac{N_2}{N_1}.
\end{equation*}
The extension off the convolution constraint for perturbations of size $c(N_2/N_1)$ is carried out by checking $|\partial_{\xi'} \mathfrak{n}(\xi_3',\eta_3')| + |\partial_{\eta'} \mathfrak{n}(\xi_3',\eta_3')| \lesssim 1$. This is immediate from $|\xi_3'| \sim 1$ and $|\eta_3'| \lesssim 1$.

\medskip

From $\mathfrak{n} \in C^2(\R^2 \backslash (\{0\} \times \R))$ the H\"older regularity assumption \ref{assumption:Surfaces} (ii) with $\beta=1$ follows. We can allow for crude estimates since the constant in the nonlinear Loomis-Whitney estimate does not depend on $b$ and $\beta$.

\medskip

Now we can formulate the trilinear estimate in the resonant case on different domains:

\begin{proposition}
Let $2^{\N_0} \ni N_1 \sim N_3 \gtrsim N_2 \in 2^{\Z}$, $L_i \in 2^{\N_0}$, and $f_{i,N_i,L_i} \in L^2(\R^3)$ with $\text{supp}(f_{i,N_i,L_i}) \subseteq D_{\alpha,N_i,\leqslant L_i}$, $i=1,2,3$. Suppose that $L_{\max} \ll N_1^\alpha N_2$. Then the following estimate holds:
\begin{equation}
\label{eq:LoomisWhitneyEuclidean}
\big| \int_{\R^3} (f_{1,N_1,L_1} * f_{2,N_2,L_2} ) f_{3,N_3,L_3} d \xi d\eta d\tau \big| \lesssim N_1^{\frac{1}{2}-\frac{3 \alpha}{4}} N_2^{-\frac{1}{2}} \prod_{i=1}^3 L_i^{\frac{1}{2}} \| f_{i,N_i,L_i} \|_{L^2_{\tau,\xi,\eta}(\R^3)}.
\end{equation}

\medskip

Under the above assumptions, if $f_{i,N_i,L_i} \in L^2(\R \times \R \times \Z/\gamma)$ for some $\gamma \in (1/2,1]$, then the following estimate holds:
\begin{equation}
\label{eq:LoomisWhitneyCylinder}
\big| \int \big( f_{1,N_1,L_1} * f_{2,N_2,L_2} \big) f_{3,N_3,L_3} d \xi (d \eta)_\gamma d \tau \big| \lesssim C(\underline{N},\underline{L})\prod_{i=1}^3 \| f_{i,N_i,L_i} \|_{L^2_{\tau, \xi, \eta}(\R \times \R \times \Z/\gamma)}
\end{equation}
with
\begin{equation*}
C(\underline{N},\underline{L}) = \big( \frac{N_1}{N_2} \big)^{\frac{1}{2}} (L_{\min} / N_1^{\alpha/2})^{\frac{1}{2}} \langle L_{\text{med}} / N_1^{\alpha/2} \rangle^{\frac{1}{2}} (L_{\max} / N_1^{\alpha/2} )^{\frac{1}{2}}.
\end{equation*}

\medskip

If $2^{\N_0} \ni N_1 \sim N_3 \gtrsim N_2 \in 2^{\N_0}$, $L_i$ like above, and $f_{i,N_i,L_i} \in L^2(\R \times \Z \times \Z/\gamma)$ with $\text{supp}(f_{i,N_i,L_i}) \subseteq D_{\alpha, N_i, \leqslant L_i}$, $i=1,2,3$. Then the following estimate holds:
\begin{equation}
\label{eq:LoomisWhitneyLattice}
\big| \int \big( f_{1,N_1,L_1} * f_{2,N_2,L_2} \big) f_{3,N_3,L_3} (d \xi)_1 (d\eta)_1 d\tau \big| \lesssim C(\underline{N},\underline{L}) \prod_{i=1}^3 \| f_{i N_i, L_i} \|_{L^2_\tau L_{\xi,\eta}^2(\Z \times \Z/\gamma)}.
\end{equation}
with
\begin{equation*}
C(\underline{N},\underline{L}) = L_{\min}^{\frac{1}{2}} \langle L_{\text{med}} / N^{\alpha-1} \rangle^{\frac{1}{2}} \langle L_{\max} / N^{\alpha - 1} \rangle^{\frac{1}{2}}.
\end{equation*}
\end{proposition}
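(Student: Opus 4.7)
The three bounds share a common template: rescale to unit $\xi$-frequency, almost-orthogonally decompose, apply Theorem~\ref{thm:GenearlNLW} with the Borel measure inherited from the ambient geometry, and reverse the scaling. Under the anisotropic substitution \eqref{eq:AnisotropicScaling}, the frequency supports become $|\xi_1'|\sim|\xi_3'|\sim 1$, $|\xi_2'|\sim N_2/N_1$, and the rescaled modulation thicknesses are $\widetilde L_i = L_i/N_1^{\alpha+1}$. The resonance hypothesis $L_{\max}\ll N_1^{\alpha}N_2$ reads $\widetilde L_{\max}\ll N_2/N_1$, and after a Galilean shift in $\eta'/\xi'$ followed by an almost orthogonal decomposition of the $(\xi_i',\eta_i')$-supports into boxes of side $c\,N_2/N_1$ with $c\ll 1$, the problem reduces to pieces on which the three rescaled hypersurfaces $\widetilde S_i$ have principal curvatures of order one and, by the wedge-product computation carried out just before the statement, transversality $|\mathfrak{n}(\widetilde S_1)\wedge\mathfrak{n}(\widetilde S_2)\wedge\mathfrak{n}(\widetilde S_3)|\sim N_2/N_1$. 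Thus Assumption~\ref{assumption:Surfaces} is satisfied uniformly on each AO piece with $A\sim N_1/N_2$; the $b,\beta$-independence of the constant in Theorem~\ref{thm:GenearlNLW} is essential since the curvature regularity is tracked only crudely.

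\paragraph{Choice of Borel measure per geometry.} On each AO piece I push the natural ambient measure forward to the rescaled coordinates and verify \eqref{eq:gammaBorelMeasure}. For \eqref{eq:LoomisWhitneyEuclidean} the pushforward is three-dimensional Lebesgue and $\gamma=3$, $C_\nu\sim 1$. For \eqref{eq:LoomisWhitneyCylinder} the $\eta'$-variable lives on a lattice of spacing $\sim\gamma^{-1}N_1^{-(\alpha/2+1)}$, so balls of radius $r$ carry measure $\lesssim r^2\max(1,\,r\gamma N_1^{\alpha/2+1})$; distinguishing the two regimes around the crossover $r\sim N_1^{-(\alpha/2+1)}$ (equivalently $L\sim N_1^{\alpha/2}$) produces the bracket $\langle L/N_1^{\alpha/2}\rangle^{1/2}$. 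For \eqref{eq:LoomisWhitneyLattice} both $\xi'$ and $\eta'$ lie on lattices, and the analogous analysis in each direction yields the two brackets $\langle L/N^{\alpha-1}\rangle^{1/2}$. Applying Theorem~\ref{thm:GenearlNLW} on each AO piece with $\varepsilon\sim\widetilde L_{\max}$, pairing with $\widetilde f_3$ via Cauchy--Schwarz, summing over AO pieces, and reversing \eqref{eq:AnisotropicScaling} then delivers the prefactor $N_1^{1/2-3\alpha/4}N_2^{-1/2}$ in the Euclidean case and its lattice counterparts in the other two.

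\paragraph{Main obstacle.} The key technical point is to extract the sharp split $L_{\min}^{1/2}L_{\text{med}}^{1/2}L_{\max}^{1/2}$ rather than the lossy $\widetilde L_{\max}^{3/2}$ that would follow from a naive application of Theorem~\ref{thm:GenearlNLW} with $\varepsilon=\widetilde L_{\max}$ and Lebesgue measure. The refined split is recovered by slicing first in the modulation coordinates: writing $s_i = \tau_i - \omega_\alpha(\xi_i,\eta_i)$, the convolution constraint gives $s_3 = s_1 + s_2 + \Omega_\alpha$, so the set $\{(s_1,s_2):|s_i|\leqslant L_i,\,|s_1+s_2+\Omega_\alpha|\leqslant L_3\}$ has measure $\lesssim L_{\min}L_{\text{med}}$, and for each frozen $(s_1,s_2)$ the residual two-dimensional nonlinear Loomis--Whitney on the surface slices provides the remaining $L_{\max}^{1/2}$ factor. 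On the cylinder and lattice this slicing must be reconciled with the $\eta'$- (and in the lattice case also $\xi'$-) counting structure, which is precisely where the brackets $\langle\cdot\rangle$ in the statement originate.
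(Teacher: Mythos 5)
Your template — rescale, AO-decompose, apply Theorem~\ref{thm:GenearlNLW} with a geometry-adapted Borel measure, rescale back — coincides with the paper's argument in broad outline only for the Euclidean estimate; for the cylinder and lattice estimates the paper proceeds quite differently, and in the lattice case the route you propose is one the paper explicitly shows to be lossy.

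\emph{Euclidean case.} Your ``obstacle'' mechanism for recovering $\prod_i L_i^{1/2}$ does not add up. You slice in $(s_1,s_2)$, obtain a set of measure $\lesssim L_{\min}L_{\text{med}}$, and assert that a ``residual two-dimensional nonlinear Loomis--Whitney'' supplies ``the remaining $L_{\max}^{1/2}$'': multiplied out this is $L_{\min}L_{\text{med}}L_{\max}^{1/2}$, which for $L_1=L_2=L_3=L$ reads $L^{5/2}$ rather than the required $L^{3/2}$, and a two-dimensional transversality estimate has no reason to produce a modulation power $L_{\max}^{1/2}$. The paper's device is simpler: slice \emph{each} $f_i$ into $\sim L_i$ unit-thickness modulation layers, apply Theorem~\ref{thm:GenearlNLW} with $\varepsilon = N^{-(\alpha+1)}$ and $\gamma=3$ to every triple of layers (each layer becoming an $N^{-(\alpha+1)}$-neighborhood after rescaling), and sum with one Cauchy--Schwarz in each index $c_i$, which yields $\prod_i L_i^{1/2}$ directly. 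Using $\varepsilon\sim\widetilde L_{\max}$ once, as in your main paragraph, is the lossy version you yourself flag.

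\emph{Cylinder and lattice cases.} The paper does not invoke Theorem~\ref{thm:GenearlNLW} at all to prove \eqref{eq:LoomisWhitneyCylinder} or \eqref{eq:LoomisWhitneyLattice}. On the cylinder it first decomposes any modulation above $N_1^{\alpha/2}$ into $N_1^{\alpha/2}$-layers (which is exactly where the factor $\langle L_{\text{med}}/N_1^{\alpha/2}\rangle^{1/2}$ originates), then rescales, performs a Galilean shear $(\xi'',\eta'')=(\xi',\eta'-A'\xi')$, decomposes into $L'_{\max}$-balls, and explicitly counts, in sequence, the sheared lattice points in $\eta'$, the $\xi''$-measure, and the $\tau'$-measure, before summing over balls by transversality. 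Your proposed two-regime measure bound $\nu(B(x,r))\lesssim r^2\max(1,r\gamma N_1^{\alpha/2+1})$ neither accounts for the shear (after which $\eta''$ is no longer lattice-valued) nor explains why only $L_{\text{med}}$ sits inside a bracket while $L_{\min}$ and $L_{\max}$ do not. On $\T^2_\gamma$ the gap is decisive: Remark~\ref{rem:CauchySchwarz} in the paper carries out precisely the argument you propose — rescale, decompose to $\varepsilon\sim N^{-2}$-pieces, Cauchy--Schwarz against two counting measures — and shows it yields only $N_2^{-1/2}N_1\prod_i\langle L_i/N\rangle^{1/2}$, which falls short of \eqref{eq:LoomisWhitneyLattice} when $N_2\ll N_1$. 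The paper closes that gap by replacing the per-piece Cauchy--Schwarz with the resonant bilinear Strichartz estimate of Corollary~\ref{prop:BilinearStrichartzResonant}, an ingredient absent from your proposal.
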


\begin{remark}
The estimate \eqref{eq:LoomisWhitneyEuclidean} recovers \cite[Lemma~4.2]{SanwalSchippa2023}. We shall be brief in the proof.
For $\alpha = 2$, \eqref{eq:LoomisWhitneyCylinder} recovers Robert's estimate \cite[Proposition~5.7]{Robert2018}. 
Moreover, \eqref{eq:LoomisWhitneyLattice} recovers the estimate of Zhang \cite{Zhang2016}.
\end{remark}

\begin{proof}
We start with the proof of the estimate in Euclidean space. To ease notation, let $N = N_1$. Firstly, we decompose $f_{i,j_i,k_i} = \sum_c f^c_{i,j_i,k_i}$ into $\sim \langle L_i \rangle$ functions, which are in the $1$-neighbourhood of a translation of the characteristic surface.  Write below $g_{i,N_i,L_i} = f^c_{i,N_i,L_i}$ to ease notation. We use the anisotropic rescaling to find
\begin{equation*}
\begin{split}
&\quad \int (g_{1,N_1,L_1} * g_{2,N_2,L_2} ) g_{3,N_3,L_3} d \tau d \xi d \eta \\
&= N^{2(\alpha+1)} N^2 N^{\alpha + 2} \int (g'_{1,N_1,L_1} * g'_{2,N_2,L_2} ) g'_{3,N_3,L_3} d \tau' d \xi' d \eta'.
\end{split}
\end{equation*}
This makes the rescaled functions $f^c_{i,N_i,L_i}$ (or a translate thereof) supported in the $N^{-(\alpha+1)}$-neighbourhood of the characteristic surface. After the almost orthogonal decomposition from above, we can suppose that $|\eta_i'| \lesssim 1$. We have computed for the transversality:
\begin{equation*}
|\mathfrak{n}(\xi_1',\eta_1') \wedge \mathfrak{n}(\xi_2',\eta_2') \wedge \mathfrak{n}(\xi_3',\eta_3') | \sim \frac{N_2}{N_1}.
\end{equation*}
In the above $(\xi_i',\eta_i')$ are contained in balls of size $c(N_2 / N_1)$, which constitute an almost orthogonal decomposition of the spatial frequency support.

\medskip

Hence, we can apply Theorem \ref{thm:GenearlNLW} with Lebesgue measure and $\varepsilon = N^{-(\alpha+1)}$ to find
\begin{equation*}
| \int (g'_{1,N_1,L_1} * g'_{2,N_2,L_2} ) g'_{3,N_3,L_3} d \tau' d \xi' d \eta' | \lesssim \big( \frac{N_1}{N_2} \big)^{\frac{1}{2}} N^{-\frac{3(\alpha+1)}{2}} \prod_{i=1}^3 \| g'_{i,N_i,L_i} \|_{L^2}. 
\end{equation*}
The claim follows from reversing the scaling \eqref{eq:AnisotropicScaling}, which yields a factor of\\ $N^{-\frac{3(\alpha+1)}{2}} N^{-\frac{3}{2}} N^{-\frac{3(\alpha+2)}{4}}$, and summing over $c_i$ with $\# c_i \sim L_i$ with Cauchy-Schwarz. This finishes the proof in the Euclidean case.

\smallskip

We turn to the cylinder case.
First we suppose that $L_{\text{med}} \ll N_1^{\alpha/2}$. If $L_{\max} \leqslant N_1^{\alpha/2}$, we do not decompose. If $L_{\max} \geqslant N_1^{\alpha/2}$, we decompose $f_{i,N_i,L_i}$ with $L_i = L_{\max}$ into layers having modulation size $N_1^{\alpha/2}$ such that in the following we suppose $L_{\max} \lesssim N_1^{\alpha/2}$. We use the anisotropic scaling 
\begin{equation*}
\tau \to \tau' = \frac{\tau}{N_1^{\alpha+1}}, \quad \xi \to \xi' = \frac{\xi}{ N}, \quad \eta \to \eta'=\frac{\eta}{N_1^{\frac{\alpha}{2}+1}},
\end{equation*}
which leads to
\begin{equation*}
\begin{split}
&\quad \big| \int_{\R \times \R \times \Z/\gamma} (f_{1,N_1,L_1} * f_{2,N_2,L_2} ) f_{3,N_3,L_3} d \xi (d \eta) d\tau \big| \\ &= N_1^2 N_1^{2(\alpha+1)} \big| \int_{\R \times \R \times \Z/(\gamma N_1^{\frac{\alpha}{2}+1})} (f'_{1,N_1,L_1} * f'_{2,N_2,L_2} ) f_{3,N_3,L_3}' d\xi' (d\eta') d\tau' \big|.
\end{split}
\end{equation*}
Write $L_i' = L_i / N_1^{\alpha+1}$. We carry out the Galilean transform
\begin{equation*}
(\xi'',\eta'') = (\xi',\eta' - A' \xi')
\end{equation*}
with $A' = N_1^{-\frac{\alpha}{2}} A$ such that given $\xi''$ we have $\eta'' + A' \xi'' \in \Z / N_1^{\frac{\alpha}{2}+1}$. Moreover, we have $|\eta''| \lesssim 1$ and we can carry out like above an almost orthogonal decomposition of $(\xi'',\eta'')$ into balls of size $c(N_2/N_1)$.

Next, we decompose the supports of $f'_{i,N_i,L_i}$ into balls $\theta_i$ of size $L_{\max}' = \frac{L_{\max}}{N_1^{\alpha+1}} \lesssim N_1^{-\frac{\alpha}{2} - 1}$. 

Then, we obtain
\begin{equation}
\label{eq:DecompositionThetaCylinderI}
\begin{split}
&\quad \sum_{\theta_3} \big| \int \sum_{\theta_1 \sim_{\theta_3} \theta_2} \big( f'_{1,\theta_1} * f'_{2,\theta_2} \big) f'_{3,\theta_3} \big| \\
&\leqslant \sum_{\theta_3} \sum_{\theta_1 \sim_{\theta_3} \theta_2} \| f'_{1,\theta_1} * f'_{2,\theta_2} \|_{L^2_{\tau',\xi',\eta'}} \| f'_{3,\theta_3} \|_{L^2_{\tau',\xi',\eta'}}.
\end{split}
\end{equation}

By symmetry we can suppose that $L_1' = L_{\min}'$. We estimate the convolution with the Cauchy-Schwarz inequality. Let $I_{L_{\max}'}$ denote the interval of length $L'_{\max} = L_{\max}/(N_1^{\alpha+1})$, which contains $\pi_{\eta''}(\text{supp}(f'_{1,\theta_1}))$. We let $J_{L_{\max}'}$ denote the interval which contains $\pi_{\xi''}(\text{supp}(f'_{1,\theta_1})$. We count the number of $\eta'$ such that $\eta' - A' \xi' \in I_{L_{\max}'}$ for $\xi'' \in I'_{L_{\max}'}$. This is similar to the proof of Proposition \ref{prop:StrichartzCylinder}:
\begin{equation*}
\# \{ \eta' \in \Z/(N_1^{\frac{\alpha}{2}+1}) : (\xi'',\eta'') \in I_{L_{\max}'} \times I'_{L_{\max}'} \} \lesssim \langle A' \cdot L'_{\max} \cdot N_1^{\frac{\alpha}{2}+1} \rangle \sim \langle A' \cdot \frac{L_{\max}}{N_1^{\frac{\alpha}{2}}} \rangle.
\end{equation*}
Now, for fixed $\eta'$ from the above set, we estimate 
\begin{equation*}
\text{meas}( \{ \xi'' \in I_{L_{\max}'} : (\xi'',\eta'') \in I'_{L'_{\max}} \times I_{L_{\max}} \} \lesssim \frac{L'_{\max}}{A'}.
\end{equation*}
Given $(\xi'',\eta'')$ from above, we finally have
\begin{equation*}
\text{meas} ( \{ \tau' \in \R : (\tau',\xi'',\eta'') \in \theta_1 \} ) \lesssim L'_{\min}.
\end{equation*}
For this reason we find
\begin{equation*}
\| f'_{1,\theta_1} * f'_{2,\theta_2} \|_{L^2_{\tau',\xi',\eta'}} \lesssim (L_{\min}' (L'_{\max}/A') \langle A' \cdot \frac{L_{\max}}{N_1^{\frac{\alpha}{2}}} \rangle)^{\frac{1}{2}} \| f'_{1,\theta_1} \|_{L^2_{\tau',\xi',\eta'}} \| f'_{2,\theta_2} \|_{L^2_{\tau',\xi',\eta'}}.
\end{equation*}

%
By two more applications of the Cauchy-Schwarz inequality to carry out the summation over $\theta_i$, we incur a factor of $A^{\frac{1}{2}} \sim \Big(\frac{N_1}{N_2}\Big)^{\frac{1}{2}}$ and obtain
\begin{equation*}
\begin{split}
&\quad \sum_{\theta_3} \sum_{\theta_1 \sim_{\theta_3} \theta_2} \| f'_{1,\theta_1} * f'_{2,\theta_2} \|_{L^2_{\tau',\xi',\eta'}} \| f'_{3,\theta_3} \|_{L^2_{\tau',\xi',\eta'}} \\
&\lesssim (L'_{\min} L'_{\max})^{\frac{1}{2}} \sum_{\theta_3} \sum_{\theta_1 \sim_{\theta_3} \theta_2} \| f'_{1,\theta_1} \|_{L^2_{\tau',\xi',\eta'}} \| f'_{2,\theta_2} \|_{L^2_{\tau',\xi',\eta'}} \| f'_{3,\theta_3} \|_{L^2_{\tau',\xi',\eta'}} \\
&\lesssim (L'_{\min} L_{\max}')^{\frac{1}{2}} (N_1 /N_2)^{\frac{1}{2}} \prod_{i=1}^3 \| f'_{i,N_i,L_i} \|_{L^2_{\tau',\xi',\eta'}}.
\end{split}
\end{equation*}

Now we reverse the scaling which yields a factor $N_1^{-\frac{3}{2}} N_1^{-\frac{3}{2}(\alpha+1)}$. Taking into account the scaling factor from above gives
\begin{equation*}
N_1^2 N_1^{2(\alpha+1)} \big( \frac{N_1}{N_2} \big)^{\frac{1}{2}} N_1^{-\frac{3}{2}} N_1^{-\frac{3}{2}(\alpha+1)} N_1^{-(\alpha+1)} (L_{\min} L_{\max})^{\frac{1}{2}} = N_1^{\frac{1}{2}-\frac{\alpha}{2}} N_2^{-\frac{1}{2}} (L_{\min} L_{\max})^{\frac{1}{2}}.
\end{equation*}
This proves \eqref{eq:LoomisWhitneyCylinder} in case $L_{\text{med}} \ll N_1^{\alpha/2}$.

\medskip

We turn to the case $L_{\text{med}} \gtrsim N_1^{\alpha/2}$ and suppose that $L_{\min} \ll N_1^{\alpha/2}$. We decompose the modulation of $f_{i_j}$ with $L_{i_1} = L_{\text{med}}$ and $L_{i_2} = L_{\text{max}}$ into layers of thickness $N_1^{\frac{\alpha}{2}}$, such that we can apply the previous result with $L_{\text{med}} \sim L_{\max} \sim N_1^{\alpha/2}$. The claim follows then from the Cauchy-Schwarz inequality which incurs factors
\begin{equation*}
(L_{\text{med}} / N_1^{\alpha/2})^{\frac{1}{2}} (L_{\text{max}} / N_1^{\alpha/2})^{\frac{1}{2}}.
\end{equation*}

Finally, we turn to the case $L_{\min} \gtrsim N_1^{\alpha/2}$. In this case we decompose all functions into layers of modulation with size $N_1^{\alpha/2}$. Then we can apply the previous arguments and finally, we can apply the Cauchy-Schwarz inequality, which incurs a factor of
\begin{equation*}
\big( \prod_{i=1}^3 L_i \big)^{\frac{1}{2}} / N_1^{\frac{3 \alpha}{2}}.
\end{equation*}
This completes the proof of the Loomis-Whitney inequality on the cylinder.

\medskip

For the domain $\R \times \Z \times \Z/\gamma$ we additionally use a bilinear Strichartz estimate as simple applications of Cauchy-Schwarz inequality do not seem to suffice; see Remark \ref{rem:CauchySchwarz}.
Decompose the functions $f_{i,N_i,L_i}$ into functions $g_{i,N_i,L_i}$ with modulation of thickness $N_1^{\alpha-1}$ such that after rescaling the functions have modulation $\lesssim N_1^{-2}$. Then it follows like in the proof of Theorem \ref{thm:GenearlNLW}
\begin{equation}
\label{eq:DecompositionLWPeriodic}
\begin{split}
&\quad \int (g_{1,N_1,L_1} * g_{2,N_2,L_2} ) g_{3,N_3,L_3} d \tau d\xi d\eta \\
 &= N^{2(\alpha+1)} \int_{\R \times \Z / N \times \Z / N^2} (g'_{1,N_1,L_1} * g'_{2,N_2,L_2} ) g'_{3,N_3,L_3} d \tau' d\xi' d\eta' \\
&\leqslant N^{2(\alpha+1)} \sum_{\theta_3} \| ( g'_{1,N_1,L_1} * g'_{2,N_2,L_2} )_{\theta_3} \|_{L^2_\tau L^2_{\xi,\eta}} \| g'_{3,N_3,L_3,\theta_3} \|_{L^2_\tau L^2_{\xi,\eta}}.
\end{split}
\end{equation}
Here $\theta_3$ denote $N^{-1}$-balls.

We have by the bilinear Strichartz estimate in the resonant case and an almost orthogonal decomposition into $N^{-1}$-balls denoted by $\theta_1$, $\theta_2$:
\begin{equation*}
\begin{split}
&\quad \| ( g'_{1,N_1,L_1} * g'_{2,N_2,L_2} )_{\theta_3} \|_{L^2_\tau L^2_{\xi,\eta}} \\
&\lesssim (L_1')^{\frac{1}{2}} (1+ (L_2')^{\frac{1}{2}}/2^{1/2}) \sum_{\theta_1 \sim_{\theta_3} \theta_2} \| g'_{1,N_1,L_1,\theta_1} \|_{L^2_\tau L^2_{\xi,\eta}} \| g'_{2,N_2,L_2,\theta_2} \|_{L^2_{\tau} L^2_{\xi,\eta}}.
\end{split}
\end{equation*}
Now we apply the Cauchy Schwarz inequality in $\theta_3$, which incurs a factor of $\langle L'_2 A N_1 \rangle \sim 1$ by $L'_2 \lesssim N_2 N_1^{-2}$ and $A \sim N_1 /N_2$. For this reason we find with $L_1' \lesssim N^{-2}$:
\begin{equation*}
\begin{split}
&\quad N^{2(\alpha+1)} \sum_{\theta_3} \| ( g'_{1,N_1,L_1} * g'_{2,N_2,L_2} )_{\theta_3} \|_{L^2_\tau L^2_{\xi,\eta}} \| g'_{3,N_3,L_3,\theta_3} \|_{L^2_\tau L^2_{\xi,\eta}} \\
&\lesssim (L_1')^{\frac{1}{2}} N^{2 \alpha + 1} \prod_{i=1}^3 \| g'_{i,N_i,L_i} \|_{L^2}.
\end{split}
\end{equation*}
Scaling back we obtain
\begin{equation*}
N^{2\alpha +1} \prod_{i=1}^3 \| g'_{i,N_i,L_i} \|_{L^2} \lesssim N^{\frac{\alpha-1}{2}} \prod_{i=1}^3 \| g_{i,N_i,L_i} \|_{L^2}.
\end{equation*}
Finally, we need to carry out the summation over thin modulation layers, which gives by the above
\begin{equation*}
\eqref{eq:DecompositionLWPeriodic} \lesssim  L_{\min}^{\frac{1}{2}} \langle L_{\text{med}} / N^{\alpha-1} \rangle^{\frac{1}{2}} \langle L_{\max} / N^{\alpha-1} \rangle^{\frac{1}{2}} \prod_{i=1}^3 \| f_{i N_i, L_i} \|_{L^2_\tau L^2}.
\end{equation*}

The proof is complete.
\end{proof}

\begin{remark}
\label{rem:CauchySchwarz}
On $\T^2_\gamma$ the decomposition to $\varepsilon = N^{-2}$ balls does not recover Zhang's estimate \cite{Zhang2016}. The computation is carried out for the KP-I equation for the sake of illustration.
We decompose $f_{i,N_i,L_i}$ into layers of modulation with thickness $N_1$ such that after the anisotropic rescaling
\begin{equation*}
\tau \to \tau / N^3, \; \xi \to \xi/N, \; \eta \to \eta/N^2
\end{equation*}
we find functions with modulation $\lesssim N^{-2} = \varepsilon$. We compute
\begin{equation*}
\begin{split}
&\quad \int_{\R \times \Z^2} \big( f_{1,N_1,L_1} * f_{2,N_2,L_2} ) f_{3,N_3,L_3} d \tau d \xi d\eta \\
&= N^6 \int_{\R \times \Z / N \times \Z / N^2} (f'_{1,N_1,L_1} * f'_{2,N_2,L_2} ) f'_{3,N_3,L_3} d \tau d\xi d\eta \\
&= N^6 \sum_{\theta_3 : N^{-2} - \text{ball}} \int \big( f'_{1,N_1,L_1} * f'_{2,N_2,L_2} )_{\theta_3} f'_{3,N_3,L_3,\theta_3} d\tau' d\xi' d\eta' \\
&\leqslant N^6 \sum_{\theta_3} \| (f'_{1,N_1,L_1} * f'_{2,N_2,L_2} )_{\theta_3} \|_{L^2_\tau L^2_{\xi,\eta}} \| f'_{3,N_3,L_3,\theta_3} \|_{L^2_\tau L^2_{\xi,\eta}}.
\end{split}
\end{equation*}
Applying Cauchy-Schwarz yields
\begin{equation*}
\| (f'_{1,N_1,L_1} * f'_{2,N_2,L_2})_{\theta_3} \|_{L^2_\tau L^2_{\xi,\eta}} \lesssim N^{-1} \sum_{\theta_1 \sim_{\theta_3} \theta_2} \| f'_{1,N_1,L_1,\theta_1} \|_{L^2} \| f'_{2,N_2,L_2,\theta_2} \|_{L^2}.
\end{equation*}
In this case, the application of Cauchy-Schwarz only yields a factor $\varepsilon^{\frac{1}{2}}= N^{-1}$ because we are dealing with two counting measures. The summation of $\theta_3$-balls incurs a factor of $A^{\frac{1}{2}} \sim \Big(\frac{N_1}{N_2}\Big)^{\frac{1}{2}}$ such that
\begin{equation*}
\begin{split}
&\lesssim N^6 N^{-1} \big( \frac{N_1}{N_2} \big)^{\frac{1}{2}} \prod_{i=1}^3 \| f'_{i,N_i,L_i} \|_{L^2_\tau L^2_{\xi,\eta}} \\
&\lesssim N^6 N^{-1} \big( N_1 / N_2 \big)^{\frac{1}{2}} N^{-\frac{9}{2}} \prod_{i=1}^3 \| f_{i,N_i,L_i} \|_{L^2_\tau L^2_{\xi,\eta}} \\
&\lesssim N_2^{-\frac{1}{2}} N_1 \prod_{i=1}^3 \langle L_i /N \rangle^{\frac{1}{2}} \| f_{i,N_i,L_i} \|_{L^2_\tau L^2_{\xi,\eta}}.
\end{split}
\end{equation*}
This leaves a gap in the case of $N_2 \ll N_1$ compared to the result of Zhang \cite{Zhang2016}. 
\end{remark}

\section{Ill-posedness results for KP-I equations}
\label{section:C2IllPosedness}
In this section, we show that \eqref{eq:FKPI} posed on $\D \in \{ \R^2, \R \times \T, \T^2 \}$ is $C^2$ ill-posed for different choices of $\alpha$, namely one cannot use Picard iteration (the fixed point argument) to solve the problem. On $\R^2$ we prove the following optimal result, which improves on the argument from \cite{SanwalSchippa2023}. Moreover, we show that on $\R \times \T$ for semilinear KP-I equations, the regularity for local well-posedness is strictly subcritical.

\subsection{Sharp $C^2$-illposedness for KP-I equations on $\R^2$}  We prove that \eqref{eq:FKPIR2} is not analytically well-posed in $H^{s_1,s_2}$ for $\alpha <\frac{5}{2}$.

First, we state a preliminary result:
\begin{lemma}
	\label{lem:ComputationFT}
	 The spatial Fourier transform of
\begin{equation*}
v(t) = \int_0^t U_\alpha(t-s) \partial_x (U_\alpha(s) \phi_1 U_\alpha(s) \phi_2)ds
\end{equation*} 
is given by
	\begin{equation}
		\begin{split}
		\hat{v}(t,\xi,\eta) = \xi e^{it\omega_{\alpha}(\xi,\eta)} \int_{*} \frac{1-e^{-it\Omega_{\alpha}(\xi_1,\xi_2,\eta_1,\eta_2)}}{\Omega_{\alpha}(\xi_1,\xi_2,\eta_1,\eta_2)}\hat{\phi}_1(\xi_1,\eta_1)\hat{\phi}_2(\xi_2,\eta_2)d\xi_1 d\eta_1,
		\end{split}
	\end{equation}
where $*$ denotes the convolution constraint $(\xi,\eta ) = (\xi_1,\eta_1) + (\xi_2,\eta_2)$ with $(\xi_i,\eta_i) \in D_i$.
\begin{proof}
	With $(\xi,\eta) = (\xi_1,\eta_1) + (\xi_2,\eta_2)$, we have 
	\begin{equation*}
		\begin{split}
		&\mathcal{F}_{x,y}\Big(U_{\alpha}(t-s)\partial_x\big(U_{\alpha}(s)\phi_1 U_{\alpha}(s)\phi_2\big)\Big) (t,\xi,\eta)\\
		 &~~=e^{i(t-s)\Omega_{\alpha}(\xi,\eta)}(i\xi)\mathcal{F}_{x,y}\big(U_{\alpha}(s)\phi_1U_{\alpha}(s)\phi_2\big)(t,\xi,\eta)\\
			&~~=e^{i(t-s)\Omega_{\alpha}(\xi,\eta)}(i\xi) \int_{*} e^{is\omega_{\alpha}(\xi_1,\eta_1)}\hat{\phi}(\xi_1,\eta_1) e^{is\omega_{\alpha}(\xi_2,\eta_2)} \hat{\phi}_2(\xi_2,\eta_2)d\xi_1d\eta_1\\
						&~~=e^{it\Omega_{\alpha}(\xi,\eta)} (i\xi)\int_{*} e^{is(\omega_{\alpha}(\xi_1,\eta_1)+\omega_{\alpha}(\xi_2,\eta_2)-\omega_{\alpha}(\xi,\eta))} \hat{\phi}_1(\xi_1,\eta_1)\hat{\phi}_2(\xi_2,\eta_2)d\xi_1d\eta_1\\
						&~~=i\xi e^{it\Omega_{\alpha}(\xi,\eta)} \int_{*} e^{-is\Omega_{\alpha}(\xi_1,\xi_2,\eta_1,\eta_2)}\hat{\phi}_1(\xi_1,\eta_1)\hat{\phi}_2(\xi_2,\eta_2)d\xi_1d\eta_1.
			\end{split}
	\end{equation*}
This implies after integrating in time
\begin{equation*}
	\begin{split}
		\hat{v}(t,\xi,\eta) &= i\xi e^{it\Omega_{\alpha}(\xi,\eta)} \int_0^t \int_{*} e^{-is\Omega_{\alpha}(\xi_1,\xi_2,\eta_1,\eta_2)}\hat{\phi}_1(\xi_1,\eta_1)\hat{\phi}_2(\xi_2,\eta_2)d\xi_1d\eta_1 ds\\
		&=\xi e^{it\Omega_{\alpha}(\xi,\eta)} \int_{*} \frac{1-e^{-it\Omega_{\alpha}(\xi_1,\xi_2,\eta_1,\eta_2)}}{\Omega_{\alpha}(\xi_1,\xi_2,\eta_1,\eta_2)} \hat{\phi}_1(\xi_1,\eta_1)\hat{\phi}_2(\xi_2,\eta_2)d\xi_1d\eta_1.
	\end{split}
\end{equation*}
\end{proof}
\end{lemma}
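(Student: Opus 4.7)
The plan is to compute the spatial Fourier transform by applying standard rules one at a time: $U_\alpha(t)$ acts in Fourier space as multiplication by $e^{it\omega_\alpha(\xi,\eta)}$, $\partial_x$ becomes multiplication by $i\xi$, and a product of functions transforms to a convolution of their Fourier transforms. First I would interchange $\mathcal{F}_{x,y}$ with the $s$-integral by Fubini, which is justified on Schwartz data (and then the identity extends by density to the function classes of interest).

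Next, for fixed $s$, applying $\mathcal{F}_{x,y}$ to the integrand produces
$e^{i(t-s)\omega_\alpha(\xi,\eta)}\,(i\xi)\,\mathcal{F}_{x,y}\!\bigl(U_\alpha(s)\phi_1\cdot U_\alpha(s)\phi_2\bigr)(\xi,\eta)$,
and the convolution identity rewrites the last factor as
$\int_{*} e^{is\omega_\alpha(\xi_1,\eta_1)}\hat\phi_1(\xi_1,\eta_1)\,e^{is\omega_\alpha(\xi_2,\eta_2)}\hat\phi_2(\xi_2,\eta_2)\,d\xi_1 d\eta_1$
under the convolution constraint $(\xi_2,\eta_2)=(\xi,\eta)-(\xi_1,\eta_1)$.

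Then I would collect the $s$-dependent exponentials: the phases combine as
$e^{i(t-s)\omega_\alpha(\xi,\eta)}\cdot e^{is(\omega_\alpha(\xi_1,\eta_1)+\omega_\alpha(\xi_2,\eta_2))}=e^{it\omega_\alpha(\xi,\eta)}\,e^{-is\Omega_\alpha(\xi_1,\xi_2,\eta_1,\eta_2)}$,
with $\Omega_\alpha$ exactly the resonance function defined in Section~\ref{section:Transversality}. Integrating in $s$ via the elementary formula $\int_0^t e^{-is\Omega_\alpha}\,ds=(1-e^{-it\Omega_\alpha})/(i\Omega_\alpha)$ produces a factor $1/(i\Omega_\alpha)$ that cancels the $i$ from $\partial_x$, leaving $\xi(1-e^{-it\Omega_\alpha})/\Omega_\alpha$ and giving the claimed formula.

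No step is substantively difficult. The only point requiring a brief remark is the apparent singularity at $\Omega_\alpha=0$, which is removable: Taylor expansion gives $(1-e^{-it\Omega_\alpha})/\Omega_\alpha\to it$ as $\Omega_\alpha\to 0$, so the kernel is bounded on the (measure-zero) resonant set and the inner convolution integral is well defined throughout. This is really the main obstacle — justifying that the manipulations are licit on the set where $\Omega_\alpha$ vanishes — and it is handled by this removable-singularity observation together with the Schwartz/density approximation used at the start.
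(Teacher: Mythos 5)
Your proof is correct and follows essentially the same route as the paper: interchange the Fourier transform with the Duhamel integral, use the standard multiplier/convolution identities, collect the phases into $e^{it\omega_\alpha}e^{-is\Omega_\alpha}$, and integrate in $s$. The one addition you make — the removable-singularity remark at $\Omega_\alpha=0$ — is a reasonable clarification the paper leaves implicit, but it does not change the argument.
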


\begin{theorem}
\label{thm:C2IllposednessR2}
Let $1 \leqslant \alpha < \frac{5}{2}$, $(s_1,s_2)\in \R^2$. Then, there does not exist a $T>0$ such that there is a function space $X_T \hookrightarrow C([-T,T];H^{s_1,s_2}(\R^2;\R))$ in which \eqref{eq:FKPI} admits a local solution with a $C^2$-differentiable flow-map $\Gamma_t$:
	\begin{equation*}
			\Gamma_t:H^{s_1,s_2}(\R^2;\R) \to H^{s_1,s_2}(\R^2;\R), \quad u_0 \mapsto u(t), ~~t \in [-T,T].
	\end{equation*}
\end{theorem}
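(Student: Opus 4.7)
My plan is a Molinet-Saut-Tzvetkov type argument by contradiction. Assume for some $T>0$ there is a space $X_T \hookrightarrow C([-T,T];H^{s_1,s_2}(\R^2;\R))$ admitting a $C^2$-differentiable flow $\Gamma_t$. Since $\Gamma_t(0)=0$ and $\Gamma_t'(0)\phi=U_\alpha(t)\phi$, a Taylor expansion at the origin combined with the Duhamel formulation of \eqref{eq:FKPI} identifies the second Fréchet derivative $\Gamma_t''(0)[\phi_1,\phi_2]$ with the symmetric bilinear operator
\[
B_t(\phi_1,\phi_2) = \int_0^t U_\alpha(t-s)\,\partial_x\bigl(U_\alpha(s)\phi_1\cdot U_\alpha(s)\phi_2\bigr)\,ds,
\]
whose spatial Fourier transform is given by Lemma \ref{lem:ComputationFT}. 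The $C^2$-hypothesis therefore forces a uniform bilinear bound
$\|B_t(\phi_1,\phi_2)\|_{H^{s_1,s_2}} \lesssim \|\phi_1\|_{H^{s_1,s_2}}\|\phi_2\|_{H^{s_1,s_2}}$
for small $t>0$, and it suffices to construct, for every admissible $(s_1,s_2)$ and every $\alpha<\frac52$, a sequence of real-valued data $(\phi_1^{(N)},\phi_2^{(N)})$ for which this estimate fails as $N\to\infty$.

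I take $\hat\phi_i^{(N)}=c_i(\chi_{A_i^{(N)}}+\chi_{-A_i^{(N)}})$ with small rectangles $A_i^{(N)}\subset\R^2$ centered at resonant frequencies $(\xi_i^*,\eta_i^*)$ satisfying $\Omega_\alpha(\xi_1^*,\eta_1^*,\xi_2^*,\eta_2^*)=0$. The natural choice is a high-high interaction $\xi_i^*\sim N$, $|\eta_i^*|\sim N^{(\alpha+2)/2}$, which exactly cancels $\Omega_{\alpha,1}(\xi_1,\xi_2)\sim N^{\alpha+1}$ against the transverse term $(\eta_1\xi_2-\eta_2\xi_1)^2/(\xi_1\xi_2\xi)$. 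The partial derivatives of $\Omega_\alpha$ at the center impose widths $\delta_\xi\lesssim N^{-\alpha}$, $\delta_\eta\lesssim N^{-\alpha/2}$ on the axis-parallel rectangles, so that $|t\Omega_\alpha|\lesssim 1$ throughout $A_1\times A_2$ and the key approximation $(1-e^{-it\Omega_\alpha})/\Omega_\alpha \approx -it$ applies. Lemma \ref{lem:ComputationFT} then reduces the problem to estimating
\[
\hat v(t,\xi,\eta)\approx -it\,\xi\,e^{it\omega_\alpha(\xi,\eta)}(\hat\phi_1\ast\hat\phi_2)(\xi,\eta).
\]
Computing $\|\hat\phi_1\ast\hat\phi_2\|_{L^2}^2$ (of order $|A|^3$ for aligned rectangles, or $|A_1||A_2|^2$ in unbalanced configurations), weighting by $\langle\xi\rangle^{2(1+s_1)}\langle\eta\rangle^{2s_2}$ at the output, and dividing by $\|\phi_i\|_{H^{s_1,s_2}}^2 \sim \langle\xi_i^*\rangle^{2s_1}\langle\eta_i^*\rangle^{2s_2}|A_i|$ yields a ratio which is a power of $N$ depending on $(\alpha,s_1,s_2)$.

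The crux — and the step where reaching the sharp threshold $\alpha_c=\frac52$ requires care — is optimizing the box geometry and the choice of interaction so that the exponent of $N$ is positive precisely when $\alpha<\frac52$, uniformly in $(s_1,s_2)$. Naive coordinate-aligned product boxes only recover a weaker threshold (this is essentially the argument from \cite{SanwalSchippa2023}), since they ignore the degenerate direction of $\Omega_\alpha$: synchronous shifts of $\eta_1,\eta_2$ leave $\eta_1\xi_2-\eta_2\xi_1$ and hence the transverse resonance invariant. Orienting each $A_i$ along the tangent direction of the resonance surface projected to the $(\xi_i,\eta_i)$-plane enlarges $|A_i|$ by a power of $N$, and aligning $A_1,A_2$ compatibly improves the convolution $L^2$-scaling from $|A_1||A_2|^2$ up to $|A|^3$. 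Since the anisotropic weights behave differently depending on the sign of $s_2$ — for $s_2\le 0$ one uses the high-high-to-low output $\eta\approx 0$ so that $\langle\eta\rangle^{s_2}\sim 1$ is favorable, while for $s_2>0$ one must tune the configuration so that the output and inputs share the same $|\eta|$-scale and the $L^{2s_2}$ factors cancel — a short case analysis in $(s_1,s_2)$ is required. In each case the final exponent simplifies to $\gamma(\alpha)>0$ exactly when $\alpha<\frac52$, producing the contradiction. The main obstacle is verifying the sharpness of the geometric optimization and checking that no lower-order terms in the Taylor expansion of $\Omega_\alpha$ around $(\xi_i^*,\eta_i^*)$ obstruct the enlargement of $A_i$ along the tangent directions.
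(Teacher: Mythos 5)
Your general framework (reduce to unboundedness of the second Picard iterate via Lemma~\ref{lem:ComputationFT} near a resonance) is the right starting point, but your choice of interaction diverges from the paper in a way that leaves the key step unverified — as you yourself flag. The paper does not use a High $\times$ High interaction with tilted boxes; it uses a \emph{Low $\times$ High} interaction with coordinate-aligned rectangles
\begin{equation*}
D_1 = [N^{-\frac{\alpha-1}{2}},\,2N^{-\frac{\alpha-1}{2}}]\times[-N^{-\frac{\alpha}{2}},N^{-\frac{\alpha}{2}}],\quad
D_2 = [N,\,N+N^{-\frac{\alpha-1}{2}}]\times[\sqrt{\alpha+1}\,N^{\frac{\alpha}{2}+1},\,\sqrt{\alpha+1}\,N^{\frac{\alpha}{2}+1}+N^{-\frac{\alpha}{2}}],
\end{equation*}
with $\eta_2$ tuned so that $\Omega_{\alpha,1}\sim\xi_1\xi_2^\alpha$ and the transverse term $\eta_2^2\xi_1/\xi_2^2$ cancel to leading order, leaving $|\Omega_\alpha|\sim 1$ (via \cite[Lemma~3.1]{SanwalSchippa2023}). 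The decisive feature is that on $D_1$ one has $|\xi_1|,|\eta_1|\lesssim 1$, so the anisotropic weight $\langle\xi_1\rangle^{s_1}\langle\eta_1\rangle^{s_2}\sim 1$ is trivial, and the output weight $N^{s_1}N^{(1+\alpha/2)s_2}$ cancels the weight on $\phi_2$ identically. The exponent $\|v(t)\|_{H^{s_1,s_2}}\sim tN^{\frac54-\frac\alpha2}$ then comes out for free, uniformly in $(s_1,s_2)$, with no case analysis and no geometric optimization.

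Your High $\times$ High configuration ($\xi_1^*\sim\xi_2^*\sim N$) puts two copies of $N^{s_1}N^{(1+\alpha/2)s_2}$ on the inputs but only one on the output; recovering a positive exponent uniformly in $(s_1,s_2)$ then does require the case analysis you sketch but do not carry out, and whether the tilted-box optimization actually reaches the threshold $\alpha_c=\frac52$ in \emph{every} regime of $(s_1,s_2)$ is left open (you write ``requires care'' and ``the main obstacle is verifying the sharpness''). Your remark that ``naive coordinate-aligned product boxes only recover a weaker threshold'' also misdiagnoses what limits \cite{SanwalSchippa2023}: the improvement in the present paper comes from switching the interaction type to Low $\times$ High and scaling the low $\xi$-frequency as $N^{-(\alpha-1)/2}$, not from tilting boxes along the resonance surface. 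As written, your argument establishes the framework but not the sharp dispersion threshold.
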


\begin{proof}	
We recall that the resonance function is given by
\begin{equation*}
	\Omega_{\alpha}(\xi_1,\xi_2,\eta_1,\eta_2) = \Omega_{\alpha}^1(\xi_1,\xi_2) - \Omega_{\alpha}^2(\xi_1,\xi_2,\eta_1,\eta_2),
\end{equation*}
where 
\begin{equation*}
	\begin{split}
	\Omega_{\alpha}^1(\xi_1,\xi_2) &= |\xi_1+\xi_2|^{\alpha}(\xi_1+\xi_2) - |\xi_1|^{\alpha}\xi_1 - |\xi_2|^{\alpha}\xi_2,\\
	\Omega_{\alpha}^2(\xi_1,\xi_2,\eta_1,\eta_2) &= \frac{(\eta_1\xi_2-\eta_2\xi_1)^2}{\xi_1\xi_2(
		\xi_1+\xi_2)}.
	\end{split}
\end{equation*}

We define functions via their Fourier transforms as follows:
\begin{equation}
	\begin{split}
	\hat{\phi}_1(\xi_1,\eta_1) &= \frac{1_{D_1}(\xi_1,\eta_1)}{N^{-\frac{\alpha}{2}+\frac{1}{4}}}, \quad D_1 = [N^{-\frac{\alpha-1}{2}}, 2N^{-\frac{\alpha-1}{2}}] \times [-N^{-\frac{\alpha}{2}}, N^{-\frac{\alpha}{2}}],\\
	\hat{\phi}_2(\xi_2,\eta_2) &= \frac{1_{D_2}(\xi_2,\eta_2)}{N^{s_1 +(1+\frac{\alpha}{2})s_2}N^{-\frac{\alpha}{2}+\frac{1}{4}}}, \\
D_2 &= [N,N+ N^{-\frac{\alpha-1}{2}}]  \times [\sqrt{1+\alpha}N^{\frac{\alpha}{2}+1}, \sqrt{1+\alpha}N^{\frac{\alpha}{2}+1} +N^{-\frac{\alpha}{2}}].
	\end{split}
\end{equation}
	From \cite[Lemma~3.1]{SanwalSchippa2023}, we estimate the size of the resonance function by
	\begin{equation*}
		|\Omega_{\alpha}| \sim 1.
	\end{equation*}

In Lemma \ref{lem:ComputationFT} we computed
\begin{equation*}
	|\hat{v}(t,\xi,\eta)| \sim t \frac{N}{N^{s_1} N^{(1+\frac{\alpha}{2})s_2}}  
\end{equation*}
for a significant measure of $(\xi,\eta) = (\xi_1,\eta_1) + (\xi_2,\eta_2)$ with $(\xi_i,\eta_i) \in D_i$. 
For $0<t\leqslant c \ll 1$, the Sobolev norm of $v(t)$ is given by
\begin{equation*}
	\|v(t)\|_{H^{s_1,s_2}(\R^2)} \sim tN^{\frac{5}{4}-\frac{\alpha}{2}}.
\end{equation*}
For $\Gamma_t$ to be $C^2$-differentiable, it needs to hold
\begin{equation*}
	1 \sim \|\phi_1\|_{H^{s_1,s_2}(\R^2)} \|\phi_2\|_{H^{s_1,s_2}(\R^2)} \gtrsim t N^{\frac{5}{4}-\frac{\alpha}{2}}, 
	\end{equation*}
which requires $\alpha \geqslant \frac{5}{2}$ for $N\gg1$. Clearly, $\phi_i$ are not real-valued, but letting $u^*_0 = \phi_1 + \phi_2$ and symmetrizing the Fourier transform we find real-valued initial data $u_0$ with comparable Sobolev norm. The above estimates remain unchanged, which completes the proof.
\end{proof}

\subsection{$C^2$-illposedness for KP-I equations on $\R \times \T$}
  Next, we treat the case of the cylinder $\R \times \T$. 
\begin{theorem}
	\label{thm:C2IllposedCylinder}
	Let $\alpha <5$, $(s_1,s_2) \in \R^2$. Then there does not exist any time $T>0$ such that there is a function space $X_T \hookrightarrow C([-T,T];H^{s_1,s_2}(\R \times \T))$ in which \eqref{eq:FKPI} has a unique local solution with a $C^2$-differentiable flow map $\Gamma_t$:
	\begin{equation*}
		\Gamma_t:H^{s_1,s_2}(\R \times \T) \to H^{s_1,s_2}(\R \times \T), \quad u_0 \mapsto u(t), ~~t \in [-T,T].
	\end{equation*} 
\end{theorem}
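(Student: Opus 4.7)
The plan is to extend the second-order Taylor obstruction used in Theorem~\ref{thm:C2IllposednessR2} from $\R^2$ to the cylinder $\R\times\T$. The key geometric difference is that the transverse frequency $\eta$ now lies in $\Z$ rather than $\R$, so we cannot shrink $\eta$-supports below counting measure one; this single constraint is exactly what moves the threshold from $\alpha_c=\tfrac{5}{2}$ (Euclidean) to $\alpha_c=5$ (cylinder). First I would reduce $C^2$-differentiability of $\Gamma_t$ at $u_0=0$ to boundedness of the bilinear map $B_2:H^{s_1,s_2}\times H^{s_1,s_2}\to H^{s_1,s_2}$ defined by the second Duhamel iterate
\[
B_2(\phi_1,\phi_2)(t)=\int_0^t U_\alpha(t-s)\,\partial_x\bigl(U_\alpha(s)\phi_1\cdot U_\alpha(s)\phi_2\bigr)\,ds,
\]
whose Fourier representation is provided by Lemma~\ref{lem:ComputationFT}, and then construct $\phi_1,\phi_2$ on $\R\times\T$ violating the corresponding estimate.

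Fix $N\gg 1$, set $\mu:=N^{-(\alpha-1)/2}$, and let $k\in\Z$ be the integer closest to $\sqrt{\alpha+1}\,N^{\alpha/2+1}$. Define
\[
\hat{\phi}_1(\xi,\eta)=\mu^{-1/2}\,1_{[\mu,2\mu]}(\xi)\,1_{\{0\}}(\eta),\qquad \hat{\phi}_2(\xi,\eta)=\mu^{-1/2}N^{-s_1}\langle k\rangle^{-s_2}\,1_{[N,N+\mu]}(\xi)\,1_{\{k\}}(\eta),
\]
so that $\|\phi_i\|_{H^{s_1,s_2}(\R\times\T)}\sim 1$.

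The technical heart is a pair of cancellation identities for $\Omega_\alpha(\xi_1,0,\xi_2,k)$ on the convolution support. Taylor expanding the two contributions in the small ratio $\mu/N$ yields
\[
\Omega_{\alpha,1}(\mu,N)=(\alpha+1)N^\alpha\mu+\tfrac{\alpha(\alpha+1)}{2}N^{\alpha-1}\mu^2+O(N^{\alpha-2}\mu^3),
\]
\[
\frac{k^2\mu}{N(N+\mu)}=\frac{k^2\mu}{N^2}-\frac{k^2\mu^2}{N^3}+O\!\Bigl(\frac{k^2\mu^3}{N^4}\Bigr).
\]
The choice $k\approx\sqrt{\alpha+1}\,N^{\alpha/2+1}$ cancels the leading $(\alpha+1)N^\alpha\mu$ contributions up to the integer rounding error $O(N^{\alpha/2-1})$, while the two second-order terms combine to $\tfrac{(\alpha+1)(\alpha+2)}{2}N^{\alpha-1}\mu^2$, which is $O(1)$ precisely when $\mu=N^{-(\alpha-1)/2}$. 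A parallel expansion shows that the leading $(\alpha+1)N^\alpha$ contributions in $\partial_{\xi_1}\Omega_\alpha$ likewise cancel, leaving $|\partial_{\xi_1}\Omega_\alpha|\lesssim N^{(\alpha-1)/2}$, so that $|\Omega_\alpha|\lesssim 1$ uniformly across the $\xi_1$-support of width $\mu$.

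With $|(1-e^{-it\Omega_\alpha})/\Omega_\alpha|\gtrsim t$ throughout the support, Lemma~\ref{lem:ComputationFT} and the convolution bound $|(\hat{\phi}_1*\hat{\phi}_2)(\xi,k)|\gtrsim a_1 a_2\mu$ on a $\xi$-interval of length $\sim\mu$ yield
\[
\|v(t)\|_{H^{s_1,s_2}(\R\times\T)}^2\gtrsim t^2\cdot N^2\cdot a_1^2 a_2^2\,\mu^2\cdot \mu\cdot N^{2s_1}\langle k\rangle^{2s_2}=t^2 N^2\mu=t^2\,N^{(5-\alpha)/2}.
\]
For $\alpha<5$ this blows up as $N\to\infty$, while $\|\phi_1\|\,\|\phi_2\|\sim 1$, contradicting the hypothetical bilinear bound. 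The main obstacle to execute will be verifying the second, more delicate cancellation $|\partial_{\xi_1}\Omega_\alpha|\lesssim N^{(\alpha-1)/2}$: without it, the admissible $\xi_1$-width would collapse to $N^{-\alpha}$ and the construction would only diverge for $\alpha<2$. Real-valued initial data is accommodated by symmetrising $u_0=\phi_1+\overline{\phi_1(-\,\cdot\,)}+\phi_2+\overline{\phi_2(-\,\cdot\,)}$; the cross interactions $B_2(\phi_i,\overline{\phi_j(-\,\cdot\,)})$ produce Fourier output on frequencies disjoint from the main contribution at $(\xi,\eta)\sim(N,k)$, so the lower bound is preserved.
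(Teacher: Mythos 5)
Your proposal is correct, and it takes the same conceptual route as the paper (a high\,$\times$\,low pair with the transverse frequency $\eta_2$ tuned to cancel the leading part of the resonance), but executes the integer-quantization step differently and, in my view, more cleanly. The paper first identifies the ideal $\beta(N,\gamma)\in\R$, then invokes Dirichlet's approximation theorem to obtain a rational $p/q$ with $|\beta - p/q|<1/q^2$, and finally applies the anisotropic scaling $\xi\to q^{2/(\alpha+2)}\xi$, $\eta\to q\eta$ to move $p/q$ to the integer $p$; this requires tracking $q$ through all the norm computations and checking $q^{2/(\alpha+2)}\gamma\lesssim 1$. You bypass all of this by taking $k$ to be the nearest integer to $\sqrt{\alpha+1}\,N^{\alpha/2+1}$. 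Your key observation is that the rounding error $|\delta|\leqslant 1/2$ enters $\Omega_{\alpha,2}$ as $(k^2-(\alpha+1)N^{\alpha+2})/N^2\cdot\xi_1\sim N^{\alpha/2-1}\cdot\mu=N^{-1/2}$, which is negligible — and this is exactly what the paper's condition \eqref{eq:ConditionOnq} is checking, since with $\gamma\sim N^{-(\alpha-1)/2}$ one has $N^{-\alpha/4}\gamma^{-1/2}=N^{-1/4}<1$, so Dirichlet's theorem with $q=1$ (i.e.\ rounding) already suffices. Your Taylor computations for the leading cancellation, the second-order residue $\sim(\alpha+1)(\alpha+2)N^{\alpha-1}\mu^2\sim 1$, and the derivative bound $|\partial_{\xi_1}\Omega_\alpha|\lesssim N^{(\alpha-1)/2}$ (which controls the variation across the $\xi_1$-support of width $\mu$) all check out, as does the symmetrization argument and the final balance $\|v(t)\|\gtrsim tN^{(5-\alpha)/4}$, giving the threshold $\alpha_c=5$. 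What your simpler route buys is a single dyadic parameter $N$ rather than the pair $(N,q)$ and a tighter lower bound (the paper's $q$-correction degrades the exponent to $\frac{5-\alpha}{4}\cdot\frac{\alpha+2}{\alpha+3}$); what it omits is the flexibility the paper's scaling gadget would offer if the rounding error were not already subdominant — here it is, so the extra machinery is unnecessary.
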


We remark that the cases $\alpha \leqslant 1$ can be readily seen to be $C^2$-illposed by comparison with the dispersion-generalized Benjamin-Ono equation:
\begin{equation*}
\left\{ \begin{array}{cl}
\partial_t u + \partial_x D_x^\alpha u &= u \partial_x u, \quad (t,x) \in \R \times \R, \\
u(0) &= u_0 \in H^{s}(\R).
\end{array} \right.
\end{equation*}
This evolution is recovered by considering initial data to KP-equations on $\R \times \T$, which do not depend on the periodic coordinate.

In the following we suppose that $\alpha \geqslant 1$. As an ansatz to prove Theorem \ref{thm:C2IllposedCylinder}, we consider the following functions with parameters $\gamma$ and $\beta(N)$ to be determined:
\begin{equation}
	\label{eq:PreliminaryFunctions}
	\begin{split}
	\hat{\phi}(\xi_1,\eta_1) &= \gamma^{-\frac{1}{2}} 1_{D_1}(\xi_1,\eta_1), ~~D_1 = [\gamma, 2\gamma] \times \{0\},\\
	\hat{\phi}_2(\xi_2,\eta_2) &= \gamma^{-\frac{1}{2}} N^{-s_1}(\beta(N))^{-s_2} 1_{D_2}(\xi_2,\eta_2), ~~D_2 = [N-\gamma, N]\times \{\beta(N)\},
	\end{split}
\end{equation}
where $N \gg 1$, $\beta(N)\sim N^{\frac{\alpha}{2}+1}$, and $\gamma = \gamma(N) \ll 1$ will be chosen later. We find an upper bound for the size of the resonance function. With $\xi_1, \xi_2>0$, we can compute $\Omega_{\alpha}^1$ invoking the mean value theorem:
\begin{equation*}
\Omega_{\alpha}^1(\xi_1,\xi_2) = (\xi_1+\xi_2)^{\alpha+1} - \xi_2^{\alpha+1}-\xi_1^{\alpha+1}= (\alpha + 1)\xi_1\xi_*^{\alpha} - \xi_1^{\alpha+1},
\end{equation*}
for $\xi_* \in (\xi_2,\xi_2+\xi_1)$. For $\eta_1=0$ which is relevant for our setting, we have
\begin{equation*}
	\Omega_{\alpha}^2(\xi_1,\xi_2,\eta_1,\eta_2) = \frac{\eta_2^2\xi_1}{\xi_2(\xi_1+\xi_2)}=:f(\xi_1).
\end{equation*}
Using Taylor's theorem, we have
\begin{equation}
f(\xi_1) = f(\xi_1^*) + f'(\xi_1^*)(\xi_1-\xi_1^*)+R,
\end{equation}
where $R$ includes all the lower order terms. This gives
\begin{equation*}
	f(\xi_1) = f(\xi_1^*) + \frac{\eta_2^2}{\xi_2(\xi_1^*+\xi_2)}(\xi_1-\xi_1^*) -\frac{\eta_2^2(\xi_1^*)}{\xi_2(\xi_1^*+\xi_2)^2}(\xi_1 - \xi_1^*) + R.
\end{equation*}
From the definition of $D_1$ and $D_2$ in \eqref{eq:PreliminaryFunctions}, we have that the first two terms on the right-hand side above have similar size while the remainder term is smaller in size than the first two terms. Now we choose $\xi_1^*, \xi_1, \xi_2,\xi^*,\eta_2 \in \R$ such that the leading order terms in $\Omega_1^{\alpha}$ and $\Omega_2^{\alpha}$ cancel, namely
\begin{equation*}
	\begin{split}
	(\alpha+1)\xi_1\xi_*^{\alpha} - \xi_1^{\alpha+1} = f(\xi_1^*) + \frac{\eta_2^2(\xi_1-\xi_1^*)}{\xi_2(\xi_1^* + \xi_2)}
= \frac{\eta_2^2\xi_1}{\xi_2(\xi_1^* + \xi_2)},
	\end{split}
\end{equation*}
which gives
\begin{equation*}
	\eta_2^2 = \big((\alpha+1)\xi_*^{\alpha} - \xi_1^{\alpha}\big) \xi_2(\xi_1^* + \xi_2).
\end{equation*}
With this choice, for the size of the resonance function, we conclude
\begin{equation*}
	|\Omega_{\alpha}| \sim \Big| \frac{\eta_2^2 \xi_1^*(\xi_1-\xi_1^*)}{\xi_2(\xi_1^*+\xi_2)^2}\Big| \sim N^{\alpha-1}\gamma^2.
\end{equation*}
However, with the choice
\begin{equation*}
	\eta_2 =:\beta(N,\gamma) = \Big(\big((\alpha+1)\xi_*^{\alpha} - \xi_1^{\alpha}\big) \xi_2(\xi_1^* + \xi_2) \Big)^{\frac{1}{2}},
\end{equation*}
we cannot ascertain that $\beta(N,\gamma) \in \N$. To this end, we find a rational approximation of $\eta_2$. Using Dirichlet's approximation theorem, we have for $\beta(N, \gamma) \in \R$, the existence of infinitely many rational numbers $\frac{p}{q}$ such that
\begin{equation}
	| \beta(N, \gamma)-\frac{p}{q}| <\frac{1}{q^2}.
\end{equation} 
The above implies that
\begin{equation}
	\label{eq:Diophantine}
	|q\beta(N,\gamma) - p| <\frac{1}{q}.
\end{equation}
Since we have a natural number ($q\beta(N,\gamma)>0$) approximation for $q\beta(N,\gamma)$ (and not $\beta(N,\gamma)$), we make use of the anisotropic scaling \eqref{eq:AnisotropicScaling} for \eqref{eq:FKPI}: for any $\lambda \in \R$
\begin{equation*}
	\tau \to \lambda^{\alpha}\tau, \quad \xi \to \lambda \xi, \quad \eta \to \lambda^{\frac{\alpha}{2}+1}\eta. 
\end{equation*}
With this, for $q \in \N$, we introduce the following scaling
\begin{equation}
\label{eq:AnisotropicScalingQ}
	\tau \to\tau':= q^{2\frac{\alpha+1}{\alpha+2}} \tau, \quad \xi \to \xi':=q^{\frac{2}{\alpha+2}}\xi, \quad \eta \to \eta':=q\eta.
\end{equation}
It is easy to check that with the scaling \eqref{eq:AnisotropicScalingQ}, the resonance function scales as
\begin{equation*}
	\Omega_{\alpha} \to \Omega'_{\alpha} = q^{2\frac{\alpha+1}{\alpha+2}} \Omega_{\alpha}.
\end{equation*}
Now we work with the scaled variables ($\xi'_i, \eta'_i)$, $i=1,2$ and ensure that the natural number approximation \eqref{eq:Diophantine} of $q\beta(N,\gamma)$ does not affect the size of the resonance function. Consider the difference
\begin{equation*}
	\begin{split}
	 |\Omega'_{\alpha}(\xi'_1, \xi'_2, 0, q\beta(N,\gamma)) - \Omega'_{\alpha}(\xi'_1,\xi_2',0,p)|&= q^{2\frac{\alpha+1}{\alpha+2}} | \Omega_{\alpha}(\xi_1,\xi_2,0,\beta) - \Omega_{\alpha}(\xi_1,\xi_2,0,\frac{p}{q})|\\
	 &=q^{2\frac{\alpha+1}{\alpha+2}}\frac{\xi_1}{\xi_2(\xi_1+\xi_2)}\Big|\beta^2 - \frac{p^2}{q^2}\Big|\\
	 &\lesssim q^{2\frac{\alpha+1}{\alpha+2}}\frac{\gamma}{N^2} \Big| \beta-\frac{p}{q}\Big|  \Big|\beta +\frac{p}{q}\Big|\\
	 &\lesssim q^{2\frac{\alpha+1}{\alpha+2}} \frac{\gamma}{N^2} \frac{N^{\frac{\alpha}{2}+1}}{q^2}.
	 \end{split}
\end{equation*}
We require the size of the above expression to be negligible compared to \\$|\Omega'_{\alpha}(\xi'_1,\xi_2',0, q\beta(N,\gamma))|$, i.e.,
\begin{equation*}
	q^{2\frac{\alpha+1}{\alpha+2}} \frac{\gamma}{N^2} \frac{N^{\frac{\alpha}{2}+1}}{q^2} \ll q^{2\frac{\alpha+1}{\alpha+2}} N^{\alpha-1}\gamma^2
	\end{equation*}
which requires 
\begin{equation}
	\label{eq:ConditionOnq}
	q \gg N^{-\frac{\alpha}{4}} \gamma^{-\frac{1}{2}}.
	\end{equation}

Furthermore, we choose $\gamma = \gamma(N)$ such that the size of the new resonance function $\Omega'_{\alpha}$ is small, namely
\begin{equation*}
	q^{2\frac{\alpha+1}{\alpha+2}} N^{\alpha-1}\gamma^2 \sim 1,
	\end{equation*}
which gives
\begin{equation}
	\label{eq:ChoiceOfGamma}
	\gamma \sim N^{\frac{1}{2}-\frac{\alpha}{2}} q^{-\frac{\alpha+1}{\alpha+2}}.
\end{equation}
We remark that this is consistent with \eqref{eq:ConditionOnq} since taking the two conditions together
\begin{equation*}
q^{\frac{\alpha+3}{2(\alpha+2)}} \gg N^{-\frac{1}{4}}.
\end{equation*}
This is automatically satisfied for $\alpha \geqslant 1$, $N \in 2^{\N_0}$, and $q \in \N$.


With the choice of $q$ and $\gamma$ made above, we are set to prove Theorem \ref{thm:C2IllposedCylinder}.

\begin{proof}[Proof of Theorem \ref{thm:C2IllposedCylinder}]
	We define the  input functions $\phi_1$ and $\phi_2$ via their spatial Fourier transforms as follows:
\begin{equation*}
	\begin{split}
		\hat{\phi}_1(\xi_1,\eta_1) &= q^{-\frac{1}{\alpha+2}} \gamma^{-\frac{1}{2}} 1_{D_1}(\xi_1,\eta_1), \text{ where } D_1 = [q^{\frac{2}{\alpha+2}}\gamma, 2q^{\frac{2}{\alpha+2}}\gamma] \times \{0\},\\
		\hat{\phi}_2(\xi_2,\eta_2) &= q^{-\frac{1}{\alpha+2}} \gamma^{-\frac{1}{2}} (q^{\frac{2}{\alpha+2}}N)^{-s_1} p^{-s_2}1_{D_2}(\xi_2,\eta_2), \\
		&\quad \text{ where } D_2 = [q^{\frac{2}{\alpha+2}} (N-\gamma), q^{\frac{2}{\alpha+2}}N] \times \{p\}.
	\end{split}
\end{equation*}
In the above definition we choose $q$ such that $[q^{\frac{2}{\alpha+2}}\gamma, 2q^{\frac{2}{\alpha+2}}\gamma] \subseteq [0,1]$, i.e., $ q^{\frac{2}{\alpha+2}} \gamma \lesssim 1$ which follows from \eqref{eq:ChoiceOfGamma} and $\alpha \geqslant 1$.
It is straight-forward to check that $\|\phi_i\|_{H^{s_1,s_2}} \sim 1$ for $i=1,2$. We consider the contribution to the second Picard iterate given by the Duhamel integral:
\begin{equation*}
	v(t) = \int_0^t S_{\alpha}(t-s)\partial_x\big(S_{\alpha}(s)\phi_1 S_{\alpha}(s)\phi_2\big)ds.
	\end{equation*}
To show  that the flow map $\Gamma_t$ is not $C^2$ differentiable at the origin, we show that
\begin{equation*}
	\|v(t)\|_{H^{s_1,s_2}} \to \infty \text{ as } N \to \infty.
\end{equation*}
From Lemma \ref{lem:ComputationFT}, we have
\begin{equation*}
	\hat{v}(t,\xi,\eta) = \xi e^{it\omega_{\alpha}(\xi,\eta)} \int_{*} \frac{1-e^{-it\Omega_{\alpha}(\xi_1,\xi_2,\eta_1,\eta_2)}}{\Omega_{\alpha}(\xi_1,\xi_2,\eta_1,\eta_2)} \hat{\phi}_1(\xi_1,\eta_1)\hat{\phi}_2(\xi_2,\eta_2) d\xi_1d\eta_1,
\end{equation*}
where $*$ denotes the convolution constraint and we have the counting measure in the $\eta$ variable. To compute the $H^{s_1,s_2}$ norm of $v$, we estimate the size of $\hat{v}$ for $0<t\leqslant c \ll 1$:
\begin{equation*}
	|\hat{v}(t,\xi,\eta)| \sim t \frac{q^{\frac{2}{\alpha+2}}N}{(q^{\frac{2}{\alpha+2}}N)^{s_1}p^{s_2}}.
\end{equation*}
Thus,
\begin{equation*}
	\|v(t)\|_{H^{s_1,s_2}}^2 \sim \int \langle \xi\rangle^{2s_1}\langle \eta \rangle^{2s_2}|\hat{v}(t,\xi,\eta)|^2d\xi d\eta \sim t^2 q^{\frac{4}{\alpha+2}} N^2 q^{\frac{2}{\alpha+2}}\gamma.
\end{equation*}
For the flow map $\Gamma_t$ to be $C^2$, we require
\begin{equation*}
	1\sim \|\phi_1\|_{H^{s_1,s_2}} \|\phi_2\|_{H^{s_1,s_2}} \gtrsim \|v(t)\|_{H^{s_1,s_2}} \sim t q^{\frac{3}{\alpha+2}}N\gamma^{\frac{1}{2}} \sim t N^{-\frac{1}{4} \frac{5-\alpha}{\alpha+3}} N^{\frac{5-\alpha}{4}},
\end{equation*}
where we used \eqref{eq:ConditionOnq} and \eqref{eq:ChoiceOfGamma} to obtain the last term in the above display. Hence, we conclude that $\Gamma_t$ can be $C^2$-differentiable only for $\alpha \geqslant 5$.
\end{proof}

\subsection{Subcritical illposedness for semilinear KP-I equations on $\R \times \T$}

We supplement the analytic well-posedness result proved in Theorem \ref{thm:SemilinearWPRT}:

\begin{theorem}
    \label{thm:IllposednessForSmallS}
    Let $\alpha >5$, and $s<\frac{1-\alpha}{4}$. Then, for initial data in $H^{s,0}(\R \times \T,\R)$, \eqref{eq:FKPI} is ill-posed on $\R\times \T$, i.e., the data-to-solution map
    \begin{equation*}
        u_0 \mapsto u(t)
    \end{equation*}
    fails to be continuous.
    \begin{proof}
        The proof relies on \cite[Proposition~1]{BejenaruTao2006}. The starting point is an analytic data-to-solution mapping assigning more regular initial data from a ball in $H^{\bar{s},0}$, $\bar{s} \in (\frac{1-\alpha}{4},0)$ to solutions of higher regularity:
        \begin{equation*}
        		(B_{R,\bar{s}}, \| \cdot \|_{H^{\bar{s},0}}) \to (B_{R}, X_{\bar{s}}), \quad u_0 \mapsto u.
        \end{equation*}
        This is provided by Theorem \ref{thm:SemilinearWPRT}.
        
\smallskip        
        
        \cite[Proposition~1]{BejenaruTao2006} now states that 
a possible well-posedness for $s'<\frac{1-\alpha}{4}$ implies continuity of
\begin{equation*}
    \begin{split}
   (B_{R,\bar{s}}, \| \cdot \|_{H^{s',0}}) \to C_T H^{s',0}, \quad
    u_0 &\mapsto \int_0^t S_\alpha(t-s) (\partial_x(S_\alpha(s) u_0 S_\alpha(s) u_0)) ds.
    \end{split}
\end{equation*}

The above display is the second Picard iterate, and we furnish initial data for which the estimate
\begin{equation}
\label{eq:Boundedness}
    \Big \| \int_0^t S_\alpha(t-s) (\partial_x(S_\alpha(s) u_0 S_\alpha(s) u_0)) ds \Big\|_{H^{s,0}(\R \times \T)} \lesssim \|u_0\|_{H^{s,0}(\R \times \T)}^2
\end{equation}
holds only for $s\geqslant \frac{1-\alpha}{4}$. We rely on a High $\times$ High $\to$ Low resonant interaction, which is crucial to apply \cite[Proposition~1]{BejenaruTao2006} because the sequence of initial data must be contained in $(B_{R,\bar{s}}, \| \cdot \|_{H^{\bar{s},0}})$. Define 
\begin{equation*}
  \hat{u}_0(\xi,\eta) = 1_{[A,A+A^{1-\alpha}]}(\xi) \cdot \delta_0(\eta) + 1_{[1-A,1-A+A^{1-\alpha}]}(\xi) \cdot \delta_m(\eta), 
\end{equation*}
where $m \in \N$ and $1\ll A \in \R_{>0}$ is such that for some output frequencies $\xi$, we have $\xi \sim 1$. It is straightforward to check that
\begin{equation*}
    \|u_0\|_{H^{s,0}} \sim A^s A^{\frac{1-\alpha}{2}}.
\end{equation*}
Again we remark that $u_0$ is not real-valued, which can be accomplished by symmetrization of the Fourier transform like in previous sections.

\smallskip

Furthermore, choosing $m \sim A^{\frac{\alpha}{2}}$ and $A$ suitably, we have
\begin{equation*}
    |\Omega_\alpha(\xi_1,\xi-\xi_1,\eta_1,\eta-\eta_1)| \sim 1.
\end{equation*}
Indeed, it suffices to check this for one specific $\xi_1 \in A$ and $\xi \sim 1$. The estimate follows then from simple derivative estimates.

\medskip

By Lemma \ref{lem:ComputationFT} and that $|\xi|\sim 1$ we have 
\begin{equation*}
  \Big \| \int_0^t S_\alpha(t-s) (\partial_x(S_\alpha(s) u_0 S_\alpha(s) u_0)) ds \Big\|_{H^{s,0}} \sim N^{\frac{3-3\alpha}{2}}.   
\end{equation*}
For \eqref{eq:Boundedness} to hold, we require
\begin{equation*}
   N^{\frac{3-3\alpha}{2}} \lesssim N^{2s} N^{1-\alpha},
\end{equation*}
which is true only for $s\geqslant \frac{1-\alpha}{4}$. This completes the proof.
    \end{proof}
\end{theorem}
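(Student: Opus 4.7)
The plan is to reduce discontinuity of the data-to-solution map to unboundedness of the second Picard iterate via the Bejenaru--Tao framework \cite{BejenaruTao2006}. Because Theorem \ref{thm:SemilinearWPRT} supplies a real-analytic solution map on a small ball at some regularity $\bar s \in (\tfrac{1-\alpha}{4}, 0)$, any hypothetical well-posedness at $s < \tfrac{1-\alpha}{4}$ would force the second Picard iterate
\[
\Phi_2(u_0)(t) = \int_0^t S_\alpha(t-s)\,\partial_x\bigl(S_\alpha(s) u_0 \cdot S_\alpha(s) u_0\bigr)\,ds
\]
to extend to a continuous bilinear map $H^{s,0} \to C_T H^{s,0}$. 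I would therefore exhibit a sequence $(u_{0,N})$ of (real-valued) initial data along which the bound
\[
\|\Phi_2(u_{0,N})(t)\|_{H^{s,0}(\R\times\T)} \lesssim \|u_{0,N}\|_{H^{s,0}(\R\times\T)}^2
\]
fails as $N\to\infty$, precisely when $s < \tfrac{1-\alpha}{4}$.

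The counterexample will exploit a High $\times$ High $\to$ Low resonant interaction. With $A = A(N) \gg 1$ and $m \in \N$ with $m \sim A^{\alpha/2}$, I set
\[
\hat u_0(\xi,\eta) = \mathbf{1}_{[A, A+A^{1-\alpha}]}(\xi)\,\delta_0(\eta) + \mathbf{1}_{[1-A, 1-A + A^{1-\alpha}]}(\xi)\,\delta_m(\eta),
\]
symmetrized in the usual way to obtain real-valuedness without affecting the sizes below. The first step is to verify, using the mean-value theorem as in the proof of Theorem \ref{thm:C2IllposedCylinder}, that on the convolution support the output $\xi$-frequency satisfies $\xi \sim 1$ while the resonance function stays of size $\mathcal{O}(1)$: balancing the leading parts of $\Omega_{\alpha,1}$ against $(\eta_1\xi_2 - \eta_2\xi_1)^2/(\xi_1\xi_2(\xi_1+\xi_2))$ forces $m^2 \sim (\alpha+1) A^{\alpha}$, which dictates the choice $m \sim A^{\alpha/2}$. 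A routine computation then gives $\|u_0\|_{H^{s,0}} \sim A^{s + (1-\alpha)/2}$. Invoking Lemma \ref{lem:ComputationFT} and the small-time expansion $|1 - e^{-it\Omega_\alpha}|/|\Omega_\alpha| \sim t$ yields $|\widehat{\Phi_2(u_0)}(t,\xi,m)| \sim t\, A^{1-\alpha}$ on a $\xi$-interval of length $A^{1-\alpha}$ around $\xi \sim 1$, hence $\|\Phi_2(u_0)(t)\|_{H^{s,0}} \sim t\, A^{(3-3\alpha)/2}$. The required inequality then reduces to $A^{(3-3\alpha)/2} \lesssim A^{2s + 1-\alpha}$, equivalently $s \geq \tfrac{1-\alpha}{4}$, so that for $s$ strictly below the threshold the ratio diverges as $A \to \infty$.

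The main obstacle I anticipate is the integrality constraint $\eta \in \Z$ on the cylinder: the optimal $m$ making $|\Omega_\alpha|$ minimal is generically irrational. I would resolve this exactly as in the proof of Theorem \ref{thm:C2IllposedCylinder}, combining a Dirichlet-type rational approximation with the anisotropic rescaling $(\tau,\xi,\eta) \mapsto (q^{2(\alpha+1)/(\alpha+2)}\tau, q^{2/(\alpha+2)}\xi, q\eta)$ so that the approximation error is absorbed into an acceptable size of $|\Omega_\alpha|$ after rescaling; the exponent $\tfrac{1-\alpha}{4}$ is scale-invariant under this rescaling, so no information is lost. A secondary bookkeeping issue is checking that the symmetrization producing real-valued $u_0$ does not create an unwanted cancellation or a distinct dominant interaction — this is standard, since the symmetric partner lives at $\xi \sim -A$ and contributes only non-resonant pieces at output frequency $\xi \sim 1$.
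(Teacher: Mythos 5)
Your proposal is correct and follows essentially the same route as the paper: reduction to unboundedness of the second Picard iterate via Bejenaru--Tao, the same High $\times$ High $\to$ Low resonant example with $\hat u_0$ concentrated near $\xi \sim A$ (and $\xi \sim 1-A$) with $\eta = 0$ and $\eta = m \sim A^{\alpha/2}$, and the same balancing that yields the threshold $s = \frac{1-\alpha}{4}$. One small remark: the Dirichlet-approximation plus rescaling step you anticipate is unnecessary here, because (unlike in Theorem \ref{thm:C2IllposedCylinder}, where the $\xi$-frequencies are themselves discrete) the parameter $A \in \R_{>0}$ is free, so one may simply pick $m \in \N$ and then solve $A = \big(m^2/(\alpha+1)\big)^{1/\alpha}$ to make $|\Omega_\alpha| \sim 1$ directly — this is what the paper means by choosing ``$A$ suitably.''
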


\subsection{Ill-posedness on tori}
Finally, we prove the following result on $\T \times \T_\gamma$.

\begin{theorem}
	\label{thm:C2IllposedTorus}
	Let $\alpha \in \R_{>0}$, $(s_1,s_2) \in \R^2$. Then, there is $\gamma = \gamma(\alpha) \in (1/2,1]$ such that there does not exist any time $T>0$, for which there is a function space $X_T \hookrightarrow C([-T,T];H^{s_1,s_2}(\T \times \T_\gamma))$ in which \eqref{eq:FKPI} has a unique local solution with a $C^2$-differentiable flow map $\Gamma_t$:
	\begin{equation*}
		\Gamma_t:H^{s_1,s_2}(\T \times \T_\gamma) \to H^{s_1,s_2}(\T \times \T_\gamma), \quad u_0 \mapsto u(t), ~~t \in [-T,T].
	\end{equation*}
\end{theorem}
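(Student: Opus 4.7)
The plan is to adapt the construction in the proof of Theorem~\ref{thm:C2IllposedCylinder} to the fully periodic setting $\T \times \T_\gamma$. Because $\xi\in\Z$, the anisotropic rescaling in $\xi$ that absorbed the Diophantine error on $\R\times\T$ is unavailable; instead, the parameter $\gamma$ itself must be chosen to depend on $\alpha$ so that a resonant High$\times$Low configuration is realized sufficiently accurately on the lattice $\Z \times \gamma^{-1}\Z$.

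First I fix $\xi_1=1$, $\xi_2=N$, $\xi_3=N+1$ and look for $\eta_i = k_i/\gamma \in \gamma^{-1}\Z$ with $k_i \in \Z$. Setting $m := N k_1 - k_2 \in \Z$, a direct computation gives
\begin{equation*}
\Omega_\alpha \;=\; \Omega_{\alpha,1}(1,N) \;-\; \frac{m^2}{\gamma^2 N(N+1)}.
\end{equation*}
Since $m$ ranges over all of $\Z$ as $(k_1,k_2)$ varies, the optimal choice is to take $m$ equal to the nearest integer to
\begin{equation*}
P(N) \;:=\; \gamma\sqrt{N(N+1)\,\Omega_{\alpha,1}(1,N)} \;=\; \gamma\sqrt{\alpha+1}\,N^{(\alpha+2)/2}\bigl(1+O(1/N)\bigr),
\end{equation*}
yielding the bound $|\Omega_\alpha| \lesssim \gamma^{-2}\|P(N)\|_{\R/\Z}\,N^{(\alpha-2)/2}$, with $\|\cdot\|_{\R/\Z}$ denoting distance to the nearest integer.

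Next, take $\phi_1,\phi_2$ with Fourier transforms supported on the single lattice points $(\xi_i,\eta_i)$ and normalized so that $\|\phi_i\|_{H^{s_1,s_2}} \sim 1$. Choosing $k_1=1$ and the corresponding $k_2$ gives $|\eta_1|\sim\gamma^{-1}$ and $|\eta_2|\sim|\eta_3|\sim\sqrt{\alpha+1}\,N^{(\alpha+2)/2}$. Lemma~\ref{lem:ComputationFT}, optimized over $t\in[0,T]$, provides $|\hat v(t,\xi_3,\eta_3)| \sim |\xi_3|\min(T,|\Omega_\alpha|^{-1})$, whence
\begin{equation*}
\frac{\|v(t)\|_{H^{s_1,s_2}}}{\|\phi_1\|_{H^{s_1,s_2}}\|\phi_2\|_{H^{s_1,s_2}}} \;\sim\; \gamma^{s_2}\,N\,\min\bigl(T,\,|\Omega_\alpha|^{-1}\bigr),
\end{equation*}
where the $s_1$-dependent factor $((N+1)/N)^{s_1}\to 1$ is absorbed in $\sim$ and the $\gamma^{s_2}$ factor arises from the ratio $|\eta_3|^{s_2}/(|\eta_1|^{s_2}|\eta_2|^{s_2})$. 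The ratio diverges along $N\to\infty$ whenever $|\Omega_\alpha|=o(N)$, equivalently whenever $\|P(N)\|_{\R/\Z} = o\bigl(N^{(4-\alpha)/2}\bigr)$ along a subsequence. A final symmetrization $\hat u_0 := \hat\phi_1+\hat\phi_2 + \overline{\hat\phi_1(-\cdot,-\cdot)}+\overline{\hat\phi_2(-\cdot,-\cdot)}$ produces real-valued data without altering the estimates.

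It remains to select $\gamma = \gamma(\alpha) \in (1/2,1]$ so that this Diophantine condition holds. For $0<\alpha<4$ every choice works, as $\|P(N)\|_{\R/\Z}\leq 1/2 \ll N^{(4-\alpha)/2}$. In the distinguished cases $\alpha\in\{2,6\}$ one has the algebraic identities
\begin{equation*}
\Omega_{2,1}(1,N) = 3N(N+1),\qquad \Omega_{6,1}(1,N) = 7N(N+1)(N^2+N+1)^2,
\end{equation*}
so that $\gamma=1/\sqrt{3}$ respectively $\gamma=2/\sqrt{7}$, both of which lie in $(1/2,1]$, render $\gamma^2 N(N+1)\Omega_{\alpha,1}(1,N)$ an exact perfect square and hence $|\Omega_\alpha|=0$ for every $N$. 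For general $\alpha\geq 4$ one chooses $\gamma\in(1/2,1]$ of Liouville type with respect to the asymptotic expansion of $P$; density of such parameters in every non-empty open subinterval of $\R$ produces a subsequence $N_k\to\infty$ on which $\|P(N_k)\|_{\R/\Z}$ decays faster than any prescribed negative power of $N_k$. The main obstacle of the proof is this quantitative Diophantine step for large $\alpha$, where the required rate worsens polynomially in $N_k$ and $P$ is not polynomial for non-integer $\alpha$; the freedom to pick $\gamma$ in the whole interval $(1/2,1]$ is what makes a Liouville construction feasible for every $\alpha>0$.
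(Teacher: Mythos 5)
Your argument is correct and moreover makes rigorous a step that the paper's proof of this theorem glosses over. For $\alpha\geq 1$ the paper sets $\Omega_\alpha=0$ exactly for a single frequency $N$ and chooses $\gamma$ so that $p=\sqrt{N(N+1)\,\Omega_{\alpha,1}(1,N)}\in\gamma^{-1}\Z$; but this ties $\gamma$ to $N$, whereas the $C^2$-illposedness conclusion needs a sequence $N_k\to\infty$ compatible with one fixed $\gamma$, and the paper does not explain how to achieve this. You resolve it cleanly by passing to nearest-integer approximation: with $m$ the nearest integer to $P(N)=\gamma\sqrt{N(N+1)\,\Omega_{\alpha,1}(1,N)}$ one gets $|\Omega_\alpha|\lesssim\|P(N)\|_{\R/\Z}\,N^{(\alpha-2)/2}$, and the ratio $N\min(T,|\Omega_\alpha|^{-1})$ diverges as soon as $\|P(N)\|_{\R/\Z}=o\bigl(N^{(4-\alpha)/2}\bigr)$ along a subsequence. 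For $\alpha<4$ this is automatic for \emph{every} $\gamma\in(1/2,1]$, which also absorbs the paper's separate periodic gBO reduction at $\alpha<1$; the closed-form choices $\gamma=1/\sqrt{3}$ and $\gamma=2/\sqrt{7}$ for $\alpha=2,6$, which make $\Omega_\alpha$ vanish identically in $N$, are a pleasing explicit bonus. The one place to sharpen is the Liouville step for $\alpha\geq 4$: ``density of Liouville-type parameters'' is a consequence, not the mechanism, so spell out the nested-interval construction directly --- fix a closed subinterval $I_0\subset(1/2,1]$, and having built $I_{k-1}$, choose $N_k$ so large that $Q(N_k):=P(N_k)/\gamma$ exceeds a fixed multiple of $|I_{k-1}|^{-1}$, then take $I_k\subset I_{k-1}$ to be one of the subintervals of length $2\delta_k/Q(N_k)$ on which $\|\gamma Q(N_k)\|_{\R/\Z}<\delta_k$, with $\delta_k$ chosen to decay as fast as $N_k^{(4-\alpha)/2}$ demands; any $\gamma\in\bigcap_k I_k$ then works. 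Finally, your $\gamma^{s_2}$ prefactor is a uniformly bounded constant (since $\langle\eta_1\rangle\sim 1$), so it is harmless but plays no role in the divergence.
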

\begin{proof}
Considering initial data, which do not depend on the second variable, the evolution becomes
\begin{equation*}
\partial_t u + \partial_x D_x^\alpha u = u \partial_x u.
\end{equation*}	
For this equation it is easy to see, choosing initial data
\begin{equation*}
\hat{u}_0(\xi) = \delta_1(\xi) + \delta_N(\xi)
\end{equation*}
for $N \gg 1$, the data-to-solution mapping fails to be $C^2$ for $\alpha < 1$. 

\medskip

In the following we suppose that $\alpha \geqslant 1$ and
consider the following functions:
		\begin{equation}
  \label{eq:PreliminaryChoiceTorus}
			\begin{split}
				\hat{\phi}_1(\xi_1,\eta_1) &=  1_{D_1}(\xi_1,\eta_1), \quad D_1 = \{ 1 \} \times \{ 0 \},\\
				\hat{\phi}_2(\xi_2,\eta_2) &= N^{-s_1} p^{-s_2} 1_{D_2}(\xi_2,\eta_2), \quad D_2 = \{ N \} \times \{ p \}.
			\end{split}
		\end{equation}
We choose $p$ such that
\begin{equation*}
\Omega_\alpha(1,0,N,p) = (N+1)^{\alpha+1} - 1 - N^{\alpha+1} - \frac{p^2}{(N+1)N} = 0.
\end{equation*}
Note that this requires a choice of $\gamma \in (\frac{1}{2},1]$ such that $p \in \gamma^{-1} \Z$, i.e., fixing the ratio of the periods.

Clearly, for the Fourier transform of the Duhamel integral
\begin{equation*}
|\hat{v}(t,\xi,\eta)| \sim \frac{N t}{N^{s_1}	 p^{s_2}}, \quad \xi = N+1, \; \eta = p.
\end{equation*}
Hence,
\begin{equation*}
\| v(t) \|_{H^{s_1,s_2}} \gtrsim N t.
\end{equation*}
This implies that for $\alpha \geqslant 1$, the data-to-solution mapping of \eqref{eq:FKPI} fails to be $C^2$ for some $\gamma = \gamma(\alpha)$.


\end{proof}

\section{Improved quasilinear local well-posedness for the KP-I equation on the Euclidean plane}
\label{section:QuasilinearLWPKPIR2}

In this section we prove the following result on local well-posedness for the KP-I equation on $\R^2$:
\begin{equation}
\label{eq:KPIR2}
\left\{ \begin{array}{cl}
\partial_t u + \partial_x^3 u - \partial_x^{-1} \partial_y^2 u &= u \partial_x u, \quad (t,x,y) \in \R \times \R^2, \\
u(0) &= u_0 \in H^{s,0}(\R^2).
\end{array} \right.
\end{equation}

\begin{theorem}
\label{thm:ImprovedQuasilinearLWPKPIR2}
\eqref{eq:KPIR2} is locally well-posed provided that $s>\frac{1}{2}$ in the following sense: The data-to-solution mapping $S_T^\infty: H^{\infty,0}(\R^2) \to C_T H^{\infty,0}(\R^2)$ extends continuously to $S_T^s: H^{s,0}(\R^2) \to C_T H^{s,0}(\R^2)$ with $T=T(\| u_0 \|_{H^{s,0}(\R^2)})$ depending lower semicontinuously on the initial data and $T(u) \gtrsim 1$ as $u \downarrow 0$.
\end{theorem}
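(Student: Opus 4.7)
The plan is to implement the Ionescu--Kenig--Tataru short-time Fourier restriction scheme with frequency-dependent time localization $T(N) = N^{-1}$ (possibly $N^{-1-\varepsilon}$ at endpoints), which is the correct scaling for $\alpha = 2$. For smooth solutions on $[0,T]$ with $T \leq 1$ and $s > \tfrac{1}{2}$, the task reduces to establishing
\begin{align*}
\| u \|_{F^s(T)} &\lesssim \| u \|_{E^s(T)} + \| \partial_x(u^2) \|_{\mathcal{N}^s(T)}, \\
\| \partial_x(u^2) \|_{\mathcal{N}^s(T)} &\lesssim \| u \|_{F^s(T)}^2, \\
\| u \|_{E^s(T)}^2 &\lesssim \| u_0 \|_{H^{s,0}}^2 + \| u \|_{F^s(T)}^3.
\end{align*}
The first is Lemma \ref{lemma:LinShortTime}; the remaining two are the core of the argument. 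A standard bootstrap together with the global smooth solutions of Molinet--Saut--Tzvetkov will produce a priori $H^{s,0}$ bounds and existence of the extended data-to-solution map by approximation.

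For the nonlinear estimate I will dyadically decompose into output frequency $N$ and input frequencies $N_1, N_2$, separating into High $\times$ High $\to$ Low/Comparable and High $\times$ Low $\to$ High interactions, each into resonant ($L_{\max} \ll N_1^2 N_2$) and non-resonant subcases. The non-resonant pieces are absorbed through $L_{\max}^{1/2} \gtrsim N_1 N_2^{1/2}$. The resonant High $\times$ High case is closed by the trilinear Loomis--Whitney inequality \eqref{eq:LoomisWhitneyEuclidean}, which for $\alpha = 2$ contributes the favorable factor $N_1^{-1} N_2^{-1/2}$. The critical case, and the one that dictates the threshold, is the resonant High $\times$ Low interaction with $N \sim N_1 \gg N_2$: H\"older in time on a window of length $N^{-1}$ yields $T^{1/2} \sim N^{-1/2}$, and the refined C\'ordoba--Fefferman bilinear Strichartz estimate of Lemma \ref{lem:CFBilinearStrichartz} (for $\alpha = 2$, a factor $N_2^{1/2}/N_1^{1/2}$) balances the derivative loss $N$ from $\partial_x$. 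The effective remaining factor is $N_2^{1/2}$, and the Sobolev summation over $N_2 \ll N$ weighted by $N_2^{-s}$ converges exactly when $s > \tfrac{1}{2}$. The energy estimate is obtained by differentiating $\| P_N u \|_{L^2}^2$, integrating by parts to reach a symmetric trilinear form, localizing time to $N^{-1}$-intervals, and bounding by the same combination of \eqref{eq:LoomisWhitneyEuclidean} and the bilinear Strichartz estimates; the resonant High $\times$ Low configuration again sets the threshold.

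For continuous dependence I will estimate the difference $v = u_1 - u_2$ of two solutions in a short-time space $\bar{F}^{-\frac{1}{2}}(T)$ carrying a low-frequency weight, where the choice of negative index $-\tfrac{1}{2}$ together with the weight is precisely calibrated so that the High $\times$ High resonant interaction in the difference equation closes without invoking high-frequency regularity of $v$. The estimates
\begin{align*}
\| v \|_{\bar{F}^{-\frac{1}{2}}(T)} &\lesssim \| v \|_{\bar{B}^{-\frac{1}{2}}(T)} + \| \partial_x(v(u_1 + u_2)) \|_{\bar{\mathcal{N}}^{-\frac{1}{2}}(T)}, \\
\| \partial_x(v(u_1+u_2)) \|_{\bar{\mathcal{N}}^{-\frac{1}{2}}(T)} &\lesssim \| v \|_{\bar{F}^{-\frac{1}{2}}(T)} \bigl( \| u_1 \|_{F^s(T)} + \| u_2 \|_{F^s(T)} \bigr), \\
\| v \|_{\bar{B}^{-\frac{1}{2}}(T)}^2 &\lesssim \| v(0) \|_{\bar{H}^{-\frac{1}{2}}}^2 + \| v \|_{\bar{F}^{-\frac{1}{2}}(T)}^2 \bigl( \| u_1 \|_{F^s(T)} + \| u_2 \|_{F^s(T)} \bigr)
\end{align*}
yield Lipschitz dependence in the $\bar{H}^{-\frac{1}{2}}$-topology, after which the standard Bona--Smith argument (comparing with frequency-truncated data) upgrades to continuity in $H^{s,0}$.

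The main obstacle is the sharp accounting in the resonant High $\times$ Low interaction at $s = \tfrac{1}{2}$, where the derivative loss $N$ can only be recovered by the simultaneous use of the short time window $N^{-1}$, the orthogonality-based gain from Lemma \ref{lem:CFBilinearStrichartz}, and modulation gains; replacing Lemma \ref{lem:CFBilinearStrichartz} by the cruder Proposition \ref{prop:BilinearStrichartzGeneral} only yields $s > \tfrac{3}{4}$. A second subtlety is the calibration of the low-frequency weight in $\bar{H}^{-\frac{1}{2}}$, which is needed so that contributions from very small $N_2$ (where the transversality gain degenerates) can still be summed.
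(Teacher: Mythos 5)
Your overall plan matches the paper's: short-time Fourier restriction with $T(N)=N^{-1}$, the nonlinear Loomis--Whitney inequality in the resonant High$\times$High case, bilinear Strichartz estimates in the resonant High$\times$Low case, energy estimates by integration by parts, and Lipschitz continuity in a weighted $\bar{H}^{-1/2}$ topology upgraded to $H^{s,0}$ by Bona--Smith. That is the correct architecture, and your estimate of which interaction sets the threshold (resonant High$\times$Low, giving an effective factor $N_2^{1/2}$) is also right.

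However, your assertion about the role of Lemma~\ref{lem:CFBilinearStrichartz} is incorrect for $\alpha=2$, and this is a genuine conceptual error rather than a stylistic choice. At $\alpha=2$ the bound of Lemma~\ref{lem:CFBilinearStrichartz}, namely $N_2^{1/2} N_1^{-\alpha/4}(L_1L_2)^{1/2} = N_2^{1/2}N_1^{-1/2}(L_1L_2)^{1/2}$, \emph{coincides} with what Proposition~\ref{prop:BilinearStrichartzGeneral} already gives in the resonant configuration: there the transversality is $\sim N_1^{\alpha/2}$, $C_{1,\R^2}(L_{\max},N_1)=(L_{\max}/N_1)^{1/2}$, and one obtains $N_2^{1/2}L_{\min}^{1/2}(L_{\max}/N_1)^{1/2}=N_2^{1/2}N_1^{-1/2}(L_1L_2)^{1/2}$. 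Indeed the proof of Lemma~\ref{lem:CFBilinearStrichartz} reduces its large-transversality regime \emph{to} Proposition~\ref{prop:BilinearStrichartzGeneral}, and the paper's proof of the KP-I short-time bilinear estimate in Section~\ref{section:QuasilinearLWPKPIR2} uses only Proposition~\ref{prop:BilinearStrichartzGeneral}, Lemma~\ref{lem:AlternativeBilinearStrichartzEstimate}, and the Loomis--Whitney inequality, never Lemma~\ref{lem:CFBilinearStrichartz}. The C\'ordoba--Fefferman refinement enters only for $\alpha>2$, where the small-transversality orthogonality is actually a gain. Consequently, your claim that ``replacing Lemma~\ref{lem:CFBilinearStrichartz} by the cruder Proposition~\ref{prop:BilinearStrichartzGeneral} only yields $s>\tfrac{3}{4}$'' is false. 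The improvement from Guo--Peng--Wang's $H^{1,0}$ to $s>\tfrac{1}{2}$ here does not come from a sharper bilinear Strichartz estimate; it comes from estimating differences at negative regularity in the weighted $\bar{H}^{-1/2}$ space (which you do correctly include in your difference-estimate scheme) combined with the Bona--Smith approximation, and from carefully handling the High$\times$High$\to$Low paraproduct term with the derivative on the low frequency and the low-frequency weight controlling $N\lesssim N_1^{-2}$. You should drop Lemma~\ref{lem:CFBilinearStrichartz} from the $\alpha=2$ argument entirely and trace the $s>\tfrac{1}{2}$ threshold to the points where it actually arises: the resonant High$\times$Low case in the bilinear estimate \eqref{eq:ShorttimeBilinearEstimateKPIA}--\eqref{eq:ShorttimeBilinearEstimateKPIB}, and the non-resonant High$\times$High cases where two $L^4$ Strichartz estimates are used.

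One further point of precision: the $\bar{H}^{-1/2}$ weight involves a parameter $\delta = \tfrac{s-1/2}{2}\wedge\tfrac{1}{8}$ (so the space depends on $s$), and the Bona--Smith step requires the asymmetric energy estimate \eqref{eq:ShorttimeEnergyDifferenceIterationKPIB} with the term $\|v\|_{\bar{F}^{-1/2}(T)}\|v\|_{F^s(T)}\|u_2\|_{F^{2s+1/2}(T)}$, i.e., the smoother approximating solution $u^H$ appears at regularity $2s+\tfrac{1}{2}$. This is where the extra regularity reserve purchased by working at $\bar{H}^{-1/2}$ rather than $L^2$ is actually spent, and should be made explicit in a complete write-up.
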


We begin with short-time bilinear estimates stated in Proposition \ref{prop:ShorttimeBilinearKPIR2} and then show energy estimates in short-time function spaces in Proposition \ref{prop:ShorttimeEnergyIterationKPIR2}. We choose the frequency-dependent time localization $T=T(N)=N^{-1}$, which is chosen to precisely ameliorate the derivative loss in the nonlinear estimate. Moreover, we consider Sobolev spaces with low frequency weight to estimate the differences of solutions:
\begin{equation*}
\| f \|_{\bar{H}^{-\frac{1}{2}}}^2 = \int_{\R^2} (1+|\xi|^{-1})^{1+2\delta} \langle \xi \rangle^{-1} |\hat{f}(\xi,\eta)|^2 d\xi d\eta.
\end{equation*}
$\delta$ is chosen depending on $s$:
\begin{equation*}
\delta = \frac{s-\frac{1}{2}}{2} \wedge \frac{1}{8}.
\end{equation*}

The choice of frequency-dependent time localization and low frequency weights can be traced back to the work of Ionescu--Kenig--Tataru \cite{IonescuKenigTataru2008}.
Short-time function spaces to estimate the difference of the solution with low frequency weight are denoted by
$\bar{F}^{-\frac{1}{2}}$, $ \bar{B}^{-\frac{1}{2}}$.

\subsection{Outline of the proof.}
The general argument can be found already in \cite{IonescuKenigTataru2008}, so we shall be brief.
Solutions $u$ to \eqref{eq:KPIR2} are iterated in short-time function spaces as follows with $T \in (0,1]$, $s \geqslant s' > \frac{1}{2}$:
\begin{equation*}
\left\{ \begin{array}{cl}
\| u \|_{F^s(T)} &\lesssim \| u \|_{B^s(T)} + \| \partial_x(u^2) \|_{\mathcal{N}^s(T)}, \\
\| \partial_x(u^2) \|_{\mathcal{N}^s(T)} &\lesssim \| u \|_{F^s(T)} \| u \|_{F^{s'}(T)}, \\
\| u \|^2_{B^s(T)} &\lesssim \| u(0) \|^2_{H^{s,0}(\R^2)} + \| u \|^2_{F^s(T)} \| u \|_{F^{s'}(T)}.
\end{array} \right.
\end{equation*}
By a standard bootstrap argument this implies a priori estimates
\begin{equation*}
\| u \|_{F^s(1)} \lesssim \| u(0) \|_{H^{s,0}(\R^2)}
\end{equation*}
for $s \geqslant s' > \frac{1}{2}$ only depending on smallness of the $H^{s',0}(\R^2)$-norm. And by subcriticality of this norm, smallness can always be accomplished by rescaling.

\smallskip

Differences of solutions $v = u_1 - u_2$ with $u_i$ solutions to \eqref{eq:KPIR2} are propagated as follows with $s > \frac{1}{2}$ and $T \in (0,1]$:
\begin{equation*}
\left\{ \begin{array}{cl}
\| v \|_{\bar{F}^{-\frac{1}{2}}(T)} &\lesssim \| v \|_{\bar{B}^{-\frac{1}{2}}(T)} + \| \partial_x(v(u_1+u_2)) \|_{\bar{\mathcal{N}}^{-\frac{1}{2}}(T)}, \\
\| \partial_x (v (u_1+u_2)) \|_{\bar{\mathcal{N}}^{-\frac{1}{2}}(T)} &\lesssim \| v \|_{\bar{F}^{-\frac{1}{2}}(T)} (\| u_1 \|_{F^{s}(T)} + \| u_2 \|_{F^s(T)} ), \\
\| v \|^2_{\bar{B}^{-\frac{1}{2}}(T)} &\lesssim \| v(0) \|_{\bar{H}^{-\frac{1}{2}}}^2 + \| v \|_{\bar{F}^{-\frac{1}{2}}(T)}^2 ( \| u_1 \|_{F^{s}(T)} + \| u_2 \|_{F^s(T)} ).
\end{array} \right.
\end{equation*}
For small initial data this implies by limiting properties of the function spaces as $T \downarrow 0$ and another continuity argument
\begin{equation*}
\| v \|_{\bar{F}^{-\frac{1}{2}}(1)} \lesssim \| v(0) \|_{\bar{H}^{-\frac{1}{2}}}.
\end{equation*}

\smallskip

We conclude local well-posedness from invoking the Bona-Smith argument. Differences of solutions in $H^{s,0}(\R^2)$ for $s > \frac{1}{2}$ are estimated as
\begin{equation*}
\left\{ \begin{array}{cl}
\| v \|_{F^{s}(T)} &\lesssim \| v \|_{B^s(T)} + \| \partial_x(v(u_1+u_2)) \|_{F^s(T)}, \\
\| \partial_x(v(u_1+u_2)) \|_{\mathcal{N}^s(T)} &\lesssim \| v \|_{F^s(T)} (\| u_1 \|_{F^s(T)} + \| u_2 \|_{F^s(T)} ), \\
\| v \|^2_{B^s(T)} &\lesssim \| v(0) \|^2_{H^{s,0}(\R^2)} + \| v \|^3_{F^s(T)} \\
&\quad + \| v \|_{F^s(T)} \| v \|_{\bar{F}^{-\frac{1}{2}}(T)} \| u_2 \|_{F^{s+\frac{1}{2}}(T)}.
\end{array} \right.
\end{equation*}
Consider $u_0 \in H^{s,0}(\R^2)$ and the frequency-truncation $P_{\leqslant H} u_0$. The solution to \eqref{eq:KPIR2} emanating from $P_{\leqslant H} u_0$ is denoted by $u^H$.

\smallskip

We have for the difference of the solutions $v = u - u^H$:
\begin{equation*}
\begin{split}
\| v \|^2_{F^s(T)} &\lesssim \| v(0) \|^2_{H^{s,0}(\R^2)} + \| v \|^2_{F^s(T)} (\| u \|^2_{F^s(T)} + \| u^{H} \|^2_{F^{s}(T)}) \\
&\quad + \| v \|^3_{F^s(T)} + \| v \|_{F^s(T)} \| v(0) \|_{\bar{H}^{-\frac{1}{2}}} \| u^H(0) \|_{F^{2s+\frac{1}{2}}(T)}.
\end{split}
\end{equation*}
Now we plug in the a priori estimates obtained previously to find
\begin{equation*}
\begin{split}
\| v \|^2_{F^s(T)} &\lesssim \| v(0) \|^2_{H^{s,0}(\R^2)} + \| v \|^2_{F^s(T)} (\| u_0 \|^2_{H^{s,0}(\R^2)} + \| u^{H}(0) \|^2_{H^{\frac{1}{2},0}(\R^2)}) \\
&\quad + \| v \|^3_{F^s(T)} + \| v \|_{F^s(T)} \| P_{\geqslant H} u(0) \|_{H^{-\frac{1}{2},0}} \| u^H(0) \|_{H^{2s+\frac{1}{2},0}}.
\end{split}
\end{equation*}
This implies for small enough initial data in $H^{s,0}$:
\begin{equation*}
\| u - u^H \|_{F^s(T)} \lesssim \| P_{\geqslant H} u(0) \|_{H^{s,0}(\R^2)}.
\end{equation*}
Next, consider $u_{0n} \to u_0$ in $H^{s,0}(\R^2)$. We expand the difference of solutions:
\begin{equation}
\label{eq:DifferenceSolutionsEstimate}
\| u_n - u \|_{C_T H^{s,0}(\R^2)} \leqslant \| u_n - u_n^H \|_{C_T H^{s,0}} + \| u_n^H - u^H \|_{C_T H^{s,0}} + \| u_n^H - u \|_{C_T H^{s,0}}.
\end{equation}
By the above, the first and third term is estimated as
\begin{equation*}
\| u_n - u_n^H \|_{C_T H^{s,0}} + \| u_n^H - u \|_{C_T H^{s,0}} \lesssim \| u_{0n}^{\geqslant H} \|_{H^{s,0}} + \| u_0^{\geqslant H} \|_{H^{s,0}}.
\end{equation*}
We can choose $H$ large enough such that for $n \geqslant n_0$
\begin{equation*}
\| u_{0n}^{\geqslant H} \|_{H^{s,0}} + \| u_0^{\geqslant H} \|_{H^{s,0}} \leqslant \delta.
\end{equation*}
Consequently, by the local well-posedness of \eqref{eq:KPIR2} in $H^{\frac{3}{2}+\varepsilon,0}$ (see e.g. \cite[Section~8]{MolinetSautTzvetkov2007}), which is used to estimate the center term in \eqref{eq:DifferenceSolutionsEstimate}, we find
\begin{equation*}
\limsup_{n \to \infty} \| u_n - u \|_{C_T H^{s,0}(\R^2)} \lesssim \delta.
\end{equation*}
Since $\delta >0$ was arbitrary, the proof of local well-posedness is complete.

\subsection{Short-time bilinear estimates}

\begin{proposition}
\label{prop:ShorttimeBilinearKPIR2}
Let $\alpha = 2$, and $r_2 \geqslant r_1 > \frac{1}{2}$. Then, the following estimate holds:
\begin{equation}
\label{eq:ShorttimeBilinearEstimateKPIA}
\| \partial_x(u_1 u_2) \|_{\mathcal{N}^{r_2}(T)} \lesssim \| u_1 \|_{F^{r_1}(T)} \| u_2 \|_{F^{r_2}(T)} + \| u_1 \|_{F^{r_2}(T)} \| u_2 \|_{F^{r_1}(T)}.
\end{equation}
Secondly, for $s > \frac{1}{2}$ the following estimate holds:
\begin{equation}
\label{eq:ShorttimeBilinearEstimateKPIB}
\| \partial_x (uv) \|_{\bar{\mathcal{N}}^{-\frac{1}{2}}(T)} \lesssim \| u \|_{F^s(T)} \| v \|_{\bar{F}^{-\frac{1}{2}}(T)}.
\end{equation}
\end{proposition}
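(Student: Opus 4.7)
The plan is to follow the short-time Fourier restriction scheme of Ionescu--Kenig--Tataru. After Littlewood--Paley decomposition $u_i = \sum_{N_i \in 2^{\Z}} P_{N_i} u_i$, use of the $\mathcal{N}_N$-duality, and passing to the $X_N$-representation, both estimates reduce to controlling trilinear expressions of the form
\begin{equation*}
N \sum_{L,L_1,L_2 \in 2^{\N_0}} L^{-1/2} \Bigl|\int (f_{N_1,L_1} \ast f_{N_2,L_2}) \, g_{N,L} \, d\tau\, d\xi\, d\eta \Bigr|,
\end{equation*}
where $f_{N_i,L_i}, g_{N,L}$ are supported in the regions $D_{\alpha,N_i,L_i}, D_{\alpha,N,L}$ with convolution constraint $L \vee L_1 \vee L_2 \gtrsim |\Omega_2|$. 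The frequency-dependent time localization $T(N) = N^{-1}$ forces $L \gtrsim N_+$ (resp. $L_i \gtrsim N_{i,+}$) in the relevant pieces via the cutoff $\chi(N_+(t - t_N))$ appearing in the definition of $F_N$ and $\mathcal{N}_N$. I then decompose into the three standard frequency regimes --- High$\times$Low with $N \sim N_1 \gg N_2$, the symmetric Low$\times$High, and High$\times$High$\to$Low with $N_1 \sim N_2 \gtrsim N$ --- and apply in each dyadic piece either a bilinear Strichartz estimate (Proposition~\ref{prop:BilinearStrichartzGeneral}, Corollary~\ref{prop:BilinearStrichartzResonant}, Lemma~\ref{lem:AlternativeBilinearStrichartzEstimate}, or Lemma~\ref{lem:CauchySchwarz}) or the nonlinear Loomis--Whitney inequality \eqref{eq:LoomisWhitneyEuclidean}.

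The crucial regime is High$\times$Low $N \sim N_1 \gg N_2$, where a full derivative $N$ must be recovered. Using $|\Omega_{2,1}| \sim N^2 N_2$, I split into two subcases. In the non-resonant subcase $L_{\max} \gtrsim N^2 N_2$ the modulation weight $L_{\max}^{1/2}$ carried by $X_{N_i}$ beats $N \cdot L^{-1/2}$ up to a residual factor $N_2^{-1/2}$, which is absorbed by the low-frequency $X_{N_2}$-norm after Lemma~\ref{lem:AlternativeBilinearStrichartzEstimate} or Lemma~\ref{lem:CauchySchwarz}. In the resonant subcase $L_{\max} \ll N^2 N_2$ I invoke Corollary~\ref{prop:BilinearStrichartzResonant}, whose gain $N_2^{1/2} N^{-\alpha/2} = N_2^{1/2} N^{-1}$ exactly cancels the derivative loss and leaves a net factor $N_2^{1/2}$; in the corner cases where the bilinear Strichartz loses an endpoint, I replace it by \eqref{eq:LoomisWhitneyEuclidean}, which delivers the sharper gain $N_2^{-1/2} N^{-3/2}$. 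Summation in $N_2$ against the Sobolev weight $(N_2)_+^{r_1}$ then converges precisely when $r_1 > 1/2$, which explains the subcritical threshold in \eqref{eq:ShorttimeBilinearEstimateKPIA}.

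The High$\times$High$\to$Low case $N_1 \sim N_2 \gg N$ is analogous but with only the output derivative $N \leqslant N_1$ to absorb; summation in $N_1$ against $N_1^{-2 r_1} N^{2 r_2}$ is summable for $r_2 \geqslant r_1 > 1/2$. For \eqref{eq:ShorttimeBilinearEstimateKPIB} the same scheme applies with the weight $(1+|\xi|^{-1})^{(1+2\delta)/2}\langle \xi \rangle^{-1/2}$ attached to $v$ and regularity $s > 1/2$ on $u$. The asymmetric regularity assignment breaks the $r_1 = r_2$ symmetry but only reshuffles the summation indices in a benign way. The only genuinely new point is High$\times$High$\to$Low with $N \downarrow 0$, where one must not be hit by the singularity of the weight; this is ensured by the additional low-frequency gain coming from the resonant Loomis--Whitney inequality combined with the smallness assumption $\delta \leqslant 1/8$ in the definition of $\bar{H}^{-1/2}$.

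The main obstacle will be keeping the $L$-summations clean in the resonant High$\times$Low regime at the endpoint $r_1 = \tfrac{1}{2}+$: the choice $T(N) = N^{-1}$ is precisely calibrated so that the modulation gain of the trilinear Loomis--Whitney absorbs the $N$ derivative loss, and controlling this balance without producing unwanted $\log N$ factors -- where I expect to invoke Lemma~\ref{lem:TradingModulationRegularity} to trade modulation regularity for a small power of $T$ at endpoints -- is where the bulk of the technical work lies. The remaining regimes are by comparison routine applications of the multilinear estimates from Section~\ref{section:Transversality}.
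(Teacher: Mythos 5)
Your overall scheme --- Littlewood--Paley decomposition, $\mathcal{N}_N$-duality, modulation localization with the time cutoff $T(N)=N_+^{-1}$, splitting into resonant/non-resonant cases, and deploying bilinear Strichartz in High$\times$Low and the nonlinear Loomis--Whitney inequality in the High$\times$High regimes --- matches the paper's proof. Your derivation of the threshold $r_1>\tfrac12$ from the net $N_2^{1/2}$ factor in the resonant High$\times$Low interaction is correct, as is the identification of High$\times$High$\to$Low as the place where the weighted space $\bar H^{-1/2}$ earns its keep.

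There is, however, a genuine gap in your treatment of \eqref{eq:ShorttimeBilinearEstimateKPIB} in the High$\times$High$\to$Low regime when the output frequency $N$ is very small. You assert that the divergence from the singular weight is ``ensured by the additional low-frequency gain coming from the resonant Loomis--Whitney inequality combined with the smallness assumption $\delta\leqslant 1/8$.'' But for $\alpha=2$ with inputs at $N_1\sim N_2\gg 1$ and output at $N\ll 1$, the Loomis--Whitney inequality \eqref{eq:LoomisWhitneyEuclidean} produces the factor $N_1^{-1}N^{-1/2}$, where $N^{-1/2}$ is a \emph{loss}, not a gain. Combining with the output derivative $\xi\sim N$, the additional time-localization cost $\sim N_1$, and the output weight $\sim N^{-1/2-\delta}$, the net $N$-dependence is $N^{-\delta}$. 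This diverges as $N\to 0$ for \emph{any} $\delta>0$, and since $N$ can lie below $N_1^{-C}$ for any fixed $C$, the factor cannot be absorbed by the positive power $N_1^{s-1/2}$ available on the input side. Exactly the same issue arises if one instead applies two $L^4$ Strichartz estimates in the non-resonant case, since those also give $N_1^{-1}N^{-1/2}$. What is actually needed --- and what the paper does --- is to switch, in the range $N\ll N_1^{-2}$, to the alternative bilinear Strichartz estimate of Lemma~\ref{lem:AlternativeBilinearStrichartzEstimate}, which delivers the factor $N^{3/4}N_1^{-3/4}\log N_1$ with a \emph{positive} power of the small output frequency, rendering the sum against the singular weight convergent. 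You list this lemma in your toolkit but do not deploy it in this range, and without it the argument does not close.

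A minor side remark: you anticipate invoking Lemma~\ref{lem:TradingModulationRegularity} to kill endpoint $\log N$ factors; for $\alpha=2$ the paper does not do this (no power of $T$ appears in \eqref{eq:ShorttimeBilinearEstimateKPIA}), and the logarithms are absorbed directly by the strict inequalities $r_1>\tfrac12$ and $r_i,s>-\tfrac12$. This is a bookkeeping difference, not an error, but it suggests you should check carefully that the logs sum without borrowing from $T$.
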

\begin{proof}
By definition of the function spaces, the claim reduces to proving summable estimates:
\begin{equation}
\label{eq:DyadicLocalizedBilinearEstimate}
\| P_N \partial_x( P_{N_1} u_1 P_{N_2} u_2) \|_{\mathcal{N}_N(T)} \lesssim C(N,N_1,N_2) \| P_{N_1} u_1 \|_{F_{N_1}(T)} \| P_{N_2}u_2 \|_{F_{N_2}(T)}.
\end{equation}
In the following, we reduce to estimates for functions localized in modulation taking into account the time localization. The arguments are standard (cf. \cite{IonescuKenigTataru2008,rsc2019}), and we shall be brief.

\subsubsection{Low $\times$ High $\to$ High interaction ($N_2 \ll N_1\sim N$)}
In the following we consider extensions $P_{N_i} \tilde{u}_i$ of $P_{N_i} u_i$ such that
\begin{equation*}
\| P_{N_i} \tilde{u}_i \|_{F_{N_i}} \leqslant 2 \| P_{N_i} u_i \|_{F_{N_i}(T)}.
\end{equation*}
To lighten notations, we shall again denote $\tilde{u}_i \to u_i$.

We use the definition of the $\mathcal{N}_N$-norm to bound the left hand side of \eqref{eq:DyadicLocalizedBilinearEstimate} by
\begin{equation*}
\begin{split}
&\sup_{t_N \in \R} \| (\tau - \omega_\alpha(\xi,\eta)+iN)^{-1} \xi 1_{A_N}(\xi) \mathcal{F}_{t,x,y}[u_{1,N_1} \cdot \eta_0(N (t-t_N))] \\
&\quad * \mathcal{F}_{t,x,y}[u_{2,N_2} \cdot \eta_0(N(t-t_N))] \|_{X_N}.
\end{split}
\end{equation*}
We define
\begin{equation*}
f_{i,N_i} = \mathcal{F}_{t,x,y}[u_{i,N_i} \cdot \eta_0(N(t-t_N))].
\end{equation*}
Consider $L_1,L_2 \geqslant N$ and $f_{i,N_i,L_i}: \R \times \R^2 \to \R_+$ and for $L_i = N$ in $D_{N,\leqslant L}$. By the function space properties it suffices to obtain an estimate
\begin{equation}
\label{eq:ConstantHighHighLowKPIR2}
\sum_{L \geqslant N} L^{-\frac{1}{2}} N \| 1_{D_{N,L}} (f_{1,N_1,L_1} * f_{2,N_2,L_2}) \|_{L^2_{\tau,\xi,\eta}} \lesssim (1 \vee N_2)^{\frac{1}{2}} (N_2 \wedge 1)^{\varepsilon} \prod_{i=1}^2 L_i^{\frac{1}{2}} \| f_i \|_{L^2_{\tau,\xi,\eta}}.
\end{equation} 
Once the above display is established, estimates \eqref{eq:DyadicLocalizedBilinearEstimate} and consequently \eqref{eq:ShorttimeBilinearEstimateKPIA} and \eqref{eq:ShorttimeBilinearEstimateKPIB} follow from summation and properties of the function spaces. Note that by the time localization we have $L,L_i \geqslant N$. Below we shall distinguish the cases of
\begin{enumerate}
\item Very low frequencies: $N \lesssim 1$,
\item Resonant case: $L_{\max} \ll N_1^2 N_2$,
\item Non-resonant case: $L_{\max} \gtrsim N_1^2 N_2$.
\end{enumerate}

\noindent \textbf{(1) Very low frequencies} $N \sim N_1 \lesssim 1$. We carry out an additional homogeneous frequency decomposition. In this case we apply the bilinear Strichartz estimate from Lemma \ref{lem:AlternativeBilinearStrichartzEstimate} to find
\begin{equation*}
\begin{split}
\| 1_{D_{N,L}} (f_{1,N_1,L_1} * f_{2,N_2,L_2}) \|_{L^2_{\tau,\xi,\eta}} 
&\lesssim (L_1 \wedge L_2)^{\frac{1}{2}} (L_1 \vee L_2)^{\frac{1}{4}} N_2^{\frac{3}{4}}
\prod_{i=1}^2\| f_{i,N_i,L_i} \|_{L^2}.
\end{split}
\end{equation*}

\noindent \textbf{(2) Resonant case} $N \gg 1$, $L_{\max} \ll N_1^2 N_2$. If $N_2 \sim 1$, we carry out an additional homogeneous frequency decomposition. Recall that in the resonant case, we have the transversality bound
\begin{equation*}
\big| \frac{\eta_1}{\xi_1} - \frac{\eta - \eta_1}{\xi -\xi_1} \big| \sim N,
\end{equation*}
which allows for the bilinear Strichartz estimate invoking Proposition \ref{prop:BilinearStrichartzGeneral}:
\begin{equation*}
\begin{split}
\sum_{N \leqslant L \ll N_1^2 N_2} L^{-\frac{1}{2}} \| 1_{D_{N,L}} (f_{1,N_1,L_1} * f_{2,N_2,L_2} ) \|_{L^2_{\tau,\xi,\eta}} &\leqslant N^{-\frac{1}{2}} \| f_{1,N_1,L_1} * f_{2,N_2,L_2} \|_{L^2_{\tau,\xi,\eta}} \\
&\lesssim N_1^{-1} N_2^{\frac{1}{2}} \prod_{i=1}^2 L_i^{\frac{1}{2}} \| f_{i,N_i,L_i} \|_{L^2_{\tau,\xi,\eta}}.
\end{split}
\end{equation*}
This implies \eqref{eq:ConstantHighHighLowKPIR2} for $r_i$, $s >\frac{1}{2}$.

\noindent \textbf{(3) Non-resonant case.} $N \gg 1$, $L_{\max} \gtrsim N_1^2 N_2$. If $N_2 \sim 1$, we carry out an additional homogeneous dyadic decomposition in $N_2 \in 2^{\Z}$. We distinguish between $L=L_{\max}$ and $L_i = L_{\max}$. In case $L=L_{\max}$ we obtain for $N_2 \gtrsim N_1^{-1}$ by applying the bilinear Strichartz estimate in Lemma \ref{lem:AlternativeBilinearStrichartzEstimate}
\begin{equation*}
\begin{split}
&\quad \sum_{L \geqslant N_1^2 N_2 } L^{-\frac{1}{2}} \| 1_{D_{N,L}} (f_{1,N_1,L_1} * f_{2,N_2,L_2}) \|_{L^2_{\tau,\xi,\eta}} \\
&\lesssim (N_1^2 N_2)^{-\frac{1}{2} } N_2^{\frac{3}{4}} (L_1 \wedge L_2)^{\frac{1}{2}} (L_1 \vee L_2)^{\frac{1}{4}} \prod_{i=1}^2 \| f_{i,N_i,L_i} \|_{L^2_{\tau,\xi,\eta}} \\
&\lesssim N_1^{-1} (N_2 / N_1)^{\frac{1}{4}} \prod_{i=1}^2 L_i^{\frac{1}{2}} \| f_{i,N_i,L_i} \|_{L^2_{\tau,\xi,\eta}}. 
\end{split}
\end{equation*}
This implies \eqref{eq:ShorttimeBilinearEstimateKPIA} and \eqref{eq:ShorttimeBilinearEstimateKPIB} for $r_i,s \geqslant 0$.

\smallskip
For $N_2 \ll N_1^{-1}$ the same argument gives
\begin{equation*}
\begin{split}
&\quad \sum_{L \geqslant N_1} L^{-\frac{1}{2}} \| 1_{D_{N,L}} (f_{1,N_1,L_1} * f_{2,N_2,L_2}) \|_{L^2_{\tau,\xi,\eta}} \\
&\lesssim N_2^{\frac{3}{4}} N_1^{-\frac{3}{4}} \prod_{i=1}^2 L_i^{\frac{1}{2}} \| f_{i,N_i,L_i} \|_{L^2_{\tau,\xi,\eta}}.
\end{split}
\end{equation*}
This shows \eqref{eq:ShorttimeBilinearEstimateKPIA} and \eqref{eq:ShorttimeBilinearEstimateKPIB} for $r_i,s > - \frac{1}{2}$.
The Low-High-High-interaction is settled.

\medskip

\subsubsection{High $\times$ High $\to$ High interaction} $N \sim N_1 \sim N_2 \gg 1$. We distinguish between resonant case $L_{\max} \ll N_1^3$ and non-resonant case $L_{\max} \gtrsim N_1^3$.

\noindent \textbf{(1) Resonant case} $L_{\max} \ll N_1^3$. In this case, we can apply the nonlinear Loomis-Whitney inequality \eqref{eq:LoomisWhitneyEuclidean} after invoking duality to find
\begin{equation*}
\sum_{N_1 \leqslant L \ll N_1^3} L^{-\frac{1}{2}} \| 1_{D_{N,L}} (f_{1,N_1,L_1} * f_{2,N_2,L_2}) \|_{L^2_{\tau,\xi,\eta}} \lesssim N_1^{-\frac{3}{2}} \log(N_1) \prod_{i=1}^2 L_i^{\frac{1}{2}} \| f_{i,N_i,L_i} \|_{L^2_{\tau,\xi,\eta}}.
\end{equation*}
This shows \eqref{eq:ShorttimeBilinearEstimateKPIA} and \eqref{eq:ShorttimeBilinearEstimateKPIB} for $r_i,s>-\frac{1}{2}$.

\noindent \textbf{(2) Non-resonant case} $L_{\max} \gtrsim N_1^3$. Suppose by symmetry (up to logarithmic loss) that $L = L_{\max}$. In this case we apply H\"older's inequality and two $L^4$-Strichartz estimates from Proposition \ref{prop:L4StrichartzEstimates}:
\begin{equation*}
\sum_{L \gtrsim N_1^3} L^{-\frac{1}{2}} \|1_{D_{N,L}} (f_{1,N_1,L_1} * f_{2,N_2,L_2}) \|_{L^2_{\tau,\xi,\eta}} \lesssim N_1^{-\frac{3}{2}} \prod_{i=1}^2 L_i^{\frac{1}{2}} \| f_{i,N_i,L_i} \|_{L^2_{\tau,\xi,\eta}}.
\end{equation*}
This implies again \eqref{eq:ShorttimeBilinearEstimateKPIA} and \eqref{eq:ShorttimeBilinearEstimateKPIB} for $r_i,s>-\frac{1}{2}$.

\subsubsection{High $\times$ High $\to$ Low interaction} $N \ll N_1 \sim N_2$. If $N \lesssim 1$, we carry out an additional homogeneous frequency decomposition in $N$. In the following we suppose that $N_1 \gg 1$ as the case $N \lesssim N_1 \lesssim 1$ can be handled like above via $L^4$-Strichartz estimates from Proposition \ref{prop:L4StrichartzEstimates}. The reduction to modulation localized estimates differs from the previous cases because the time localization of $\mathcal{N}_N$-norm does not suffice to estimate the $F_{N_i}$-norm. We add time-localization, which incurs a factor of $N_1 (1 \vee N)^{-1}$:
\begin{equation*}
\begin{split}
&\quad \| (\tau - \omega_\alpha(\xi,\eta) + i N)^{-1} \xi 1_{A_N}(\xi) \mathcal{F}_{t,x,y}[ u_{1,N_1} u_{2,N_2} \cdot \eta^2_0(N(t-t_N))] \|_{X_N} \\
&= \| (\tau - \omega_\alpha(\xi,\eta) + i N)^{-1} \xi 1_{A_N}(\xi) \mathcal{F}_{t,x,y}[u_{1 ,N_1} \eta_0(N(t-t_N)) \\
&\quad \cdot u_{2,N_2} \eta_0(N(t-t_N)) \sum_{k \in \Z} \gamma^2(N_1 t - k)] \|_{X_N}.
\end{split}
\end{equation*}
Consequently, it suffices to estimate $N_1 (1 \vee N)^{-1}$ terms of the form
\begin{equation*}
\begin{split}
&\quad \| (\tau - \omega_\alpha(\xi,\eta) + iN)^{-1} \xi 1_{A_N}(\xi) \mathcal{F}_{t,x,y}[u_{1,N_1} \gamma(N_1 t - k) \eta_0(N(t-t_N))] \\
&\quad * \mathcal{F}_{t,x,y}[u_{2,N_2} \gamma(N_1 t - k) \eta_0(N(t-t_N))] \|_{X_N}.
\end{split}
\end{equation*}

\noindent \textbf{(1) Very small frequencies} ($N\lesssim N_1 \lesssim 1$): As mentioned above, this case can be handled using two $L^4$ Strichartz estimates as in the previous estimate.\\
The case $N \lesssim 1 \lesssim N_1\sim N_2$ is considered in the following cases.

\noindent \textbf{(2) Resonant case} ($L_{\max} \ll N_1^2 N$): In this case, we apply the nonlinear Loomis-Whitney inequality \eqref{eq:LoomisWhitneyEuclidean} to find
\begin{equation*}
\begin{split}
&\quad \sum_{N \leqslant L \ll N_1^2 N} L^{-\frac{1}{2}} \| 1_{D_{N,L}} (f_{1,N_1,L_1} * f_{2,N_2,L_2}) \|_{L^2_{\tau,\xi,\eta}} \\
&\lesssim N_1^{-1} N^{-\frac{1}{2}} \log(N_1) \prod_{i=1}^2 L_i^{\frac{1}{2}} \| f_{i,N_i,L_i} \|_2.
\end{split}
\end{equation*}
This implies \eqref{eq:ShorttimeBilinearEstimateKPIA}  for $r_2 \geq r_1 > 0$ taking into account derivative loss and time localization.

For \eqref{eq:ShorttimeBilinearEstimateKPIB} we need to handle the low frequency weight. The above estimate suffices for $s>\frac{1}{2}$ when $N \gtrsim N_1^{-2}$. If $N \ll N_1^{-2}$ we apply Lemma \ref{lem:AlternativeBilinearStrichartzEstimate} to find
\begin{equation}
\label{eq:AuxShorttimeEstimateKPIR2}
\begin{split}
&\quad \sum_{N \leqslant L \ll N_1^2 N} L^{-\frac{1}{2}} \| 1_{D_{N,L}} (f_{1,N_1,L_1} * f_{2,N_2,L_2}) \|_{L^2_{\tau,\xi,\eta}} \\
&\lesssim N^{\frac{3}{4}} \log(N_1) N_1^{-\frac{3}{4}} \prod_{i=1}^2 L_i^{\frac{1}{2}} \| f_{i,N_i,L_i} \|_2.
\end{split}
\end{equation}
This shows \eqref{eq:ShorttimeBilinearEstimateKPIB} also for very low $N \ll 1$.

\noindent \textbf{(3) Non-resonant case} $L_{\max} \gtrsim N_1^2 N$. Suppose that $L = L_{\max}$. The other cases only deviate logarithmically by additional summation in the modulation. We apply two $L^4$-Strichartz estimates from Proposition \ref{prop:L4StrichartzEstimates} to find
\begin{equation*}
\begin{split}
&\quad \sum_{L \gtrsim N_1^2 N} L^{-\frac{1}{2}} \| 1_{D_{N,L}} (f_{1,N_1,L_1} * f_{2,N_2,L_2}) \|_{L^2_{\tau,\xi,\eta}} \lesssim N_1^{-1} N^{-\frac{1}{2}} \prod_{i=1}^2 L_i^{\frac{1}{2}} \| f_{i,N_i,L_i} \|_2.
\end{split}
\end{equation*}
This shows \eqref{eq:ShorttimeBilinearEstimateKPIA} and \eqref{eq:ShorttimeBilinearEstimateKPIB} for $r_i,s>1/2$ and $N \gg 1$.

Next, we suppose that $N \ll 1 \ll N_1 \sim N_2$. We carry out an additional homogeneous decomposition in frequency $N \in 2^{-\N}$ to localize the derivative. The above display suffices for \eqref{eq:ShorttimeBilinearEstimateKPIA}.
As long as $N \gtrsim N_1^{-2}$ the above display still suffices for \eqref{eq:ShorttimeBilinearEstimateKPIB}. If $N \ll N_1^{-2}$, we apply a bilinear Strichartz estimate  from Lemma \ref{lem:AlternativeBilinearStrichartzEstimate} to find \eqref{eq:AuxShorttimeEstimateKPIR2}, which also finishes the proof of \eqref{eq:ShorttimeBilinearEstimateKPIB}. The proof of Proposition \ref{prop:ShorttimeBilinearKPIR2} complete.
\end{proof}

\subsection{Short-time energy estimates}

Next, we turn to the iteration of the energy norm. We show the following:
\begin{proposition}
\label{prop:ShorttimeEnergyIterationKPIR2}
Let $T \in (0,1]$, $s_1 \geqslant s_2 > \frac{1}{2}$, and $u \in C([0,T],H^{\infty,0}(\R^2))$ be a smooth solution to \eqref{eq:FKPI} with $\alpha = 2$. Then the following estimate holds:
\begin{equation}
\label{eq:ShorttimeEnergyIterationKPI}
\| u \|^2_{B^{s_1}(T)} \lesssim \| u_0 \|_{H^{s_1,0}}^2 + \| u \|_{F^{s_1}(T)}^2 \| u \|_{F^{s_2}(T)}.
\end{equation}
Let $s>\frac{1}{2}$, and $v = u_1-u_2$ for $u_i \in C([0,T],H^{\infty,0}(\R^2))$. Then the following estimates hold:
\begin{align}
\label{eq:ShorttimeEnergyDifferenceIterationKPIA}
\| v \|^2_{\bar{B}^{-\frac{1}{2}}(T)} &\lesssim \| v(0) \|_{\bar{H}^{-\frac{1}{2}}}^2 + \| v \|^2_{\bar{F}^{-\frac{1}{2}}(T)} ( \| u_1 \|_{F^{s}(T)} + \| u_2 \|_{F^s(T)} ), \\
\label{eq:ShorttimeEnergyDifferenceIterationKPIB}
\| v \|^2_{B^{s}(T)} &\lesssim \| v(0) \|_{H^{s,0}}^2 + \| v \|^3_{F^s(T)} + \| v \|_{F^s(T)} \| v \|_{\bar{F}^{-\frac{1}{2}}(T)} \| u_2 \|_{F^{2s+\frac{1}{2}}(T)}.
\end{align}
\end{proposition}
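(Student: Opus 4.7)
The plan is to prove all three estimates via the short-time energy method of Ionescu--Kenig--Tataru. For \eqref{eq:ShorttimeEnergyIterationKPI}, I would apply $P_N$ to the equation, pair with $P_N u$, and integrate over $\R^2\times[0,t_k]$ with $t_k\in[0,T]$; the dispersive terms $P_N\partial_x^3 u$ and $P_N\partial_x^{-1}\partial_y^2 u$ vanish by antisymmetry, leaving
\begin{equation*}
\|P_N u(t_k)\|_{L^2}^2 = \|P_N u(0)\|_{L^2}^2 + \int_0^{t_k}\!\!\int_{\R^2} P_N u\cdot P_N\partial_x(u^2)\,dx\,dy\,ds.
\end{equation*}
After multiplying by $N^{2s_1}$, taking $\sup_{t_k}$ and $\sum_N$ produces the $B^{s_1}(T)$-norm on the left; the task is to control the trilinear remainder on the right. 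Decomposing $u=\sum_{N_i\in 2^{\Z}} P_{N_i}u$, inserting a partition of unity on $[0,T]$ at scale $T(N_{\max})=N_{\max}^{-1}$, and passing to extensions matching the $F_{N_i}(T)$-norms up to a factor of $2$ reduces each frequency-localized contribution to a modulation-localized $L^2$-convolution estimate for functions supported near the characteristic surfaces, with modulations $L_i\gtrsim N_{\max}$.

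The main trilinear bound then comes from two inputs: in the resonant regime $L_{\max}\ll N_{\max}^{2}N_{\min}$ the nonlinear Loomis--Whitney inequality \eqref{eq:LoomisWhitneyEuclidean} gives the constant $\lesssim N_{\max}^{-1}N_{\min}^{-1/2}\prod_i L_i^{1/2}$, while the non-resonant regime is covered by the $L^4$-Strichartz estimate of Proposition \ref{prop:L4StrichartzEstimates} (via $L_{\max}^{-1/4}$-modulation gain) together with the bilinear Strichartz estimate of Lemma \ref{lem:AlternativeBilinearStrichartzEstimate}. The choice $T(N)=N^{-1}$ is made precisely so that the $\partial_x$ of size $N_{\max}$ in the nonlinearity is absorbed by the modulation gain from the frequency-dependent time cutoff. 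Summing in $(N,N_1,N_2)$ under $s_1\geq s_2>\tfrac12$ yields \eqref{eq:ShorttimeEnergyIterationKPI}; the borderline case which fixes the threshold $s_2>\tfrac12$ is the High$\times$High$\to$Low interaction $N\ll N_1\sim N_2$, where the bottleneck is the summation in $N\in 2^{\Z}$ after the $\partial_x$ lands on a low-frequency output.

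For \eqref{eq:ShorttimeEnergyDifferenceIterationKPIA} and \eqref{eq:ShorttimeEnergyDifferenceIterationKPIB} I would run the same scheme on the difference equation
\begin{equation*}
\partial_t v + \partial_x^{3}v - \partial_x^{-1}\partial_y^{2}v = \tfrac12\partial_x\bigl(v(u_1+u_2)\bigr).
\end{equation*}
For \eqref{eq:ShorttimeEnergyDifferenceIterationKPIA}, replace the weight $N^{2s_1}$ by $w(\xi)=(1+|\xi|^{-1})^{1+2\delta}\langle\xi\rangle^{-1}$; the enhancement $|\xi|^{-(1+2\delta)}$ at low output frequency is exactly what is needed to overcome the $\partial_x$-loss and the $N\in 2^{\Z}$-summation cost in the High$\times$High$\to$Low case. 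For \eqref{eq:ShorttimeEnergyDifferenceIterationKPIB} split $v(u_1+u_2)=v^{2}+2vu_2$; the $v^{2}$-contribution is handled as in \eqref{eq:ShorttimeEnergyIterationKPI} and produces the cubic term $\|v\|_{F^s(T)}^{3}$. For the mixed contribution $vu_2$ in the dangerous High$\times$High$\to$Low configuration, place the high-frequency copy of $v$ in $\bar{F}^{-\frac{1}{2}}(T)$ (which gains $N_{\max}^{1/2}$ at high frequencies relative to $L^{2}$) and the high frequency of $u_2$ in $F^{2s+\frac{1}{2}}(T)$, so that the extra $2s+\tfrac12$ derivatives on $u_2$ pay for the weight $N^{s}$ on the opposite side as well as for the summation in $N$.

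The main obstacle will be tracking the logarithmic and endpoint summation losses in the High$\times$High$\to$Low regime after Loomis--Whitney, especially for the weighted estimate \eqref{eq:ShorttimeEnergyDifferenceIterationKPIA}: the tension between the low-frequency weight $|\xi|^{-(1+2\delta)}$, the dyadic summation of the low output $N\in 2^{\Z}$, and the constraint that both $u_1$ and $u_2$ only sit in $F^s(T)$ for $s>\tfrac12$ is what forces the particular choice $\delta=\min\{(s-\tfrac12)/2,\tfrac18\}$. A secondary (but routine) issue will be treating the commutator terms produced when the frequency-dependent time cutoffs act on the nonlinearity, which requires the standard orthogonality argument and the time-multiplier estimates \eqref{eq:TimeMult}.
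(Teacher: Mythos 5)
Your overall strategy (fundamental theorem of calculus, frequency‑dependent time localization at scale $N_{\max}^{-1}$, reduction to modulation‑localized trilinear convolution estimates via the Loomis--Whitney bound in the resonant regime and $L^4$/bilinear Strichartz in the non‑resonant regime, then using the low‑frequency weight and the $\bar F^{-1/2}/F^{2s+1/2}$ split for the difference estimates) matches the paper's. But there is a genuine gap in how you propose to recover the derivative.

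You omit the integration‑by‑parts/commutator step that transfers $\partial_x$ onto the \emph{lowest} frequency. After pairing with $P_N u$ and decomposing, in the High$\times$Low$\to$High configuration $N_2\ll N_1\sim N_3\sim N$ the raw $\partial_x$ carries a factor $N_{\max}$. Your accounting then reads: derivative $N_{\max}$, time localization $T N_{\max}$ intervals, Loomis--Whitney gain $N_{\max}^{-1}N_{\min}^{-1/2}$, total $T N_{\max} N_{\min}^{-1/2}$, and after normalizing by $N_{\max}^{2s_1}N_{\min}^{s_2}$ one is left with $T N_{\max} N_{\min}^{-1/2-s_2}$, which is not summable for any finite $s_2$. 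The paper's proof avoids this by writing $P_N(u\,P_{\ll N}u)=P_Nu\,P_{\ll N}u+[\text{commutator}]$, integrating by parts in $x$ on the symmetric part so that $\int (P_Nu)^2 \partial_x P_{\ll N}u$ carries only an $N_{\min}$ derivative, and showing the commutator is of the same order via the bilinear multiplier bound $|\partial^{\alpha_1}_{\xi_1}\partial^{\alpha_2}_{\xi_2}m|\lesssim K N^{-\alpha_1}K^{-\alpha_2}$. That replaces $N_{\max}\to N_{\min}$ and produces the summable factor $T N_{\min}^{1/2}$, whence $s_2>\tfrac12$. Without this step, your scheme for \eqref{eq:ShorttimeEnergyIterationKPI} fails precisely in the configuration you don't flag as borderline.

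Relatedly, your stated reason for the choice $T(N)=N^{-1}$ — that ``the $\partial_x$ of size $N_{\max}$ is absorbed by the modulation gain from the frequency‑dependent time cutoff'' — is the correct mechanism for the \emph{nonlinear} estimate in $\mathcal N^s$ (Proposition \ref{prop:ShorttimeBilinearKPIR2}), where the factor $(\tau-\omega_\alpha+iN)^{-1}$ in the $\mathcal N_N$‑norm supplies an $L^{-1/2}$‑weight, and together with $L\gtrsim N$ this yields an $N^{-1/2}$ gain. The energy functional $\|P_Nu(t)\|_{L^2}^2$ has no such modulation weight, so there is nothing to absorb $N_{\max}$ there; the paper recovers the derivative loss entirely through the integration‑by‑parts step. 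Your proposal conflates the two propositions.

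A smaller point: for \eqref{eq:ShorttimeEnergyDifferenceIterationKPIB} the dangerous paraproduct piece is $P_N(P_{\ll N}v\cdot u_2)$, i.e.\ a Low$\times$High$\to$High interaction in which the low frequency sits on $v$ and not on $u_2$; it is dangerous precisely because the two high‑frequency factors are different functions so the symmetric cancellation is unavailable. Calling it High$\times$High$\to$Low obscures this. Your proposed assignment ($\bar F^{-1/2}$ for the high‑frequency $v$, $F^{2s+1/2}$ for $u_2$) is the correct fix once that is clarified.
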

\begin{proof}
We begin with the proof of \eqref{eq:ShorttimeEnergyIterationKPI}.
We invoke the fundamental theorem of calculus to write for $N \in 2^{\N_0}$, $N \gg 1$ and $t \in [0,T]$:
\begin{equation}
\label{eq:FundamentalTheoremKPI}
\| P_N u(t) \|^2_{L^2} = \| P_N u(0) \|_{L^2}^2 + \int_0^t \int_{\R^2} P_N u \partial_x (P_N (u^2)) dx ds.
\end{equation}
To obtain more favorable estimates, we integrate by parts to assign the derivative always to the lowest frequency term. The arguments are standard, and we shall be brief (cf. \cite{IonescuKenigTataru2008,rsc2019,SanwalSchippa2023}).

\medskip

We use a paraproduct decomposition:
\begin{equation*}
P_N (u^2) = 2 P_N ( u P_{\ll N} u) + P_N( P_{\gtrsim N} u P_{\gtrsim N} u).
\end{equation*}
We decompose
\begin{equation*}
\int_0^t \int_{\R^2} P_N (u P_{\ll N} u) \partial_x P_N u dx ds = \sum_{\substack{K \ll N, \\ K \in 2^{\Z}}} \int_0^t \int_{\R^2} P_N (u P_K u) \partial_x P_N u dx ds.
\end{equation*}
Write
\begin{equation*}
P_N(u P_K u) = P_N u P_K u + [ P_N(u P_K u) - P_N u P_K u].
\end{equation*}
For the first term we can integrate by parts to assign the derivative on the lowest frequency:
\begin{equation*}
\int_{\R^2} P_N u P_K u (\partial_x P_N u) dx =  - \frac{1}{2} \int_{\R^2} (P_N u)^2 (\partial_x P_K u) dx.
\end{equation*}

For the second term we consider the bilinear Fourier multiplier:
\begin{equation*}
m(\xi_1,\xi_2) = (\xi_1+\xi_2) [ \chi_N(\xi_1+\xi_2) \chi_K(\xi_2) - \chi_N(\xi_1) \chi_K(\xi_2) ].
\end{equation*}
It is a consequence of the mean-value theorem that
\begin{equation*}
\big| \partial_{\xi_1}^{\alpha_1} \partial_{\xi_2}^{\alpha_2} m(\xi_1,\xi_2) \big| \lesssim K N^{-\alpha_1} K^{-\alpha_2} \tilde{\chi}_N(\xi_1) \tilde{\chi}_K(\xi_2)
\end{equation*}
for suitable mild enlargements of $\chi_N$, $\chi_K$. Consequently, by Fourier series expansion the second term can effectively be regarded as
\begin{equation*}
\int_0^t \int_{\R^2} \big[ P_N (u P_K u) - P_N u P_K u \big] \partial_x P_N u dx ds \sim \int_0^t \int_{\R^2} \tilde{P}_N u P_N u \partial_x P_K u dx ds.
\end{equation*}
The reduction points out that we can always regard the derivative acting on the lowest frequency.

$\bullet$ High $\times$ Low $\to$ High interaction ($N_2 \ll N_1\sim N_3$): We show the estimate
\begin{equation}
\label{eq:HighLowHighEnergyKPIR2}
\sum_{\substack{N_2 \ll N_1 \sim N_3, \\ N_1 \gtrsim 1}} N_1^{2s_1} \int_0^t \int_{\R^2} P_{N_1} u \partial_x P_{N_2} u P_{N_3} u dx ds \lesssim \| u \|^2_{F^{s_1}(T)} \| u \|_{F^{s_2}(T)}.
\end{equation}
Firstly we smoothly decompose the interval $[0,t] \subseteq [0,T]$ into intervals of length $\lesssim N_1^{-1}$:
\begin{equation*}
1 = \sum_{k \in \Z} \gamma^3(N_1 t- k)
\end{equation*}
for a suitable $\gamma \in C^\infty_c(\R)$. This achieves the required time-localization to carry out estimates in the short-time Fourier restriction spaces. In the following we focus on estimates
\begin{equation}
\label{eq:AuxEnergyEstimateKPI}
\begin{split}
&\quad \big| \int_{\R} \int_{\R^2} (\gamma(N_1 s - k) P_{N_1} u) (\gamma (N_1 s - k) P_{N_2} u) (\gamma(N_1 s - k) P_{N_3} u) dx ds \big| \\
&\lesssim N_1^{-1} N_2^{-\frac{1}{2}} \prod_{i=1}^3 \| P_{N_i} u \|_{F_{N_i}}.
\end{split}
\end{equation}
Estimates \eqref{eq:HighLowHighEnergyKPIR2} follow from taking into account time localization, which incurs a factor $T N_1$ and the derivative loss, which draws a factor $N_2$.\footnote{We remark that the boundary terms actually require a separate estimate due to the sharp time-cutoff. There are at most $4$ terms, which effectively gains a factor $N^{-1}$ from the time localization. We refer to the literature for further details (cf. \cite{IonescuKenigTataru2008}).}

We turn to establishing \eqref{eq:AuxEnergyEstimateKPI}: By an application of Plancherel's theorem and the properties of the function spaces, it suffices to establish estimates
\begin{equation}
\label{eq:AuxEnergyEstimateModulationLocalizedKPI}
\int (f_{1,N_1,L_1} * f_{2,N_2,L_2} ) f_{3,N_3,L_3} d\xi d\eta d\tau \lesssim N_1^{-1} N_2^{-\frac{1}{2}} \prod_{i=1}^3 L_i^{\frac{1}{2}} \| f_{i,N_i,L_i} \|_2
\end{equation}
for $f_{i,N_i,L_i} : \R \times \R^2 \to \R_{>0}$ with $\text{supp}(f_{i,N_i,L_i}) \subseteq D_{N_i,L_i}$, $L_i \geqslant N_1$. We record this in the following lemma which is proved in \cite{IonescuKenigTataru2008}. We include the proof for self-containedness.
\begin{lemma}
\label{lem:EnergyTrilinearEstimateKPI}
Let $N_1 \sim N_2 \gtrsim N_3$ for $N_1 \in 2^{\N_0}$ and $N_3 \in 2^{\Z}$. Then \eqref{eq:AuxEnergyEstimateModulationLocalizedKPI} holds true.
\end{lemma}
 \begin{proof}
First, we handle the resonant case: $\max_i(L_i) \ll N_1^2 N_2$. An application of the nonlinear Loomis-Whitney inequality \eqref{eq:LoomisWhitneyEuclidean} gives
\begin{equation*}
\int (f_{1,N_1,L_1} * f_{2,N_2,L_2}) f_{3,N_3,L_3} d\xi d\eta d\tau \lesssim N_1^{-1} N_2^{-\frac{1}{2}} \prod_{i=1}^3 L_i^{\frac{1}{2}} \| f_{i,N_i,L_i} \|_2,
\end{equation*}
which is \eqref{eq:AuxEnergyEstimateModulationLocalizedKPI}.

\smallskip

Next, suppose we are in the non-resonant case: $\max(L) \gtrsim N_1^2 N_2$. Suppose that $L_1 = L_{\max}$. The other cases can be handled similarly. Applying H\"older's inequality and two $L^4_{t,x,y}$-Strichartz estimates from Proposition \ref{prop:L4StrichartzEstimates} yields again \eqref{eq:AuxEnergyEstimateModulationLocalizedKPI}:
\begin{equation*}
\begin{split}
\big| \int (f_{1,N_1,L_1} * f_{2,N_2,L_2} ) f_{3,N_3,L_3} d\xi d\eta d\tau \big| &\leqslant \| f_{1,N_1,L_1} \|_{L^2_{\tau,\xi,\eta}} \prod_{i=2}^3 \| \mathcal{F}^{-1}_{t,x,y} [f_{i,N_i,L_i} ] \|_{L^4_{t,x,y}} \\
&\lesssim (N_1^2 N_2)^{-\frac{1}{2}} \prod_{i=1}^3 L_i^{\frac{1}{2}} \| f_{i,N_i,L_i} \|_{L_{\tau,\xi,\eta}^2}.
\end{split}
\end{equation*}
\end{proof}

$\bullet$ High $\times$ High $\to$ High interaction ($N_1 \sim N_2 \sim N_3 \gg 1$): The estimate \eqref{eq:AuxEnergyEstimateKPI} follows again from applying Lemma \ref{lem:EnergyTrilinearEstimateKPI}.

$\bullet$ High $\times$ High $\to$ Low interaction ($N_1 \ll N_2 \sim N_3$): We obtain from the above reductions and an application of Lemma \ref{lem:EnergyTrilinearEstimateKPI}
\begin{equation*}
\begin{split}
&\quad \sum_{1 \lesssim N_1 \ll N_2 \sim N_3} N_1^{2s_1}  \big| \int_{0}^t \int_{\R^2} (\partial_x P_{N_1} u) P_{N_2} u P_{N_3} u dx ds \big| \\
&\lesssim \sum_{1 \lesssim N_1 \ll N_2 \sim N_3} N_1^{2s_1} N_1^{\frac{1}{2}} \prod_{i=1}^3 \| P_{N_i} u \|_{F_{N_i}}.
\end{split}
\end{equation*}
Dyadic summation using the Cauchy-Schwarz inequality implies \eqref{eq:ShorttimeEnergyIterationKPI} for the claimed regularities. This finishes the proof of \eqref{eq:ShorttimeEnergyIterationKPI}.

$\hfill \Box$

\medskip

\subsection{Energy estimates for the difference of solutions} We turn to the proof of \eqref{eq:ShorttimeEnergyDifferenceIterationKPIA}. Invoking the fundamental theorem of calculus for a solution to the difference equation on frequencies $N \gg 1$ yields
\begin{equation*}
\| P_N v(t) \|_{L^2}^2 = \|P_N v(0) \|^2_{L^2} + \int_0^t \int_{\R^2} P_N v \partial_x (P_N(v(u_1+u_2))) dx ds.
\end{equation*}
We take again a paraproduct decomposition:
\begin{equation}
\label{eq:ParaproductDifferences}
\begin{split}
P_N(v(u_1+u_2)) &= P_N(v P_{\ll N} (u_1+u_2)) + P_N (P_{\gtrsim N} v P_{\gtrsim N}(u_1+u_2)) \\
&\quad + P_N (P_{\ll N} v (u_1+u_2)).
\end{split}
\end{equation}

The first term can be handled by integration by parts like above. The second term is different because we need to estimate one function with high frequency at negative Sobolev regularity. It suffices to show for $i=1,2$ and $s>\frac{1}{2}$:
\begin{equation*}
\sum_{1 \lesssim N_1 \lesssim N_2 \sim N_3} N_1^{-1} \big| \int_0^t \int_{\R^2} (\partial_x P_{N_1} v) P_{N_2} v P_{N_3} u_i dx ds \big| \lesssim \| v \|^2_{\bar{F}^{-\frac{1}{2}}(T)} \| u_i \|_{F^s(T)}.
\end{equation*}
This is reduced to dyadic estimates for $1 \lesssim N_1 \lesssim N_2 \sim N_3$:
\begin{equation}
\label{eq:EnergyDyadicEstimateKPIAux}
\big| \int_0^t \int_{\R^2} P_{N_1} v P_{N_2} v P_{N_3} u_i dx ds \big| \lesssim N_1^{-\frac{1}{2}} \| P_{N_1} v \|_{F_{N_1}} \| P_{N_2} v \|_{F_{N_2}} \| P_{N_3} u_i \|_{F_{N_3}}
\end{equation}
for extensions $v$, $u_i$. Time localization incurs a factor $N_2$, by which we reduce to the convolution estimate (after changing to Fourier variables, modulation localization, and taking into account the properties of the function spaces):
\begin{equation*}
\int (f_{1,N_1,L_1} * f_{2,N_2,L_2} ) f_{3,N_3,L_3} d\xi d\eta d\tau \lesssim N_1^{-\frac{1}{2}} N_2^{-1} \prod_{i=1}^3 L_i^{\frac{1}{2}} \| f_{i,N_i,L_i} \|_2
\end{equation*}
with $1 \lesssim N_1 \lesssim N_2 \sim N_3$ and $L_i \geqslant N_2$. This is the content of Lemma \ref{lem:EnergyTrilinearEstimateKPI}.

\medskip

The third term is different because the derivative does not act on the lowest frequency and we cannot integrate by parts due to lack of symmetry. Here the estimate at negative regularity comes to rescue. We need to obtain summable estimates for $K \ll N$, $N \gg 1$:
\begin{equation*}
\big| \int_0^t \int_{\R^2} P_N v P_K v \tilde{P}_N u_i dx ds \big| \lesssim K^{-\frac{1}{2}} \| P_N v \|_{F_N} \| P_K v \|_{F_K} \| \tilde{P}_N u_i \|_{F_N}.
\end{equation*}
Note that we have not included the factor coming from time localization.


\smallskip

For $K \gtrsim 1$ the above estimate has already been verified in \eqref{eq:EnergyDyadicEstimateKPIAux}.

\smallskip

Suppose that $K \lesssim 1$. Let $N_1 \sim N_3 \gtrsim N_2 \sim K$. In the resonant case $L_{\max} \ll N_1^2 N_2$ we can use a bilinear Strichartz estimate from Proposition \ref{prop:BilinearStrichartzResonant} to find with above notations:
\begin{equation*}
\int (f_{1,N_1,L_1} * f_{2,N_2,L_2}) f_{3,N_3,L_3} d\xi d\eta d\tau \lesssim N_1^{-1} N_2^{\frac{1}{2}} \prod_{i=1}^3 L_i^{\frac{1}{2}} \| f_{i,N_i,L_i} \|_2.
\end{equation*}
In the non-resonant case $L_{\max} \gtrsim N_1^2 N_2$ we can use a different bilinear Strichartz estimate Lemma \ref{lem:AlternativeBilinearStrichartzEstimate} on $f_{1,N_1,L_1} * f_{2,N_2,L_2}$ to find with $L_2 = L_{\max}$:
\begin{equation*}
\begin{split}
\int (f_{1,N_1,L_1} * f_{2,N_2,L_2}) f_{3,N_3,L_3} d\xi d\eta d\tau &\lesssim (N_1^2 N_2)^{-\frac{1}{4}} N_2^{\frac{3}{4}} N_1^{-\frac{1}{2}} \prod_{i=1}^3 L_i^{\frac{1}{2}} \| f_{i,N_i,L_i} \|_2 \\
&\lesssim N_1^{-1} N_2^{\frac{1}{2}} \prod_{i=1}^3 \| f_{i,N_i,L_i} \|_2.
\end{split}
\end{equation*}
This estimate is acceptable. Applying the analog argument for $L_i = L_{\max}$, $i \in \{1,3\}$ gives a better estimate.
With this estimate we find by dyadic summation for any $s>\frac{1}{2}$:
\small
\begin{equation}
\label{eq:DyadicSummationDifferencesKPI}
\big| \sum_{\substack{K \ll N, \\ N \gtrsim 1}} N^{-1} \int_0^t \int_{\R^2} P_N v \partial_x (P_K v \tilde{P}_N(u_1+u_2) ) dx ds \big| \lesssim \| v \|_{\bar{F}^{-\frac{1}{2}}(T)}^2 ( \| u_1 \|_{F^s(T)} + \| u_2 \|_{F^s(T)} ).
\end{equation}
\normalsize
This finishes the proof of \eqref{eq:ShorttimeEnergyDifferenceIterationKPIA}.

\medskip

We turn to the proof of \eqref{eq:ShorttimeEnergyDifferenceIterationKPIB}. To this end, we rewrite
\begin{equation*}
\begin{split}
&\quad \| P_N v(t) \|_{L^2}^2 \\
 &= \|P_N v(0) \|^2_{L^2} + \int_0^t \int_{\R^2} P_N v \partial_x (P_N(v(u_1+u_2))) dx ds \\
&= \|P_N v(0) \|^2_{L^2} + \int_0^t \int_{\R^2} P_N v \partial_x (P_N(v^2)) dx ds + \int_0^t \int_{\R^2} P_N v \partial_x (P_N(v u_2)) dx ds.
\end{split}
\end{equation*}
The estimate of the first integral in the last line is like for \eqref{eq:ShorttimeEnergyIterationKPI}. For the second integral we carry out a paraproduct decomposition like in \eqref{eq:ParaproductDifferences}:
\begin{equation*}
P_N(v \, u_2) = P_N(v \, P_{\ll N} u_2) + P_N (P_{\gtrsim N} v P_{\gtrsim N} u_2) + P_N (P_{\ll N} v \, u_2).
\end{equation*}
The first term can be handled by integration by parts, the second term can likewise be treated like in the estimate \eqref{eq:ShorttimeEnergyIterationKPI}. For the last term we use the argument from \eqref{eq:DyadicSummationDifferencesKPI} to find for $s > \frac{1}{2}$:
\begin{equation*}
\big| \sum_{\substack{K \ll N, \\ N \gtrsim 1}} N^{2s} \int_0^t \int_{\R^2} P_N v \partial_x (P_K v \tilde{P}_N u_2)  dx ds \big| \lesssim \| v \|_{\bar{F}^{-\frac{1}{2}}(T)} \| v \|_{F^s(T)} \| u_2 \|_{F^{2s+\frac{1}{2}}(T)}.
\end{equation*}
The proof of \eqref{eq:ShorttimeEnergyDifferenceIterationKPIB} is complete.
\end{proof}

\section{Improved quasilinear local well-posedness for the dispersion-generalized KP-I equation on the Euclidean plane}
\label{section:LWPFKPIR2}
Next, we show improved local well-posedness for quasilinear fractional KP-I equations with $\alpha \in (2,5)$ on $\R^2$:
\begin{equation}
\label{eq:FKPIR2}
\left\{ \begin{array}{cl}
\partial_t u + \partial_x D_x^\alpha u - \partial_x^{-1} \partial_y^2 u &= u \partial_x u, \quad (t,x,y) \in \R \times \R^2, \\
u(0) &= u_0 \in H^{s,0}(\R^2)
\end{array} \right.
\end{equation}
The second and third author showed local well-posedness for $s(\alpha,\varepsilon) \geqslant 5-2\alpha + \varepsilon$ in \cite{SanwalSchippa2023}. Here we improve the result as follows: Define
\begin{equation}
\label{eq:RegularityFKPIR2}
s_1(\alpha,\varepsilon,\R^2) =
\begin{cases}
6 - \frac{11 \alpha}{4} + \varepsilon, &\quad \alpha \in (2,\frac{24}{11}], \\
0, &\quad \alpha \in (\frac{24}{11},\frac{5}{2}),
\end{cases}
\end{equation}

In \cite{SanwalSchippa2023} we showed a priori estimates for $s \geqslant s_1(\alpha,\varepsilon,\R^2)$. In the following we show local well-posedness for $s \geqslant s_1(\alpha,\varepsilon,\R^2)$.
\begin{theorem}
\label{thm:ImprovedQuasilinearLWPFKPIR2}
Let $\alpha \in (2,\frac{5}{2})$. Then \eqref{eq:FKPIR2} is locally well-posed provided that $s \geqslant s_1(\alpha,\varepsilon,\R^2)$ for any $\varepsilon > 0$. The data-to-solution mapping $S_T^\infty : H^{\infty,0}(\R^2) \to C_T H^{\infty,0}$ extends to a mapping $S_T^s: H^{s,0}(\R^2) \to C_T H^{s,0}(\R^2)$ for $s \geqslant s_1(\alpha,\varepsilon,\R^2)$ with $T=T(\| u_0 \|_{H^{s,0}})$ depending lower semicontinuously on $ \| u_0 \|_{H^{s,0}}$ and $T(u) \gtrsim 1$ as $u \downarrow 0$. 
\end{theorem}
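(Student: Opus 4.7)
The plan is to run the same short-time Fourier restriction scheme deployed in Section \ref{section:QuasilinearLWPKPIR2} for the KP-I equation, but calibrated to the dispersion-generalized dispersion relation $\omega_\alpha$ with $\alpha\in(2,5/2)$ and the frequency-dependent time localization $T=T(N)=N^{-(5-2\alpha)-\varepsilon(\alpha)}$. With the a priori bound $\|u\|_{F^s(1)}\lesssim \|u_0\|_{H^{s,0}}$ already obtained in \cite{SanwalSchippa2023} at regularity $s\geqslant s_1(\alpha,\varepsilon,\R^2)$, what is left to establish is (i) a bilinear short-time estimate $\|\partial_x(u_1u_2)\|_{\mathcal{N}^s(T)}\lesssim\|u_1\|_{F^s(T)}\|u_2\|_{F^{s'}(T)}+\|u_1\|_{F^{s'}(T)}\|u_2\|_{F^{s}(T)}$ (with some $s'\geqslant s_1$), (ii) a bilinear estimate at negative regularity $\|\partial_x(uv)\|_{\bar{\mathcal{N}}^{-1/2}(T)}\lesssim\|u\|_{F^s(T)}\|v\|_{\bar F^{-1/2}(T)}$, (iii) the two corresponding short-time energy estimates paralleling Proposition \ref{prop:ShorttimeEnergyIterationKPIR2}, and (iv) the Bona--Smith approximation conclusion. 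Steps (iii) and (iv) are then essentially verbatim from the KP-I argument; the substance is in (i)--(ii).

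For (i) and (ii) I would mirror the three-case split (Low$\times$High$\to$High, High$\times$High$\to$High, High$\times$High$\to$Low) of Proposition \ref{prop:ShorttimeBilinearKPIR2}, but with modulation thresholds dictated by the current dispersion. In the Low$\times$High$\to$High regime ($N_2\ll N_1\sim N$), after reducing as before to modulation-localized convolutions with $L_i\gtrsim N^{5-2\alpha+\varepsilon}$, the resonant window is $L_{\max}\ll N_1^\alpha N_2$ and Proposition \ref{prop:BilinearStrichartzGeneral} yields the transversality gain $N_{\max}^{-\alpha/2}$, which beats the time-localization weight precisely because $\alpha>2$; the nonresonant case is handled via the alternative bilinear Strichartz Lemma \ref{lem:AlternativeBilinearStrichartzEstimate}. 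In the High$\times$High$\to$High and High$\times$High$\to$Low regimes the resonant contribution is controlled by the nonlinear Loomis--Whitney bound \eqref{eq:LoomisWhitneyEuclidean}, whereas the nonresonant contribution is closed by two $L^4$-Strichartz estimates (Proposition \ref{prop:L4StrichartzEstimates}). The regularity threshold $s_1(\alpha,\varepsilon,\R^2)$ appears here, and as long as the computations are carried through dyadically, summation is driven by the $N^{s_1}$-weight together with the gains $N_1^{1-3\alpha/2}N_2^{-1/2}$ (resonant) and $N_1^{1-\alpha}$ (nonresonant), producing the same $5-2\alpha$ balance that determines $T(N)$.

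The main novelty, and what I expect to be the main obstacle, is in the High$\times$High$\to$Low interaction and in the analogous piece of the energy estimate for differences at $-1/2$: precisely the step where the KP-I argument in Section \ref{section:QuasilinearLWPKPIR2} used Lemma \ref{lem:AlternativeBilinearStrichartzEstimate} to absorb the low-frequency weight when $N\ll N_1^{-2}$. Here I would instead invoke the C\'ordoba--Fefferman type refinement (Lemma \ref{lem:CFBilinearStrichartz}): after the Whitney decomposition in the transversality $D$ and the almost-orthogonal $\xi$-decomposition into intervals of length $D^2N_2/N_1^\alpha$, one obtains the bound $\|f_{1,N_1,L_1}*f_{2,N_2,L_2}\|_{L^2}\lesssim N_2^{1/2}N_1^{-\alpha/4}\prod L_i^{1/2}\|f_{i}\|_{L^2}$, which is exactly the gain needed to close the low-frequency weight $(1+|\xi|^{-1})^{1+2\delta}\langle\xi\rangle^{-1}$ for every $N\in 2^{\Z}$ once $\delta=\tfrac{s-s_1}{2}\wedge\tfrac18$; the analogous step in the energy iteration for the third term of the paraproduct decomposition is handled in the same way, leading to \eqref{eq:ShorttimeEnergyDifferenceIterationKPIA}--\eqref{eq:ShorttimeEnergyDifferenceIterationKPIB} in the dispersion-generalized setting.

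With the four sets of estimates in hand, a priori bounds, Lipschitz dependence in $\bar H^{-1/2}$ (for differences of solutions in $H^{s,0}$ with $s\geqslant s_1$), and the three-term telescoping
\[
\|u_n-u\|_{C_TH^{s,0}}\leqslant \|u_n-u_n^H\|_{C_TH^{s,0}}+\|u_n^H-u^H\|_{C_TH^{s,0}}+\|u^H-u\|_{C_TH^{s,0}}
\]
combined with the quantitative estimate $\|u-u^H\|_{F^s(T)}\lesssim \|P_{\geqslant H}u_0\|_{H^{s,0}}$ complete the Bona--Smith argument, using local well-posedness of \eqref{eq:FKPIR2} at high regularity to control the middle term (via the smoothing gain in $H$ coming from the low-frequency weighted estimate). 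Smallness of initial data is arranged by the subcritical scaling that distinguishes $s>1-3\alpha/4$, and this gives the stated local well-posedness in $H^{s,0}(\R^2)$ for all $s\geqslant s_1(\alpha,\varepsilon,\R^2)$.
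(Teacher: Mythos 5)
Your plan follows essentially the same route the paper takes: short-time Fourier restriction with $T(N)=N^{-(5-2\alpha+\varepsilon)}$, a priori estimates imported from \cite{SanwalSchippa2023}, short-time bilinear and energy estimates at both $H^{s,0}$ and weighted $\bar{H}^{-1/2}$ regularity, and the Bona--Smith approximation closing the argument. You also correctly single out the C\'ordoba--Fefferman refinement (Lemma \ref{lem:CFBilinearStrichartz}) as the ingredient that makes the improvement over \cite{SanwalSchippa2023} possible.

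Two details in the sketch are worth tightening. First, the paper packages the resonant and non-resonant trilinear bounds into a single trilinear convolution estimate, Lemma \ref{lem:TrilinearConvolutionDispersionGeneralized}, whose proof combines the Loomis--Whitney estimate \eqref{eq:LoomisWhitneyEuclidean} when $L_{\max}\ll N_1^\alpha N_2$, H\"older plus two $L^4$-Strichartz estimates when the low-frequency factor has maximal modulation, and H\"older plus Lemma \ref{lem:CFBilinearStrichartz} precisely when one of the \emph{high}-frequency factors carries the maximal modulation; this is the situation where \eqref{eq:AlternativeBilinearStrichartz} alone is suboptimal. The C\'ordoba--Fefferman estimate is therefore not a substitute for Lemma \ref{lem:AlternativeBilinearStrichartzEstimate} in the regime $N\ll N_1^{-2}$---the paper still handles that borderline case by \eqref{eq:AlternativeBilinearStrichartz}---but a complementary tool for the non-resonant sub-case with a high-frequency factor at maximal modulation. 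Second, the resonant trilinear gain is $N_1^{1/2-3\alpha/4}N_2^{-1/2}$, not $N_1^{1-3\alpha/2}N_2^{-1/2}$, and the non-resonant gain from two $L^4$-Strichartz estimates (each costing $N_1^{(2-\alpha)/8}$) is $(N_1^\alpha N_2)^{-1/2}N_1^{(2-\alpha)/4}=N_1^{1/2-3\alpha/4}N_2^{-1/2}$, which coincides with the resonant gain rather than $N_1^{1-\alpha}$. These are sketch-level slips, but they are exactly the exponents one must trace to confirm that the $N_1$-summation closes for $s\geqslant s_1(\alpha,\varepsilon,\R^2)$.
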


\subsection{Outline of the proof.}

We use again short-time Fourier restriction like in Section \ref{section:QuasilinearLWPKPIR2}. The present choice of frequency-dependent time localization is like in \cite{SanwalSchippa2023}: $T=T(N)=N^{-(5-2\alpha+\varepsilon)}$. This interpolates between $T=T(N)=N^{-1}$ for the KP-I equation \eqref{eq:KPIR2} and semilinear local well-posedness for \eqref{eq:FKPIR2} with $\alpha=\frac{5}{2}$, which will be established in Section \ref{section:SemilinearWellposedness}. There is a significant difference between the KP-I equation and the dispersion-generalized KP-I equations: The nonlinear Loomis-Whitney inequality recovers more than one derivative in the resonant case and a logarithmic loss from dyadically summing the modulation can be compensated. This makes the nonlinear Loomis-Whitney inequality also useful for the short-time bilinear estimate. 

\smallskip

Presently, we improve the regularity by estimating the differences of solutions in $H^{-\frac{1}{2},0}(\R^2)$ and use a weight for the low frequencies:
\begin{equation*}
\| f \|^2_{\bar{H}^{-\frac{1}{2},0}} = \int_{\R^2} (1+|\xi|^{-1})^{1+2\varepsilon} \langle \xi \rangle^{-1} |\hat{f}(\xi,\eta)|^2 d\xi d\eta.
\end{equation*}
We denote the derived short-time function spaces with low frequency weight as $\bar{F}^{-\frac{1}{2}}(T)$ or $\bar{\mathcal{N}}^{-\frac{1}{2}}(T)$, respectively.

 We show the following short-time bilinear estimates:
\begin{proposition}
\label{prop:ShorttimeBilinearEstimatesFKPIR2}
Let $T \in (0,1]$, and $\alpha \in (2,\frac{5}{2})$. There is $\varepsilon >0$ such that the following estimate holds for $r_1 \geqslant 0$:
\begin{equation}
\label{eq:ShorttimeBilinearEstimateSolutionsFKPIR2}
\| \partial_x(u v) \|_{\mathcal{N}^{r_1}(T)} \lesssim \| u \|_{F^{r_1}(T)} \| v \|_{F^{0}(T)} + \| u \|_{F^{0}(T)} \| v \|_{F^{r_1}(T)}.
\end{equation}
Secondly, the following estimate holds for $r \geqslant s_1(\alpha,2\varepsilon,\R^2)$:
\begin{equation}
\label{eq:ShorttimeBilinearEstimateDifferencesFKPIR2}
\| \partial_x (u v) \|_{\bar{\mathcal{N}}^{-\frac{1}{2}}(T)} \lesssim \| u \|_{F^r(T)} \| v \|_{\bar{F}^{-\frac{1}{2}}(T)}.
\end{equation}
\end{proposition}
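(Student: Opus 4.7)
The plan is to mirror the architecture of Proposition \ref{prop:ShorttimeBilinearKPIR2}, but replacing the time localization $T(N) = N^{-1}$ with $T(N) = N^{-(5-2\alpha+\varepsilon)}$ and exploiting that the nonlinear Loomis--Whitney estimate \eqref{eq:LoomisWhitneyEuclidean} now yields the factor $N_1^{\frac{1}{2}-\frac{3\alpha}{4}} N_2^{-\frac{1}{2}}$, which for $\alpha > 2$ recovers strictly more than one full derivative. First I would unfold the definitions of $\mathcal{N}^{r_1}$, $F^{r_1}$, $\bar{\mathcal{N}}^{-\frac{1}{2}}$, $\bar{F}^{-\frac{1}{2}}$ via a Littlewood--Paley decomposition and a paraproduct split, so that after choosing extensions and taking Fourier transforms the estimates reduce to dyadic trilinear convolution bounds
\begin{equation*}
\sum_{L \geqslant N_+^{5-2\alpha+\varepsilon}} L^{-\frac{1}{2}} N \,\bigl\| 1_{D_{N,L}} (f_{1,N_1,L_1} * f_{2,N_2,L_2}) \bigr\|_{L^2_{\tau,\xi,\eta}} \lesssim C(N,N_1,N_2) \prod_{i=1}^2 L_i^{\frac{1}{2}} \|f_{i,N_i,L_i}\|_{L^2},
\end{equation*}
with $L_i \geqslant N_+^{5-2\alpha+\varepsilon}$ as well, and then to sum against the weight $(1 \vee N)^{r_1}$ (resp.\ $(1+|\xi|^{-1})^{\frac{1+2\delta}{2}}\langle\xi\rangle^{-\frac{1}{2}}$) in the relevant cases.

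The three interaction geometries are treated separately. In the High$\times$Low$\to$High case ($N_2 \ll N_1 \sim N$), the resonant regime $L_{\max} \ll N_1^\alpha N_2$ is controlled by the bilinear Strichartz Corollary \ref{prop:BilinearStrichartzResonant}, which produces $L_{\min}^{1/2} N_2^{1/2} (L_{\max}/N_1^{\alpha/2})^{1/2}$; the non-resonant regime is handled either by Lemma \ref{lem:AlternativeBilinearStrichartzEstimate} or by two $L^4$-Strichartz estimates from Proposition \ref{prop:L4StrichartzEstimates}. For the difference estimate one more ingredient is needed, namely the refined Córdoba--Fefferman bilinear Strichartz Lemma \ref{lem:CFBilinearStrichartz}, which is what makes the low-frequency weighted estimate reach down to $r \geqslant s_1(\alpha,2\varepsilon,\R^2)$ when $N_2 \ll N_1^{-\alpha/2}$. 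In the High$\times$High$\to$High case ($N_1 \sim N_2 \sim N$) the nonlinear Loomis--Whitney inequality \eqref{eq:LoomisWhitneyEuclidean} together with the dyadic modulation summation supplies a gain of $N_1^{1/2 - 3\alpha/4} \log(N_1)$ in the resonant regime $L_{\max} \ll N_1^{\alpha+1}$, which suffices for all $r_1, r \geqslant 0$; the non-resonant case is again dispatched via $L^4$-Strichartz.

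The delicate piece is the High$\times$High$\to$Low interaction ($N \ll N_1 \sim N_2$), where the time-localization in $\mathcal{N}_N$ is weaker than in $F_{N_i}$ and must be strengthened, incurring a factor $(N_1/N)^{5-2\alpha+\varepsilon}$. In the resonant regime $L_{\max} \ll N_1^\alpha N$, \eqref{eq:LoomisWhitneyEuclidean} yields a constant $N_1^{-3\alpha/4} N^{-1/2}$ that, combined with the derivative loss $N$ and the time factor, produces a total weight $N_1^{1 - 3\alpha/4} N^{1/2} (N_1/N)^{5-2\alpha+\varepsilon}$; summation against $N^{r_1}$ requires $r_1 \geqslant 0$ and matches the exponent in \eqref{eq:RegularityFKPIR2} up to endpoints. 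For the weighted estimate \eqref{eq:ShorttimeBilinearEstimateDifferencesFKPIR2}, the weight $(1+N^{-1})^{\frac{1+2\delta}{2}}$ combined with a homogeneous decomposition $N \in 2^{\Z}$ becomes problematic for $N \ll N_1^{-\alpha/2}$, where I would switch from \eqref{eq:LoomisWhitneyEuclidean} to the Córdoba--Fefferman bilinear estimate in Lemma \ref{lem:CFBilinearStrichartz}, trading a factor $N_1^{-\alpha/4}$ for the weight. The non-resonant case $L_{\max} \gtrsim N_1^\alpha N$ is again reduced to $L^4$-Strichartz estimates with the standard modulation dispersal.

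The main obstacle will be ensuring that, after the additional time decomposition in the High$\times$High$\to$Low interaction, the logarithmic losses from summing modulations $L \in [N_+^{5-2\alpha+\varepsilon}, N_1^{\alpha} N]$ and from the nonlinear Loomis--Whitney estimate are absorbed by the $\varepsilon > 0$ built into the frequency-dependent time localization; this is precisely why the threshold $s_1(\alpha,\varepsilon,\R^2)$ in \eqref{eq:RegularityFKPIR2} is strict at $\alpha = \frac{24}{11}$. Once this bookkeeping is set up, the estimates close uniformly in $T \in (0,1]$, and the low-frequency weight in $\bar{F}^{-\frac{1}{2}}(T)$ is used only in the High$\times$High$\to$Low interaction of \eqref{eq:ShorttimeBilinearEstimateDifferencesFKPIR2}, via the refined Córdoba--Fefferman bound, to push the regularity threshold of the difference equation below that of the solution.
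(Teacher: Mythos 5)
The paper's proof of this proposition is in fact much shorter than what you propose: estimate \eqref{eq:ShorttimeBilinearEstimateSolutionsFKPIR2} is not reproved at all but is cited from \cite[Proposition~5.2]{SanwalSchippa2023}, and for \eqref{eq:ShorttimeBilinearEstimateDifferencesFKPIR2} the paper first observes that every interaction with $N \gtrsim N_2$ (i.e.\ High$\times$Low$\to$High and High$\times$High$\to$High) follows from the trilinear convolution estimates already established for the energy propagation, so that only the High$\times$High$\to$Low case requires new work. You do not make this reduction, and instead re-derive all three interaction geometries for both estimates. That is not a mathematical error per se, but it obscures the structure of the argument and creates room for the following genuine gap.

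In the resonant High$\times$Low$\to$High regime you propose to rely solely on the bilinear Strichartz bound of Corollary \ref{prop:BilinearStrichartzResonant}, which yields (after the modulation sum starting at $L \geqslant N_1^{5-2\alpha+\varepsilon}$ and the derivative loss $N_1$) a dyadic constant of size $N_1^{-\frac{3}{2}+\frac{3\alpha}{4}-\frac{\varepsilon}{2}} N_2^{\frac{1}{2}}$. For $\alpha \in (2,\tfrac{5}{2})$ and small $\varepsilon$ the exponent of $N_1$ is positive, and summing $N_2^{1/2}$ over $1 \lesssim N_2 \lesssim N_1$ produces an extra $N_1^{1/2}$; the constant is therefore not summable for $r_1 \geqslant 0$. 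To close this case one must, for $N_2 \gtrsim 1$, replace the bilinear Strichartz bound by the nonlinear Loomis--Whitney inequality \eqref{eq:LoomisWhitneyEuclidean}, which gives a constant $N_1^{\frac{3}{2}-\frac{3\alpha}{4}} N_2^{-\frac{1}{2}} \log(N_1)$ that does decay in $N_1$ for $\alpha > 2$, and reserve the C\'ordoba--Fefferman/Strichartz bound for the very small $N_2$. The paper achieves precisely this dichotomy by packaging both into the single convolution estimate Lemma \ref{lem:TrilinearConvolutionDispersionGeneralized}, which your plan does not invoke even though it is the workhorse of the proof. Relatedly, the threshold $N_2 \ll N_1^{-\alpha/2}$ you suggest for switching to the refined bilinear estimate is not the one that appears in the paper, where the low-output-frequency cases are split at $N \lesssim N_1^{-2}$ and treated with Lemma \ref{lem:AlternativeBilinearStrichartzEstimate}.
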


\begin{remark}
We presently remark on the choice of parameters: $\varepsilon$ in the time localization is chosen small enough such that \eqref{eq:ShorttimeBilinearEstimateSolutionsFKPIR2} holds (cf. \cite[Proposition~5.2]{SanwalSchippa2023}). We choose the low frequency weight with the same $\varepsilon$. This allows us to quantify the well-posedness for $s \geqslant s_1(\alpha,5\varepsilon,\R^2)$.
\end{remark}

The energy estimates in short-time Fourier restriction norms read as follows:
\begin{proposition}
\label{prop:EnergyEstimatesFKPIR2}
Let $T \in (0,1]$, and $\alpha \in (2,\frac{5}{2})$. Let $u \in C([0,T];H^{\infty,0}(\R^2))$ be a smooth solution to \eqref{eq:FKPIR2}. For $r \geqslant s_1(\alpha,5 \varepsilon,\R^2)$ the following estimate holds:
\begin{equation}
\label{eq:EnergyEstimateSolutionFKPIR2}
\| u \|^2_{B^{r}(T)} \lesssim \| u_0 \|_{H^{r,0}(\R^2)}^2 + \| u \|^2_{F^{r}(T)} \| u \|_{F^{s_1}(T)}.
\end{equation}

\smallskip

Let $v = u_1 - u_2$ be a difference of two smooth solutions to \eqref{eq:FKPIR2}. For $r \geqslant s_1(\alpha,\varepsilon,\R^2)$ the following estimates hold:
\begin{align}
\label{eq:EnergyEstimateDifferencesFKPIR2A}
\| v \|^2_{\bar{B}^{-\frac{1}{2}}(T)} &\lesssim \| v(0) \|^2_{\bar{H}^{-\frac{1}{2}}(\R^2)} + \| v \|^2_{\bar{F}^{-\frac{1}{2}}(T)} ( \| u_1 \|_{F^{r}(T)} + \| u_2 \|_{F^r(T)} ), \\
\label{eq:EnergyEstimateDifferencesFKPIR2B}
\| v \|^2_{B^r(T)} &\lesssim \| v(0) \|^2_{H^{r,0}(\R^2)} + \| v \|^3_{F^r(T)} + \| v \|_{\bar{F}^{-\frac{1}{2}}(T)} \| v \|_{F^r(T)} \| u_2 \|_{F^{2r+\frac{1}{2}}(T)}.
\end{align}
\end{proposition}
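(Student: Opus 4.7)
The three estimates are proved along the lines of Proposition \ref{prop:ShorttimeEnergyIterationKPIR2}, but adapted to the time localization $T=T(N)=N^{-(5-2\alpha+\varepsilon)}$ and to the stronger dispersion. The starting point is the fundamental theorem of calculus
\begin{equation*}
\| P_N u(t) \|^2_{L^2} = \| P_N u(0) \|^2_{L^2} + \int_0^t\!\!\int_{\R^2} P_N u \, \partial_x P_N(u^2) \, dx\, ds
\end{equation*}
for \eqref{eq:EnergyEstimateSolutionFKPIR2}, and analogous identities for the difference equation satisfied by $v=u_1-u_2$ for \eqref{eq:EnergyEstimateDifferencesFKPIR2A} and \eqref{eq:EnergyEstimateDifferencesFKPIR2B}. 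A paraproduct decomposition splits the trilinear form into High$\times$Low$\to$High, High$\times$High$\to$High and High$\times$High$\to$Low contributions. Wherever the paraproduct is symmetric in the two highest frequencies, we integrate by parts in $x$ to transfer $\partial_x$ onto the lowest frequency factor, up to a commutator term whose bilinear multiplier satisfies standard mean-value symbol estimates and can be absorbed.

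After a smooth time partition into intervals of length $N_{\max}^{-(5-2\alpha+\varepsilon)}$ adapted to the highest frequency and Plancherel's theorem, the main task reduces to proving a trilinear convolution estimate of the form
\begin{equation*}
\Big| \int (f_{1,N_1,L_1} * f_{2,N_2,L_2}) f_{3,N_3,L_3} \, d\tau d\xi d\eta \Big| \lesssim C(\underline{N},\underline{L}) \prod_{i=1}^{3} L_i^{1/2} \| f_{i,N_i,L_i} \|_{L^2}
\end{equation*}
with $L_i \geqslant N_{\max}^{5-2\alpha+\varepsilon}$. In the resonant regime $L_{\max} \ll N_{\max}^\alpha N_{\min}$ the estimate \eqref{eq:LoomisWhitneyEuclidean} delivers a gain of $N_{\max}^{1/2-3\alpha/4} N_{\min}^{-1/2}$, which, combined with time-localization cost $T \cdot N_{\max} \lesssim N_{\max}^{2\alpha-4-\varepsilon}$ and the $\partial_x$ derivative, is acceptable exactly in the regularity range $r \geqslant s_1(\alpha,5\varepsilon,\R^2)$. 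In the non-resonant regime $L_{\max} \gtrsim N_{\max}^\alpha N_{\min}$ we lose one modulation factor by H\"older and close with two $L^4$-Strichartz estimates from Proposition \ref{prop:L4StrichartzEstimates}.

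For the difference estimate \eqref{eq:EnergyEstimateDifferencesFKPIR2A} the paraproduct piece $P_N(P_{\ll N} v \cdot u_i)$ is the critical one, since the derivative sits on a high-frequency factor and integration by parts is not available. Here the low-frequency weight $(1+|\xi|^{-1})^{1+2\varepsilon}$ must be exploited: for $K \ll 1$ we run a case distinction in the resonance function and invoke the Córdoba--Fefferman refined bilinear Strichartz estimate Lemma \ref{lem:CFBilinearStrichartz}, which yields the gain $N_2^{1/2}/N_1^{\alpha/4}$ that the classical Lemma \ref{lem:AlternativeBilinearStrichartzEstimate} misses and whose $\varepsilon$-summability against the weight dictates the precise exponent $1+2\varepsilon$. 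For \eqref{eq:EnergyEstimateDifferencesFKPIR2B} one repeats the argument for the $v\cdot v$ term exactly as for \eqref{eq:EnergyEstimateSolutionFKPIR2}, while the $v\cdot u_2$ paraproduct with $P_{\ll N}v$ is handled through the same negative-regularity estimate, trading $N^{2r+1/2}/N^{-1/2}$ derivatives to $u_2$ to compensate the weight on $v$. The main obstacle is the endpoint book-keeping: ensuring that the logarithmic factors arising from dyadic summation in the modulation variables and the $\varepsilon$-loss from the time localization, the low-frequency weight and the Loomis--Whitney inequality combine to give summable dyadic estimates at precisely $r = s_1(\alpha,5\varepsilon,\R^2)$, which is why the slightly enlarged $\varepsilon$ appears in \eqref{eq:EnergyEstimateSolutionFKPIR2} compared to \eqref{eq:EnergyEstimateDifferencesFKPIR2A}.
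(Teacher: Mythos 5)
Your plan matches the paper's argument in all essentials: fundamental theorem of calculus, paraproduct decomposition with integration by parts plus a commutator bound on the symmetric pieces, smooth time partition adapted to the largest frequency, and reduction to trilinear convolution estimates closed by the nonlinear Loomis--Whitney inequality \eqref{eq:LoomisWhitneyEuclidean} in the resonant regime and by $L^4$-Strichartz or the C\'ordoba--Fefferman refined bilinear estimate in the non-resonant regime, with the $P_N(P_{\ll N}v\cdot u_i)$ paraproduct piece controlled through the negatively weighted $\bar{H}^{-1/2}$-norm. The paper simply packages the resonant/non-resonant case analysis once and for all in Lemma \ref{lem:TrilinearConvolutionDispersionGeneralized} and invokes it in each paraproduct subcase (and cites \cite{SanwalSchippa2023} for \eqref{eq:EnergyEstimateSolutionFKPIR2} rather than re-deriving it); two small imprecisions in your account are that Lemma \ref{lem:CFBilinearStrichartz} is used in the non-resonant branch $L_3=L_{\max}$ uniformly in $N_2\ll N_1$, not specifically for $K\ll1$, and that the weight exponent $1+2\varepsilon$ is soft room for dyadic summability rather than being forced by that lemma.
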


The estimates for solutions were proved in \cite{SanwalSchippa2023}: \eqref{eq:ShorttimeBilinearEstimateSolutionsFKPIR2} was proved in \cite[Proposition~5.2]{SanwalSchippa2023}, and \eqref{eq:EnergyEstimateSolutionsCylinder} was proved in \cite[Proposition~5.8]{SanwalSchippa2023}. The proof of Theorem \ref{thm:ImprovedQuasilinearLWPKPIR2} can be carried out in three steps:
\begin{itemize}
\item Proof of a priori estimates for $s \geqslant s_1(\alpha,\varepsilon)$, which has already been carried out in \cite{SanwalSchippa2023},
\item Proof of Lipschitz continuous dependence in $\bar{H}^{-\frac{1}{2}}$ for solutions in $F^r(T)$, $r \geqslant s_1(\alpha,\varepsilon,\R^2)$,
\item Conclusion of local well-posedness via Bona--Smith approximation.
\end{itemize}
The second and third step were described in the context of the KP-I equation in Section \ref{section:QuasilinearLWPKPIR2}. We presently omit the details to avoid repetition.

\subsection{A trilinear convolution estimate}

The following trilinear estimate will play a central role in establishing the short-time estimates.
\begin{lemma}
\label{lem:TrilinearConvolutionDispersionGeneralized}
Let $\alpha \geqslant 2$, $N_i \in 2^{\Z}$ with $N_2 \ll N_1 \sim N_3$, $N_1 \gtrsim 1$, and $L_i \gtrsim (1 \vee N_i^{5-2\alpha+\varepsilon})$. Let $f_{i,N_i,L_i}: \R \times \R^2 \to \R_{\geqslant 0}$ with $\text{supp}(f_{i,N_i,L_i}) \subseteq D_{N_i,L_i}$ for $i \in \{1,2,3\}$. Then the following estimate holds:
\begin{equation}
\label{eq:TrilinearConvolutionEstimateR2}
\int (f_{1,N_1,L_1} * f_{2,N_2,L_2}) f_{3,N_3,L_3} d\xi d\eta d\tau \lesssim N_2^{-\frac{1}{2}} N_1^{\frac{1}{2}-\frac{3 \alpha}{4}} \prod_{i=1}^3 L_i^{\frac{1}{2}} \| f_{i,N_i,L_i} \|_2.
\end{equation}
\end{lemma}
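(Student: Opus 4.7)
The proof splits into the resonant and non-resonant regimes. In the resonant regime $L_{\max} \ll N_1^\alpha N_2$, the hypotheses of the nonlinear Loomis--Whitney inequality \eqref{eq:LoomisWhitneyEuclidean} are met, so dualizing $\int (f_1*f_2) f_3$ immediately yields the desired bound $N_2^{-1/2} N_1^{1/2-3\alpha/4} \prod L_i^{1/2} \|f_i\|_{L^2}$.

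In the non-resonant regime $L_{\max} \gtrsim N_1^\alpha N_2$, I would apply Cauchy--Schwarz, always placing the factor carrying the largest modulation in $L^2_{\tau,\xi,\eta}$, and split into three subcases according to which $L_i$ realizes $L_{\max}$. When $L_3 = L_{\max}$, bounding $\|f_{1,N_1,L_1} * f_{2,N_2,L_2}\|_{L^2}$ via the C\'ordoba--Fefferman-type estimate in Lemma \ref{lem:CFBilinearStrichartz} produces a factor $N_2^{1/2}/N_1^{\alpha/4}$, and comparing against the target \eqref{eq:TrilinearConvolutionEstimateR2} reduces to verifying $L_3 \gtrsim N_2^2 N_1^{\alpha-1}$, which follows from $L_{\max} \gtrsim N_1^\alpha N_2$ together with $N_2 \leqslant N_1$. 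When $L_1 = L_{\max}$, I would use the rewriting $\int (f_1*f_2)f_3 = \int f_1 \cdot (\tilde{f}_2 * f_3)$ with $\tilde{f}_2(\tau,\xi,\eta) := f_2(-\tau,-\xi,-\eta)$; since $\omega_\alpha$ is odd, $\tilde{f}_2$ remains supported in $D_{N_2,L_2}$, so Lemma \ref{lem:CFBilinearStrichartz} still applies to the High--Low convolution $\tilde{f}_2 * f_3$ (still with $N_3 \sim N_1 \gg N_2$), and the verification is identical after interchanging the roles of $L_3$ and $L_1$.

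The main obstacle is the remaining subcase $L_2 = L_{\max}$: an analogous dualization would produce the High--High convolution $\tilde{f}_1 * f_3$, to which Lemma \ref{lem:CFBilinearStrichartz} does not apply directly. Here I would instead pass to physical space via Plancherel, recognizing the trilinear form up to a fixed constant as $\int \mathcal{F}^{-1}(f_1) \cdot \mathcal{F}^{-1}(f_2) \cdot \overline{\mathcal{F}^{-1}(\overline{f}_3)}\, dt\,dx\,dy$, and apply H\"older in the form $L^2 \cdot L^4 \cdot L^4$ with $L^2$ placed on $\mathcal{F}^{-1}(f_2)$. This choice avoids a potentially harmful $N_2^{(2-\alpha)/8}$ factor that the $L^4$-Strichartz would produce on the low-frequency input (bad when $N_2 \ll 1$ and $\alpha > 2$); the remaining $L^4$ norms of $\mathcal{F}^{-1}(f_1)$ and $\mathcal{F}^{-1}(f_3)$ are then estimated via the linear Strichartz inequality of Proposition \ref{prop:L4StrichartzEstimates} combined with the standard transference for modulation-localized functions (costing $L_i^{1/2}$), yielding the upper bound $N_1^{(2-\alpha)/4} L_1^{1/2} L_3^{1/2} \prod_i \|f_i\|_{L^2}$. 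Inserting $L_2^{1/2} L_2^{-1/2}$ and comparing against the target reduces to the inequality $L_2 \gtrsim N_1^\alpha N_2$, which is exactly the defining assumption of the non-resonant subcase. Collecting all four contributions completes the proof.
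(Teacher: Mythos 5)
Your proof is correct and takes essentially the same route as the paper: nonlinear Loomis--Whitney in the resonant regime, two $L^4$ Strichartz estimates when $L_2 = L_{\max}$, and the C\'ordoba--Fefferman-type bilinear Strichartz estimate (Lemma \ref{lem:CFBilinearStrichartz}) when the largest modulation sits on a high-frequency factor. The only difference is cosmetic: the paper groups $L_1 = L_{\max}$ with $L_3 = L_{\max}$ and treats only the latter, whereas you spell out the reduction via $\int (f_1 * f_2) f_3 = \int f_1\,(\tilde{f}_2 * f_3)$ and the oddness of $\omega_\alpha$ — a clean way to make that implicit symmetry explicit.
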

\begin{proof}
In the resonant case $L_{\max} \ll N_1^\alpha N_2$ this is a consequence of the nonlinear Loomis-Whitney inequality \eqref{eq:LoomisWhitneyEuclidean}. In case $L_{\max} \gtrsim N_1^\alpha N_2$ we distinguish between $L_2 = L_{\max}$ and $(L_1 = L_{\max}$ or $L_3 = L_{\max})$.

In case $L_2 = L_{\max}$ we apply H\"older's inequality and two $L^4_{t,x,y}$-Strichartz estimates from Proposition \ref{prop:L4StrichartzEstimates} to find
\begin{equation*}
\begin{split}
\int (f_{1,N_1,L_1} * f_{2,N_2,L_2} ) f_{3,N_3,L_3} d\xi d\eta d\tau &\leqslant \| f_{2,N_2,L_2} \|_{L^2_{\tau,\xi,\eta}} \prod_{i=1,3} \| \mathcal{F}^{-1}_{t,x,y}[f_{i,N_i,L_i} ] \|_{L^4_{t,x,y}} \\
&\lesssim (N_1^\alpha N_2)^{-\frac{1}{2}} N_1^{\frac{2-\alpha}{4}} \prod_{i=1}^3 L_i^{\frac{1}{2}} \| f_{i,N_i,L_i} \|_2,
\end{split}
\end{equation*}
which is \eqref{eq:TrilinearConvolutionEstimateR2}.

\smallskip

Next, suppose $L_3 = L_{\max} \gtrsim N_1^\alpha N_2$. We apply H\"older's inequality and Lemma \ref{lem:CFBilinearStrichartz} to find
\begin{equation*}
\begin{split}
\int (f_{1,N_1,L_1} * f_{2,N_2,L_2} ) f_{3,N_3,L_3} d\xi d\eta d\tau &\leqslant \| f_{3,N_3,L_3} \|_{L^2_{\tau,\xi,\eta}} \| f_{1,N_1,L_1} * f_{2,N_2,L_2} \|_{L^2_{\tau,\xi,\eta}} \\
&\lesssim (N_1^\alpha N_2)^{-\frac{1}{2}} \frac{N_2^{\frac{1}{2}}}{N_1^{\frac{\alpha}{4}}} \prod_{i=1}^3 L_i^{\frac{1}{2}} \| f_{i,N_i,L_i} \|_2.
\end{split}
\end{equation*}
This completes the proof.
\end{proof}

\subsection{Short-time bilinear estimates}
\label{subsection:ShorttimeBilinearEstimatesR2}
In this section we finish the proof of Proposition \ref{prop:ShorttimeBilinearEstimatesFKPIR2} by showing \eqref{eq:ShorttimeBilinearEstimateDifferencesFKPIR2}.

As usually, the claim follows from estimates for dyadically localized frequencies:
\begin{equation*}
\| P_N \partial_x (P_{N_1} u P_{N_2} v) \|_{\mathcal{N}^{-\frac{1}{2}}(T)} \lesssim C(N,N_1,N_2) \| P_{N_1} u \|_{F_{N_1}(T)} \| P_{N_2} v \|_{F_{N_2}(T)}
\end{equation*}
for $N,N_1,N_2 \in 2^{\Z}$. For $N \gtrsim N_2$ we can use the estimates from the proof of \eqref{eq:EnergyEstimateSolutionFKPIR2}. 

It remains to check the \emph{High $\times$ High $\to$ Low} interaction. In this case the necessity to estimate the high frequency at low regularity leads to the condition $ r \geqslant s_1(\alpha,\varepsilon,\R^2)$. We shall consider the following cases:
\begin{enumerate}
\item Very low frequencies,
\item Resonant case,
\item Non-resonant case.
\end{enumerate}

\noindent \textbf{(1) Very small frequencies} ($N \lesssim 1 \ll N_1$): We consider two subcases:\\
(i) $L_{\max} \ll N_1^\alpha N$: Since $(N \vee 1) =1$, the time localization factor becomes $N_1^{5-2\alpha+\varepsilon}$ and we need to account for the weight at negative Sobolev regularity. We obtain as a consequence of the nonlinear Loomis-Whitney inequality \eqref{eq:LoomisWhitneyEuclidean}
\begin{equation*}
\begin{split}
&\quad N^{\frac{1}{2}-\delta} N_1^{5-2 \alpha + \varepsilon} \sum_{1 \lesssim L \ll N_1^\alpha N} L^{-\frac{1}{2}} \| 1_{D_{N,L}} (f_{1,N_1,L_1} * f_{2,N_2,L_2}) \|_{L^2_{\tau,\xi,\eta}} \\
&\lesssim N_1^{-\frac{1}{2}} N^{-\delta} N_1^{s_1(\alpha,\varepsilon,\R^2)} \prod_{i=1}^2 L_i^{\frac{1}{2}} \| f_{i,N_i,L_i} \|_2 \lesssim N_1^{-\frac{1}{2}} N_1^{s_1(\alpha,\varepsilon+\delta,\R^2)} \prod_{i=1}^2 L_i^{\frac{1}{2}} \| f_{i,N_i,L_i} \|_2
\end{split}
\end{equation*}
as long as $N \gtrsim N_1^{-2}$. For $N \lesssim N_1^{-2}$ we can apply a bilinear Strichartz estimate from Lemma \ref{lem:AlternativeBilinearStrichartzEstimate}
after invoking duality to find
\begin{equation*}
\begin{split}
&\quad N^{\frac{1}{2}-\delta} N_1^{5-2 \alpha + \varepsilon} \sum_{1 \lesssim L \ll N_1^\alpha N} L^{-\frac{1}{2}} \| 1_{D_{N,L}} (f_{1,N_1,L_1} * f_{2,N_2,L_2}) \|_{L^2_{\tau,\xi,\eta}} \\
&\lesssim N^{\frac{5}{4}-\delta} N_1^{\frac{5-2\alpha+\varepsilon}{2}} \log(N_1) \prod_{i=1}^2 L_i^{\frac{1}{2}} \| f_{i,N_i,L_i} \|_2.
\end{split}
\end{equation*}
This clearly suffices.

\noindent (ii) $L_{\max} \gtrsim N_1^\alpha N$: We further distinguish between $L_{\max} = L$ and $L_{\max}= L_1 (\text{ or }L_2$).\\
$\bullet~ L_{\max} = L$: We apply H\"older's inequality and two $L^4$-Strichartz estimates from Proposition \ref{prop:L4StrichartzEstimates}:
\begin{equation*}
\begin{split}
&\quad N^{\frac{1}{2}-\delta} N_1^{5-2 \alpha + \varepsilon} L^{-\frac{1}{2}} \| 1_{D_{N,L}} (f_{1,N_1,L_1} * f_{2,N_2,L_2}) \|_{L^2_{\tau,\xi,\eta}} \\
&\lesssim N^{-\delta} N_1^{5-2 \alpha + \varepsilon} (N_1^\alpha N)^{-\frac{1}{2}} N_1^{\frac{2-\alpha}{4}} \prod_{i=1}^2 L_i^{\frac{1}{2}} \| f_{i,N_i,L_i} \|_{L^2} \\
&\lesssim N^{-\delta} N_1^{\frac{11}{2} - \frac{11 \alpha}{4} + \varepsilon} \prod_{i=1}^2 L_i^{\frac{1}{2}} \| f_{i,N_i,L_i} \|_{L^2}.
\end{split}
\end{equation*}
Like above this suffices for $N \gtrsim N_1^{-2}$. For $N \ll N_1^{-2}$ we can argue again using a bilinear Strichartz estimate from Lemma \ref{lem:AlternativeBilinearStrichartzEstimate}.

\smallskip

$\bullet~  L_{\max}=L_1 (\text{or }L_2)$ We assume $L_{\text{med}} \ll N_1^\alpha N $. Invoking duality and Lemma \ref{lem:CFBilinearStrichartz} gives
\begin{equation*}
\begin{split}
&\quad N^{\frac{1}{2}-\delta} N_1^{5-2 \alpha + \varepsilon} \sum_{1 \lesssim L \ll N_1^\alpha N} L^{-\frac{1}{2}} \| 1_{D_{N,L}} (f_{1,N_1,L_1} * f_{2,N_2,L_2}) \|_{L^2_{\tau,\xi,\eta}} \\
&\lesssim N^{\frac{1}{2}-\delta} N_1^{5-2 \alpha + \varepsilon} N_1^{\frac{1}{2}-\frac{3\alpha}{4}} N^{-\frac{1}{2}} \prod_{i=1}^2 L_i^{\frac{1}{2}} \| f_{i,N_i,L_i} \|_2 \\
&\lesssim N_1^{-\frac{1}{2}} N^{-\delta} N_1^{s_1(\alpha,\varepsilon,\R^2)} \prod_{i=1}^2 L_i^{\frac{1}{2}} \| f_{i,N_i,L_i} \|_2.
\end{split}
\end{equation*}
This estimate suffices for $N_1^{-2} \lesssim N \lesssim 1$. For $N \lesssim N_1^{-2}$ we can again conclude by Lemma \ref{lem:AlternativeBilinearStrichartzEstimate}.

\smallskip

\noindent \textbf{(2) Resonant case} ($1 \lesssim N \ll N_1, L_{\max}\ll N_1^{\alpha}N$): We find from increasing time localization and modulation localization that it suffices to show estimates
\begin{equation}
\label{eq:LowHighHighShorttimeBilinearEstimateFKPIR2}
\begin{split}
&\quad N^{\frac{1}{2}} \big( \frac{N_1}{N} \big)^{5-2 \alpha + \varepsilon} \sum_{L \geqslant N^{5-2\alpha + \varepsilon}} L^{-\frac{1}{2}} \| 1_{D_{N,L}} (f_{1,N_1,L_1} * f_{2,N_2,L_2} ) \|_{L^2_{\tau,\xi,\eta}} \\
&\lesssim N_1^{-\frac{1}{2}} N^{-c(\varepsilon)} N_1^{s_1(\alpha,C \varepsilon)} \prod_{i=1}^2 L_i^{\frac{1}{2}} \| f_{i,N_i,L_i} \|_2
\end{split}
\end{equation}
for $L_i \geqslant N_1^{5-2\alpha + \varepsilon}$.
First, we suppose that $L_{\max} \ll N_1^\alpha N$. Since this case is resonant, we can apply the nonlinear Loomis-Whitney inequality \eqref{eq:LoomisWhitneyEuclidean} (as a special case of Lemma \ref{lem:TrilinearConvolutionDispersionGeneralized}) to find
\begin{equation*}
\begin{split}
&\quad N^{\frac{1}{2}} \big( \frac{N_1}{N} \big)^{5-2 \alpha + \varepsilon} L^{-\frac{1}{2}} \| 1_{D_{N,L}} (f_{1,N_1,L_1} * f_{2,N_2,L_2}) \|_{L^2_{\tau,\xi,\eta}} \\
&\lesssim N_1^{\frac{1}{2}-\frac{3\alpha}{4}} \big( \frac{N_1}{N} \big)^{5-2\alpha+\varepsilon} \prod_{i=1}^2 L_i^{\frac{1}{2}} \| f_{i,N_i,L_i} \|_2.
\end{split}
\end{equation*}
This estimate implies \eqref{eq:LowHighHighShorttimeBilinearEstimateFKPIR2} with $C = 1$.

\smallskip

\noindent \textbf{(3) Non-resonant case} ($L_{\max} \gtrsim N_1^\alpha N$, $1 \lesssim N \ll N_1 \sim N_2$): Depending on $L_{\max}$, we have the following sub-cases: \\
(i) $L_{\max}=L$. We find from applying H\"older's inequality and two $L^4$-Strichartz  estimates from Proposition \ref{prop:L4StrichartzEstimates}:
\begin{equation*}
\begin{split}
&\quad N^{\frac{1}{2}} \big( \frac{N_1}{N} \big)^{5-2 \alpha + \varepsilon} \sum_{L \geqslant N_1^\alpha N} L^{-\frac{1}{2}} \| 1_{D_{N,L}} (f_{1,N_1,L_1} * f_{2,N_2,L_2}) \|_{L^2_{\tau,\xi,\eta}} \\
&\lesssim N^{\frac{1}{2}} \big( \frac{N_1}{N} \big)^{5-2\alpha+\varepsilon}  (N_1^\alpha N)^{-\frac{1}{2}} N_1^{\frac{2-\alpha}{4}} \prod_{i=1}^2 L_i^{\frac{1}{2}} \| f_{i,N_i,L_i} \|_2 \\
&= N_1^{-\frac{1}{2}} N^{-(5-2\alpha+\varepsilon)}  N_1^{s_1(\alpha,\varepsilon,\R^2)}  \prod_{i=1}^2 L_i^{\frac{1}{2}} \| f_{i,N_i,L_i} \|_2
\end{split}
\end{equation*}
This implies \eqref{eq:LowHighHighShorttimeBilinearEstimateFKPIR2}.\\
(ii) $L_{\max} = L_1$ and $L_{\text{med}} \ll N_1^\alpha N$: In this case, we invoke duality and apply Lemma \ref{lem:TrilinearConvolutionDispersionGeneralized} to find
\begin{equation}
\label{eq:AuxShorttimeDifferenceEstimate}
\begin{split}
&\quad N^{\frac{1}{2}} \big( \frac{N_1}{N} \big)^{5-2 \alpha + \varepsilon} \sum_{N^{5-2\alpha+\varepsilon} \leqslant L \leqslant N_1^\alpha N} L^{-\frac{1}{2}} \| 1_{D_{N,L}} (f_{1,N_1,L_1} * f_{2,N_2,L_2}) \|_{L^2_{\tau,\xi,\eta}} \\
&\lesssim N^{\frac{1}{2}} \big( \frac{N_1}{N} \big)^{5-2\alpha+\varepsilon} \log(N_1) N_1^{\frac{1}{2}-\frac{3\alpha}{4}} N^{-\frac{1}{2}} \prod_{i=1}^2 L_i^{\frac{1}{2}} \| f_{i,N_i,L_i} \|_2.
\end{split}
\end{equation}
This implies again \eqref{eq:LowHighHighShorttimeBilinearEstimateFKPIR2}.\\
The easier case when $L_{\text{med}} \gtrsim N_1^\alpha N_2$ \eqref{eq:LowHighHighShorttimeBilinearEstimateFKPIR2} follows from applying Lemma \ref{lem:AlternativeBilinearStrichartzEstimate}.
This completes the proof of Proposition \ref{prop:ShorttimeBilinearEstimatesFKPIR2}.

$\hfill \Box$

\subsection{Energy estimates}

This section is devoted to the proof of Proposition \ref{prop:EnergyEstimatesFKPIR2}. Energy estimates for solutions with the claimed regularity were already proved in \cite{SanwalSchippa2023}. We need to establish estimates for differences of solutions $v =u_1-u_2$:
\begin{equation}
\label{eq:AuxDifferenceEnergyEstimate}
\| v \|^2_{\bar{B}^{-\frac{1}{2}}(T)} \lesssim \| v(0) \|^2_{\bar{H}^{-\frac{1}{2}}(\R^2)} + \| v \|^2_{\bar{F}^{-\frac{1}{2}}(T)} ( \| u_1 \|_{F^s(T)} + \| u_2 \|_{F^s(T)})
\end{equation}
for
\begin{equation*}
s \geqslant s_1(\alpha,\varepsilon,\R^2) =
\begin{cases}
6 - \frac{11 \alpha}{4} + \varepsilon, \quad &\alpha \in (2, \frac{24}{11}], \\
0, \quad &\alpha \in (\frac{24}{11},\frac{5}{2}).
\end{cases}
\end{equation*}
We turn to the proof. For small frequencies we have by the definition of the function spaces
\begin{equation*}
\| P_{\lesssim 1} v \|_{\bar{B}^{-\frac{1}{2}}(T)} \lesssim \| P_{\lesssim 1} v(0) \|_{\bar{H}^{-\frac{1}{2}}}.
\end{equation*}

For $N \gg 1$ we invoke the fundamental theorem of calculus:
\begin{equation*}
\| P_N v(t) \|^2_{L^2} = \| P_N v(0) \|^2_{L^2} +  \int_0^t \int_{\R^2} P_N v \partial_x (P_N ( v (u_1+u_2)) dx ds.
\end{equation*}
For $i=1,2$, we take a paraproduct decomposition:
\begin{equation}
\label{eq:ParaproductDifferencesFKPIR2}
P_N (v u_i) = P_N (v P_{\ll N} u_i ) + P_N (P_{\ll N} v \cdot u_i) + P_N (P_{\gtrsim N} v P_{\gtrsim N} u_i).
\end{equation}
The first term can be estimated like in \eqref{eq:EnergyEstimateSolutionFKPIR2} by integration by parts and a commutator estimate. The details are omitted. The second term is different because we cannot integrate by parts and for the third term we need to estimate $P_{\gtrsim N} v$ at negative regularity.

\medskip

We turn to the estimate of the second term in \eqref{eq:ParaproductDifferencesFKPIR2}. After adding time localization and taking the negative regularity into account, it remains to estimate
\begin{equation*}
N_1^{5-2\alpha + \varepsilon} \iint_{\R \times \R^2} \gamma^3(N_1 (t-t_I)) P_{N_1} v P_{N_2} v P_{N_3} u dx dt.
\end{equation*}
Above $u$ denotes a solution $u_i$, $N_1 \sim N_3 \gg N_2$, and $\gamma \in C^\infty_c(\R)$ denotes a suitable bump function adapted to the unit interval.

After changing to Fourier space and modulation localization, this reduces again to establishing convolution estimates 
\begin{equation*}
\int (f_{1,N_1,L_1} * f_{2,N_2,L_2}) f_{3,N_3,L_3} d\xi d\eta d\tau \lesssim N_1^{\frac{1}{2}-\frac{3\alpha}{4}} N_2^{-\frac{1}{2}} \prod_{i=1}^3 L_i^{\frac{1}{2}} \| f_{i,N_i,L_i} \|_2.
\end{equation*}
This is the content of Lemma \ref{lem:CFBilinearStrichartz}. The above display gives \eqref{eq:AuxDifferenceEnergyEstimate} for $s \geqslant s_1(\alpha,2\varepsilon,\R^2)$.

\smallskip

Lastly, we estimate the third term in \eqref{eq:ParaproductDifferencesFKPIR2}. After the usual time and modulation localization, we need to prove for $1 \lesssim N_1 \lesssim N_2$ and $L_i \gtrsim N_2^{5-2\alpha + \varepsilon}$:
\small
\begin{equation*}
N_2^{5-2\alpha+\varepsilon} \int (f_{1,N_1,L_1} * f_{2,N_2,L_2}) f_{3,N_3,L_3} d\xi d\eta d\tau \lesssim N_1^{-\frac{1}{2}} N_2^{-\frac{1}{2}} N_2^{s_2(\alpha,\varepsilon,\R^2)} \prod_{i=1}^3 L_i^{\frac{1}{2}} \| f_{i,N_i,L_i} \|_2.
\end{equation*}
\normalsize
Summaation of the above in the spatial frequencies leads to \eqref{eq:AuxDifferenceEnergyEstimate} for $s \geqslant s_1(\alpha,2\varepsilon,\R^2)$. The above display follows from Lemma \ref{lem:TrilinearConvolutionDispersionGeneralized}.
The proof of \eqref{eq:AuxDifferenceEnergyEstimate} is complete.
%
%

\medskip

It remains to prove \eqref{eq:EnergyEstimateDifferencesFKPIR2B}:
\begin{equation*}
\| v \|^2_{B^r(T)} \lesssim \| v(0) \|^2_{H^{r,0}(\R^2)} + \| v \|^3_{F^r(T)} + \| v \|_{\bar{F}^{-\frac{1}{2}}(T)} \| v \|_{F^r(T)} \| u_2 \|_{F^{2r+\frac{1}{2}}(T)}
\end{equation*}
for $r \geqslant s_1(\alpha,\varepsilon,\R^2)$.
To this end, we invoke again the fundamental theorem of calculus to find for $N \gg 1$
\begin{equation*}
\begin{split}
&\quad \| P_N v(t) \|_{L^2}^2 \\
 &= \|P_N v(0) \|^2_{L^2} + \int_0^t \int_{\R^2} P_N v \partial_x (P_N(v(u_1+u_2))) dx ds \\
&= \|P_N v(0) \|^2_{L^2} + \int_0^t \int_{\R^2} P_N v \partial_x (P_N(v^2)) dx ds + \int_0^t \int_{\R^2} P_N v \partial_x (P_N(v u_2)) dx ds.
\end{split}
\end{equation*}
The first integral can be estimated like solutions in \eqref{eq:EnergyEstimateSolutionFKPIR2}. It suffices to estimate the second integral. We carry out a paraproduct decomposition:
\begin{equation*}
P_N(v u_2) = P_N(v P_{\ll N} u_2) + P_N(P_{\ll N} v \cdot u_2) + P_N(P_{\gtrsim N} v \cdot P_{\gtrsim N} u_2).
\end{equation*}
The first term can be estimated like in \eqref{eq:EnergyEstimateSolutionFKPIR2} by integration by parts. We turn to the second term, for which we need to show estimates after adding time localization and dyadic frequency localization for $N_1 \sim N_3 \gg N_2$, $N_1 \gtrsim 1$:
\begin{equation*}
N_1^{5-2\alpha + \varepsilon} \iint_{\R \times \R^2} \gamma^3(N_1 (t-t_I)) P_{N_1} v P_{N_2} v P_{N_3} u dx dt.
\end{equation*}
After changing to Fourier space and modulation localization, it suffices to show
\begin{equation*}
\int (f_{1,N_1,L_1} * f_{2,N_2,L_2} ) f_{3,N_3,L_3} d\xi d\eta d\tau \lesssim N_1^{\frac{1}{2}-\frac{3\alpha}{4}} N_2^{-\frac{1}{2}} \prod_{i=1}^3 L_i^{\frac{1}{2}} \| f_{i,N_i,L_i} \|_2.
\end{equation*}
This establishes \eqref{eq:EnergyEstimateDifferencesFKPIR2B} for $r \geqslant s_1(\alpha,2\varepsilon,\R^2)$.
The above display is again Lemma \ref{lem:TrilinearConvolutionDispersionGeneralized}.

For the last term we need to estimate for
$N_1 \ll N_2 \sim N_3$, $N_1 \gtrsim 1$:
\begin{equation*}
N_2^{5-2\alpha + \varepsilon} \iint_{\R \times \R^2} \gamma^3(N_1 (t-t_I)) P_{N_1} v P_{N_2} v P_{N_3} u dx dt.
\end{equation*}
Like above we change to Fourier space and carry out a modulation localization such that it suffices to show
\begin{equation*}
\int (f_{1,N_1,L_1} * f_{2,N_2,L_2} ) f_{3,N_3,L_3} d\xi d\eta d\tau \lesssim N_2^{\frac{1}{2}-\frac{3\alpha}{4}} N_1^{-\frac{1}{2}} \prod_{i=1}^3 L_i^{\frac{1}{2}} \| f_{i,N_i,L_i} \|_2
\end{equation*}
as this will imply \eqref{eq:EnergyEstimateDifferencesFKPIR2B} for $r \geqslant s_1(\alpha,2\varepsilon,\R^2)$. The above display is once more established by Lemma \ref{lem:TrilinearConvolutionDispersionGeneralized}. The proof is complete.

$\hfill \Box$

\section{Quasilinear well-posedness on the cylinder}
\label{section:LWPFKPICylinder}
In this section we show the following result on quasilinear local well-posedness on the cylinder. Recall the definition of $s_1(\alpha,\varepsilon,\R \times \T)$:
\begin{equation*}
s_1(\alpha,\varepsilon,\R \times \T) = \begin{cases}
 \frac{11}{6} - \frac{2 \alpha}{3} + \varepsilon, &\quad 2 < \alpha \leqslant \frac{11}{4} \text{ and } \varepsilon > 0,\\
  0, &\quad \frac{11}{4} < \alpha \leqslant 5.
  \end{cases}
\end{equation*}

\begin{theorem}
\label{thm:QuasilinearLWPCylinder}
Let $\D = \R \times \T$, $\alpha \in (2,5)$, and $ s \geqslant s_1(\alpha,\varepsilon,\D)$ with $s_1$ defined in \eqref{eq:RegSolution} and $\varepsilon > 0$. Then, there is a data-to-solution mapping $S_T^s:H^{s,0}(\D) \to C_T H^{s,0}(\D)$, which assigns real-valued initial data $u_0 \in H^{s,0}(\D)$ to solutions $u \in C_T H^{s,0}$ to \eqref{eq:FKPI} and $T=T(\| u_0 \|_{H^{s,0}})$ is lower semi-continuous. It holds $T(\| u_0 \|_{H^{s,0}}) \gtrsim 1$ for $\| u_0 \|_{H^{s,0}(\D)} \downarrow 0$.
\end{theorem}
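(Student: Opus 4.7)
The overall strategy mirrors the argument of Section \ref{section:LWPFKPIR2} on $\R^2$, replacing the Euclidean linear/bilinear/trilinear estimates by their cylindrical counterparts. We work in the short-time Fourier restriction spaces $F^s(T)$, $\mathcal{N}^s(T)$, $B^s(T)$ introduced in Section \ref{section:Notations} with the cylindrical time-localization $T(N) = N_+^{-\frac{5-\alpha}{3}-\varepsilon}$. By Theorem \ref{thm:GWPEnergySpace} we have global smooth solutions $u \in C(\R, E^{\frac{\alpha}{2}}(\R \times \T))$ for any smooth enough initial data; it therefore suffices to derive the a priori and Lipschitz estimates below for these smooth solutions and extend the flow to $H^{s,0}(\R \times \T)$ by density.

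The plan is as follows. First, I would establish the short-time nonlinear estimate
\begin{equation*}
\| \partial_x(uv) \|_{\mathcal{N}^{r}(T)} \lesssim \| u \|_{F^{r}(T)} \| v \|_{F^{s_1}(T)} + \| u \|_{F^{s_1}(T)} \| v \|_{F^{r}(T)}, \qquad r \geqslant s_1(\alpha,\varepsilon,\R \times \T),
\end{equation*}
for the choice $s_1=s_1(\alpha,\varepsilon,\R \times \T)$. Dyadic reduction and Plancherel bring one to trilinear convolution estimates of the form
\begin{equation*}
\int (f_{1,N_1,L_1}*f_{2,N_2,L_2}) f_{3,N_3,L_3}\, d\xi (d\eta) d\tau \lesssim C(\underline{N},\underline{L}) \prod_{i=1}^{3} L_i^{1/2}\|f_{i,N_i,L_i}\|_2,
\end{equation*}
which in the resonant regime $L_{\max}\ll N_1^\alpha N_2$ are supplied by the cylindrical nonlinear Loomis--Whitney inequality \eqref{eq:LoomisWhitneyCylinder}, and in the non-resonant regime by the refined bilinear estimates in Corollary \ref{prop:BilinearStrichartzResonant}, Lemma \ref{lem:AlternativeBilinearStrichartzEstimate}, and Lemma \ref{lem:CFBilinearCylinder}, together with the $L^4$-decoupling Strichartz estimate from Proposition \ref{prop:StrichartzCylinder}. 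The threshold $s_1=\frac{11}{6}-\frac{2\alpha}{3}$ for $2<\alpha\leqslant\frac{11}{4}$ emerges by optimizing the High$\times$High$\to$Low contribution: the Loomis--Whitney bound contributes $(N_1/N_2)^{1/2}$, the derivative $\partial_x$ gives $N_1$, the time-localization gain is $N_2^{-\frac{5-\alpha}{3}-\varepsilon}$, and balancing yields the stated value (and $s_1=0$ for $\alpha>\frac{11}{4}$ where the balance becomes favorable on the $L^2$-side). Second, I would prove the short-time energy estimate
\begin{equation*}
\| u \|_{B^{r}(T)}^2 \lesssim \| u_0 \|_{H^{r,0}}^2 + \|u\|_{F^{r}(T)}^2 \|u\|_{F^{s_1}(T)}
\end{equation*}
following the paraproduct-plus-integration-by-parts scheme of Proposition \ref{prop:EnergyEstimatesFKPIR2}: the derivative is always moved onto the lowest-frequency factor via symmetrization, and the resulting trilinear forms are controlled by the same cylindrical Loomis--Whitney/bilinear Strichartz toolkit; Lemma \ref{lemma:LinShortTime} then produces, via a standard bootstrap, an a priori bound $\|u\|_{F^r(1)}\lesssim \|u_0\|_{H^{r,0}}$ for small data (with smallness achieved by rescaling in the subcritical regime).

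Third, to upgrade a priori bounds to Lipschitz dependence I would run the same iteration for the difference $v=u_1-u_2$ in the weighted space $\bar H^{-1/2}$ with low-frequency weight as in Section \ref{section:QuasilinearLWPKPIR2}:
\begin{equation*}
\left\{ \begin{array}{l}
\|v\|_{\bar F^{-\frac{1}{2}}(T)} \lesssim \|v\|_{\bar B^{-\frac{1}{2}}(T)} + \|\partial_x(v(u_1+u_2))\|_{\bar{\mathcal N}^{-\frac{1}{2}}(T)},\\[1mm]
\|\partial_x(v(u_1+u_2))\|_{\bar{\mathcal N}^{-\frac{1}{2}}(T)} \lesssim \|v\|_{\bar F^{-\frac{1}{2}}(T)}(\|u_1\|_{F^r(T)}+\|u_2\|_{F^r(T)}),\\[1mm]
\|v\|_{\bar B^{-\frac{1}{2}}(T)}^2 \lesssim \|v(0)\|_{\bar H^{-\frac{1}{2}}}^2 + \|v\|_{\bar F^{-\frac{1}{2}}(T)}^2(\|u_1\|_{F^r(T)}+\|u_2\|_{F^r(T)}).
\end{array}\right.
\end{equation*}
The central point here, and the main obstacle, is the \emph{High$\times$High$\to$Low} interaction in the paraproduct decomposition of $P_N(v\,u_i)$ in both the nonlinear and the energy estimate: the derivative falls on a middle frequency rather than the lowest, so integration-by-parts is not available, and one must instead exploit the negative Sobolev weight $(1+|\xi|^{-1})^{1+2\delta}\langle\xi\rangle^{-1}$ to absorb the derivative. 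Here the refined cylindrical bilinear Strichartz estimate Lemma \ref{lem:CFBilinearCylinder}, which compensates small transversality by an almost-orthogonal C\'ordoba--Fefferman decomposition, is essential, and it is this step that dictates the value of $s_1$.

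Finally, once the a priori bound in $F^s(T)$ and the Lipschitz bound in $\bar F^{-\frac{1}{2}}(T)$ are available, local well-posedness at $s\geqslant s_1$ follows by the Bona--Smith approximation scheme exactly as outlined at the end of Section \ref{section:QuasilinearLWPKPIR2}: truncate the data to $P_{\leqslant H}u_0$, use higher regularity to close a difference estimate $\|u-u^H\|_{F^s(T)}\lesssim \|P_{\geqslant H}u_0\|_{H^{s,0}}$, and conclude continuous dependence from a triangle-inequality argument together with local well-posedness in the energy space provided by Theorem \ref{thm:GWPEnergySpace}.
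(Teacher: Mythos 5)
Your proposal follows the same overall skeleton as the paper's argument (short-time bilinear estimates in $\mathcal N^s(T)$, short-time energy estimates in $B^s(T)$, a difference estimate in a weighted $\bar H^{-1/2}$ space, Bona--Smith), and the key ingredients you invoke — the cylindrical Loomis--Whitney inequality \eqref{eq:LoomisWhitneyCylinder}, the bilinear Strichartz estimates including Lemma \ref{lem:CFBilinearCylinder}, and the decoupling $L^4$ estimate of Proposition \ref{prop:StrichartzCylinder} — are indeed the ones the paper uses. The identification of the threshold $s_1=\frac{11}{6}-\frac{2\alpha}{3}$ with the resonant High$\times$High$\to$Low interaction is also correct, although the heuristic as you state it suppresses the modulation factors $\langle L_{\mathrm{med}}/N_1^{\alpha/2}\rangle^{1/2}(L_{\max}/N_1^{\alpha/2})^{1/2}$ that make the cylindrical Loomis--Whitney bound genuinely different from the Euclidean one; these factors are what produce the explicit numerology in \eqref{eq:ResonantHighHigh} and hence the value of $s_1$.

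There is, however, one genuine gap: you close the bootstrap by invoking ``smallness achieved by rescaling in the subcritical regime.'' That is the correct mechanism on $\R^2$ (Sections \ref{section:QuasilinearLWPKPIR2}--\ref{section:LWPFKPIR2}), but it does \emph{not} transfer to $\R\times\T$, because the anisotropic scaling \eqref{eq:AnisotropicScaling} changes the period of the $y$-torus, and the multilinear estimates are not uniform over the resulting family of cylinders $\R\times\T_\gamma$ as $\gamma\to\infty$. The paper avoids this issue by inserting a small positive power of $T$ into each estimate: Proposition \ref{prop:ShorttimeBilinearEstimateGeneral} carries a factor $T^\kappa$, \eqref{eq:EnergyEstimateSolutionsCylinder} and Proposition \ref{prop:EnergyEstimatesDifference} carry $T^\delta$, and these are produced by trading a small amount of modulation regularity for powers of the time localization (Lemma \ref{lem:TradingModulationRegularity}), exploiting the $L_{\max}^{0-}$ slack in Lemma \ref{lem:TrilinearConvolutionEstimateCylinder}. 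The bootstrap is then closed by shrinking $T$, not by rescaling, which is precisely what gives a time of existence $T=T(\|u_0\|_{H^{s,0}})$ lower semicontinuous in the data. Your iteration scheme as written, with no $T$-dependent gain, only establishes local well-posedness for small data on a fixed time interval.
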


\subsection{Outline of the proof.}

Like in the previous section on KP-I equations posed on $\R^2$, we use short-time Fourier restriction. 

\smallskip

 We have the following short-time bilinear estimates:
\begin{proposition}
\label{prop:ShorttimeBilinearEstimateGeneral}
Let $\alpha \in (2,5)$ and $T=T(N)=N^{-\frac{5-\alpha}{3}-\varepsilon(\alpha)}$ with
\begin{equation*}
0< \varepsilon < \frac{4 \alpha}{6} - \frac{4}{3}.
\end{equation*}
 Then the estimate
 \begin{equation}
 \label{eq:ShorttimeEstimateI}
 \| \partial_x(u v) \|_{\mathcal{N}^{s}(T)} \lesssim T^{\kappa} ( \| u \|_{F^s(T)} \| v \|_{F^0(T)} + \| u \|_{F^0(T)} \| v \|_{F^s(T)}
 \end{equation}
  holds for $s \geqslant 0$. 
  
\medskip  
  
  Secondly, the estimate
 \begin{equation}
 \label{eq:AsymmetricShorttimeBilinearEstimate}
\| \partial_x(u v) \|_{\bar{\mathcal{N}}^{-\frac{1}{2}}(T)} \lesssim T^{\kappa} \| u \|_{F^{s}(T)} \| v \|_{ \bar{F}^{-\frac{1}{2}}(T)}
\end{equation}
holds for $s \geqslant s_1(\alpha,\varepsilon,\R \times \T)$. 
\end{proposition}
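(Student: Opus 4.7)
\medskip

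\noindent\textbf{Proof plan.} The plan is to reduce both estimates to uniform dyadic bilinear bounds of the form
\begin{equation*}
\| P_N \partial_x(P_{N_1} u \, P_{N_2} v) \|_{\mathcal{N}_N(T)} \lesssim T^\kappa \, C(N,N_1,N_2) \, \| P_{N_1} u \|_{F_{N_1}(T)} \| P_{N_2} v \|_{F_{N_2}(T)}
\end{equation*}
with $C(N,N_1,N_2)$ summable against the appropriate Sobolev weights, and then to split the analysis into the three paraproduct regimes: \emph{High $\times$ Low $\to$ High} ($N_2 \ll N_1 \sim N$), \emph{High $\times$ High $\to$ High} ($N \sim N_1 \sim N_2$), and \emph{High $\times$ High $\to$ Low} ($N \ll N_1 \sim N_2$). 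Passing from $\mathcal{N}_N$ to modulation-localized $L^2$-convolution estimates using the function space machinery in Section~\ref{section:Notations}, the modulations satisfy $L, L_i \gtrsim N_+^{(5-\alpha)/3+\varepsilon}$. The time localization is chosen precisely so that in each regime the gain from summing $L^{-1/2}$ over $L \geqslant N_+^{(5-\alpha)/3+\varepsilon}$ combines with the trilinear/bilinear gains below to recover the derivative loss.

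\medskip

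In each regime I would first separate the \emph{resonant} case $L_{\max} \ll N_{\max}^\alpha N_{\min}$ from the \emph{non-resonant} case $L_{\max} \gtrsim N_{\max}^\alpha N_{\min}$. In the resonant case the key tool is the cylinder Loomis--Whitney inequality \eqref{eq:LoomisWhitneyCylinder}, which after duality yields a gain of $(N_1/N_2)^{1/2} N_1^{-\alpha/2}$ (up to the $\langle \cdot \rangle^{1/2}$ factors absorbed by summing modulations). Combined with the $N_1^{-(5-\alpha)/3-\varepsilon}$ from the time localization and the derivative $N_{\max}$, this recovers the loss in the High $\times$ Low $\to$ High and High $\times$ High $\to$ High regimes for $s \geqslant 0$, with the exponent $(5-\alpha)/3$ chosen precisely to balance the Loomis--Whitney output $N_1^{(1-\alpha)/2}$ against the derivative. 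In the non-resonant case I would split according to which factor carries $L_{\max}$: if $L_{\max} = L$ (the output modulation), H\"older's inequality with two $L^4$-Strichartz estimates (Proposition~\ref{prop:StrichartzCylinder}) suffices since the large modulation absorbs the derivative; if $L_{\max} = L_i$, the refined bilinear Strichartz estimate from Lemma~\ref{lem:CFBilinearCylinder} (or Lemma~\ref{lem:AlternativeBilinearStrichartzEstimate} when the high modulation is very large) on the remaining two factors provides the needed gain.

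\medskip

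The main obstacle, and the reason \eqref{eq:AsymmetricShorttimeBilinearEstimate} requires $s \geqslant s_1(\alpha,\varepsilon,\R \times \T)$ rather than $s \geqslant 0$, is the \emph{High $\times$ High $\to$ Low} regime in the asymmetric estimate. Here the derivative $\partial_x$ lands on a small output frequency $N \ll N_1 \sim N_2$, one must accommodate the low-frequency weight $(1+|\xi|^{-1})^{(1+2\delta)/2}\langle\xi\rangle^{-1/2}$, and simultaneously the high-frequency factor $v$ is only controlled in $\bar F^{-1/2}$. The derivative loss in $\mathcal{N}^{-1/2}$-counting becomes effectively $N^{1/2-\delta}$, and the time localization is dictated by the output scale $N$, which is the \emph{smallest} frequency, so one cannot gain from $T = N^{-(5-\alpha)/3-\varepsilon}$ in the usual way --- instead one pays the factor $(N_1/N)^{(5-\alpha)/3+\varepsilon}$ for redistributing the interval. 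Applying Loomis--Whitney in the resonant case and the refined bilinear estimate Lemma~\ref{lem:CFBilinearCylinder} in the non-resonant case, and comparing powers of $N_1$ against the threshold $N \gtrsim N_1^{-\alpha/2}$, leads exactly to the threshold $s \geqslant \frac{11}{6}-\frac{2\alpha}{3}+\varepsilon$ when $\alpha \leqslant 11/4$ and $s \geqslant 0$ when $\alpha > 11/4$; for $N \ll N_1^{-\alpha/2}$ one falls back on Lemma~\ref{lem:AlternativeBilinearStrichartzEstimate} which provides an even more favorable bound. Finally, the factor $T^\kappa$ with small $\kappa > 0$ is extracted by Lemma~\ref{lem:TradingModulationRegularity}, trading a sliver of modulation regularity for a positive power of $T$, which is the mechanism closing the contraction-type bootstrap for differences.
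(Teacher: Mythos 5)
Your proposal is correct and follows essentially the same route as the paper's proof: reduction to modulation-localized $L^2$-convolution estimates, paraproduct decomposition, separation into resonant/non-resonant cases with the cylinder Loomis--Whitney estimate \eqref{eq:LoomisWhitneyCylinder} covering the former and $L^4$-Strichartz (Proposition~\ref{prop:StrichartzCylinder}) or refined bilinear Strichartz estimates (Lemmas~\ref{lem:AlternativeBilinearStrichartzEstimate}, \ref{lem:CFBilinearCylinder}) the latter, and Lemma~\ref{lem:TradingModulationRegularity} to extract $T^\kappa$. You also correctly identify the High $\times$ High $\to$ Low resonant interaction in \eqref{eq:AsymmetricShorttimeBilinearEstimate} as the regime that pins down $s_1(\alpha,\varepsilon,\R\times\T)$, and the factor $(N_1/N)^{(5-\alpha)/3+\varepsilon}$ from redistributing the time interval. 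Two small organizational remarks: the paper folds High $\times$ High $\to$ High into the $N\sim N_1\gtrsim N_2$ case rather than treating it separately (harmless), and the paper packages most of the non-resonant interactions through the single trilinear estimate Lemma~\ref{lem:TrilinearConvolutionEstimateCylinder} rather than calling the bilinear lemmas directly; also the very-small-frequency threshold the paper actually uses is governed by $N_1^\alpha N_2 \gtrsim N_1^{(5-\alpha)/3}$ (i.e. $N_2 \gtrsim N_1^{(5-4\alpha)/3}$) rather than your stated $N \gtrsim N_1^{-\alpha/2}$ --- this does not affect the argument since both fall back on Lemma~\ref{lem:AlternativeBilinearStrichartzEstimate} below the threshold.
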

Note that we obtain the following as a special case:
\begin{equation*}
\| \partial_x (u_1 u_2) \|_{\mathcal{N}
^0(T)} \lesssim T^{\kappa} \| u_1 \|_{F^0(T)} \| u_2 \|_{F^0(T)}.
\end{equation*}
The proof will be given in Subsection \ref{section:ShorttimeBilinearEstimatesCylinder}. The veracity of \eqref{eq:ShorttimeEstimateI} will give an upper bound for $\varepsilon$.

A further ingredient is the short-time energy propagation. We show energy estimates for the solution:
\begin{equation}
\label{eq:EnergyEstimateSolutionsCylinder}
\| u \|^2_{B^s(T)} \lesssim \| u_0 \|_{H^{s,0}}^2 + T^{\delta} \| u \|_{F^s(T)}^2 \| u \|_{F^{s_1}(T)}
\end{equation}
for $s \geqslant s_1(\alpha,5\varepsilon,\R \times \T)$ and $\delta = \delta(\varepsilon) > 0$.

\smallskip

Secondly, we prove energy estimates for the difference of solutions. Let $u_i$, $i=1,2$ be two solutions to \eqref{eq:FKPI} and let $v = u_1 - u_2$. We observe that $v$ solves:
\begin{equation*}
\partial_t v - \partial_x D_x^\alpha v - \partial_{x}^{-1} \partial_y^2 v =  \frac{\partial_x(v (u_1+u_2))}{2}.
\end{equation*}
We shall estimate $v$ in weighted Sobolev norms of negative regularity. Define
\begin{equation*}
\| f \|^2_{\bar{H}^{-\frac{1}{2}}} = \int_{\R \times \Z} (1+|\xi|^{-1})^{1+2 \varepsilon} \langle \xi \rangle^{-1} |\hat{f}(\xi,\eta)|^2 d\xi (d\eta)_1. 
\end{equation*}

\smallskip


The energy estimates for solutions read as follows:
\begin{proposition}
\label{prop:EnergyEstimateSolutionsCylinder}
Suppose that $s \geqslant s_1$.
Then the estimate \eqref{eq:EnergyEstimateSolutionsCylinder} holds true.
\end{proposition}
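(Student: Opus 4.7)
The proof will follow the template of Proposition \ref{prop:ShorttimeEnergyIterationKPIR2} and Proposition \ref{prop:EnergyEstimatesFKPIR2}, adapted to the partially periodic geometry. The plan is to apply the fundamental theorem of calculus to $\|P_N u(t)\|_{L^2}^2$ on a dyadic block and reduce the resulting trilinear space-time integrals to modulation-localized convolution estimates, using the nonlinear Loomis--Whitney inequality \eqref{eq:LoomisWhitneyCylinder}, the bilinear Strichartz estimates from Corollary \ref{prop:BilinearStrichartzResonant} and Lemma \ref{lem:CFBilinearCylinder}, and the $L^4$-Strichartz estimate from Proposition \ref{prop:StrichartzCylinder}.

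First, for $N \gg 1$ I would write
\begin{equation*}
\|P_N u(t)\|_{L^2}^2 = \|P_N u(0)\|_{L^2}^2 + \int_0^t\!\!\int_{\R\times\T} P_N u\cdot \partial_x(P_N(u^2))\,dx\,dy\,ds,
\end{equation*}
and decompose $P_N(u^2)$ paraproductwise into the three trilinear pieces corresponding to High$\times$Low$\to$High, High$\times$High$\to$High, and High$\times$High$\to$Low interactions. In the first two cases, a symmetrization/commutator argument (exactly as on p.~\pageref{section:QuasilinearLWPKPIR2} for the KP-I case) lets us transfer $\partial_x$ onto the lowest frequency factor, so it suffices to prove bounds of the shape
\begin{equation*}
\Bigl|\int_0^t\!\!\int \tilde{P}_N u\,P_N u\,\partial_x P_{N_2} u\,dx\,dy\,ds\Bigr|\lesssim T^{\delta}\,C(N,N_2)\,\|\tilde P_N u\|_{F_N(T)}\|P_N u\|_{F_N(T)}\|P_{N_2}u\|_{F_{N_2}(T)}.
\end{equation*}
After inserting a smooth partition of unity on $[0,T]$ of scale $T(N)=N^{-(5-\alpha)/3-\varepsilon}$, Plancherel converts each piece into a convolution estimate of the form $\int(f_{1,N_1,L_1}*f_{2,N_2,L_2})f_{3,N_3,L_3}$ with $L_i\gtrsim N_{\max}^{(5-\alpha)/3+\varepsilon}$.

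The crux is the High$\times$High$\to$Low case $N\ll N_1\sim N_2$ (and the analogous High$\times$Low$\to$High), where the derivative sits on the lowest frequency and the Loomis--Whitney inequality must carry the entire derivative loss. In the resonant regime $L_{\max}\ll N_1^\alpha N$, I invoke \eqref{eq:LoomisWhitneyCylinder} with
\begin{equation*}
C(\underline N,\underline L)\sim\Bigl(\tfrac{N_1}{N}\Bigr)^{\!1/2}(L_{\min}/N_1^{\alpha/2})^{1/2}\langle L_{\mathrm{med}}/N_1^{\alpha/2}\rangle^{1/2}(L_{\max}/N_1^{\alpha/2})^{1/2},
\end{equation*}
which after summation in $L_i$ (absorbing the logarithmic losses into $T^\delta$ via Lemma \ref{lem:TradingModulationRegularity}) yields a gain of $N_1^{-\alpha/2}(N_1/N)^{1/2}$ from the three modulation factors divided by $L^{1/2}$. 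Balancing this against the derivative loss $N$, the time-localization factor $T\cdot N_1^{(5-\alpha)/3+\varepsilon}$, and the weight $N_1^{2s}$ in $B^s(T)$, one summand requires $s\geq\tfrac{11}{6}-\tfrac{2\alpha}{3}+O(\varepsilon)$, which is precisely $s_1(\alpha,5\varepsilon,\R\times\T)$ in the range $\alpha\leq 11/4$; for $\alpha>11/4$ the bound is favorable already at $s=0$. In the non-resonant regime $L_{\max}\gtrsim N_1^\alpha N$, the $L^4$-Strichartz estimate of Proposition \ref{prop:StrichartzCylinder} combined with H\"older furnishes the gain directly, and for intermediate configurations one appeals to Lemma \ref{lem:CFBilinearCylinder}, which handles the delicate case of small transversality via the C\'ordoba--Fefferman type decomposition.

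The High$\times$Low$\to$High and High$\times$High$\to$High interactions are handled analogously but with one less factor of $(N_1/N)^{1/2}$ to absorb, so they are strictly easier and impose no new regularity constraint (they close for all $s\geq s_1$). Finally, Cauchy--Schwarz summation in the dyadic frequencies $N,N_1,N_2,N_3$ (using the extra factor $N_{\min}^{1/2}$ or $\ell^2_N$ weights to control the $B^s$ norm), together with the trivial bound for $P_{\leq 1}u$, yields \eqref{eq:EnergyEstimateSolutionsCylinder}; the power $T^\delta$ arises from Lemma \ref{lem:TradingModulationRegularity} applied to the factor corresponding to the ``lowest frequency'' leg, which is the factor we may afford to place in $F_N^b$ with $b<1/2$. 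The main technical obstacle is bookkeeping the cylinder-specific counting-measure losses in \eqref{eq:LoomisWhitneyCylinder} across the boundary regimes $L_{\mathrm{med}}\sim N_1^{\alpha/2}$ and $L_{\max}\sim N_1^\alpha N$, which is where the endpoint loss $+\varepsilon$ in the definition of $s_1$ originates.
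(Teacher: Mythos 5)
Your proposal is correct and follows essentially the same route as the paper's proof: fundamental theorem of calculus, paraproduct decomposition with integration by parts / commutator to place the derivative on the lowest frequency, refinement of the time localization to scale $N_{\max}^{-(5-\alpha)/3-\varepsilon}$, Plancherel reduction to modulation-localized trilinear convolution estimates, and dyadic summation. The only structural difference is that the paper first packages the case analysis you carry out by hand (resonant regime via the cylinder Loomis--Whitney estimate \eqref{eq:LoomisWhitneyCylinder}, nonresonant regime via $L^4$-Strichartz, intermediate transversality via Lemma~\ref{lem:CFBilinearCylinder} and Lemma~\ref{lem:AlternativeBilinearStrichartzEstimate}) into the single convolution estimate of Lemma~\ref{lem:TrilinearConvolutionEstimateCylinder}, which gives the uniform bound
\[
\int (f_{1,N_1,L_1} * f_{2,N_2,L_2}) f_{3,N_3,L_3} \lesssim N_2^{-\frac12+\delta} N_1^{-\frac13-\frac{\alpha}{3}-\frac{\varepsilon}{4}+\delta} L_{\max}^{-\delta} \prod_i L_i^{\frac12}\|f_{i,N_i,L_i}\|_2
\]
and is then applied verbatim in both the High$\times$Low$\to$High and High$\times$High$\to$Low branches; the $T^\delta$ in \eqref{eq:EnergyEstimateSolutionsCylinder} is extracted from the $L_{\max}^{-\delta}$ factor via Lemma~\ref{lem:TradingModulationRegularity}, which is the same mechanism you describe. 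One minor expository wrinkle: you single out High$\times$High$\to$Low as the crux and claim High$\times$Low$\to$High is strictly easier, whereas the same power-counting (and the same constraint $s_1\geq \tfrac{11}{6}-\tfrac{2\alpha}{3}+O(\varepsilon)$) appears in both; the paper treats them symmetrically via the one lemma. This does not affect the validity of your argument, and your numerology $s_1(\alpha,5\varepsilon,\R\times\T)$ matches the statement.
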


We formulate the estimates for differences of solutions:
\begin{proposition}
\label{prop:EnergyEstimatesDifference}
Let $\alpha \in (2,5)$, and $s \geqslant s_1$. Then choosing $\varepsilon$ small enough, for $T=T(N)=N^{-\frac{5-\alpha}{3}-\varepsilon}$, the following estimate holds for some $\delta = \delta(\varepsilon) > 0$:
\begin{equation}
\label{eq:DifferenceEnergyEstimate}
\| v \|_{\bar{B}^{-\frac{1}{2}}(T)}^2 \lesssim \| v(0) \|_{\bar{H}^{-\frac{1}{2}}}^2 + T^{\delta} \| v \|_{\bar{F}^{-\frac{1}{2}}(T)}^2 \big( \| u_1 \|_{F^{s}(T)} + \| u_2 \|_{F^{s}(T)} \big).
\end{equation}
Let $r \geqslant s_1(\alpha,5\varepsilon,\R \times \T)$. The following estimate holds for some $\delta = \delta(\varepsilon) > 0$:
\begin{equation}
\label{eq:DifferenceEnergyEstimateII}
\| v \|_{B^r(T)}^2 \lesssim \| v(0) \|_{H^{r,0}}^2 + T^{\delta} \| v \|_{F^{r}(T)}^3 + T^{\delta} \| v \|_{\bar{F}^{-\frac{1}{2}}(T)} \| v \|_{F^r(T)} \| u_2 \|_{F^{2r+\frac{1}{2}}(T)} .
\end{equation}
\end{proposition}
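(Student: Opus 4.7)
The overall strategy mirrors the argument carried out for the Euclidean case in Proposition \ref{prop:EnergyEstimatesFKPIR2}, replacing the Euclidean nonlinear Loomis--Whitney inequality \eqref{eq:LoomisWhitneyEuclidean} with its cylindrical counterpart \eqref{eq:LoomisWhitneyCylinder}, and supplementing with the refined bilinear Strichartz estimate Lemma \ref{lem:CFBilinearCylinder} in the regimes where \eqref{eq:LoomisWhitneyCylinder} alone does not suffice. For each dyadic frequency $N \gg 1$, I would apply the fundamental theorem of calculus to $\|P_N v(t)\|_{L^2}^2$ using the difference equation $\partial_t v - \partial_x D_x^\alpha v - \partial_x^{-1}\partial_y^2 v = \tfrac{1}{2}\partial_x(v(u_1+u_2))$, and then perform the paraproduct decomposition
\begin{equation*}
P_N(v u_i) = P_N(v P_{\ll N} u_i) + P_N(P_{\ll N} v \cdot u_i) + P_N(P_{\gtrsim N} v \cdot P_{\gtrsim N} u_i).
\end{equation*}
The factor $T^\delta$ will come from a small surplus modulation regularity via Lemma \ref{lem:TradingModulationRegularity}, for which one needs $\varepsilon$ small enough that the multilinear estimates below leave a positive exponent of $T$ after the frequency-dependent time localization is converted.

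For the \emph{High--Low--High} contribution $N_1 \sim N_3 \gg N_2$ (the first paraproduct term) the standard reduction through integration by parts (to put the derivative on the low frequency) plus a commutator absorbed into a disposable Fourier multiplier reduces the matter, after time localization on intervals of length $N_1^{-(5-\alpha)/3-\varepsilon}$ and modulation localization, to convolution bounds of the form
\begin{equation*}
\big| \textstyle\int (f_{1,N_1,L_1} * f_{2,N_2,L_2}) f_{3,N_3,L_3} \, d\xi (d\eta)_1 d\tau \big| \lesssim (N_1/N_2)^{1/2} L_{\min}^{1/2} \langle L_{\text{med}}/N_1^{\alpha/2}\rangle^{1/2} \langle L_{\max}/N_1^{\alpha/2}\rangle^{1/2} \!\! \prod_i \|f_{i,N_i,L_i}\|_{L^2},
\end{equation*}
which is precisely \eqref{eq:LoomisWhitneyCylinder}. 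Summing in the admissible modulation range (with a boundary $L_{\max} \gtrsim N_1^\alpha N_2$ handled by $L^4$-Strichartz from Proposition \ref{prop:StrichartzCylinder} or by Lemma \ref{lem:CFBilinearCylinder}), the surplus $N_1^{-\alpha/2} N_2^{1/2}$ of derivatives recovered by \eqref{eq:LoomisWhitneyCylinder} together with the frequency-dependent time localization and the definition of $s_1(\alpha,\varepsilon,\R\times\T)$ yields the required bound.

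The main obstacle is the second term $P_N(P_{\ll N} v \cdot u_i)$, where integration by parts is not available because of the asymmetric roles of $v$ and $u_i$ in the negative-regularity estimate; here the derivative does \emph{not} fall on the lowest frequency. For this term, as well as for the third \emph{High--High--Low} term, I proceed after time localization, dyadic localization in the output frequency $N \in 2^{\Z}$ (homogeneous decomposition for $N \lesssim 1$), and modulation localization by applying the refined bilinear estimate Lemma \ref{lem:CFBilinearCylinder} together with \eqref{eq:LoomisWhitneyCylinder}; in the regime $N \ll N_1^{-\alpha/2}$ we must invoke the alternative bilinear Strichartz estimate Lemma \ref{lem:AlternativeBilinearStrichartzEstimate} to prevent the low-frequency weight $(1+|\xi|^{-1})^{1+2\varepsilon}$ from blowing up, exactly as in Subsection \ref{subsection:ShorttimeBilinearEstimatesR2}. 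The additional logarithmic factor $\log(D^*)$ appearing in \eqref{eq:RefinedBilinearCylinder} is harmless, being absorbed into $T^\delta$.

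For \eqref{eq:DifferenceEnergyEstimateII} I would split $v(u_1+u_2) = v^2 + 2 v u_2$. The $v^2$-contribution is estimated exactly as for a solution via \eqref{eq:EnergyEstimateSolutionsCylinder} applied to $v$, producing the $T^\delta \|v\|_{F^r(T)}^3$ term. For $v u_2$, the paraproduct with the first and third pieces handled by integration by parts and \eqref{eq:LoomisWhitneyCylinder} as above, only the second (asymmetric) piece $P_N(P_{\ll N} v \cdot u_2)$ requires the mixed bound: using one copy of $v$ in $\bar F^{-1/2}(T)$ and one in $F^r(T)$, and transferring the $(2r+\tfrac{1}{2})$-regularity from $u_2$ via a loss $N_2^{2r+1/2}$ accounted for by \eqref{eq:LoomisWhitneyCylinder} combined with the weight $(1+|\xi|^{-1})^{1+2\varepsilon}\langle\xi\rangle^{-1}$, one obtains the remaining term $T^\delta \|v\|_{\bar F^{-1/2}(T)}\|v\|_{F^r(T)}\|u_2\|_{F^{2r+1/2}(T)}$. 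The delicate point throughout is tracking $\varepsilon$: it must be small enough so that all the trilinear bounds leave a positive power of $T$ after converting frequency-dependent time localization, but the threshold $s_1(\alpha,5\varepsilon,\R\times\T)$ accommodates any such choice.
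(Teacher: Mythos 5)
Your proposal takes essentially the same route as the paper's proof: identical paraproduct decomposition $P_N(vu_i) = P_N(vP_{\ll N}u_i) + P_N(P_{\ll N}v\cdot u_i) + P_N(P_{\gtrsim N}v P_{\gtrsim N}u_i)$, the same identification of the asymmetric term $P_N(P_{\ll N}v\cdot u_i)$ as the one forcing the negative-regularity weight (since integration by parts is unavailable there), the same splitting $v(u_1+u_2)=v^2+2vu_2$ for \eqref{eq:DifferenceEnergyEstimateII}, and the same ingredients (the cylindrical Loomis--Whitney inequality \eqref{eq:LoomisWhitneyCylinder}, the C\'ordoba--Fefferman-type bilinear estimate of Lemma \ref{lem:CFBilinearCylinder}, $L^4$-Strichartz from Proposition \ref{prop:StrichartzCylinder}, and Lemma \ref{lem:AlternativeBilinearStrichartzEstimate} for tiny low input frequencies). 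The only organizational difference is that the paper does not rerun these case distinctions inline: it invokes the packaged trilinear convolution estimate Lemma \ref{lem:TrilinearConvolutionEstimateCylinder} (which is proved from exactly the ingredients you list, with the $L_{\max}^{-\delta}$ surplus built in to produce $T^\delta$ via Lemma \ref{lem:TradingModulationRegularity}) uniformly for all three paraproduct contributions in both \eqref{eq:DifferenceEnergyEstimate} and \eqref{eq:DifferenceEnergyEstimateII}; your plan would effectively reprove that lemma several times. One small imprecision: you write of a regime ``$N \ll N_1^{-\alpha/2}$'' for the output frequency $N$, but in the cylinder case the output in the energy-norm integral is always $\gtrsim 1$ (this is why the paper states ``it suffices to obtain estimates for $N\gtrsim 1$''); the place where Lemma \ref{lem:AlternativeBilinearStrichartzEstimate} is genuinely needed to control the low-frequency weight is for tiny values of the \emph{low input} frequency $N_2$, exactly as handled inside the proof of Lemma \ref{lem:TrilinearConvolutionEstimateCylinder}.
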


With these estimates at hand, the proof follows along the same lines of Sections \ref{section:QuasilinearLWPKPIR2} and \ref{section:LWPFKPIR2}. Details are omitted to avoid repetition.

\subsection{A trilinear convolution estimate}

In this section we show the analog of Lemma \ref{lem:TrilinearConvolutionDispersionGeneralized}:
\begin{lemma}
\label{lem:TrilinearConvolutionEstimateCylinder}
Let $\alpha \geqslant 2$, $N_i \in 2^{\Z}$ with $N_2 \lesssim N_1 \sim N_3$, $N_1 \gtrsim 1$, and $L_{\text{med}} \gtrsim (1 \vee N_1^{\frac{5-\alpha}{3}+\varepsilon})$. Let $f_{i,N_i,L_i}: \R \times \R \times \Z \to \R_{\geqslant 0}$ with $\text{supp}(f_{i,N_i,L_i}) \subseteq D_{N_i,L_i}$ for $i \in \{1,2,3\}$. Then the following estimate holds for some $0<\delta\leqslant \delta'$ :
\begin{equation}
\label{eq:TrilinearConvolutionEstimateCylinder}
\int (f_{1,N_1,L_1} * f_{2,N_2,L_2}) f_{3,N_3,L_3} d\xi d\eta d\tau \lesssim N_2^{-\frac{1}{2}+\delta} N_1^{-\frac{1}{3}-\frac{\alpha}{3}-\frac{\varepsilon}{4}+\delta} L_{\max}^{-\delta} \prod_{i=1}^3 L_i^{\frac{1}{2}} \| f_{i,N_i,L_i} \|_{L^2}.
\end{equation}
\end{lemma}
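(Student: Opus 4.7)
The plan is to follow the template of the Euclidean analog Lemma \ref{lem:TrilinearConvolutionDispersionGeneralized}, but substituting the cylinder versions of each ingredient: the cylinder nonlinear Loomis--Whitney inequality \eqref{eq:LoomisWhitneyCylinder} replaces \eqref{eq:LoomisWhitneyEuclidean}, the $L^4$-Strichartz estimate on the cylinder (Proposition \ref{prop:StrichartzCylinder}) replaces its Euclidean counterpart in Proposition \ref{prop:L4StrichartzEstimates}, and the refined bilinear Strichartz estimate Lemma \ref{lem:CFBilinearCylinder} replaces Lemma \ref{lem:CFBilinearStrichartz}. The hypothesis $L_{\mathrm{med}} \gtrsim N_1^{(5-\alpha)/3 + \varepsilon}$ supplies the necessary slack to cover a genuine loss relative to the Euclidean case and to extract the small factor $L_{\max}^{-\delta}$. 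Throughout, the case $N_2$ very small can be absorbed via the trivial bound $N_2^{1/2} \leqslant N_1^{1/2}$, trading an extra half-power of $N_1$ whenever the bilinear estimate produces only $N_2^{\delta}$ rather than the target $N_2^{-1/2+\delta}$.

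First I would dichotomize according to whether $L_{\max} \ll N_1^{\alpha} N_2$ (resonant) or $L_{\max} \gtrsim N_1^{\alpha} N_2$ (non-resonant). In the resonant case, apply \eqref{eq:LoomisWhitneyCylinder} directly to obtain
\[
  \int (f_1 * f_2)\, f_3 \;\lesssim\; \Big(\tfrac{N_1}{N_2}\Big)^{\!1/2} N_1^{-\alpha/2}\, L_{\min}^{1/2} L_{\max}^{1/2}\, \Big\langle \tfrac{L_{\mathrm{med}}}{N_1^{\alpha/2}}\Big\rangle^{\!1/2} \prod_{i=1}^3 \|f_i\|_{L^2}.
\]
When $L_{\mathrm{med}} \gtrsim N_1^{\alpha/2}$ this collapses to $N_2^{-1/2} N_1^{1/2 - 3\alpha/4} \prod L_i^{1/2}$, which is stronger than the target whenever $\alpha > 2$. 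When $L_{\mathrm{med}} \lesssim N_1^{\alpha/2}$, I rewrite $L_{\min}^{1/2} L_{\max}^{1/2} = L_{\mathrm{med}}^{-1/2} \prod L_i^{1/2}$ and invoke $L_{\mathrm{med}} \gtrsim N_1^{(5-\alpha)/3+\varepsilon}$ to produce an $N_1$-exponent $-(1+\alpha)/3 - \varepsilon/2$, again strictly better than the target. Finally, the gap $L_{\max} \ll N_1^{\alpha} N_2$ allows insertion of $L_{\max}^{-\delta}$ at cost of $N_1^{\alpha\delta} N_2^{\delta}$, which is absorbed by the aforementioned slack.

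In the non-resonant regime I split according to which $L_i$ is maximal. If $L_2 = L_{\max}$, Cauchy--Schwarz isolates $\|f_2\|_{L^2}$ and reduces to bounding $\|\mathcal{F}^{-1}f_1 \cdot \mathcal{F}^{-1}f_3\|_{L^2}$, which I handle by H\"older and two applications of the cylinder $L^4$-Strichartz estimate from Proposition \ref{prop:StrichartzCylinder}. Trading $L_{\max}^{-1/2}$ for $(N_1^{\alpha} N_2)^{-1/2 + \delta} L_{\max}^{-\delta}$ yields the target exponent for $\alpha > 2$ (with the usual $N_1^{\varepsilon'}$ loss from decoupling). If instead $L_1 = L_{\max}$ or $L_3 = L_{\max}$ (symmetric), I isolate the large-modulation factor and apply Lemma \ref{lem:CFBilinearCylinder} to the remaining bilinear convolution, noting that $|\eta_i/\xi_i| \lesssim N_1^{\alpha/2}$ on the supports gives $D^\ast \lesssim N_1^{\alpha/2}$ and $\log D^\ast \lesssim \log N_1$. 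This produces
\[
  \|f_1 * f_2\|_{L^2} \;\lesssim\; (\log N_1)\, N_2^{1/2}\, L_{\min}^{1/2}\, \Big\langle \tfrac{L_{\mathrm{med}}}{N_1^{\alpha/2}}\Big\rangle^{\!1/2} \|f_1\|_{L^2}\|f_2\|_{L^2},
\]
and I again bifurcate on $L_{\mathrm{med}} \lessgtr N_1^{\alpha/2}$ and use $L_{\mathrm{med}} \gtrsim N_1^{(5-\alpha)/3+\varepsilon}$ combined with $L_{\max} \gtrsim N_1^{\alpha} N_2$ to recover the target bound.

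The principal obstacle will be booking the constants $\delta > 0$ uniformly in $N$ and $L$ while threading the needle between the two sub-regimes of $L_{\mathrm{med}}$ in each case. The $L_{\mathrm{med}} \sim N_1^{\alpha/2}$ transition is where the refined bilinear estimate saturates, and the quantitative relation $\varepsilon \gg (\alpha - 1)\delta$ must be imposed so that the loss $N_1^{\alpha\delta}$ from inserting $L_{\max}^{-\delta}$ is outweighed by the gain $N_1^{-\varepsilon/2}$ from $L_{\mathrm{med}}$. The remaining bookkeeping---almost orthogonal decompositions in $\xi$ when $N_2 \ll N_1$, treatment of very small $N_2$, and the logarithmic $\log N_1$ losses---is routine and absorbed into the factor $N_1^{\delta}$.
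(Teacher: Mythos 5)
Your overall architecture -- resonant/non-resonant dichotomy, sub-cases by which $L_i$ is maximal, with the cylinder ingredients substituted for the Euclidean ones -- matches the paper. The resonant case (Loomis--Whitney plus the $L_{\text{med}}$ lower bound) and the $L_1 = L_{\max}$ case (H\"older plus Lemma \ref{lem:CFBilinearCylinder}) are essentially right. However, there is a genuine gap in the non-resonant case $L_2 = L_{\max}$. You propose to apply H\"older and two copies of Proposition \ref{prop:StrichartzCylinder} directly, as in the Euclidean Lemma \ref{lem:TrilinearConvolutionDispersionGeneralized}. But the cylinder Strichartz estimate is \emph{not} a straightforward replacement: it requires the Fourier support to satisfy $|\eta/\xi - A| \lesssim N^{\alpha/2}$ for some $A$, whereas $D_{N_i,L_i}$ imposes no such constraint. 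The paper resolves this with a Whitney decomposition in the transversality $D \sim |\eta_1/\xi_1 - \eta_3/\xi_3|$ (which the convolution constraint bounds by $(L_{\max}/N_2)^{1/2}$): for $D \lesssim N_1^{\alpha/2}$ one localizes the $\eta/\xi$-supports to intervals of length $N_1^{\alpha/2}$ and then applies Proposition \ref{prop:StrichartzCylinder}; for $D \gtrsim N_1^{\alpha/2}$ one instead uses the bilinear Strichartz estimate Proposition \ref{prop:BilinearStrichartzGeneral}, exploiting the transversality, and this is precisely where the hypothesis $L_{\text{med}} \gtrsim N_1^{(5-\alpha)/3+\varepsilon}$ enters. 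Your closing remark mentions "almost orthogonal decompositions in $\xi$", but the required decomposition is in $\eta/\xi$, and without it Proposition \ref{prop:StrichartzCylinder} does not apply to the full dyadic blocks. The two-regime structure is the essential extra step the cylinder demands over $\R^2$.

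A secondary error: in the $L_1 = L_{\max}$ case you justify $D^* \lesssim N_1^{\alpha/2}$ by claiming "$|\eta_i/\xi_i| \lesssim N_1^{\alpha/2}$ on the supports", but the modulation region $D_{N_i,L_i}$ places no bound on $\eta_i/\xi_i$ at all; this quantity ranges over all of $\R$ (resp.\ $\Z/\xi_i$). The correct way to bound $D^*$ is through the resonance identity: writing $\Omega_\alpha$ as a difference of the three modulations gives $|\eta_2/\xi_2 - \eta_3/\xi_3| \lesssim (L_{\max}/N_2)^{1/2}$, and hence $\log D^* \lesssim \log L_{\max} + \log N_1$. Your conclusion that the logarithm is harmless (absorbed by $L_{\max}^{-\delta}$) is correct, but the stated justification is not, and since $L_{\max}$ can be super-polynomial in $N_1$ the $\log L_{\max}$ term cannot simply be replaced by $\log N_1$.
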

\begin{proof}
Firstly, we can suppose that $N_2 \gtrsim N_1^{-C}$ for some large $C$ since the estimate of tiny frequencies $N_2 \ll N_1^{-C}$ is immediate from Lemma \ref{lem:AlternativeBilinearStrichartzEstimate}.
\smallskip
Next, suppose that $L_{\max} \ll N_1^\alpha N_2$. In this case \eqref{eq:TrilinearConvolutionEstimateCylinder} is immediate from the nonlinear Loomis-Whitney inequality \eqref{eq:LoomisWhitneyCylinder} and the lower bound on $L_{\text{med}}$:
\begin{equation*}
\begin{split}
&\quad \int (f_{1,N_1,L_1} * f_{2,N_2,L_2}) f_{3,N_3,L_3} d\xi d\eta d\tau \\
&\lesssim N_2^{-\frac{1}{2}} N_1^{\frac{1}{2}-\frac{\alpha}{2}} (L_{\min} L_{\max})^{\frac{1}{2}} \langle L_{\text{med}} / N_1^{\alpha/2} \rangle^{\frac{1}{2}} \prod_{i=1}^3 \| f_{i,N_i,L_i} \|_{L^2}.
\end{split}
\end{equation*}

Next, suppose that $L_{\max} \gtrsim N_1^\alpha N_2$. In case $L_2 = L_{\max}$ we carry out a dyadic decomposition in the transversality
\begin{equation*}
D \sim \big| \frac{\eta_1}{\xi_1} - \frac{\eta_3}{\xi_3} \big|.
\end{equation*}

We remark that we have by convolution constraint for $(\xi_i,\eta_i) \in \pi_{\xi,\eta}\text{supp}(f_{i,N_i,L_i})$:
\begin{equation*}
\big| \frac{\eta_1}{\xi_1} - \frac{\eta_2}{\xi_2} \big| \lesssim \big( \frac{L_{\max}}{N_2} \big)^{\frac{1}{2}}.
\end{equation*}
Indeed, we have
\begin{equation*}
\Omega_{\alpha} = (\tau - \omega_\alpha(\xi,\eta)) - (\tau_1 - \omega_{\alpha}(\xi_1,\eta_1)) - (\tau_2 - \omega_\alpha(\xi_2,\eta_2)).
\end{equation*}
This gives the claimed bound on the transversality:
\begin{equation*}
\frac{|\eta_1 \xi_2 - \eta_2 \xi_1|^2}{\xi_1 \xi_2 (\xi_1 + \xi_2)} \lesssim L_{\max} \Rightarrow \big| \frac{\eta_1}{\xi_1} - \frac{\eta_2}{\xi_2} \big| \lesssim \big( \frac{L_{\max}}{N_2} \big)^{\frac{1}{2}}.
\end{equation*}

The summation in $D$ incurs a factor of $\log(L_{\max})+\log(N_1)$ by the lower bound on $N_2$. For $D \lesssim N_1^{\frac{\alpha}{2}}$ we can apply H\"older's inequality and two $L^4$-Strichartz estimates from Proposition \ref{prop:StrichartzCylinder} to find
\begin{equation*}
\begin{split}
&\quad \int (f_{1,N_1,L_1} * f_{2,N_2,L_2}) f_{3,N_3,L_3} d\xi d\eta d\tau \\
 &\leqslant \| f_{2,N_2,L_2} \|_{L^2_{\tau,\xi,\eta}} \prod_{i=1,3} \| \mathcal{F}^{-1}_{t,x,y} [f_{i,N_i,L_i}] \|_{L^4_{t,x,y}} \\
&\lesssim_\varepsilon (N_1^\alpha N_2)^{-\frac{1}{2}+\delta} L_{\max}^{\frac{1}{2}-\delta} N_1^{\frac{2-\alpha}{4}+\varepsilon} \| f_{2,N_2,L_2} \|_{L^2} \prod_{i=1,3} \| f_{i,N_i,L_i} \|_{L^2}.
\end{split}
\end{equation*}
Here we use an almost orthogonal decomposition to
\begin{equation*}
\big| \frac{\eta_i}{\xi_i} - A \big| \lesssim N_1^{\frac{\alpha}{2}}    
\end{equation*}
such that Proposition \ref{prop:StrichartzCylinder} becomes applicable.

For $D \gtrsim N_1^{\frac{\alpha}{2}}$ we can apply a bilinear Strichartz estimate from Proposition \ref{prop:BilinearStrichartzGeneral} to find
\begin{equation*}
\begin{split}
&\quad \int (f_{1,N_1,L_1} * f_{2,N_2,L_2} ) f_{3,N_3,L_3} d\xi d\eta d\tau \\
&\leqslant \| f_{2,N_2,L_2} \|_{L^2} \| f_{1,N_1,L_1} * \tilde{f}_{3,N_3,L_3} \|_{L^2_{\tau,\xi,\eta}} \\
&\lesssim (N_1^\alpha N_2)^{-\frac{1}{2}} L_{\max}^{\frac{1}{2}} L_{\min}^{\frac{1}{2}} N_2^{\frac{1}{2}} \langle L_{\text{med}} / N_1^{\frac{\alpha}{2}} \rangle^{\frac{1}{2}} \prod_{i=1}^3 L_i^{\frac{1}{2}} \| f_{i,N_i,L_i} \|_2 \\
&\lesssim N_1^{-\frac{5}{6}-\frac{\alpha}{3}-\frac{\varepsilon}{2}} \prod_{i=1}^3 L_i^{\frac{1}{2}} \| f_{i,N_i,L_i} \|_2.
\end{split}
\end{equation*}

Next, we consider $L_1 = L_{\max} \gtrsim N_1^\alpha
 N_2$ (which covers as well $L_3 = L_{\max}$ by symmetry). We apply H\"older's inequality and Lemma \ref{lem:CFBilinearCylinder} to find
 \begin{equation*}
 \begin{split}
&\quad \int (f_{1,N_1,L_1} * f_{2,N_2,L_2}) f_{3,N_3,L_3} d\xi d\eta d\tau \\
 &\leqslant \| f_{1,N_1,L_1} \|_2 \| f_{2,N_2,L_2} * \tilde{f}_{3,N_3,L_3} \|_{L^2_{\tau,\xi,\eta}} \\
 &\lesssim (N_1^\alpha N_2)^{-\frac{1}{2}+\delta} L_{\max}^{\frac{1}{2}-\delta} \log(L_{\max}) N_2^{\frac{1}{2}} L_{\min}^{\frac{1}{2}} \langle L_{\text{med}} / N_1^{\frac{\alpha}{2}} \rangle^{\frac{1}{2}} \prod_{i=1}^3 \| f_{i,N_i,L_i} \|_2.
 \end{split}
 \end{equation*}
The claim follows by the lower bound on $L_{\text{med}}$.

\end{proof}
We remark that by interpolation with Lemma \ref{lem:AlternativeBilinearStrichartzEstimate} we can always obtain a small power of $N_2$ and smoothing in $L_{\max}$:
\begin{equation*}
\int (f_{1,N_1,L_1} * f_{2,N_2,L_2} ) f_{3,N_3,L_3} d\xi d\eta d\tau \lesssim N_2^{-\frac{1}{2}+} N_1^{-\frac{1}{3}-\frac{\alpha}{3}-\frac{\varepsilon}{4}} L_{\max}^{0-} \prod_{i=1}^3 L_i^{\frac{1}{2}} \| f_{i,N_i,L_i} \|_2.
\end{equation*}

\subsection{Short-time bilinear estimates}
\label{section:ShorttimeBilinearEstimatesCylinder}
In this section we shall obtain short-time nonlinear estimates 
\begin{equation}
\label{eq:ShorttimeNonlinearEstimate}
\|\partial_x (u_1 u_2) \|_{\mathcal{N}^{s}} \lesssim T^{\kappa} ( \| u_1 \|_{F^s} \| u_2 \|_{F^0} + \| u_1 \|_{F^0} \| u_2 \|_{F^s} )
\end{equation}
and modified estimates in weighted norms for differences of solutions. The small power of $T$ is a consequence of Lemma \ref{lem:TradingModulationRegularity} from a smoothing in the maximal modulation $L_{\max}^{0-}$, see the remark above. We turn to the proof of Proposition \ref{prop:ShorttimeBilinearEstimateGeneral}.

\subsubsection{High $\times$ Low $\to$ High interaction}
 
In this case we suppose that $N \sim N_1 \gtrsim N_2$.
 We focus in the following on $N \gtrsim 1$. 
The case $N \lesssim 1$ can be estimated via Lemma \ref{lem:AlternativeBilinearStrichartzEstimate}.
The following reductions to convolution estimates in Fourier variables are carried out like in previous sections.
We shall establish an estimate
\begin{equation}
\label{eq:DyadicallyLocalizedShorttimeEstimate}
\| P_N \partial_x (P_{N_1} u_1 P_{N_2} u_2) \|_{\mathcal{N}_N(T)} \lesssim C(N,N_1,N_2) \| P_{N_1} u_1 \|_{F_{N_1}(T)} \| P_{N_2} u_2 \|_{F_{N_2}(T)}.
\end{equation}
The claim then follows from dyadic summations.

\medskip

In the following we consider extensions $P_{N_i} \tilde{u}_i$ of $P_{N_i} u_i$ such that
\begin{equation*}
\| P_{N_i} \tilde{u}_i \|_{F_{N_i}} \leqslant 2 \| P_{N_i} u_i \|_{F_{N_i}(T)}.
\end{equation*}
To lighten notations, we shall again redenote $\tilde{u}_i \to u_i$.

We use the definition of the $\mathcal{N}_N$-norm to bound
the left hand side of \eqref{eq:DyadicallyLocalizedShorttimeEstimate} by
\begin{equation*}
\begin{split}
&\sup_{t_N \in \R} \big\| (\tau - \omega_\alpha(\xi,\eta) + i N^{\frac{5-\alpha}{3}+\varepsilon} )^{-1} \xi 1_{A_N}(\xi) \mathcal{F}[u_{1,N_1} \cdot \eta_0(N^{\frac{5-\alpha}{3}+\varepsilon}(t-t_N))] \\
&\quad * \mathcal{F}[u_{2,N_2} \cdot \eta_0(N^{\frac{5-\alpha}{3}+\varepsilon}(t-t_N))] \big\|_{X_N}.
\end{split}
\end{equation*}
We define
\begin{equation*}
f_{i,N_i} = \mathcal{F}[u_{i,N_i} \cdot \eta_0(N^{\frac{5-\alpha}{3}+\varepsilon}(t-t_N))].
\end{equation*}
Consider $L_1,L_2 \geqslant N^{\frac{5-\alpha}{3}+\varepsilon}$ and $f_{i,N_i,L_i}: \R \times \R \times \Z \to \R_+$ are functions supported in $D_{N_i,L_i}$ and for $L_i = N^{\frac{5-\alpha}{3}+\varepsilon}$ in $D_{N, \leqslant L_i}$. By the function space properties it suffices to
%
%
 obtain the following estimate for $L \geqslant N_1^{\frac{5-\alpha}{3}+\varepsilon}$:
\begin{equation*}
 N_1 \| 1_{D_{\alpha,N,L}} (f_{1,N_1,L_1} * f_{2,N_2,L_2}) \|_{L^2_{\tau,\xi,\eta}} \lesssim C(N_1,N_2) L_{\max}^{0-} L^{\frac{1}{2}} \prod_{i=1}^2 L_i^{\frac{1}{2}} \| f_i \|_{L^2_{\tau,\xi,\eta}}.
\end{equation*}
We shall see that for the High-Low interaction, any $\varepsilon > 0$ suffices to obtain the estimate in the above display with $C(N_1,N_2) = N_1^{-\varepsilon'}$. Then
it follows from dyadic summation that \eqref{eq:ShorttimeEstimateI} and \eqref{eq:AsymmetricShorttimeBilinearEstimate} are true. As seen in the previous section, suitable interpolation with Lemma \ref{lem:AlternativeBilinearStrichartzEstimate} allows us always to squeeze out a factor $N_{\min}^{0+} L_{\max}^{0-}$, for which reason we obtain summability in the low frequency and modulation parameters. This will often be omitted not to overburden the notation.
 We remark that it is the High $\times$ High $\to$ Low interaction ($N \ll N_1 \sim N_2$) which limits the regularity $s \geqslant s_1$ in \eqref{eq:AsymmetricShorttimeBilinearEstimate}.

\medskip

Note that by the time localization we have $L_i \gtrsim N_1^{\frac{5-\alpha}{3}+\varepsilon}$. Below we shall distinguish the cases of
\begin{enumerate}
\item very low frequencies: $N_2 \lesssim 1\lesssim N_1$, \quad $N_1^\alpha N_2 \lesssim N_1^{\frac{5-\alpha}{3}}$,
 \item resonant case: $L_{\max} \ll N_1^\alpha N_2$, \quad $N_1^\alpha N_2 \gtrsim N_1^{\frac{5-\alpha}{3}}$,
\item non-resonant case: $L_{\max} \gtrsim N_1^\alpha N_2$, \quad $N_1^\alpha N_2 \gtrsim N_1^{\frac{5-\alpha}{3}}$.
\end{enumerate}

\medskip

\noindent\textbf{(1) Very low frequencies} ($N_2 \lesssim 1 \lesssim N_1, N_1^\alpha N_2 \lesssim N_1^{\frac{5-\alpha}{3}}$): We estimate
\begin{equation*}
N_1 \| 1_{D_{N,L}} (f_{1,N_1,L_1} * f_{2,N_2,L_2}) \|_{L^2_{\tau,\xi,\eta}}
\end{equation*}
by bilinear Strichartz estimates recorded in Lemma \ref{lem:AlternativeBilinearStrichartzEstimate}. This gives the (crude) estimate
\begin{equation*}
\| 1_{D_{N,L}} (f_{1,N_1,L_1} * f_{2,N_2,L_2} ) \|_{L^2_{\tau,\xi,\eta}} \lesssim N_2^{\frac{1}{2}} L_{\max}^{0-} \prod_{i=1}^2 L_i^{\frac{1}{2}} \| f_{i,N_i,L_i} \|_2.
\end{equation*}
Taking into account the derivative loss $N_1$ and $L \geqslant N_1^{\frac{5-\alpha}{3}+\varepsilon}$ we obtain
\begin{equation*}
N_1  \| 1_{D_{N,L}} (f_{1,N_1,L_1} * f_{2,N_2,L_2} ) \|_{L^2_{\tau,\xi,\eta}} \lesssim N_1^{1-\frac{5-\alpha}{6}-\frac{\varepsilon}{2}} N_2^{\frac{1}{2}} L_{\max}^{0-}  L^{\frac{1}{2}} \prod_{i=1}^2 L_i^{\frac{1}{2}} \| f_{i,N_i,L_i} \|_2.
\end{equation*}
Summing over $N_2 \lesssim N_1^{\frac{5-\alpha}{3}-\alpha}$ and $L$ with weight $L^{-\frac{1}{2}}$ this estimate is sufficient.

In the following cases, we shall consider exclusively $N_2 N_1^\alpha \gtrsim N_1^{\frac{5-\alpha}{3}}$, which imposes a lower bound on $N_2$.

\medskip

\noindent\textbf{(2) Resonant case} ($L_{\max} = \max(L,L_1,L_2) \ll N_1^\alpha N_2, \quad 1 \lesssim N \sim N_1, \quad N_2 \gtrsim N_1^{-\alpha + \frac{5-\alpha}{3}}$): We consider two sub-cases:\\
(i) $N_1^{\frac{4-2\alpha}{3}} \lesssim N_2$: Applying the nonlinear Loomis-Whitney inequality \eqref{eq:LoomisWhitneyCylinder} gives
\begin{equation*}
\begin{split}
&\quad N_1 \sum_{N_1^{\frac{5-\alpha}{3}+\varepsilon} \leqslant L \leqslant N_1^\alpha N_2} L^{-\frac{1}{2}} \| 1_{D_{N,L}} (f_{1,N_1,L_1} * f_{2,N_2,L_2} ) \|_{L^2_{\tau,\xi,\eta}} \\
&\lesssim \frac{N_1^{\frac{3}{2}-\frac{\alpha}{2}}}{N_2^{\frac{1}{2}}} \sum_{N_1^{\frac{5-\alpha}{3}+\varepsilon} \leqslant L \leqslant N_1^\alpha N_2} L^{-\frac{1}{2}} L^{\frac{1}{2}} L_1^{\frac{1}{2}} L_2^{\frac{1}{2}} N_1^{-\frac{5-\alpha}{6}-\varepsilon} \| f_{1,N_1,L_1} \|_{L^2_{\tau,\xi,\eta}} \| f_{2,N_2,L_2} \|_{L^2_{\tau,\xi,\eta}} \\
&\lesssim \frac{N_1^{\frac{2-\alpha}{3}}}{N_2^{\frac{1}{2}}} \log(N_1) N_1^{-\varepsilon} \prod_{i=1}^2 L_i^{\frac{1}{2}} \| f_{i,N_i,L_i} \|_{L^2}.
\end{split}
\end{equation*}

This yields a favorable estimate provided that $N_2 \gtrsim N_1^{-\kappa}$ for $\kappa = \frac{2(\alpha-2)}{3}$. 

\medskip

\noindent(ii) $N_2 \ll N_1^{\frac{4-2\alpha}{3}}$: We find from the short-time bilinear Strichartz estimate recorded in Proposition \ref{prop:BilinearStrichartzGeneral}
\begin{equation*}
\begin{split}
&\quad N_1 \sum_{N_1^{\frac{5-\alpha}{3}+\varepsilon} \leqslant L \leqslant N_1^\alpha N_2} L^{-\frac{1}{2}} \| 1_{D_{N,L}} (f_{1,N_1,L_1} * f_{2,N_2,L_2}) \|_{L^2_{\tau,\xi,\eta}} \\
 &\lesssim N_2^{\frac{1}{2}} N_1^{1-\frac{5-\alpha}{3}-\varepsilon} \prod_{i=1}^2 L_i^{\frac{1}{2}} \| f_{i,N_i,L_i} \|_{L^2_{\tau,\xi,\eta}} \lesssim N_1^{-\varepsilon} \prod_{i=1}^2 L_i^{\frac{1}{2}} \| f_{i,N_i,L_i} \|_{L^2_{\tau,\xi,\eta}}.
\end{split}
\end{equation*}

\medskip

\noindent \textbf{(3) Non-resonant case} ($L_{\max} \geqslant N_1^\alpha N_2$, $N_2 \gtrsim N_1^{-\alpha + \frac{5-\alpha}{3}}$): We consider further sub-cases:

\smallskip

\noindent (i) $N_2\gtrsim 1$: Depending on $L_{\max}$, we have the following cases:\\
$\bullet~ L_{\max}=L$: The case $L_1 = L_{\max}$ (the high frequency is at high modulation) can be handled by dual arguments. Indeed, there is additional smoothing $N_1^{-\varepsilon}$ in the estimates below, which allows us to handle the summation over $L$.

We remark that we have by convolution constraint for $(\xi_i,\eta_i) \in \pi_{\xi,\eta}\text{supp}(f_{i,N_i,L_i})$:
\begin{equation*}
\big| \frac{\eta_1}{\xi_1} - \frac{\eta_2}{\xi_2} \big| \lesssim \big( \frac{L_{\max}}{N_2} \big)^{\frac{1}{2}}.
\end{equation*}
Indeed, we have
\begin{equation*}
\Omega_{\alpha} = (\tau - \omega_\alpha(\xi,\eta)) - (\tau_1 - \omega_{\alpha}(\xi_1,\eta_1)) - (\tau_2 - \omega_\alpha(\xi_2,\eta_2)).
\end{equation*}
This gives the claimed bound on the transversality:
\begin{equation*}
\frac{|\eta_1 \xi_2 - \eta_2 \xi_1|^2}{\xi_1 \xi_2 (\xi_1 + \xi_2)} \lesssim L_{\max} \Rightarrow \big| \frac{\eta_1}{\xi_1} - \frac{\eta_2}{\xi_2} \big| \lesssim \big( \frac{L_{\max}}{N_2} \big)^{\frac{1}{2}}.
\end{equation*}
For this reason we can apply a bilinear Strichartz estimate from Lemma \ref{lem:CFBilinearCylinder} and take into account the minimum modulation size to find
\begin{equation*}
\begin{split}
&\quad N_1 L^{-\frac{1}{2}} \| 1_{D_{N,L}} (f_{1,N_1,L_1} * f_{2,N_2,L_2} ) \|_{L^2_{\tau,\xi,\eta}} \\
&\lesssim N_1 (N_1^\alpha N_2)^{-\frac{1}{2}+\delta} N_2^{\frac{1}{2}} N_1^{-\frac{5-\alpha}{6}-\frac{\varepsilon}{2}} \prod_{i=1}^2 L_i^{\frac{1}{2}} \| f_{i,N_i,L_i} \|_2 \\
&\lesssim N_2^\delta N_1^{\frac{1}{6}-\frac{\alpha}{3}+\delta-\frac{\varepsilon}{2}} \prod_{i=1}^2 L_i^{\frac{1}{2}} \| f_{i,N_i,L_i} \|_2.
\end{split}
\end{equation*}
This is acceptable choosing $\delta=\delta(\varepsilon) > 0$ small enough.

\medskip

\noindent $\bullet~L_{\max}=L_2$: If $L \gtrsim N_1^\alpha N_2$ we can apply a bilinear Strichartz estimate from Lemma \ref{lem:AlternativeBilinearStrichartzEstimate} to find
\begin{equation*}
\begin{split}
&\quad \sum_{L \gtrsim N_1^\alpha N_2} L^{-\frac{1}{2}} \| 1_{D_{N,L}} (f_{1,N_1,L_1} * f_{2,N_2,L_2} ) \|_{L^2_{\tau,\xi,\eta}} \\
 &\lesssim (N_1^\alpha N_2)^{-\frac{1}{2}} N_2^{\frac{3}{4}} (N_1^\alpha N_2)^{-\frac{1}{4}+\delta} \prod_{i=1}^2 L_i^{\frac{1}{2}} \| f_{i,N_i,L_i} \|_2 \\
&\lesssim N_1^{-\frac{3\alpha}{4}+\delta \alpha} N_2^\delta \prod_{i=1}^2 L_i^{\frac{1}{2}} \| f_{i,N_i,L_i} \|_2.
\end{split}
\end{equation*}
Hence, we suppose $L \ll N_1^\alpha N_2$ in the following.

An application of Lemma \ref{lem:TrilinearConvolutionEstimateCylinder} after invoking duality gives
\begin{equation*}
\| 1_{D_{N,L}} (f_{1,N_1,L_1} * f_{2,N_2,L_2}) \|_{L^2_{\tau,\xi,\eta}} \lesssim L_{\max}^{-\delta} N_2^{-\frac{1}{2}+\delta} N_1^{-\frac{1}{3}-\frac{\alpha}{3}-\frac{\varepsilon}{2}} \prod_{i=1}^2 L_i^{\frac{1}{2}} \| f_{i,N_i,L_i} \|_2.
\end{equation*}
This is acceptable for $N_2 \gtrsim 1$.

\medskip

\noindent (ii) $N_2 \lesssim 1$: \\
$\bullet~L_{\max} = L~(\text{or } L_1)$: In this case, the previous estimates suffice.\\

\noindent $\bullet~L_{\max}=L_2 \gtrsim N_1^\alpha N_2 \gtrsim N_1^{\frac{\alpha}{2}}$. We impose an upper bound $L \lesssim N_1^3$. In this case we estimate by duality and Lemma \ref{lem:AlternativeBilinearStrichartzEstimate}:
\begin{equation}
    \label{eq:HighLowHighEstimateAuxI}
    \begin{split}
        &\quad \| 1_{D_{N,L}} ( f_{1,N_1,L_1} * f_{2,N_2,L_2}) \|_{L^2_{\tau,\xi,\eta}} \\
        &= \sup_{\| g_{N,L} \|_{L^2_{\tau,\xi,\eta}=1}} \int g_{N,L} (f_{1,N_1,L_1} * f_{2,N_2,L_2}) \\
        &= \sup_{\| g_{N,L} \|_{L^2_{\tau,\xi,\eta}=1}} \int (\tilde{g}_{N,L} * f_{1,N_1,L_1} ) f_{2,N_2,L_2} \\
        &\lesssim  \sup_{\| g_{N,L} \|_{L^2_{\tau,\xi,\eta}=1}} \| \tilde{g}_{N,L} * f_{1,N_1,L_1} \|_{L^2_{\tau,\xi,\eta}} \| f_{2,N_2,L_2} \|_{L^2_{\tau,\xi,\eta}} \\
        &\lesssim N_2^{\frac{1}{2}} N_1^{\frac{1}{4}} (L L_1)^{\frac{1}{2}} N_1^{-\frac{(5-\alpha)}{12} - \frac{\varepsilon}{2}} L_2^{\frac{1}{2}} (N_1^\alpha N_2)^{-\frac{1}{2}} \prod_{i=1}^2 \| f_{i,N_i,L_i} \|_{L^2_{\tau,\xi,\eta}}.
    \end{split}
\end{equation}

Summing \eqref{eq:HighLowHighEstimateAuxI} over $N_1^\alpha N_2 \lesssim L \lesssim N_1^3$ and $N_1^{-C} \lesssim N_2 \lesssim 1$ gives
\begin{equation*}
\begin{split}
    &\quad N_1 \sum_{N_1^\alpha N_2 \lesssim L \lesssim N_1^3} L^{-\frac{1}{2}} \| 1_{D_{N,L}} (f_{1,N_1,L_1} * f_{2,N_2,L_2}) \|_{L^2_{\tau,\xi,\eta}} \\
    &\lesssim N_1^{\frac{10}{12}-\frac{5\alpha}{12}-\frac{\varepsilon}{2}} \log(N_1) \prod_{i=1}^2 L_i^{\frac{1}{2}} \| f_{i,N_i,L_i} \|_{L^2_{\tau,\xi,\eta}}.
    \end{split}
\end{equation*}
This is acceptable.

In case $L \gtrsim N_1^3$ we can apply Lemma \ref{lem:AlternativeBilinearStrichartzEstimate} on $f_{1,N_1,L_1}* f_{2,N_2,L_2}$:
\begin{equation*}
\begin{split}
    &\quad N_1 \sum_{L \gtrsim N_1^3} L^{-\frac{1}{2}} \| 1_{D_{N,L}} ( f_{1,N_1,L_1} * f_{2,N_2,L_2}) \|_{L^2_{\tau,\xi,\eta}} \\
    &\lesssim N_1^{-\frac{1}{2}} N_2^{\frac{3}{4}} \prod_{i=1}^2 L_i^{\frac{1}{2}} \| f_{i,N_i,L_i} \|_{L^2_{\tau,\xi,\eta}}.
\end{split}    
\end{equation*}
This is sufficient.

\smallskip

Interpolation with the bilinear Strichartz estimate from Lemma \ref{lem:AlternativeBilinearStrichartzEstimate} yields factors $N_2^{0+} L_{\max}^{0-}$, which finishes the argument for the High $\times$ Low $\to$ High interaction.

\subsubsection{High $\times$ High $\to$ Low interaction}
Here, reductions to frequency and modulation localized estimates require additional time localization. We shall estimate for $N_1 \sim N_2 \gg N$:
\begin{equation*}
\begin{split}
&\| P_N \partial_x (P_{N_1} u_1 P_{N_2} u_2) \|_{\mathcal{N}_N} = \sup_{t_N \in \R} \| (\tau - \omega_\alpha(\xi,\eta) + i N^{\frac{5-\alpha}{3}+\varepsilon}_+ )^{-1} \\
&\quad \times \xi \mathcal{F}_{t,x,y}[ \eta_0(N_+^{\frac{5-\alpha}{3}+\varepsilon}(t-t_N)) P_{N_1} u_1 P_{N_2} u_2 ] \|_{X_N}.
\end{split}
\end{equation*}
Since $N_1 \sim N_2 \gg N$, the time localization $N_+^{\frac{5-\alpha}{3}+\varepsilon}$ does not suffice for $N_1 \gg 1$. We need to additionally localize to time intervals to size $N_{1,+}^{-(\frac{5-\alpha}{3}+\varepsilon)}$. To this end, let $\gamma \in C^\infty_c(-2,2)$ with
\begin{equation*}
\sum_{k \in \Z} \gamma^2(t-k) \equiv 1.
\end{equation*}
We write
\begin{equation*}
\begin{split}
&\quad \eta_0(N_+^{\frac{5-\alpha}{3}+\varepsilon}(t-t_N) P_{N_1} u_1 P_{N_2} u_2 ) \\
&= \sum_{k \in \Z} \eta_0(N_+^{\frac{5-\alpha}{3}+\varepsilon}(t-t_N)) (\gamma(N_{1,+}^{\frac{5-\alpha}{3}+\varepsilon}t - k) P_{N_1} u_1) (\gamma(N_{1,+}^{\frac{5-\alpha}{3}+\varepsilon}t - k) P_{N_2} u_2)).
\end{split}
\end{equation*}
Note that
\begin{equation*}
\# \{ k \in \Z : \eta_0(N_+^{\frac{5-\alpha}{3}+\varepsilon}(t-t_N)) \gamma(N_{1,+}^{\frac{5-\alpha}{3}+\varepsilon} t -k) \not\equiv 0 \} \sim \Big(\frac{N_{1,+}}  {N_+}\Big)^{\frac{5-\alpha}{3}+\varepsilon}.
\end{equation*}
By Minkowski's inequality it will suffice to estimate $\Big(\frac{N_{1,+}}  {N_+}\Big)^{\frac{5-\alpha}{3}+\varepsilon}$ expressions of the kind (with estimate uniformly in $k$):
\small
\begin{equation*}
\begin{split}
&\quad \| (\tau - \omega_\alpha(\xi,\eta)+ i N_+^{\frac{5-\alpha}{3}+\varepsilon} )^{-1} i \xi 1_{A_N}(\xi) \\
&\quad \times \mathcal{F}_{t,x,y}[ \eta_0(N_+^{\frac{5-\alpha}{3}+\varepsilon}(t-t_N)) (\gamma(N_{1,+}^{\frac{5-\alpha}{3}+\varepsilon}(t-k) P_{N_1} u_1) (\gamma(N_{1,+}^{\frac{5-\alpha}{3}+\varepsilon} t -k) P_{N_2} u_2)] \|_{X_N}.
\end{split}
\end{equation*}
\normalsize
The time localization suffices to conclude
\begin{equation*}
\| \mathcal{F}_{t,x}[ \gamma(N_{1,+}^{\frac{5-\alpha}{3}+\varepsilon} (t-k)) P_{N_i} u_i ] \|_{X_{N_i}} \lesssim \| P_{N_i} u_i \|_{F_{N_i}}.
\end{equation*}
Let
\begin{equation*}
\begin{split}
f_{1,N_1,L_1} &= \mathcal{F}_{t,x}[\eta_0(N_+^{\frac{5-\alpha}{3}+\varepsilon} \gamma(N_{1,+}^{\frac{5-\alpha}{3}+\varepsilon} (t-k)) P_{N_1} u_1 ], \\
f_{2,N_2,L_2} &= \mathcal{F}_{t,x}[\gamma(N_{1,+}^{\frac{5-\alpha}{3}+\varepsilon} (t-k)) P_{N_2} u_2 ].
\end{split}
\end{equation*}
By the function space properties, we have
\begin{equation*}
N \sum_{L_i \geqslant N_1^{\frac{5-\alpha}{3}+\varepsilon}} L_i^{\frac{1}{2}} \| f_{i,N_i,L_i} \|_{L^2_{\tau,\xi,\eta}} \lesssim \| \mathcal{F}_{t,x}[ \gamma(N_{1,+}^{\frac{5-\alpha}{3}+\varepsilon} t-k) P_{N_i} u_i] \|_{X_{N_i}}.
\end{equation*}
Consequently, it suffices to prove dyadic estimates
\begin{equation*}
\begin{split}
&\quad N \Big(\frac{N_{1,+}}  {N_+}\Big)^{\frac{5-\alpha}{3}+\varepsilon} \sum_{L \geqslant N^{\frac{5-\alpha}{3}+\varepsilon}} L^{-\frac{1}{2}} \| 1_{D_{N,L}} (f_{1,N_1,L_1} * f_{2,N_2,L_2} ) \|_{L^2_{\tau,\xi,\eta}} \\
&\lesssim C(N,N_1) \prod_{i=1}^2 L_i^{\frac{1}{2}} \| f_{i,N_i,L_i} \|_{L^2_{\tau,\xi,\eta}}
\end{split}
\end{equation*}
for $L_{i} \gtrsim N_1^{\frac{5-\alpha}{3}+\varepsilon}$.

\medskip

\textbf{(1) Very small frequencies} ($N \lesssim 1 \lesssim N_1\sim N_2$): We distinguish the resonant and the non-resonant cases:\\
\noindent (i) $L_{\max} \ll NN_1^{\alpha}$: We carry out a dyadic decomposition in the output frequency $N$ and add time localization $N_1^{\frac{5-\alpha}{3}+\varepsilon}$. Using the nonlinear Loomis-Whitney inequality \eqref{eq:LoomisWhitneyCylinder}, we obtain
\begin{equation*}
\begin{split}
&\quad N L^{-\frac{1}{2}} N_1^{\frac{5-\alpha}{3}+\varepsilon} \| 1_{D_{N,L}} (f_{1,N_1,L_1} * f_{2,N_2,L_2}) \|_{L^2_{\tau,\xi,\eta}} \\
&\lesssim N N_1^{\frac{5-\alpha}{6}} N_1^{-\frac{\alpha}{2}} \big( \frac{N_1}{N} \big)^{\frac{1}{2}} \prod_{i=1}^2 L_i^{\frac{1}{2}} \| f_{i,N_i,L_i} \|_{L^2} \\
&\lesssim N^{\frac{1}{2}} N_1^{\frac{4}{3}-\frac{2 \alpha}{3}+\varepsilon} \prod_{i=1}^2 L_i^{\frac{1}{2}} \| f_{i,N_i,L_i} \|_{L^2}.
\end{split}
\end{equation*}
This is acceptable provided that an upper bound on $\varepsilon$, 
\begin{equation}
\label{eq:UpperBoundEpsI}
0< \varepsilon < \frac{4 \alpha}{6} - \frac{4}{3},
\end{equation}
holds true.

\noindent (ii) $L_{\max} \gtrsim NN_1^{\alpha}$: Depending on $L_{\max}$, the following sub-cases arise:

$\bullet ~L_{\max}=L_1$ (or $L_2$): Employing duality and Lemma \ref{lem:AlternativeBilinearStrichartzEstimate} like in \eqref{eq:HighLowHighEstimateAuxI} gives
\begin{equation*}
\begin{split}
&\quad N L^{-\frac{1}{2}} N_1^{\frac{5-\alpha}{3}+\varepsilon} \| 1_{D_{N,L}} (f_{1,N_1,L_1} * f_{2,N_2,L_2}) \|_{L^2_{\tau,\xi,\eta}} \\
&\lesssim N N_1^{\frac{5-\alpha}{3}+\varepsilon} (N_1^\alpha N)^{-\frac{1}{2}+\delta} N^{\frac{3}{4}} N_1^{- \frac{5-\alpha}{12} - \frac{\varepsilon}{2}} L_{\max}^{-\delta} \prod_{i=1}^2 L_i^{\frac{1}{2}} \| f_{i,N_i,L_i} \|_{L^2} \\
&\lesssim N^{\frac{1}{4}+\delta} N_1^{\frac{5-\alpha}{4}-\frac{\varepsilon}{2}-\frac{\alpha}{2}+\alpha \delta} L_{\max}^{-\delta} \prod_{i=1}^2 L_i^{\frac{1}{2}} \| f_{i,N_i,L_i} \|_{L^2_{\tau,\xi,\eta}},
\end{split}
\end{equation*}
which is likewise acceptable.

\medskip
$\bullet ~L_{\max}=L$: In this case we apply Lemma \ref{lem:AlternativeBilinearStrichartzEstimate} to $f_{1,N_1,L_1} * f_{2,N_2,L_2}$ and find
\begin{equation*}
    \begin{split}
        &\quad N N_1^{\frac{5-\alpha}{3}+\varepsilon} L^{-\frac{1}{2}} \| 1_{D_{N,L}}(f_{1,N_1,L_1} * f_{2,N_2,L_2}) \|_{L^2_{\tau,\xi,\eta}} \\
        &\lesssim N N_1^{\frac{5-\alpha}{3}+\varepsilon} (N_1^\alpha N)^{-\frac{1}{2}} N^{\frac{1}{2}} N_1^{\frac{1}{4}} N_1^{- \frac{5-\alpha}{12} - \frac{\varepsilon}{4}} \prod_{i=1}^2 L_i^{\frac{1}{2}} \| f_{i,N_i,L_i} \|_{L^2_{\tau,\xi,\eta}} \\
        &\lesssim N N_1^{\frac{3}{2}-\frac{3 \alpha}{4}+ \frac{3 \varepsilon}{4}} \prod_{i=1}^2 L_i^{\frac{1}{2}} \| f_{i,N_i,L_i} \|_{L^2_{\tau,\xi,\eta}}.
    \end{split}
\end{equation*}

This is acceptable provided that
\begin{equation}
\label{eq:UpperBoundEpsII}
    0 < \varepsilon < \alpha - 2
\end{equation}
holds true. Note that this is less restrictive than \eqref{eq:UpperBoundEpsI}.

\medskip

In the following cases, we assume that $N \gtrsim 1$.\\
\textbf{(2) Resonant case} ($L_{\max} \ll N_1^\alpha N$ and $1 \lesssim N \ll N_1 \sim N_2$): We obtain from applying the nonlinear Loomis--Whitney inequality \eqref{eq:LoomisWhitneyCylinder} and taking into account the additional time localization:
\begin{equation}
\label{eq:ResonantHighHigh}
\begin{split}
&\quad N L^{-\frac{1}{2}} \Big(\frac{N_1}{ N}\Big)^{\frac{5-\alpha}{3}+\varepsilon} \| 1_{D_{N,L}} (f_{1,N_1,L_1} * f_{2,N_2,L_2}) \|_{L^2_{\tau,\xi,\eta}} \\
&\lesssim N \Big(\frac{N_1}{ N}\Big)^{\frac{5-\alpha}{3}+\varepsilon} N_1^{-\frac{5-\alpha}{6}-\frac{\varepsilon}{2}} N_1^{-\alpha/2} \big( \frac{N_1}{N} \big)^{\frac{1}{2}} \prod_{i=1}^2 L_i^{\frac{1}{2}} \| f_{i,N_i,L_i} \|_{L^2_{\tau,\xi,\eta}} \\
&\lesssim N_1^{\frac{4}{3}-\frac{4 \alpha}{6}+\varepsilon} N^{\frac{1}{2}-\frac{5-\alpha}{3}-\varepsilon} \prod_{i=1}^2 L_i^{\frac{1}{2}} \| f_{i,N_i,L_i} \|_{L^2_{\tau,\xi,\eta}}.
\end{split}
\end{equation}

Estimate \eqref{eq:ShorttimeEstimateI} follows from dyadic summation. It is \eqref{eq:ResonantHighHigh} which suggests to carry out \eqref{eq:AsymmetricShorttimeBilinearEstimate} in $\bar{\mathcal{N}}^{-\frac{1}{2}}$ because
\begin{equation*}
\frac{4}{3} - \frac{2 \alpha}{3}+ \frac{1}{2} = s_1(\alpha,0).
\end{equation*}


\medskip

\noindent \textbf{(3) Non-resonant case} ($L_{\max} \gtrsim N_1^\alpha N$): 
Since $N \gtrsim 1$, applying Lemma \ref{lem:TrilinearConvolutionEstimateCylinder} gives
\begin{equation*}
\begin{split}
&\quad N L^{-\frac{1}{2}} \Big(\frac{N_1}{N}\Big)^{\frac{5-\alpha}{3}+\varepsilon} \| 1_{D_{N,L}} (f_{1,N_1,L_1} * f_{2,N_2,L_2}) \|_{L^2_{\tau,\xi,\eta}} \\
&\lesssim N^{\frac{1}{2}-\frac{5-\alpha}{3}-\varepsilon+\delta} N_1^{\frac{5-\alpha}{3}-\frac{1}{3}-\frac{\alpha}{3}+\frac{\varepsilon}{2}} L_{\max}^{-\delta} \prod_{i=1}^2 L_i^{\frac{1}{2}} \| f_{i,N_i,L_i} \|_{L^2},
\end{split}
\end{equation*}
which is acceptable.


\medskip
This completes the proof of Proposition \ref{prop:ShorttimeBilinearEstimateGeneral}. $\hfill \Box$


\subsection{Energy estimates}
\label{subsection:EnergyEstimates}

We carry out energy estimates in short-time Fourier restriction norms.

\subsubsection{Energy estimates for solutions}
In this subsection we show energy estimates for the solution as stated in \eqref{eq:EnergyEstimateSolutionsCylinder}.

%
%
%
\begin{proof}[Proof~of~Proposition~\ref{prop:EnergyEstimateSolutionsCylinder}]
Let $u \in C_T E^\infty$. By the fundamental theorem of calculus we have for any $N \in 2^{\Z}$:
\begin{equation*}
\| P_N u(t) \|_{L^2}^2 = \| P_N u(0) \|_{L^2}^2 + \int_0^t \int_{\R \times \T} P_N u \, \partial_x P_N (u^2) dx dy ds.
\end{equation*}
Note that by the definition of the function spaces $B^s$, the low frequencies satisfy
\begin{equation*}
\| P_{\ll 1} u \|_{B^s(T)}^2 = \| P_{\ll 1} u(0) \|_{L^2}^2.
\end{equation*}

\medskip

We carry out the same reductions to take advantage of the derivative nonlinearity like in the previous sections.
It suffices to estimate
\begin{equation*}
N_2 \int_0^t \int_{\R \times \T} P_{N_1} u P_{N_2} u P_{N_3} u \, dx dy ds \quad (N_2 \lesssim N_1 \sim N_3, \; N_1 \gtrsim 1).
\end{equation*}

We need to add time localization to estimate the expression in the correct function spaces. This amounts to a factor of $N_1^{\frac{5-\alpha}{3}+\varepsilon(\alpha)}$. After applying Plancherel's theorem we reduce to dyadic estimates:
\begin{equation*}
N_2 N_1^{\frac{5-\alpha}{3}+\varepsilon} \big| \int \big( f_{1,N_1,L_1} * f_{2,N_2,L_2} \big) f_{3,N_3,L_3} d\xi d\eta d\tau \big|
\end{equation*}
with $L_i \gtrsim N_1^{\frac{5-\alpha}{3}+\varepsilon(\alpha)}$, which is attributed to the corresponding time localization.

\medskip

$\bullet$ High $\times$ Low $\to$ High interaction: In this case, we have $N_1 \sim N \gtrsim N_2$, $N_1 \gtrsim 1$. Applying Lemma \ref{lem:TrilinearConvolutionEstimateCylinder}
we find
\begin{equation*}
\begin{split}
&\quad N_2 N_1^{\frac{5-\alpha}{3}+\varepsilon} \big| \int \big( f_{1,N_1,L_1} * f_{2,N_2,L_2} \big) f_{3,N_3,L_3} d\xi d\eta d\tau \big| \\
&\lesssim N_2^{\frac{1}{2}+\delta} N_1^{\frac{4}{3}-\frac{2\alpha}{3}+\frac{3\varepsilon}{4}+\delta} L_{\max}^{-\delta} \prod_{i=1}^3 L_i^{\frac{1}{2}} \| f_{i,N_i,L_i} \|_2.
\end{split}
\end{equation*}
This is acceptable for $s \geqslant s_1(\alpha,\varepsilon,\R \times \T)$ choosing $\varepsilon = \varepsilon(\alpha)$ small enough.

\medskip

$\bullet$ High$\times$ High $\to$ Low interaction: In this case, we have $1 \lesssim N \sim N_2 \ll N_1 \sim N_3$. We check that the estimate in the above display suffices again. The proof is complete.

\end{proof}

\subsubsection{Energy estimates for differences of solutions}

We estimate the solutions to the difference equation. At a crucial step in the analysis, we cannot integrate by parts which motivates us to estimate the difference of solutions at negative Sobolev regularity.

\begin{proof}[Proof~of~Proposition~\ref{prop:EnergyEstimatesDifference}]
Like above, we use the fundamental theorem of calculus to find
\begin{equation*}
\| P_N v(t) \|_{L^2}^2 = \| P_N v(0) \|_{L^2}^2 + \int_0^t \int_{\R \times \T} P_N v \partial_x P_N (v(u_1+u_2)) dx dt.
\end{equation*}
Again, by definition of function spaces, it suffices to obtain estimates for $N \gtrsim 1$.

First, we shall show the estimate for $i=1,2$ with $s \geqslant s_1(\alpha,\varepsilon,\R \times \T)$:
\begin{equation}
\label{eq:AuxEnergyEstimateDifferenceCylinder}
\sum_{N \gtrsim 1} N^{-1} \big| \int_0^T \int_{\R \times \T} P_N v \partial_x P_N (v \, u_i) dx dt \big| \lesssim T^{\kappa} \| v \|_{F^{-\frac{1}{2}}(T)}^2 \| u_i \|_{F^s(T)}.
\end{equation}
To ease notation, let $u := u_i$.

\medskip

After dyadic frequency localization and using commutator arguments like in the proof of Proposition \ref{prop:EnergyEstimateSolutionsCylinder}, we see that we need to estimate three expressions. The first one is given by:
\begin{equation*}
N_2 N_1^{-1} \int_0^T \int_{\R \times \T} P_{N_1} v P_{N_2} u P_{N_3} v dx dt, \quad N_1 \sim N_3 \gtrsim N_2,
\end{equation*}
where $u$ denotes a solution to \eqref{eq:FKPI}. Here we have integrated by parts to shift the derivative to the low frequency.

 This expression can be handled like in the energy estimates for solutions and we obtain estimates
\begin{equation*}
\sum_{N_2 \lesssim N_1 \sim N_3} N_2 N_1^{-1} \big| \int_0^T \int_{\R \times \T} P_{N_1} v P_{N_2} u P_{N_3} v dx dt \big| \lesssim T^{\kappa} \| v \|_{\bar{F}^{-\frac{1}{2}}(T)}^2 \| u \|_{F^{s}(T)}
\end{equation*}
for $s \geqslant s_1(\alpha,\varepsilon,\R \times \T)$. 

\medskip

The second expression is given by the High $\times$ High $\to$ Low interaction: For $N_1 \sim N_3 \gg N_2 \gtrsim 1$, we require to estimate
\begin{equation*}
N_2^{-1} \int_0^T \int_{\R \times \T} P_{N_2} v \partial_x (P_{N_1} u P_{N_3} v) dx dy dt.
\end{equation*}
Again the derivative acts on the low frequency, but this time the high frequency terms include a solution ($P_{N_1}u$) and a difference of solutions ($P_{N_3}v$).

Taking the derivative into account and adding time localization we need to establish estimates for
\begin{equation*}
N_1^{\frac{5-\alpha}{3}+\varepsilon} \int (f_{1,N_1,L_1} * f_{2,N_2,L_2} ) f_{3,N_3,L_3} d \xi d\eta d\tau.
\end{equation*}

We apply the estimate from Lemma \ref{lem:TrilinearConvolutionEstimateCylinder} to find:
\begin{equation*}
\begin{split}
&\quad N_1^{\frac{5-\alpha}{3}+\varepsilon} \int (f_{1,N_1,L_1} * f_{2,N_2,L_2} ) f_{3,N_3,L_3} d \xi d\eta d\tau \\
&\lesssim N_2^{-\frac{1}{2}+\delta} N_1^{\frac{4-2\alpha}{3}+\frac{3 \varepsilon}{4}+\delta} L_{\max}^{-\delta} \prod_{i=1}^3 L_i^{\frac{1}{2}} \| f_{i,N_i,L_i} \|_2.
\end{split}
\end{equation*}
This estimate is summable for $s \geqslant s_1(\alpha,\varepsilon,\R \times \T)$.

\medskip

\medskip

The third expression is given by
\begin{equation*}
N_1 N_1^{-1} \int_0^T \int_{\R \times \T} P_{N_1} v P_{N_2} v P_{N_3} u dx dt, \quad N_1 \sim N_3 \gtrsim N_2.
\end{equation*}
In this case, we cannot use integration by parts to place the derivative on the low frequency. We observe that the estimate at negative regularity comes to the rescue, compensating for the derivative loss. Moreover, the case of low frequencies $N_1 \lesssim 1$ is easy to handle because it is not necessary to add time localization and the derivative exhibits a smoothing effect. 

After frequency-dependent time localization and applying Plancherel's theorem, with the above notations it suffices to obtain summable estimates for
\begin{equation*}
N_1^{\frac{5-\alpha}{3}+\varepsilon} \int (f_{1,N_1,L_1} * f_{2,N_2,L_2}) f_{3,N_3,L_3} d\xi d\eta d\tau.
\end{equation*}
An application of Lemma \ref{lem:TrilinearConvolutionEstimateCylinder} gives
\begin{equation*}
\begin{split}
&\quad N_1^{\frac{5-\alpha}{3}+\varepsilon} \int (f_{1,N_1,L_1} * f_{2,N_2,L_2}) f_{3,N_3,L_3} d\xi d\eta d\tau \\
&\lesssim N_2^{-\frac{1}{2}+\delta} N_1^{\frac{4}{3} - \frac{2 \alpha}{3} + \frac{3 \varepsilon}{4}+\delta} L_{\max}^{-\delta} \prod_{i=1}^3 L_i^{\frac{1}{2}} \| f_{i,N_i,L_i} \|_2.
\end{split}
\end{equation*}
Summation yields again \eqref{eq:AuxEnergyEstimateDifferenceCylinder}.

\medskip

We turn to the proof of \eqref{eq:DifferenceEnergyEstimateII}:
\begin{equation*}
\| v \|^2_{E^s(T)} \lesssim \| v(0) \|^2_{H^{s,0}} + T^{\kappa} \| v \|^3_{F^{s,0}(T)} + T^\kappa \| v \|_{\bar{F}^{-\frac{1}{2}}(T)} \| v \|_{F^s(T)} \| u_2 \|_{F^{2s+\frac{1}{2}}(T)}.
\end{equation*}
We invoke again the fundamental theorem of calculus to find for $N \gg 1$
\begin{equation*}
\begin{split}
&\quad \| P_N v(t) \|_{L^2}^2 \\
 &= \|P_N v(0) \|^2_{L^2} + \int_0^t \int_{\R^2} P_N v \partial_x (P_N(v(u_1+u_2))) dx ds \\
&= \|P_N v(0) \|^2_{L^2} + \int_0^t \int_{\R^2} P_N v \partial_x (P_N(v^2)) dx ds + \int_0^t \int_{\R^2} P_N v \partial_x (P_N(v u_2)) dx ds.
\end{split}
\end{equation*}
For symmetry reasons the first integral can be handled like in the estimate for solutions \eqref{eq:EnergyEstimateSolutionsCylinder}. 

For the second term we take once more a paraproduct decomposition:
\begin{equation}
\label{eq:ParaproductDifferencesCylinder}
P_N(v u_2) = P_N( v P_{\ll N} u_2)+ P_N(P_{\ll N} v \cdot u_2) + 
 P_N (P_{\gtrsim N} v P_{\gtrsim N} u_2).
\end{equation}
The first term can be handled by integration by parts and the arguments from the proof of \eqref{eq:EnergyEstimateSolutionsCylinder} gives
\begin{equation*}
\sum_{N \geqslant 1} N^{2s} \big| \int_0^t \int_{\R^2} P_N v \partial_x (P_N(v P_{\ll N} u_2)) dx dy ds \big| \lesssim T^\kappa \| v \|^2_{F^s(T)} \| u_2 \|_{F^s(T)}.
\end{equation*}
Moreover, the third term can be estimated like in \eqref{eq:EnergyEstimateSolutionsCylinder} because
the derivative acts on the lowest frequency. This yields again
\begin{equation*}
\sum_{N \gtrsim 1} N^{2s} \big| \int_0^t \int_{\R \times \T} \partial_x P_N v P_N (P_{\gtrsim N} v P_{\gtrsim N} u_2 )) dx dy dt \big| \lesssim T^{\kappa} \| v \|_{F^s(T)}^2 \| u_2 \|_{F^s(T)}. 
\end{equation*}

It remains to estimate the second term in \eqref{eq:ParaproductDifferencesCylinder}. Here we need to consider
\begin{equation*}
N \big| \int_0^t \int_{\R \times \T} P_{N_1} v P_{N_2} v P_{N_3} u dx dy ds \big|, \quad N_2 \ll N_1 \sim N_3 \sim N
\end{equation*}
in a summable manner. Time localization amounts to a factor $N^{\frac{5-\alpha}{3}+\varepsilon}$ and after applying Plancherel's theorem and modulation localization we need to estimate
\begin{equation*}
N_1^{\frac{5-\alpha}{3}+1+\varepsilon} \int (f_{1,N_1,L_1} * f_{2,N_2,L_2} ) f_{3,N_3,L_3} d\xi d\eta d\tau
\end{equation*}
for $L_i \gtrsim N_1^{\frac{5-\alpha}{3}+\varepsilon}$. Applying Lemma \ref{lem:TrilinearConvolutionEstimateCylinder} gives
\begin{equation*}
\begin{split}
&\quad N_1^{\frac{5-\alpha}{3}+1+\varepsilon} \int (f_{1,N_1,L_1} * f_{2,N_2,L_2} ) f_{3,N_3,L_3} d\xi d\eta d\tau \\
&\lesssim N_1^{\frac{5-\alpha}{3}+1+\varepsilon} N_2^{-\frac{1}{2}+\delta} N_1^{-\frac{1}{3}-\frac{\alpha}{3}-\frac{\varepsilon}{2}+\delta} L_{\max}^{-\delta} \prod_{i=1}^3 L_i^{\frac{1}{2}} \| f_{i,N_i,L_i} \|_2 \\
&\lesssim N_2^{-\frac{1}{2}} N_1^{\frac{1}{2}+s_1(\alpha,\frac{3\varepsilon}{4}+2\delta,\R \times \T)} L_{\max}^{-\delta} \prod_{i=1}^3 L_i^{\frac{1}{2}} \| f_{i,N_i,L_i} \|_2.
\end{split}
\end{equation*}

In conclusion, choosing $\varepsilon(\alpha)$ small enough, we can cover any regularity $s \geqslant s_1$ in \eqref{eq:DifferenceEnergyEstimate}. The proof is complete.

\end{proof}

%

\section{Semilinear well-posedness}
\label{section:SemilinearWellposedness}

In this section, we show the sharp semilinear local well-posedness for KP-I equations posed on the Euclidean plane and on the cylinder. The results are proved by invoking the contraction mapping principle in Fourier restriction spaces.

\subsection{$\R^2$ case} 
\label{subsection:R2SemilinearLWP}
We state the following theorem which is an improvement of \cite[Theorem 6.1]{SanwalSchippa2023} and proves well-posedness in the full subcritical range.

Let $s,b \in \R$, recall $\omega_\alpha(\xi,\eta) = \xi |\xi|^\alpha + \frac{\eta^2}{\xi}$, and we define the space $X^{s,b}$ as closure of Schwartz functions with respect to the norm:
\begin{equation*}
    \| u \|_{X^{s,b}} = \| \langle \xi \rangle^s \langle \tau - \omega_\alpha(\xi,\eta) \rangle^b \mathcal{F}_{t,x,y}(u) \|_{L^2_{\tau,\xi,\eta}}.
\end{equation*}
We define for measurable $u:[0,T] \times \R^2 \to \C$:
\begin{equation*}
    \| u \|_{X^{s,b}_T} = \inf_{\substack{\tilde{u} \in X^{s,b}, \\ u = \tilde{u} \vert_{[0,T]} } } \| \tilde{u} \|_{X^{s,b}}.
\end{equation*}

\begin{theorem}
\label{thm:SemilinearWPR2}
Let $\alpha \geqslant \frac{5}{2}$ and $s>1-\frac{3\alpha}{4}$. Then, there is some $b>\frac{1}{2}$ such that for $T=T(\|u_0\|_{H^{s,0}})$, \eqref{eq:FKPIR2} is analytically locally well-posed in $H^{s,0}$ with the solution lying in $X^{s,b}_T \hookrightarrow C([0,T];H^{s,0})$.
\end{theorem}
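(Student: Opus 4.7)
I would solve \eqref{eq:FKPIR2} by the contraction mapping principle applied to the Duhamel formulation
\begin{equation*}
u(t) = S_\alpha(t) u_0 - \tfrac{1}{2} \int_0^t S_\alpha(t-s) \partial_x(u^2)(s)\, ds
\end{equation*}
in the closed ball of radius $2C \| u_0 \|_{H^{s,0}}$ inside $X^{s,b}_T$ for some $b > 1/2$ chosen close to $1/2$. The linear homogeneous and inhomogeneous Bourgain estimates
\begin{equation*}
\| S_\alpha(\cdot) u_0 \|_{X^{s,b}_T} \lesssim \| u_0 \|_{H^{s,0}}, \qquad \Big\| \int_0^{\cdot} S_\alpha(\cdot -s) F(s)\, ds \Big\|_{X^{s,b}_T} \lesssim T^{\delta} \| F \|_{X^{s,b-1}_T},
\end{equation*}
are standard, with the small time factor $T^{\delta}$, $\delta = \delta(b) > 0$, obtained by slightly lowering the modulation index in the style of Lemma~\ref{lem:TradingModulationRegularity}. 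Analyticity of the resulting data-to-solution map is automatic from its construction as a Neumann series, so the whole argument reduces to the trilinear estimate
\begin{equation}
\label{eq:TrilinearGoalSemilinR2}
\| \partial_x (u_1 u_2) \|_{X^{s,b-1}} \lesssim \| u_1 \|_{X^{s,b}} \| u_2 \|_{X^{s,b}}
\end{equation}
for $\alpha \geqslant \tfrac{5}{2}$ and $s > 1 - \tfrac{3\alpha}{4}$.

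By duality against $v \in X^{-s,1-b}$, Plancherel, and a joint Littlewood--Paley and dyadic modulation decomposition, \eqref{eq:TrilinearGoalSemilinR2} reduces to establishing that
\begin{equation*}
N \langle N_1 \rangle^{-s} \langle N_2 \rangle^{-s} \langle N \rangle^{s} L_1^{-b} L_2^{-b} L^{b-1} \int (f_{1,N_1,L_1} * f_{2,N_2,L_2}) f_{3,N,L}\, d\xi\, d\eta\, d\tau
\end{equation*}
is summable in the dyadic parameters when estimated against $\prod_{i=1}^2 \| f_i \|_{L^2} \| f_3 \|_{L^2}$. I would then split into the three interaction regimes High$\times$Low$\to$High ($N \sim N_1 \gtrsim N_2$), High$\times$High$\to$High ($N \sim N_1 \sim N_2$) and High$\times$High$\to$Low ($N \ll N_1 \sim N_2$), and within each further distinguish the resonant regime $L_{\max} \ll N_{\max}^\alpha N_{\min}$ from the non-resonant regime $L_{\max} \gtrsim N_{\max}^\alpha N_{\min}$. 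In the resonant regime, Lemma~\ref{lem:TrilinearConvolutionDispersionGeneralized} (equivalently, the nonlinear Loomis--Whitney inequality \eqref{eq:LoomisWhitneyEuclidean}) delivers the trilinear bound $N_{\max}^{1/2 - 3\alpha/4} N_{\min}^{-1/2} \prod_{i} L_i^{1/2}$, in which the dispersive gain $N_{\max}^{-3\alpha/4}$ overcomes the derivative loss exactly when $\alpha \geqslant \tfrac{5}{2}$. In the non-resonant regime, I would invoke either the bilinear Strichartz estimate of Lemma~\ref{lem:AlternativeBilinearStrichartzEstimate} or two $L^4_{t,x,y}$-Strichartz estimates from Proposition~\ref{prop:L4StrichartzEstimates} (via the standard embedding $X^{\gamma, 1/2+} \hookrightarrow L^4_{t,x,y}$ with $\gamma = (2-\alpha)/8$); the forced modulation size $L_{\max} \gtrsim N_{\max}^\alpha N_{\min}$ then combines with the weight $L^{b-1}$ to produce precisely the dispersive smoothing needed.

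The main obstacle is the High$\times$High$\to$Low interaction at the endpoint $\alpha = \tfrac{5}{2}$ and $s$ close to the scaling-critical $s_c = 1 - \tfrac{3\alpha}{4} = -\tfrac{7}{8}$, where the derivative loss $N$, the Loomis--Whitney gain $N_{\max}^{1/2-3\alpha/4} N^{-1/2}$, and the Sobolev weights $N^s N_{\max}^{-2s}$ balance exactly under scaling: writing $s = s_c + \varepsilon_s$ with $\varepsilon_s > 0$, the product reduces to $N^{-3/8+\varepsilon_s} N_{\max}^{3/8 - 2\varepsilon_s}$, so the entire subcritical margin must be recovered in the summation over the high frequency and the modulation against the resonance-forced upper bound $L \lesssim N_{\max}^\alpha N$. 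The delicate step is to arrange the modulation summation with $b$ taken just above $1/2$ so that the low-modulation contribution (governed by Loomis--Whitney) and the high-modulation contribution (governed by $L^4$-Strichartz) fit together; once this closes, the inhomogeneous time factor $T^\delta$ produced by Lemma~\ref{lem:TradingModulationRegularity} lets the fixed-point argument go through on $[0, T]$ with $T \lesssim \| u_0 \|_{H^{s,0}}^{-1/\delta}$, and the analytic data-to-solution mapping $H^{s,0} \to X^{s,b}_T \hookrightarrow C([0,T]; H^{s,0})$ follows directly.
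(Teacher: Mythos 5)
Your proposal matches the paper's argument in essence and in almost all detail: the proof in Subsections~\ref{subsection:R2SemilinearLWP} and~\ref{subsection:ConclusionSemilinearLWP} runs exactly the contraction mapping in $X^{s,b}_T$ reduced to the bilinear estimate \eqref{eq:XsbBilinearEstimate}, and Proposition~\ref{prop:BilinearEstimateRR} establishes that estimate via duality, Littlewood--Paley and modulation decomposition, and a split into the High$\times$Low$\to$High, High$\times$High$\to$Low, and very-low-frequency regimes, further separated by $L_{\max} \lessgtr N_{\max}^\alpha N_{\min}$, with the nonlinear Loomis--Whitney inequality \eqref{eq:LoomisWhitneyEuclidean} driving the resonant case and (bi)linear Strichartz estimates driving the non-resonant case. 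Two small remarks: (i) the paper handles large data by rescaling to small data and solving on a unit time interval, exploiting the subcritical regularity, rather than by extracting a $T^\delta$ factor from a modulation-trading lemma; both routes are available on $\R^2$ (and the $T^\delta$ route is what the paper actually uses on $\R\times\T$ via Lemma~\ref{lem:TradingModulationRegularitySemilinear}, not Lemma~\ref{lem:TradingModulationRegularity} which concerns the short-time spaces $F_N^b$), but rescaling is slightly cleaner since it sidesteps having to re-verify the bilinear estimate with a lowered modulation index. (ii) Lemma~\ref{lem:TrilinearConvolutionDispersionGeneralized} carries the short-time lower bound $L_i\gtrsim (1\vee N_i^{5-2\alpha+\varepsilon})$, which is not automatic in the ordinary $X^{s,b}$ setting; the object you actually need, as you correctly note parenthetically, is the raw Loomis--Whitney estimate \eqref{eq:LoomisWhitneyEuclidean} together with the non-resonant Strichartz fallback. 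Your identification of High$\times$High$\to$Low near the scaling-critical regularity as the binding case, and your balance calculation $N^{-3/8+\varepsilon_s} N_{\max}^{3/8-2\varepsilon_s}$ at $\alpha=5/2$, agree with what the paper exhibits.
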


We shall be brief here and refer the reader to \cite[Section 6]{SanwalSchippa2023} for the properties of the auxiliary spaces $X^{s,b}_T$. The proof of the theorem is concluded in Subsection \ref{subsection:ConclusionSemilinearLWP}. The following estimate is crucial:
\begin{proposition}
\label{prop:BilinearEstimateRR}
	Let $\alpha\geqslant \frac{5}{2}$ and $s>1-\frac{3\alpha}{4}$. Then  there is some $b>\frac{1}{2}$ such that the following estimate is true:
	\begin{equation}
		\label{eq:XsbBilinearEstimate}
		\| \partial_x(uv)\|_{X^{s,b-1}} \lesssim \|u\|_{X^{s,b}} \|v\|_{X^{s,b}}.
	\end{equation}
\begin{proof}
	By duality and Plancherel's theorem, we can reduce the above to proving
	\begin{equation}
		\int_{\R^3} \xi~ \widehat{uv}\cdot \overline{\widehat{w}} d\tau d\xi d\eta \lesssim \|u\|_{X^{s,b}} \|v\|_{X^{s,b}} \|w\|_{X^{-s,1-b}}.
	\end{equation}
After a dyadic decomposition, for $N_i \in 2^{\Z}, L_i\in 2^{\N_0}$, we prove the following estimate
\begin{equation}
\label{eq:DyadicLocalizedFourierRestrictionEstimateR2}
\begin{split}
	\Big| \int_{\R^3}(f_{N_1,L_1}\ast g_{N_2,L_2})h_{N,L}d\tau d\xi d\eta\Big| &\lesssim C(N_1,N_2,N) L_1^{\frac{1}{2}} L_2^{\frac{1}{2}} L^{\frac{1}{2}-} \\
&\quad \|f_{N_1,L_1}\|_{L^2} \|g_{N_2,L_2}\|_{L^2} \|h_{N,L} \|_{L^2}
\end{split}
\end{equation}
for a suitable summability constant $C(N_1,N_2,N)$ which also incorporates the derivative loss from the nonlinearity. To avoid writing the integral on the left-hand side in the above display repetitively, we denote:
\begin{equation*}
	I:= 	\Big| \int_{\R^3}(f_{N_1,L_1}\ast g_{N_2,L_2})h_{N,L}d\tau d\xi d\eta\Big|. 
\end{equation*}
In the following we do a case-by-case analysis depending on the size of the $x$-frequencies.\\
\textbf{(i) High $\times$ Low $\rightarrow$ High} ($N_2 \lesssim N_1\sim N$): We consider two more cases:\begin{itemize}
	\item \underline{$L_{\max}\ll N_1^{\alpha}N_2$}: In case $N_1^{-\frac{\alpha}{2}+\frac{1}{2}} L_3^{\varepsilon} \lesssim N_2 \lesssim 1$, using the nonlinear Loomis--Whitney estimate \eqref{eq:LoomisWhitneyEuclidean}, we obtain
\begin{equation*}
	\begin{split}
		\sum_{N_1^{-\frac{\alpha}{2}+\frac{1}{2}} L_3^{\varepsilon} \lesssim N_2 \lesssim 1} I &\lesssim \sum_{N_1^{-\frac{\alpha}{2}+\frac{1}{2}} L_3^{\varepsilon} \lesssim N_2 \lesssim 1} N_1^{-\frac{3\alpha}{4} +\frac{1}{2}} N_2^{-\frac{1}{2}} (L_1 L_2 L )^{\frac{1}{2}} \\
		&\quad \quad \times \|f_{N_1,L_1}\|_{L^2} \|g_{N_2,L_2}\|_{L^2} \|h_{N,L}\|_{L^2}\\
		& \lesssim N_1^{-\frac{\alpha}{2}+\frac{1}{4}} (L_1L_2)^{\frac{1}{2}} L_3^{\frac{1}{2}-\frac{\varepsilon}{2}} \|f_{N_1,L_1}\|_{L^2} \|g_{\lesssim 1,L_2}\|_{L^2} \|h_{N,L}\|_{L^2}.
	\end{split}
\end{equation*}
In the other case where $N_2 \lesssim N_1^{-\frac{\alpha}{2} +\frac{1}{2}} L^{\varepsilon}$, using the bilinear Strichartz estimate \eqref{eq:BilinearStrichartzEstimate}, we have
\begin{equation*}
	\begin{split}
\sum_{ N_2 \lesssim N_1^{-\frac{\alpha}{2}+\frac{1}{2}} L^{\varepsilon}} I	&\lesssim 	\sum_{ N_2 \lesssim N_1^{-\frac{\alpha}{2}+\frac{1}{2}} L_3^{\varepsilon}} N_1^{-\frac{\alpha}{4}} N_2^{\frac{1}{2}} (L_1L_2)^{\frac{1}{2}} \|f_{N_1,L_1}\|_{L^2} \|g_{N_2,L_2}\|_{L^2} \|h_{N,L}\|_{L^2}\\
&\lesssim N_1^{-\frac{\alpha}{2}+\frac{1}{4}} (L_1L_2)^{\frac{1}{2}} L^{\frac{\varepsilon}{2}} \|f_{N_1,L_1}\|_{L^2} \|g_{N_2,L_2}\|_{L^2} \|h_{N,L}\|_{L^2}.
	\end{split}
\end{equation*}
In both cases, after considering the derivative loss and summing up, the estimate leads to \eqref{eq:DyadicLocalizedFourierRestrictionEstimateR2}. In the case $N_2 \gtrsim 1$, we use the nonlinear Loomis--Whitney estimate \eqref{eq:LoomisWhitneyEuclidean} to obtain
\begin{equation*}
	\begin{split}
		\sum_{1\lesssim N_2\lesssim N_1} I &\lesssim \sum_{1 \lesssim N_2\lesssim N_1} N_1^{-\frac{1}{2}-\frac{3\alpha}{4}}N_2^{-\frac{1}{2}} (L_1L_2L)^{\frac{1}{2}} \|f_{N_1,L_1}\|_{L^2} \|g_{N_2,L_2}\|_{L^2}\|h_{N,L}\|_{L^2}\\
		&\lesssim N_1^{1-\frac{3\alpha}{4}-s+(\alpha+1)\varepsilon} \sum_{1\lesssim N_2 \lesssim N_1} \Big(\frac{N_2}{N_1}\Big)^{-\frac{1}{2}-s+\varepsilon} (L_1L_2)^{\frac{1}{2}}L^{\frac{1}{2}-\varepsilon} N_2^s N^{-1} \\
		&\quad \quad \|f_{N_1,L_1}\|_{L^2} \|g_{N_2,L_2}\|_{L^2}\|h_{N,L}\|_{L^2}.
	\end{split}
\end{equation*}
\item \underline{$L_{\max}\gtrsim  N_1^{\alpha}N_2$}: Using \cite[Lemma 4.6]{SanwalSchippa2023}, we have 
\begin{equation*}
	\begin{split}
		 I &\lesssim \frac{(L_1L_2L)^{\frac{1}{2}}}{L_{\max}^{\frac{1}{4}}} N_1^{-\frac{\alpha}{2}} N_2^{\frac{1}{4}} \|f_{N_1,L_1}\|_{L^2} \|g_{N_2,L_2}\|_{L^2}\|h_{N,L}\|_{L^2}\\
		&\lesssim (L_1L_2)^{\frac{1}{2}} L_{\max}^{\frac{1}{2}-\varepsilon} N_1^{-\frac{3\alpha}{4}+\alpha \varepsilon} N_2^{\varepsilon} \|f_{N_1,L_1}\|_{L^2} \|g_{N_2,L_2}\|_{L^2}\|h_{N,L}\|_{L^2}\\
		&\lesssim N_1^{-1} (L_1L_2)^{\frac{1}{2}} L_{\max}^{\frac{1}{2}-\varepsilon} N_1^{1-\frac{3\alpha}{4}+\alpha\varepsilon} N_2^{-1+\frac{3\alpha}{4} -\alpha\varepsilon} N_2^{1-\frac{3\alpha}{4}+\alpha\varepsilon+\varepsilon} \\
		&\quad \quad \times \|f_{N_1,L_1}\|_{L^2} \|g_{N_2,L_2}\|_{L^2}\|h_{N,L}\|_{L^2}\\
		&\lesssim N_1^{-1} (L_1L_2)^{\frac{1}{2}} L_{\max}^{\frac{1}{2}-\varepsilon} \Big(\frac{N_1}{N_2}\Big)^{1-\frac{3\alpha}{4}+\alpha \varepsilon} N_2^{1-\frac{3\alpha}{4} +(\alpha+1)\varepsilon} \\
		&\quad \quad \times \|f_{N_1,L_1}\|_{L^2} \|g_{N_2,L_2}\|_{L^2}\|h_{N,L}\|_{L^2},
	\end{split}
\end{equation*}
which is summable since $\alpha \geqslant \frac{5}{2}$.
\end{itemize}

\noindent \textbf{(ii) High $\times$ High $\rightarrow$ Low} ($N \lesssim N_1 \sim N_2$): The two cases are
\begin{itemize}
	\item \underline{$L_{\max}\ll N_1^{\alpha}N_2$}: In case $N\gtrsim 1$, using the nonlinear Loomis--Whitney estimate \eqref{eq:LoomisWhitneyEuclidean}, we obtain
	\begin{equation*}
		\begin{split}
			\sum_{N\ll N_1\sim N_2} I &\lesssim  \sum_{N \ll N_1\sim N_2} N^{-\frac{1}{2}+\varepsilon} N_1^{\frac{1}{2}-\frac{3\alpha}{4}+\alpha\varepsilon} N N_1^{-2s} N^s  (L_1L_2)^{\frac{1}{2}} L^{\frac{1}{2}-\varepsilon}\\
   &\qquad N^{-1}N_1^s N_2^s N^{-s} \|f_{N_1,L_1}\|_{L^2} \|g_{N_2,L_2}\|_{L^2}\|h_{N,L}\|_{L^2}\\
			&=\sum_{N\ll N_1\sim N_2} N^{\frac{1}{2}+s+\epsilon} N_1^{\frac{1}{2}-\frac{3\alpha}{4}-2s+\alpha\varepsilon} N^{-1}N_1^s N_2^s N^{-s}\\
   &\qquad \|f_{N_1,L_1}\|_{L^2} \|g_{N_2,L_2}\|_{L^2}\|h_{N,L}\|_{L^2}\\
			&\lesssim \sum_{N_1\sim N_2 \gtrsim 1} N_1^{1-\frac{3\alpha}{4}-s+\alpha\varepsilon +\varepsilon} N^{-1}N_1^s N_2^s N^{-s} \|f_{N_1,L_1}\|_{L^2} \|g_{N_2,L_2}\|_{L^2}\|h_{N,L}\|_{L^2}.
		\end{split}
	\end{equation*}
The above can be summed up for $s>1-\frac{3\alpha}{4}$.
\item \underline{$L_{\max}\gtrsim N_1^{\alpha}N_2$}: This case can be handled in the same way as the analogous subcase in High $\times$ Low $\rightarrow$ High interaction.
\end{itemize}
\noindent \textbf{(iii) Very low frequencies} ($N , N_1 , N_2 \lesssim 1$): Using the bilinear Strichartz estimate from Lemma \ref{lem:AlternativeBilinearStrichartzEstimate}, we have
\begin{equation*}
	\begin{split}
	\sum_{N, N_1,  N_2 \lesssim 1} N \cdot I &\lesssim \sum_{N\sim N_1\sim N_2 \lesssim 1} \|f_{N_1,L_1} * g_{N_2,L_2} \|_{L^2} N \|h_{N,L}\|_{L^2}\\
	&\lesssim \sum_{N, N_1,  N_2 \lesssim 1} (L_1 L_2)^{\frac{1}{2}} (N_1 \wedge N_2)^{\frac{1}{2}} (N_1 \vee N_2)^{\frac{1}{4}} L^{\frac{1}{2}-}\\ &\qquad \times  \|f_{N_1,L_1}\|_{L^2} \|g_{N_2,L_2}\|_{L^2} N \|h_{N,L}\|_{L^2}\\
\end{split}
\end{equation*}
with straight-forward summation.
\end{proof}
\end{proposition}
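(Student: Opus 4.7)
The plan is to use the standard $X^{s,b}$-duality framework to reduce the bilinear estimate to a trilinear convolution estimate, and then combine the nonlinear Loomis--Whitney inequality \eqref{eq:LoomisWhitneyEuclidean} (for resonant interactions) with the bilinear Strichartz estimates (Lemmas \ref{lem:AlternativeBilinearStrichartzEstimate} and \ref{lem:CFBilinearStrichartz}, for non-resonant ones). By Plancherel's theorem and duality, \eqref{eq:XsbBilinearEstimate} is equivalent to
\begin{equation*}
\Big| \int_{\R^3} \xi\, \widehat{uv}\, \overline{\hat w}\, d\tau d\xi d\eta \Big| \lesssim \|u\|_{X^{s,b}}\|v\|_{X^{s,b}} \|w\|_{X^{-s,1-b}}.
\end{equation*}
After a Littlewood--Paley decomposition in both space frequency $N,N_1,N_2$ and modulation $L,L_1,L_2$, the task reduces to establishing summable dyadic convolution estimates of the form
\begin{equation*}
N \Big| \int (f_{N_1,L_1}\ast g_{N_2,L_2}) h_{N,L}\, d\tau d\xi d\eta \Big| \lesssim C(N_1,N_2,N)\, L_1^{1/2} L_2^{1/2} L^{1/2-\varepsilon} \prod \|\cdot\|_{L^2},
\end{equation*}
with the derivative loss $N$ absorbed into $C(N_1,N_2,N)$, and a small gain $L^{-\varepsilon}$ available for choosing some $b>1/2$.

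I would split into three frequency regimes: \textbf{High}$\times$\textbf{Low}$\to$\textbf{High} ($N_2\lesssim N_1\sim N$), \textbf{High}$\times$\textbf{High}$\to$\textbf{Low} ($N\lesssim N_1\sim N_2$), and the \textbf{very low frequency} case ($N,N_1,N_2\lesssim 1$, handled by Lemma \ref{lem:AlternativeBilinearStrichartzEstimate}). In each spatial regime, I further split by the modulation size into a resonant sub-case $L_{\max}\ll N_1^\alpha N_2$ and a non-resonant sub-case $L_{\max}\gtrsim N_1^\alpha N_2$. In the resonant sub-case, the nonlinear Loomis--Whitney inequality \eqref{eq:LoomisWhitneyEuclidean} yields the key factor $N_1^{1/2-3\alpha/4}N_2^{-1/2}(L_1L_2L)^{1/2}$, which is the sharp gain reflecting the critical scaling $s_c=1-3\alpha/4$. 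In the non-resonant sub-case, a high modulation allows one to trade $L_{\max}^{-1/4+\varepsilon}$ for spatial regularity via Lemma \ref{lem:CFBilinearStrichartz} (or the alternative bilinear Strichartz estimate), which turns out to be favorable since $L_{\max}\gtrsim N_1^\alpha N_2$.

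The critical check is the \textbf{High}$\times$\textbf{High}$\to$\textbf{Low} resonant case. Here the derivative loss is only $N\lesssim N_1$, and the Loomis--Whitney gain $N_1^{1/2-3\alpha/4}$ must absorb two derivatives of loss on the weighted side (since $h_{N,L}$ is measured in $X^{-s,1-b}$). Writing out the summable factor one finds the scaling
\begin{equation*}
N_1^{1-\frac{3\alpha}{4}-s+O(\varepsilon)} \cdot \Big(\tfrac{N_2}{N_1}\Big)^{-\frac12-s+\varepsilon}\cdot N^{\frac12+s+\varepsilon}\cdot N^{-1},
\end{equation*}
which is summable exactly when $s>1-\tfrac{3\alpha}{4}$. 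This is the main obstacle: one must carry out the dyadic summations in all three frequency parameters while keeping the $\varepsilon$ budget, and simultaneously choose $b>1/2$ small enough so that the $L^{1/2-\varepsilon}$ weight still furnishes $b-1>-1/2$.

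For the non-resonant sub-cases, one exploits the fact that $L_{\max}\gtrsim N_1^\alpha N_2\gtrsim 1$, so the Lemma \ref{lem:CFBilinearStrichartz}-type estimate produces $N_1^{-\alpha/2}N_2^{1/4}L_{\max}^{-1/4}$, which after converting $L_{\max}^{-\varepsilon}$ into $N_1^{-\alpha\varepsilon}N_2^{-\varepsilon}$ yields an overall prefactor $N_1^{-3\alpha/4+O(\varepsilon)}N_2^{\varepsilon}$. This is favorable for $\alpha\geqslant 5/2$, where the $N_1^{-3\alpha/4}$ decay is stronger than the derivative loss $N_1$ compensated by $N_1^{-s}$ from the weights, so summation in $N_2$ and then $N_1$ converges for all $s>1-3\alpha/4$ (with room to spare). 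After gathering all cases, one concludes \eqref{eq:XsbBilinearEstimate} with $b=\tfrac12+\delta$ for some small $\delta=\delta(\varepsilon)>0$.
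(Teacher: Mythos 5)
Your proposal follows the same overall architecture as the paper's proof: duality to a trilinear convolution estimate, dyadic decomposition in space frequencies and modulations, the split into High$\times$Low$\to$High, High$\times$High$\to$Low and very low frequencies, and the resonant/non-resonant dichotomy via $L_{\max}\lessgtr N_1^\alpha N_2$, with Loomis--Whitney in the resonant regime and a bilinear Strichartz estimate in the non-resonant one. You also correctly identify the High$\times$High$\to$Low resonant case as the one that fixes $s>1-\frac{3\alpha}{4}$. However, there is a concrete gap in the High$\times$Low$\to$High resonant case that your sketch does not address.

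When $N_2\lesssim 1\lesssim N_1\sim N$, the anisotropic Sobolev weight on the low-frequency factor is $\langle N_2\rangle^s\sim 1$, so the Loomis--Whitney constant $N_1^{\frac12-\frac{3\alpha}{4}}N_2^{-\frac12}$ must by itself be summable over the dyadic range $N_1^{-\alpha}\lesssim N_2\lesssim 1$ (the lower endpoint being forced by $L_{\max}\geqslant 1$ and $L_{\max}\ll N_1^\alpha N_2$). This summation produces an extra $N_1^{\alpha/2}$, and after adding the derivative loss $N\sim N_1$ one arrives at $N_1^{3/2-\alpha/4}$, which is $N_1^{7/8}$ at $\alpha=\frac52$; the estimate fails to close, and the $L$-room (choosing $b$ close to $\frac12$) cannot rescue it because $L$ is bounded above by $N_1^\alpha N_2$, which only makes the $N_2^{-1/2}$ problem worse. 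The paper resolves this by a further $L$-dependent split of the $N_2$ range at the threshold $N_2\sim N_1^{-\alpha/2+1/2}L^\varepsilon$: for $N_2$ above this threshold, Loomis--Whitney is used and the $N_2$-sum is paid for by sacrificing an $L^{\varepsilon/2}$ from the modulation budget; for $N_2$ below this threshold, the short-time resonant bilinear Strichartz estimate \eqref{eq:BilinearStrichartzEstimate} (Corollary \ref{prop:BilinearStrichartzResonant}) replaces Loomis--Whitney, now giving the favorable $N_2^{+1/2}$ instead of $N_2^{-1/2}$ at the price of a weaker gain $N_1^{-\alpha/4}$. After summing $N_2$ in each sub-regime one gets precisely $N_1^{-\alpha/2+1/4}$ times a small power of $L$, which together with the derivative loss $N_1$ closes the argument exactly at $\alpha=\frac52$. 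Without this additional $N_2$-split within the resonant sub-case your argument does not converge; you should add it before the rest of your plan goes through.
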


\subsection{$\R \times \T$ case} We prove the following result:
\begin{theorem}
\label{thm:SemilinearWPRT}
Let $\alpha \geqslant5$ and $s>\frac{1}{4}-\frac{\alpha}{4}$. Then, there is some $b>\frac{1}{2}$ such that for $T=T(\|u_0\|_{H^{s,0}})$, \eqref{eq:FKPI} is analytically locally well-posed in $H^{s,0}$ with the solution lying in $X^{s,b}_T \hookrightarrow C([0,T];H^{s,0})$.
\end{theorem}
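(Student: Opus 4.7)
The plan is to mirror the argument of Theorem \ref{thm:SemilinearWPR2}, setting up a Picard iteration in $X^{s,b}$-spaces adapted to the KP-I dispersion relation for some $b>\frac{1}{2}$ sufficiently close to $\frac{1}{2}$. By the standard reduction (see the conclusion in Subsection \ref{subsection:ConclusionSemilinearLWP}), the theorem will follow from the bilinear estimate
\begin{equation*}
\|\partial_x(uv)\|_{X^{s,b-1}} \lesssim \|u\|_{X^{s,b}}\|v\|_{X^{s,b}}, \qquad s>\tfrac{1-\alpha}{4}.
\end{equation*}
By duality, Plancherel's theorem, and a Littlewood--Paley decomposition in both frequency and modulation, this reduces to proving dyadic convolution estimates of the shape
\begin{equation*}
\Big|\int (f_{N_1,L_1}*g_{N_2,L_2})\,h_{N,L}\,d\tau d\xi (d\eta)_1\Big| \lesssim C(N,N_1,N_2)\,L_1^{\frac{1}{2}}L_2^{\frac{1}{2}}L^{\frac{1}{2}-}\prod\|\cdot\|_{L^2},
\end{equation*}
where $C(N,N_1,N_2)$ must absorb the derivative loss and the Sobolev weights in a summable way.

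The case analysis would proceed by the familiar trichotomy: High $\times$ Low $\to$ High ($N_2\lesssim N_1\sim N$), High $\times$ High $\to$ Low ($N\ll N_1\sim N_2$), and very low frequencies ($N,N_1,N_2\lesssim 1$). Within each regime I would split into the resonant case $L_{\max}\ll N_1^{\alpha}N_{\min}$ and the non-resonant case $L_{\max}\gtrsim N_1^{\alpha}N_{\min}$. In the resonant regime the engine is the nonlinear Loomis--Whitney inequality on the cylinder \eqref{eq:LoomisWhitneyCylinder}, which delivers the transversality factor $(N_1/N_{\min})^{1/2}$ together with the favorable modulation gain $\langle L/N_1^{\alpha/2}\rangle^{-1/2}$. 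In the non-resonant regime I would instead use the bilinear Strichartz estimates of Proposition \ref{prop:BilinearStrichartzGeneral}, Lemma \ref{lem:AlternativeBilinearStrichartzEstimate}, or the refined Lemma \ref{lem:CFBilinearCylinder}, paired with a dyadic Cauchy--Schwarz in modulation exploiting the exponent $b-1<-\frac{1}{2}$.

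The main obstacle will be the threshold High $\times$ Low $\to$ High interaction with extremely small $N_2$, which is precisely the regime that dictates the sharp index $s^{*}(\alpha)=\frac{1-\alpha}{4}$. As indicated in the heuristic in Subsection \ref{subsection:LWPResults}, the Loomis--Whitney bound $N_1^{-1}N_2^{-1/2}$ is profitable only as long as $N_2\gtrsim N_1^{1-\alpha}$; once $N_2$ falls below this transition one must switch to the bilinear Strichartz estimate that delivers only a constant factor, and the interplay between the two bounds is exactly what produces the threshold $s^{*}(\alpha)$. Consequently I would dyadically decompose $N_2$ at the transition $N_2\sim N_1^{1-\alpha}$ and estimate each piece separately; the $H^{s,0}$-weight $N_2^{-s}$ arising in the dualization will be summable precisely when $s>\frac{1-\alpha}{4}$. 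A parallel analysis for the symmetric High $\times$ High $\to$ Low interaction forces no extra constraint, since the output frequency $N$ is equipped with the positive weight $N^{s}$ for $s$ close to $s^{*}$.

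The remaining cases are routine. For $L_{\max}\gtrsim N_1^{\alpha}N_{\min}$ in any of the three regimes, Lemma \ref{lem:CFBilinearCylinder} combined with the gain $L_{\max}^{-(1-b)}$ from the $X^{s,b-1}$-norm suffices without touching the threshold; the very-low-frequency case is handled directly with Lemma \ref{lem:AlternativeBilinearStrichartzEstimate}. Sharpness of the threshold at $s=s^{*}(\alpha)$ is already recorded in Theorem \ref{thm:IllposednessForSmallS}, which confirms that no softer analysis could push below $s^{*}(\alpha)$; global well-posedness in $L^{2}$ for real-valued data for $\alpha\geqslant 5$ then follows from $L^{2}$-conservation and iteration of the local theory, as $s^{*}(\alpha)<0$ for $\alpha>1$.
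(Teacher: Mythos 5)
Your overall strategy (duality and dyadic reduction, Loomis--Whitney in the resonant regime, bilinear Strichartz in the non-resonant regime) is exactly the engine the paper uses, and the reduction to the bilinear $X^{s,b}$-estimate via Proposition~\ref{prop:BilinearEstimateRT} is correct. However, there is a genuine conceptual error in the heart of the argument: you misidentify the binding interaction. You assert that the threshold $s^*(\alpha)=\tfrac{1-\alpha}{4}$ is dictated by the \emph{High\,$\times$\,Low\,$\to$\,High} interaction with very small $N_2$, with a transition at $N_2\sim N_1^{1-\alpha}$, and that the \emph{High\,$\times$\,High\,$\to$\,Low} interaction ``forces no extra constraint.'' This has the roles reversed. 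In the High\,$\times$\,Low\,$\to$\,High case with $N_2 \lesssim 1$, the Loomis--Whitney and bilinear Strichartz bounds are matched at $N_2 \sim N_1^{\frac12-\frac{\alpha}{2}}L_{\max}^{\frac12}$ and deliver an unconditional $N_1$-power; for $N_2\gtrsim 1$ the resulting constraint is only $s>1-\tfrac{\alpha}{2}$, which for $\alpha\geqslant 5$ is strictly weaker than $s>\tfrac{1-\alpha}{4}$. The threshold actually comes from the High\,$\times$\,High\,$\to$\,Low case with $N\gtrsim 1$: dualizing puts the weight $N^{-s}$ on the small output frequency with $-s>0$ and the weights $N_1^s N_2^s$ on the large input frequencies with $s<0$, so the net frequency factor is $N^{\frac12+s}N_1^{\frac12-\frac{\alpha}{2}-2s}$, and for $s\leqslant -\tfrac12$ the $N$-sum is borderline at $N\sim N_1$, leaving exactly the constraint $\tfrac12-\tfrac{\alpha}{2}-2s<0$, i.e.\ $s>\tfrac{1-\alpha}{4}$. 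Indeed the ill-posedness example in Theorem~\ref{thm:IllposednessForSmallS}, which you cite, is built on precisely this resonant High\,$\times$\,High\,$\to$\,Low interaction.

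A secondary omission: on the cylinder the resonant regime must be further split at $L_{\max}\sim N_{\max}^{\alpha/2}$, because the Loomis--Whitney constant \eqref{eq:LoomisWhitneyCylinder} carries the factor $\langle L_{\text{med}}/N_1^{\alpha/2}\rangle^{1/2}$, which switches behavior across that scale and forces one to interpolate against bilinear Strichartz differently in the two sub-regimes (this is handled by the paper's lemma covering $L_{\text{med}}\lesssim N_{\max}^{\alpha/2}\lesssim L_{\max}\lesssim N_{\max}^{\alpha}N_{\min}$). Without this extra split the resonant bound degenerates for intermediate modulations and you would not recover the sharp index. Finally, the $L^2$ global well-posedness remark at the end belongs to Theorem~\ref{thm:GWPFKPICylinder}, not to the present statement.
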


As in the $\R^2$ case, the claim is implied by the bilinear estimate:
\begin{proposition}
\label{prop:BilinearEstimateRT}
    Let $\alpha\geqslant 5$ and $s>\frac{1}{4}-\frac{\alpha}{4}$. Then  there is some $b>\frac{1}{2}$ such that the following estimate is true:
	\begin{equation}
		\label{eq:XsbBilinearEstimateRT}
		\| \partial_x(uv)\|_{X^{s,b-1}} \lesssim \|u\|_{X^{s,b}} \|v\|_{X^{s,b}}.
	\end{equation}
 \end{proposition}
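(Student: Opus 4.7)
The approach follows the template of Proposition \ref{prop:BilinearEstimateRR}, adapted to the cylinder geometry. By duality and Plancherel's theorem, the estimate \eqref{eq:XsbBilinearEstimateRT} reduces to a trilinear convolution bound
\begin{equation*}
\Big| \int_{\R \times \R \times \Z} (f_{N_1,L_1} \ast g_{N_2,L_2}) h_{N,L}\, d\tau\, d\xi (d\eta)_1 \Big| \lesssim C(\underline{N}) L_1^{\frac{1}{2}} L_2^{\frac{1}{2}} L^{\frac{1}{2}-\varepsilon} \prod \|\cdot\|_{L^2},
\end{equation*}
after dyadic decomposition in spatial frequencies $N, N_1, N_2 \in 2^{\Z}$ and modulations $L, L_1, L_2 \in 2^{\N_0}$. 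Summability over modulations is ensured by $b > \frac{1}{2}$ together with the $L^{\frac{1}{2}-\varepsilon}$ smoothing; summability over frequencies requires
\begin{equation*}
\sum_{N,N_1,N_2} \frac{N_+^{1+s}}{N_{1,+}^{s} N_{2,+}^{s}} C(\underline{N}) < \infty.
\end{equation*}

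The plan is to split the analysis into three frequency regimes. In the High $\times$ Low $\to$ High regime ($N_2 \lesssim N_1 \sim N$), the resonant case $L_{\max} \ll N_1^{\alpha} N_2$ is handled via the nonlinear Loomis--Whitney inequality \eqref{eq:LoomisWhitneyCylinder}, giving $C(\underline{N}) \sim (N_1/N_2)^{1/2} N_1^{-\alpha/2}$; together with the derivative factor $N_1$ this is comfortably summable for any $s$ in our range. The non-resonant sub-case $L_{\max} \gtrsim N_1^\alpha N_2$ uses Lemma \ref{lem:CFBilinearCylinder} combined with Lemma \ref{lem:AlternativeBilinearStrichartzEstimate} to extract modulation smoothing that beats the derivative loss. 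In the High $\times$ High $\to$ Low regime ($N \lesssim N_1 \sim N_2$), the resonant Loomis--Whitney estimate gives $C(\underline{N}) \sim (N_1/N)^{1/2} N_1^{-\alpha/2}$; the critical configuration is $N \ll 1 \ll N_1 \sim N_2$, where the derivative factor $N$ is small and the contribution is bounded by
\begin{equation*}
N \cdot N_+^{s} N_{1,+}^{-2s} \cdot C(\underline{N}) \lesssim N^{\frac{1}{2}} N_1^{\frac{1}{2}-2s-\frac{\alpha}{2}},
\end{equation*}
which sums over $N \leqslant 1$ and $N_1 \geqslant 1$ precisely for $s > \frac{1}{4}-\frac{\alpha}{4}$. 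The non-resonant sub-case is again handled by modulation smoothing from Lemmas \ref{lem:CFBilinearCylinder} and \ref{lem:AlternativeBilinearStrichartzEstimate}. Very low frequencies ($N, N_1, N_2 \lesssim 1$) are immediate from Lemma \ref{lem:AlternativeBilinearStrichartzEstimate}.

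The main obstacle is precisely the High $\times$ High $\to$ Low interaction with output frequency $N \ll 1$. This is the configuration that drives the threshold $s > \frac{1}{4} - \frac{\alpha}{4}$ and is matched by the ill-posedness example in Theorem \ref{thm:IllposednessForSmallS}, so no slack is available. A secondary technical point is the transition between resonant and non-resonant modulation regimes, where one interpolates the Loomis--Whitney bound with the alternative bilinear Strichartz estimate of Lemma \ref{lem:AlternativeBilinearStrichartzEstimate} to produce the $L^{\frac{1}{2}-\varepsilon}$ gain required for summability in the modulation of $h_{N,L}$. All remaining sub-cases are strictly subcritical and follow by the same ingredients with room to spare.
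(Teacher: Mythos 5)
Your proposal follows essentially the same strategy as the paper — duality plus dyadic decomposition in spatial frequency and modulation, the cylinder Loomis--Whitney estimate \eqref{eq:LoomisWhitneyCylinder} in the resonant modulation regime, and Bourgain's bilinear Strichartz estimate (Lemma \ref{lem:AlternativeBilinearStrichartzEstimate}) for large modulations — and you correctly locate the regularity threshold in the High $\times$ High $\to$ Low interaction and obtain $s > \frac{1-\alpha}{4}$. Two small inaccuracies: the paper's non-resonant case uses only Lemma \ref{lem:AlternativeBilinearStrichartzEstimate} and does not invoke the C\'ordoba--Fefferman refinement Lemma \ref{lem:CFBilinearCylinder}; and while your computation for $N \ll 1 \ll N_1 \sim N_2$ is correct and gives the same exponent, the paper identifies $1 \lesssim N \ll N_1 \sim N_2$ as the threshold-determining sub-case, and in both regimes the dyadic sum is dominated by $N \sim 1$, which is what the ill-posedness example in Theorem \ref{thm:IllposednessForSmallS} tests (output frequency $\xi \sim 1$, not $\xi \to 0$).
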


 We prove Proposition \ref{prop:BilinearEstimateRT} in a series of lemmata. After dyadic decomposition in the $x$-frequencies, we aim to prove an estimate similar to \eqref{eq:DyadicLocalizedFourierRestrictionEstimateR2} with a suitable, but different summability constant, say $\tilde{C}(N,N_1,N_2)$.
We first observe that for $L_{med} \gtrsim N_{\max}^{\frac{\alpha}{2}}$, the nonlinear Loomis--Whitney estimate and the bilinear Strichartz estimates are the same as in the $\R^2$ case. The boundary cases $N_{\max} \lesssim 1$ and $N_{\min} \lesssim N_{\max}^{-\frac{\alpha}{2}}$ are treated in Lemma \ref{lem:SemilinearCylinderBoundaryCases}.

\smallskip

Hence, it remains to consider the following cases:
\begin{itemize}
	\item $L_{\max} \lesssim N_{\max}^{\frac{\alpha}{2}}$,
	\item $L_{med} \lesssim N_{\max}^{\frac{\alpha}{2}} \lesssim L_{\max} \lesssim N_{\max}^\alpha N_{\min}$,
 \item $L_{\max} \gtrsim N_{\max}^{\alpha}N_{\min}$.
\end{itemize}
The first two cases are sub-cases of the resonant case, while the last case is the non-resonant case. In the following, we assume that
\begin{equation}
\label{eq:MainFrequencyAssumptions}
N_{\max} \gtrsim 1, \text{ and } N_{\min} \gtrsim N_{\max}^{-\frac{\alpha}{2}}.
\end{equation}

\smallskip

We handle the first case in the following:
\begin{lemma}
    Let $\alpha \geqslant 5$, $s>\frac{1-\alpha}{4}$ and $N, N_i \in 2^{\Z}$, $L_i \in 2^{\mathbb{N}_0}$, and suppose that \eqref{eq:MainFrequencyAssumptions}. Let $f_{N_1,L_1}, g_{N_2,L_2}, h_{N,L}:\R \times \R \times \Z\to \R_{+}$ and $supp(f_{N_1,L_1}) \subseteq D_{N_1,L_1}$, $supp(g_{N_2,L_2}) \subseteq D_{N_2,L_2}$, and $supp(h_{N,L}) \subseteq D_{N,L}$ such that $L_{\max} \lesssim \max(N, N_1,N_2)^{\frac{\alpha}{2}}$. Then, the estimate \eqref{eq:XsbBilinearEstimateRT} holds.
    \begin{proof}
    To prove the result, we consider the following cases:\\
\noindent  \textbf{(i) Low $\times$ High $\rightarrow$ High} ($N_2 \lesssim N_1\sim N$): If the size of the low frequency, viz. $N_2\lesssim 1$, we consider two cases:
In case $N_1^{-\frac{\alpha}{2}+\frac{1}{2}} L_{\max}^{\frac{1}{2}} \lesssim N_2 \lesssim 1$, we use the Loomis--Whitney estimate \eqref{eq:LoomisWhitneyCylinder}, (note: $L_{\max} \ll N_1^\alpha N_2$) to obtain
\begin{equation*}
	\begin{split}
\sum_{N_1^{-\frac{\alpha}{2}+\frac{1}{2}}L_{\max}^{\frac{1}{2}} \lesssim N_2 \lesssim 1} I &\lesssim \sum_{N_1^{-\frac{\alpha}{2}+\frac{1}{2}} L_{\max}^{\frac{1}{2}} \lesssim N_2 \lesssim 1} N_1^{\frac{1}{2}-\frac{\alpha}{2}} N_2^{-\frac{1}{2}} (L_{\max}L_{\min})^{\frac{1}{2}} \\
&\quad\|f_{N_1,L_1}\|_{L^2} \|g_{N_2,L_2}\|_{L^2}\|h_{N,L}\|_{L^2}\\
&\lesssim N_1^{\frac{5}{4}-\frac{\alpha}{4}}N_1^{-1} L_{\max}^{\frac{1}{4}} L_{\min}^{\frac{1}{2}} \|f_{N_1,L_1}\|_{L^2} \|g_{N_2,L_2}\|_{L^2}\|h_{N,L}\|_{L^2}.
 \end{split}
\end{equation*}
Summing up the above in the spatial frequencies gives the required estimate. In case $N_2 \lesssim N_1^{-\frac{\alpha}{2}+\frac{1}{2}}L_{\max}^{\frac{1}{2}}$, we use the bilinear Strichartz estimate \eqref{eq:BilinearStrichartzEstimate} to obtain
\begin{equation*}
	\begin{split}
		\sum_{N_2 \lesssim N_1^{-\frac{\alpha}{2}+\frac{1}{2}}L_{\max}^{\frac{1}{2}}} I &\lesssim \sum_{N_2 \lesssim N_1^{-\frac{\alpha}{2}+\frac{1}{2}}L_{\max}^{\frac{1}{2}}} N_2^{\frac{1}{2}} L_{\min}^{\frac{1}{2}} \|f_{N_1,L_1}\|_{L^2} \|g_{N_2,L_2}\|_{L^2}\|h_{N,L}\|_{L^2}\\
		&\lesssim  N_1^{\frac{5}{4}-\frac{\alpha}{4}} N_1^{-1} L_{\min}^{\frac{1}{2}} L_{\max}^{\frac{1}{4}} \|f_{N_1,L_1}\|_{L^2} \|g_{N_2,L_2}\|_{L^2}\|h_{N,L}\|_{L^2}.
	\end{split}
\end{equation*}
In the case $N_2\gtrsim 1$, the nonlinear Loomis--Whitney estimate \eqref{eq:LoomisWhitneyCylinder} gives
\begin{equation*}
	\begin{split}
		\sum_{1 \lesssim N_2\lesssim N_1} I &\lesssim \sum_{1 \lesssim N_2 \lesssim N_1} N_2^{-\frac{1}{2}} N_1^{-\frac{\alpha}{2}+\frac{1}{2}} (L_{\min}L_{\max})^{\frac{1}{2}} \|f_{N_1,L_1}\|_{L^2} \|g_{N_2,L_2}\|_{L^2}\|h_{N,L}\|_{L^2}\\
		&\lesssim \sum_{1\lesssim N_2 \lesssim N_1} N_2^{-\frac{1}{2}-s} N_1^{\frac{3}{2}-\frac{\alpha}{2}+\varepsilon'} L_{\min}^{\frac{1}{2}} L_{\max}^{\frac{1}{2}-\varepsilon} \|f_{N_1,L_1}\|_{L^2} \|h_{N,L}\|_{L^2} N_2^s \|g_{N_2,L_2}\|_{L^2}.
	\end{split}
\end{equation*}
We observe that in case $s>-\frac{1}{2}$, it is straightforward to sum the above expression in $N_1$ and $N_2$. In case $-\frac{1}{2}-s \geq 0$, we obtain for the above expression that it is bounded by
\begin{equation*}
	\sum_{1 \lesssim N_2 \lesssim N_1} \Big(\frac{N_2}{N_1}\Big)^{-\frac{1}{2}-s} N_1^{1-\frac{\alpha}{2}-s+\varepsilon} N^{-1} L_{\min}^{\frac{1}{2}} L_{\max}^{\frac{1}{2}-\varepsilon'} \|f_{N_1,L_1}\|_{L^2} \|h_{N,L}\|_{L^2}\sup_{N_2\gtrsim 1} N_2^s\|g_{N_2,L_2}\|_{L^2}.
\end{equation*}
It is easy to observe that the above is summable for $s>1-\frac{\alpha}{2}$.\\

\noindent \textbf{(ii) High $\times$ High $\rightarrow$ Low} ($N_1 \sim N_2 \gtrsim N$): We first consider the case $N\lesssim 1$ where the derivative in the nonlinearity is smoothing. Using the nonlinear Loomis--Whitney estimate \eqref{eq:LoomisWhitneyCylinder}, we have
\begin{equation*}
	\begin{split}
\sum_{N\lesssim 1\lesssim N_1\sim N_2} I &\lesssim \sum_{N\lesssim 1\lesssim N_1\sim N_2}N^{-\frac{1}{2}} N_1^{\frac{1}{2}-\frac{\alpha}{2}} (L_{\min}L_{\max})^{\frac{1}{2}}\|f_{N_1,L_1}\|_{L^2} \|h_{N,L}\|_{L^2}\|g_{N_2,L_2}\|_{L^2}\\
&\lesssim \sum_{N\lesssim 1\lesssim N_1\sim N_2} N^{\frac{1}{2}} N_1^{\frac{1}{2}-\frac{\alpha}{2}-2s+\varepsilon} L_{\min}^{\frac{1}{2}} L_{\max}^{\frac{1}{2}-\varepsilon'} N^{-1}N_1^s \|f_{N_1,L_1}\|_{L^2} \\
&\quad \quad \times N_2^s\|g_{N_2,L_2}\|_{L^2} \|h_{N,L}\|_{L^2}.
\end{split}
\end{equation*}
We observe that the above is summable for $s>\frac{1}{4}-\frac{\alpha}{2}$.

For $N \gtrsim 1$, the same estimate yields
\begin{equation*}
\begin{split}
	\sum_{1 \lesssim N \lesssim N_1\sim N_2}I &\lesssim  \sum_{1 \lesssim N \lesssim N_1\sim N_2} N^{\frac{1}{2}+s} N_1^{\frac{1}{2}-\frac{\alpha}{2}-2s+\varepsilon'}L_{\min}^{\frac{1}{2}} L_{\max}^{\frac{1}{2}-\varepsilon'}N^{-1} N_1^s\|f_{N_1,L_1}\|_{L^2}  \\
 &\quad \qquad \qquad \times N_2^s\|g_{N_2,L_2}\|_{L^2} N^{-s}\|h_{N,L}\|_{L^2}.
 \end{split}
\end{equation*}
The above is summable for $s>\frac{1}{4}-\frac{\alpha}{4}$. We observe that this is the case which determines the regularity threshold for local well-posedness.\\
    \end{proof}
\end{lemma}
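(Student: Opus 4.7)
The plan is to reduce the claim to a dyadic trilinear bound
\begin{equation*}
I := \Big|\int (f_{N_1,L_1} \ast g_{N_2,L_2})\, h_{N,L}\, d\tau\, d\xi\, d\eta\Big| \lesssim \tilde{C}(N,N_1,N_2)\, L_1^{1/2} L_2^{1/2} L^{1/2-\varepsilon}\prod_i \|\cdot\|_{L^2},
\end{equation*}
where the constant $\tilde{C}$ is arranged so that, after multiplying by the $\partial_x$ derivative loss (a factor $N$) and by the Sobolev weights $N_1^{s} N_2^{s} N^{-s}$, what remains is geometrically summable in the dyadic indices. Under the working hypothesis $L_{\max}\lesssim N_{\max}^{\alpha/2}$, the nonlinear Loomis--Whitney bound \eqref{eq:LoomisWhitneyCylinder} collapses to
\begin{equation*}
I \lesssim (N_{\max}/N_{\min})^{1/2}\,(L_{\min}L_{\max})^{1/2}\, N_{\max}^{-\alpha/2}\prod_i \|f_i\|_{L^2},
\end{equation*}
because the bracket $\langle L_{\text{med}}/N_{\max}^{\alpha/2}\rangle^{1/2}$ saturates at $1$; the $L^{-\varepsilon}$ smoothing in the output modulation is purchased at the negligible cost of a factor $N_{\max}^{\alpha\varepsilon/2}$.

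A standard dichotomy is then needed. In the High$\times$Low$\to$High interaction ($N_2\lesssim N_1\sim N$), the Loomis--Whitney bound degrades when the transversality constant $A\sim N_1/N_2$ overwhelms the $N_{\max}^{-\alpha/2}$ gain, i.e.\ when $N_2\lesssim N_1^{(1-\alpha)/2} L_{\max}^{1/2}$. In that subregime I would replace Loomis--Whitney by the resonant bilinear Strichartz estimate \eqref{eq:BilinearStrichartzEstimate} of Corollary~\ref{prop:BilinearStrichartzResonant}, which delivers $N_2^{1/2} L_{\min}^{1/2}$ without the $(N_1/N_2)^{1/2}$ penalty. In either regime, combining with the derivative loss $N$ and the Sobolev weights produces an aggregate of size $N_1^{5/4-\alpha/4}$ times a geometric factor in $N_2/N_1$, which is summable precisely because $\alpha\geq 5$. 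The complementary case $N_2\gtrsim 1$ goes directly through Loomis--Whitney, giving a factor $N_1^{3/2-\alpha/2-s+\varepsilon'}$ per the trivial Sobolev scaling, summable for $s>1-\alpha/2$ (a weaker threshold than the theorem's).

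The decisive case is the High$\times$High$\to$Low interaction ($N\lesssim N_1\sim N_2$). Now $A\sim N_1/N$, and Loomis--Whitney together with the $\partial_x$-loss and Sobolev reweighting yields a summand of order
\begin{equation*}
N^{1/2+s}\, N_1^{1/2-\alpha/2-2s+\varepsilon'}\, L_{\min}^{1/2} L_{\max}^{1/2-\varepsilon}.
\end{equation*}
Summation in $N_1\geq N$ requires $1/2-\alpha/2-2s<0$, which is exactly $s>1/4-\alpha/4$, the hypothesis of the theorem; the subregime $N\lesssim 1$ is easier because of the smoothing factor $N^{1/2+s}$ (given $s>-1/2$). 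The main obstacle is precisely this High$\times$High$\to$Low case with $N\gtrsim 1$: it pins the sharp threshold $s>(1-\alpha)/4$ and leaves no slack in the Loomis--Whitney exponents, so the full strength of the $N_{\max}^{-\alpha/2}$ smoothing from the nonlinear Loomis--Whitney inequality is indispensable. Bilinear Strichartz alone is insufficient here, as it cannot recover this smoothing factor.
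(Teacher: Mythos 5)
Your proposal matches the paper's proof essentially line for line: the same case decomposition (Low$\times$High$\to$High with a dichotomy at $N_2\sim N_1^{(1-\alpha)/2}L_{\max}^{1/2}$ between Loomis--Whitney and the resonant bilinear Strichartz estimate; High$\times$High$\to$Low via Loomis--Whitney alone), the same collapse of the Loomis--Whitney constant to $(N_{\max}/N_{\min})^{1/2}(L_{\min}L_{\max})^{1/2}N_{\max}^{-\alpha/2}$ when $L_{\max}\lesssim N_{\max}^{\alpha/2}$, and the same identification of High$\times$High$\to$Low with $N\gtrsim 1$ as the case that pins the sharp threshold $s>\frac{1}{4}-\frac{\alpha}{4}$. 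No substantive gaps relative to the paper's argument.
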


Next, we have the following result to deal with the intermediate case $L_{med} \lesssim N_{\max}^{\frac{\alpha}{2}} \lesssim L_{\max}$:
\begin{lemma}
    Let $\alpha \geqslant 5$, $s>\frac{1-\alpha}{4}$, and $N, N_i \in 2^{\Z}$, $L_i,L \in 2^{\N}$, and suppose that \eqref{eq:MainFrequencyAssumptions}. Let $f_{N_1,L_1}, g_{N_2,L_2}, h_{N,L}:\R \times \R \times \Z\to \R_{+}$ and $supp(f_{N_1,L_1}) \subseteq D_{N_1,L_1}$, $\text{supp}(g_{N_2,L_2}) \subseteq D_{N_2,L_2}$, $\text{supp}(h_{N,L}) \subseteq D_{N,L}$ such that $L_{med} \lesssim N_{\max}^{\frac{\alpha}{2}} \lesssim L_{\max}$. Then, the estimate \eqref{eq:XsbBilinearEstimateRT} holds. 
\begin{proof}
 Note that in this case the Loomis--Whitney estimate remains the same, but the bilinear Strichartz estimate gains $N_1^{-\frac{\alpha}{4}}$ at the cost of $L_{\max}^{\frac{1}{2}}$.

\smallskip
 
\textbf{(i) Low $\times$ High $\rightarrow$ High} ($N_2 \lesssim N_1\sim N$): In the case $N_2 \lesssim 1$, we consider two subcases. For $N_1^{\frac{1}{2}-\frac{\alpha}{4}} \lesssim N_2$, we use the Loomis-Whitney estimate \eqref{eq:LoomisWhitneyCylinder} to obtain
\small
\begin{equation*}
	\begin{split}
	\sum_{N_1^{\frac{1}{2}-\frac{\alpha}{4}} \lesssim N_2}I &\lesssim  \sum_{N_1^{\frac{1}{2}-\frac{\alpha}{4}} \lesssim N_2} N^{\frac{3}{2}-\frac{\alpha}{2}+\varepsilon} N_2^{-\frac{1}{2}} (L_1L_2)^{\frac{1}{2}}L^{\frac{1}{2}-\varepsilon'}N^{-1} \|f_{N_1,L_1}\|_{L^2} \|g_{N_2,L_2}\|_{L^2}\|h_{N,L}\|_{L^2}\\
	&\lesssim N_1^{\frac{5}{4}-\frac{3\alpha}{8}+\varepsilon} \sum_{N_1^{\frac{1}{2}-\frac{\alpha}{4}} \lesssim N_2}  (L_1L_2)^{\frac{1}{2}}L^{\frac{1}{2}-\varepsilon'}N^{-1} \|f_{N_1,L_1}\|_{L^2} \|g_{N_2,L_2}\|_{L^2}\|h_{N,L}\|_{L^2}.
	\end{split}
\end{equation*}
\normalsize
In the complementary case viz. $N_2\lesssim N_1^{\frac{1}{2}-\frac{\alpha}{4}}$, using the bilinear Strichartz estimate \eqref{eq:BilinearStrichartzEstimate}, we have
\begin{equation*}
	\begin{split}
		\sum_{N_2\lesssim N_1^{\frac{1}{2}-\frac{\alpha}{4}}} I &\lesssim \sum_{N_2\lesssim N_1^{\frac{1}{2}-\frac{\alpha}{4}}} N_2^{\frac{1}{2}} N_1^{-\frac{\alpha}{4}} (L_{\min}L_{\max})^{\frac{1}{2}} \|f_{N_1,L_1}\|_{L^2} \|g_{N_2,L_2}\|_{L^2}\|h_{N,L}\|_{L^2}\\
		&\lesssim N_1^{\frac{5}{4}-\frac{3\alpha}{8}+\varepsilon}\sum_{N_2\lesssim N_1^{\frac{1}{2}-\frac{\alpha}{4}}} (L_1L_2)^{\frac{1}{2}}L^{\frac{1}{2}-\varepsilon'}N^{-1} \|f_{N_1,L_1}\|_{L^2} \|g_{N_2,L_2}\|_{L^2}\|h_{N,L}\|_{L^2}.
	\end{split}
\end{equation*}
For $N_2\gtrsim 1$, the same estimate as the case $L_{\max} \lesssim N_1^{\frac{\alpha}{2}}$ suffices.\\

\noindent \textbf{(ii) High $\times$ High $\rightarrow$ Low} ($N_1\sim N_2 \gtrsim N$):
This case can be dealt with in the same way as the corresponding case in $L_{\max} \lesssim N_1^{\frac{\alpha}{2}}$ since we employ the Loomis--Whitney estimate which is the same for both the cases.\\   
\end{proof}
    \end{lemma}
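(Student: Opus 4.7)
The strategy mirrors the previous lemma (the case $L_{\max}\lesssim N_{\max}^{\alpha/2}$): after dyadic decomposition of the $x$-frequencies and modulations, reduce by duality and Plancherel to dyadic estimates of the form
\begin{equation*}
I := \Big|\int (f_{N_1,L_1}\ast g_{N_2,L_2})\, h_{N,L}\, d\tau\, d\xi\, d\eta\Big| \lesssim \tilde C(N,N_1,N_2)\,L_1^{1/2}L_2^{1/2}L^{1/2-\varepsilon}\prod\|\cdot\|_{L^2},
\end{equation*}
and split into Low $\times$ High $\to$ High and High $\times$ High $\to$ Low. The two inputs are the nonlinear Loomis--Whitney inequality \eqref{eq:LoomisWhitneyCylinder} and the bilinear Strichartz estimate from Lemma \ref{lem:CFBilinearCylinder}. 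Because $L_{\text{med}}\lesssim N_{\max}^{\alpha/2}$, the factor $\langle L_{\text{med}}/N_{\max}^{\alpha/2}\rangle^{1/2}$ in the Loomis--Whitney constant is $\sim 1$, so that inequality produces the same bound as in the previous lemma. On the other hand, since $L_{\max}\gtrsim N_{\max}^{\alpha/2}$, the bilinear Strichartz estimate now produces an extra factor $L_{\max}^{1/2}$ while gaining $N_{\max}^{-\alpha/4}$ relative to the naive bound; the $L_{\max}^{1/2}$ is absorbed into $L_{\max}^{1/2-\varepsilon}$ supplied by the $X^{s,b-1}$ weight for $b$ sufficiently close to $1/2$.

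For the Low $\times$ High $\to$ High interaction ($N_2\lesssim N_1\sim N$) I would split at $N_2\sim 1$. When $N_2\gtrsim 1$ the Loomis--Whitney estimate yields a constant $\sim N_1^{3/2-\alpha/2+\varepsilon}N_2^{-1/2-s}$ (after accounting for the derivative $N$), and dyadic summation in $N_1,N_2$ uses the Cauchy--Schwarz trick as in the previous lemma; for $\alpha\geqslant 5$ the threshold $s>\tfrac{1-\alpha}{4}$ is comfortably strong enough. When $N_2\lesssim 1$ I would further split at the crossover threshold $N_2\sim N_1^{1/2-\alpha/4}$, which is the point where the Loomis--Whitney bound $N_1^{1/2-\alpha/2}N_2^{-1/2}$ meets the refined bilinear Strichartz bound $N_2^{1/2}N_1^{-\alpha/4}$; both regimes contribute $N_1^{5/4-3\alpha/8+\varepsilon}$, which combined with the $N_1^{-1}$ derivative loss sums for $\alpha\geqslant 5$ since $\tfrac{5}{4}-\tfrac{3\alpha}{8}\leqslant -\tfrac{5}{8}$.

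For the High $\times$ High $\to$ Low interaction ($N\lesssim N_1\sim N_2$) the Loomis--Whitney bound used in the previous lemma is identical here (again because $\langle L_{\text{med}}/N_{\max}^{\alpha/2}\rangle^{1/2}\sim 1$), producing the same summability constant and forcing the same regularity threshold $s>\tfrac{1}{4}-\tfrac{\alpha}{4}$. Handling both $N\lesssim 1$ and $N\gtrsim 1$ proceeds exactly as before, with the derivative factor $N$ absorbed on the low frequency side.

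The main obstacle is the bookkeeping around the extra $L_{\max}^{1/2}$ that the refined bilinear Strichartz deposits: one must verify that the weight $\langle\tau-\omega_\alpha\rangle^{\pm b}$ in the $X^{s,b}$-norms, with $b$ only slightly above $1/2$, produces a surplus $L_{\max}^{-\varepsilon'}$ sufficient to convert the $L_{\max}^{1/2}$ into something summable over $L_{\max}\in[N_{\max}^{\alpha/2},\,N_{\max}^\alpha N_{\min}]$, while the gain $N_{\max}^{-\alpha/4}$ is preserved to recover the same summability constant as in the previous lemma. A secondary, minor point is identifying $N_2\sim N_1^{1/2-\alpha/4}$ as the correct crossover in the Low $\times$ High case; this is forced by equating the two available dyadic estimates in $N_2$ and indicates no new ideas are needed beyond those of the previous lemma.
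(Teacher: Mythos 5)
Your proposal is correct and follows essentially the same route as the paper's proof: identify that the Loomis--Whitney constant is unchanged since $\langle L_{\text{med}}/N_{\max}^{\alpha/2}\rangle^{1/2}\sim 1$, observe that the bilinear Strichartz trades $L_{\max}^{1/2}$ for a gain of $N_{\max}^{-\alpha/4}$, split the Low $\times$ High case at the crossover $N_2\sim N_1^{1/2-\alpha/4}$ to get $N_1^{5/4-3\alpha/8}$ from both subcases, and reduce High $\times$ High $\to$ Low to the preceding lemma. The paper in fact cites the basic bilinear estimate \eqref{eq:BilinearStrichartzEstimate} rather than Lemma \ref{lem:CFBilinearCylinder} in the complementary subcase, but the constants coincide here, so this is a cosmetic difference.
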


     The case $L_{\max}\gtrsim N_{\max}^{\alpha}N_{\min}$ is dealt with in the following:

    \begin{lemma}
    Let $\alpha \geqslant 5$, $s>\frac{1-\alpha}{4}$ and $N,N_i \in 2^{\Z}$, $L_i,L \in 2^{\N_0}$, and suppose that \eqref{eq:MainFrequencyAssumptions}. Let $f_{N_1,L_1}, g_{N_2,L_2}, h_{N,L}:\R \times \R \times \Z\to \R_{+}$ and $supp(f_{N_1,L_1}) \subseteq D_{N_1,L_1}$, $\text{supp}(g_{N_2,L_2}) \subseteq D_{N_2,L_2}$, $\text{supp}(h_{N,L}) \subseteq D_{N,L}$ such that $L_{\max}\gtrsim N_{\max}^{\alpha}N_{\min}$. Then, the estimate \eqref{eq:XsbBilinearEstimateRT} holds.
    \begin{proof}
    \textbf{(i) Low $\times$ High $\rightarrow$ High} ($N_2 \lesssim N_1\sim N$): In case $L_{\max} = L_2$, using Lemma \ref{lem:AlternativeBilinearStrichartzEstimate}, we have
\begin{equation*}
\begin{split}
    I &\lesssim \|f_{N_1,L_1} \ast h_{N,L}\|_{L^2} \|g_{N_2,L_2} \|_{L^2}\\
    &\lesssim N_2^{\frac{1}{2}} (L \wedge L_1)^{\frac{1}{2}} \langle (L \vee L_1) N_1\rangle^{\frac{1}{4}} \|f_{N_1,L_1}\|_{L^2} \|g_{N_2,L_2}\|_{L^2} \|h_{N,L}\|_{L^2}\\
    &\lesssim N_1^{-\frac{\alpha}{2}+\frac{5}{4}} (L\wedge L_1)^{\frac{1}{2}} (L \vee L_1)^{\frac{1}{4}}L_2^{\frac{1}{2}} N^{-1}\|f_{N_1,L_1}\|_{L^2} \|g_{N_2,L_2}\|_{L^2} \|h_{N,L}\|_{L^2}.
\end{split}
\end{equation*}
If $N_2 \lesssim 1$, the above is summable for any $s$. For $N_2 \gtrsim 1$, the above expression is dominated by
\begin{equation*}
N_1^{-\frac{\alpha}{2} +\frac{5}{4}} N_2^{-s}(L\wedge L_1)^{\frac{1}{2}} (L \vee L_1)^{\frac{1}{4}}L_2^{\frac{1}{2}} N^{-1}\|f_{N_1,L_1}\|_{L^2} N_2^s\|g_{N_2,L_2}\|_{L^2} \|h_{N,L}\|_{L^2}.
\end{equation*}
If $s\geqslant 0$, the expression is summable in the spatial frequencies. For $s<0$, the above is dominated by
\begin{equation*}
\Big(\frac{N_1}{N_2}\Big)^{-\frac{\alpha}{2}+\frac{5}{4}}N_2^{-s-\frac{\alpha}{2}+\frac{5}{4}} (L\wedge L_1)^{\frac{1}{2}} (L \vee L_1)^{\frac{1}{4}}L_2^{\frac{1}{2}} N^{-1}\|f_{N_1,L_1}\|_{L^2} N_2^s\|g_{N_2,L_2}\|_{L^2} \|h_{N,L}\|_{L^2}
\end{equation*}
which is summable for $s>\frac{5-\alpha}{4}$.\\
If $L_{\max} = L$, we use Lemma \ref{lem:AlternativeBilinearStrichartzEstimate} as follows:
\begin{equation*}
    \begin{split}
    I &\lesssim \|f_{N_1,L_1}\ast g_{N_2,L_2}\|_{L^2} \|h_{N,L}\|_{L^2}\\
&\lesssim N_2^{\frac{1}{2}} (L_1 \wedge L_2)^{\frac{1}{2}} \langle (L_1 \vee L_2)N_2 \rangle^{\frac{1}{4}} \|f_{N_1,L_1}\|_{L^2} \|g_{N_2,L_2}\|_{L^2} \|h_{N,L}\|_{L^2}\\
&\lesssim N_2^{\frac{1}{4}+\varepsilon} N_1^{1-\frac{\alpha}{2}+\varepsilon} (L_1 \wedge L_2)^{\frac{1}{2}} (L_1 \vee L_2)^{\frac{1}{4}} L^{\frac{1}{2}-\varepsilon'} N^{-1}\|f_{N_1,L_1}\|_{L^2} \|g_{N_2,L_2}\|_{L^2} \|h_{N,L}\|_{L^2}
    \end{split}
\end{equation*}
For $N_2\lesssim 1$, summability follows since $\alpha \geqslant 5$. For $N_2 \gtrsim 1$, the above expression is bounded by
\begin{equation*}
    \begin{split}
 N_2^{\frac{1}{4}-s+\varepsilon} N_1^{1-\frac{\alpha}{2}+\varepsilon} (L_1 \wedge L_2)^{\frac{1}{2}} (L_1 \vee L_2)^{\frac{1}{4}} L^{\frac{1}{2}-\varepsilon'} N^{-1}\|f_{N_1,L_1}\|_{L^2}N_2^s \|g_{N_2,L_2}\|_{L^2} \|h_{N,L}\|_{L^2}       
    \end{split}
\end{equation*}
which can be summed up for $s>\frac{5}{4}-\frac{\alpha}{2}$.\\

\noindent \textbf{(ii) High $\times$ High $\rightarrow$ Low} ($N\lesssim N_1\sim N_2$): We first consider the case $L_{\max} = L$. If $N\lesssim 1$, the derivative in the nonlinearity is smoothing and using the bilinear Strichartz estimate from Lemma \ref{lem:AlternativeBilinearStrichartzEstimate} for the high frequencies, we obtain
\begin{equation*}
	\begin{split}
		I &\lesssim \| f_{1,N_1,L_1} * g_{2,N_2,L_2} \|_{L^2} \|h_{N,L}\|_{L^2} \\
		&\lesssim (N_1^\alpha N)^{-\frac{1}{2}+\varepsilon} N^{\frac{1}{2}} N_1^{\frac{1}{4}} (L_1 L_2)^{\frac{1}{2}} \|f_{N_1,L_1}\|_{L^2} \|g_{N_2,L_2}\|_{L^2} L^{\frac{1}{2}-\varepsilon} \|h_{N,L}\|_{L^2}\\
		&\lesssim N_1^{\frac{1}{4}- \frac{\alpha}{2}+\alpha \varepsilon - 2s} (L_1L_2)^{\frac{1}{2}} L^{\frac{1}{2}-\varepsilon} N_1^s \|f_{N_1,L_1}\|_{L^2} N_2^s \|g_{N_2,L_2}\|_{L^2} N^{\varepsilon} \|h_{N,L}\|_{L^2}.
	\end{split}
\end{equation*}
The above is summable for $s>\frac{1}{8}-\frac{\alpha}{4}$.\\
For $N\gtrsim 1$, the above estimate is still sufficient and is summable for $s>\frac{1}{8}-\frac{\alpha}{4}$.

\smallskip

If $L_{\max} = L_1$, we have using Lemma \ref{lem:AlternativeBilinearStrichartzEstimate},
\begin{equation*}
\begin{split}
I &\lesssim \|g_{N_2,L_2}\ast h_{N,L}\|_{L^2} \|f_{N_1,L_1}\\
&\lesssim N^{\frac{1}{2}} (L_2 \wedge L)^{\frac{1}{2}} \langle (L_2 \vee L) N\rangle^{\frac{1}{4}} \|f_{N_1,L_1}\|_{L^2}  \|g_{N_2,L_2}\|_{L^2} \|h_{N,L}\|_{L^2}\\
&\lesssim NN_1^{-\frac{\alpha}{2}} (L_2 \wedge L)^{\frac{1}{2}} \langle (L_2 \vee L) N\rangle^{\frac{1}{4}}L_1^{\frac{1}{2}} N^{-1}\|f_{N_1,L_1}\|_{L^2}  \|g_{N_2,L_2}\|_{L^2} \|h_{N,L}\|_{L^2}\\
&\lesssim  NN_1^{-\frac{\alpha}{2}-2s} (L_2 \wedge L)^{\frac{1}{2}} \langle (L_2 \vee L) N\rangle^{\frac{1}{4}}L_1^{\frac{1}{2}} N^{-1} N_1^s\|f_{N_1,L_1}\|_{L^2}  N_2^s \|g_{N_2,L_2}\|_{L^2} \|h_{N,L}\|_{L^2}.
\end{split}
\end{equation*}
If $N \lesssim 1$, we require $s>-\frac{\alpha}{4}$  for summability in spatial frequencies. If $N \gtrsim 1$, the following bound
\begin{equation*}
\begin{split}
   &\lesssim  N^{\frac{5}{4}+s} N_1^{-\frac{\alpha}{2}-2s} (L_2 \wedge L)^{\frac{1}{2}} \langle (L_2 \vee L) N\rangle^{\frac{1}{4}}L_1^{\frac{1}{2}} N^{-1} N_1^s\|f_{N_1,L_1}\|_{L^2}  N_2^s \|g_{N_2,L_2}\|_{L^2} \\
   &\quad \quad \times N^{-s}\|h_{N,L}\|_{L^2}
   \end{split}
\end{equation*}
implies that we require $s>\frac{-\alpha}{4}$ to be able to sum up the above estimate.
    \end{proof}
    \end{lemma}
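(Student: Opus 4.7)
The plan is to exploit the non-resonance hypothesis $L_{\max}\gtrsim N_{\max}^{\alpha} N_{\min}$ directly: in the Duhamel integral the factor $\langle \tau-\omega_{\alpha}(\xi,\eta)\rangle^{b-1}$ attached to the high-modulation piece provides a gain of roughly $L_{\max}^{-(1-b)}\sim L_{\max}^{-\tfrac{1}{2}+\varepsilon'}$, which by the hypothesis is at least $(N_{\max}^{\alpha}N_{\min})^{-\tfrac{1}{2}+\varepsilon'}$. This gain, transported to the outside of the integral, is the single mechanism we will use to absorb the derivative loss $N_{\max}$ coming from $\partial_{x}$ and to close the summation. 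Accordingly, in each subcase I will isolate the function carrying $L_{\max}$ in $L^{2}$ by Cauchy--Schwarz and estimate the convolution of the remaining two factors using the transversality-based bilinear Strichartz estimate from Lemma \ref{lem:AlternativeBilinearStrichartzEstimate} (not Lemma \ref{lem:CFBilinearCylinder}, since the smoothing produced by the non-resonant modulation already suffices and is uniform in the transversality parameter).

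The case analysis splits first according to the frequency interaction: Low$\times$High$\to$High ($N_{2}\lesssim N_{1}\sim N$) and High$\times$High$\to$Low ($N\lesssim N_{1}\sim N_{2}$), and inside each according to which of $L_{1},L_{2},L$ realizes $L_{\max}$. In Low$\times$High$\to$High with $L_{\max}=L_{2}$ I bound $\|g_{N_{2},L_{2}}\|_{L^{2}}$ by itself and apply Lemma \ref{lem:AlternativeBilinearStrichartzEstimate} to $f_{N_{1},L_{1}}*\tilde h_{N,L}$, yielding a factor $N_{2}^{1/2}(L\wedge L_{1})^{1/2}\langle(L\vee L_{1})N_{1}\rangle^{1/4}$ that combines with the non-resonant gain to give the summable kernel $N_{1}^{5/4-\alpha/2}$; the cases $L_{\max}=L_{1}$ and $L_{\max}=L$ are handled analogously by isolating the appropriate factor. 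For High$\times$High$\to$Low I isolate $h_{N,L}$ or $f_{N_{1},L_{1}}$ in $L^{2}$ and apply Lemma \ref{lem:AlternativeBilinearStrichartzEstimate} to the remaining convolution supported on two comparable $\mathbb{R}\times\mathbb{T}$-frequencies; the dominant budget becomes $N_{1}^{1/4-\alpha/2}N_{1}^{-2s}$ multiplied by dyadic factors involving $N$, and summation forces the threshold regularity.

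Throughout, the scheme for extracting summability is: after Cauchy--Schwarz and Lemma \ref{lem:AlternativeBilinearStrichartzEstimate}, distribute $b-\tfrac{1}{2}$ powers of $L_{\max}^{-1}$ between the $\langle L\vee L'\rangle^{1/4}$ factor (leaving $L_{\max}^{0-}$) and the genuine non-resonant gain, then express the result as the product of $N_{i}^{s}\|\cdot\|_{L^{2}}$ factors times a geometric $\ell^{2}$-summable dyadic weight in $N_{1},N_{2},N$. The derivative $N$ in the nonlinearity is absorbed into the weight $N^{-1}$ coming from the $h_{N,L}$ leg via the constraint $N\lesssim N_{1}\sim N_{2}$ or $N\sim N_{1}\gtrsim N_{2}$.

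The main obstacle will be the High$\times$High$\to$Low subcase with $L_{\max}=L$, where the output modulation is forced to dominate, because then the gain $L^{-1/2+\varepsilon'}\lesssim (N_{1}^{\alpha}N)^{-1/2+\varepsilon'}$ must simultaneously (i) cancel the derivative $N$, (ii) survive the weight $N^{-s}$ transferred to the negative-regularity side of $w$, and (iii) produce a negative power of $N_{1}$ sufficient for $\ell^{2}$-summability. Balancing these three constraints produces exactly the sharp threshold $s>\tfrac{1}{4}-\tfrac{\alpha}{4}$, which is consistent with the regularity of the resonant lemmas and with the critical scaling $s_{c}=\tfrac{1-\alpha}{2}$, and this balance is what forces the hypothesis $\alpha\geqslant 5$; checking that no other subcase beats this threshold, and in particular that Low$\times$High$\to$High with $s<0$ closes via the two-sided dyadic weight $(N_{1}/N_{2})^{-s+(5-\alpha)/4}N_{2}^{s}$, is where most of the bookkeeping effort will go.
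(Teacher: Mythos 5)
Your proposal takes essentially the same route as the paper: you split by which of $L_1,L_2,L$ realizes $L_{\max}$, isolate that factor in $L^2$ by Cauchy--Schwarz, apply Lemma \ref{lem:AlternativeBilinearStrichartzEstimate} to the remaining two-fold convolution, and convert the non-resonance hypothesis $L_{\max}\gtrsim N_{\max}^{\alpha}N_{\min}$ into the decisive gain $L_{\max}^{-1/2+\varepsilon'}\lesssim(N_{\max}^{\alpha}N_{\min})^{-1/2+\varepsilon'}$ that absorbs the derivative and yields a summable dyadic weight. That is exactly the paper's proof, down to the observation that the C\'ordoba--Fefferman type Lemma \ref{lem:CFBilinearCylinder} is not needed here.

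One small inaccuracy: you attribute the sharp threshold $s>\tfrac{1}{4}-\tfrac{\alpha}{4}$ to the High$\times$High$\to$Low subcase with $L_{\max}=L$. In the paper that subcase actually closes under the weaker constraint $s>\tfrac{1}{8}-\tfrac{\alpha}{4}$ (and the $L_{\max}=L_1$ subcase under $s>-\tfrac{\alpha}{4}$); the binding threshold $s>\tfrac{1}{4}-\tfrac{\alpha}{4}$ for the bilinear estimate comes from the \emph{resonant} case $L_{\max}\lesssim N_{\max}^{\alpha/2}$ in the companion lemma, as the paper explicitly flags. This does not affect the validity of the present lemma (whose hypothesis $s>\tfrac{1-\alpha}{4}$ dominates all constraints produced here), but it misidentifies where the regularity bottleneck actually lives.
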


    The very small frequencies can be handled as follows:
    \begin{lemma}
    \label{lem:SemilinearCylinderBoundaryCases}
     Let $\alpha \geqslant 5$, $s>\frac{1-\alpha}{4}$ and $N, N_i \in 2^{\Z}$ such that $N_{\max} \lesssim 1$ or $N_{\min} \lesssim N_{\max}^{-\frac{\alpha}{2}}$. Let $f_{N_1,L_1}, g_{N_2,L_2}, h_{N,L}:\R \times \R \times \Z\to \R_{+}$ and $supp(f_{N_1,L_1}) \subseteq D_{N_1,L_1}$, $\text{supp}(g_{N_2,L_2}) \subseteq D_{N_2,L_2}$, $\text{supp}(h_{N,L}) \subseteq D_{N,L}$. Then, the estimate \eqref{eq:XsbBilinearEstimateRT} holds.
        \begin{proof}
          In this case, we do not distinguish between the resonant and the non-resonant case. Using Lemma \ref{lem:AlternativeBilinearStrichartzEstimate}, we have
\begin{equation*}
\begin{split}
    I &\lesssim \|f_{N_1,L_1} \ast g_{N_2,L_2}\|_{L^2}\|h_{N,L}\|_{L^2}\\
    &\lesssim N_{\min}^{\frac{1}{2}} (L_1\wedge L_2)^{\frac{1}{2}} \langle (L_1 \vee L_2) N_{\max} \rangle^{\frac{1}{4}} \|f_{N_1,L_1}\|_{L^2} \|g_{N_2,L_2}\|_{L^2}\|h_{N,L}\|_{L^2},
    \end{split}
\end{equation*}
which is sufficient.\\  
        \end{proof}
    \end{lemma}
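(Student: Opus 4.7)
The plan is to abandon the resonance/transversality dichotomy that drove the three preceding lemmas and instead rely on the second-order transversality bilinear Strichartz estimate of Lemma \ref{lem:AlternativeBilinearStrichartzEstimate}, whose constant depends only on $N_{\min}$ and the $\xi$-support diameter $K$ and therefore remains effective precisely in the two degenerate regimes at hand. The reason the previous lemmas cannot be extended to these boundary cases is exactly that the first-order transversality $|\eta_1/\xi_1 - \eta_2/\xi_2|$ either does not make sense ($N_{\max}\lesssim 1$) or collapses below the threshold needed for Whitney decomposition ($N_{\min}\lesssim N_{\max}^{-\alpha/2}$).

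By duality and Cauchy--Schwarz the trilinear expression $I$ is controlled by $\|f_{N_1,L_1} \ast g_{N_2,L_2}\|_{L^2}\|h_{N,L}\|_{L^2}$, and Lemma \ref{lem:AlternativeBilinearStrichartzEstimate} applied with $K \lesssim N_{\max}$ then yields
\begin{equation*}
I \lesssim N_{\min}^{1/2}\,(L_1 \wedge L_2)^{1/2}\,\langle (L_1 \vee L_2)\,N_{\max} \rangle^{1/4}\,\|f_{N_1,L_1}\|_{L^2}\|g_{N_2,L_2}\|_{L^2}\|h_{N,L}\|_{L^2}.
\end{equation*}
It then remains to distribute the derivative loss $N$, the Sobolev weights $N_i^{\pm s}$, and the modulation weight $L^{1/2-\varepsilon}$, and to check dyadic summability. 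When $N_{\max} \lesssim 1$ all three spatial frequencies are of order unity, the derivative $N$ is harmless, and the sum over $N_{\max} \in 2^{\Z}_{\leqslant 0}$ converges as a geometric series for any $s \in \R$. When $N_{\min} \lesssim N_{\max}^{-\alpha/2}$ the prefactor $N_{\min}^{1/2}$ supplies a polynomial gain of order $N_{\max}^{-\alpha/4}$; combined with the hypothesis $\alpha \geqslant 5$, this dominates both the derivative loss $N \lesssim N_{\max}$ and the admissible Sobolev weights, giving summability for every $s > (1-\alpha)/4$.

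I do not anticipate a substantive obstacle: the argument reduces to weight-bookkeeping, and Lemma \ref{lem:AlternativeBilinearStrichartzEstimate} is tailor-made for a situation where the standard first-order transversality analysis breaks down. The only mildly delicate sub-case is the High $\times$ High $\to$ Low configuration $N = N_{\min}$ with $s<0$, where the derivative loss $N$ and the output weight $N^{-s}$ pull in the same direction; here one must verify that the gain $N_{\min}^{1/2}$ beats the combined weight $N^{1-s}$, which is immediate from the margin $s > (1-\alpha)/4$ together with $\alpha \geqslant 5$. The modulation factor $\langle (L_1 \vee L_2)N_{\max}\rangle^{1/4}$ is balanced against the $L^{1/2-\varepsilon}$ weight available from $b>1/2$, trading a small power $\varepsilon$ for convergence, which is why the estimate is subcritical rather than endpoint.
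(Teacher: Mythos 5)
Your proposal takes exactly the paper's approach: abandon the resonance/transversality dichotomy and close the boundary cases with a single application of the second-order transversality bilinear Strichartz estimate of Lemma~\ref{lem:AlternativeBilinearStrichartzEstimate}, followed by weight bookkeeping; the paper's own proof is a one-line version of the same. One small slip in the prose: you write that the lemma is applied ``with $K\lesssim N_{\max}$,'' but the factor $N_{\min}^{1/2}$ in the resulting bound comes from localizing the $\xi$-support to size $K\sim N_{\min}$ (the low-frequency factor's support), not $N_{\max}$; with $K\sim N_{\max}$ one would only get $N_{\max}^{1/2}$, which is too weak.
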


\subsection{Conclusion of the theorems on semilinear local well-posedness}
\label{subsection:ConclusionSemilinearLWP}

\begin{proof}[Proof of Theorem \ref{thm:SemilinearWPR2}]
The proof follows along the same lines as for \cite[Theorem~6.1] {SanwalSchippa2023}, and we shall be brief. We use $X^{s,b}$ spaces as the auxiliary spaces to run a fixed point argument for the operator given by:
\begin{equation*}
    \Gamma(u)(t):= \eta(t) S_{\alpha}(t)u_0 + \eta(t) \int_0^t S_{\alpha}(t-s)(u \partial_x u)(s)ds.
\end{equation*}
Here, $\eta$ is a smooth compactly supported time cut-off. Using the linear estimate,  the energy estimate for $X^{s,b}$ spaces \cite[Section 2.6]{Tao2006NonlinearDispersiveEquations} and Proposition \ref{prop:BilinearEstimateRR}, we obtain
\begin{equation*}
    \|\Gamma(u)\|_{X^{s,b}_1} \leqslant C (\|u(0)\|_{H^{s,0}} + \|u\|_{X^{s,b}_1}^2).
\end{equation*}
Similarly,
\begin{equation*}
    \| \Gamma(u_1) -\Gamma(u_2)\|_{X^{s,b}_1} \leqslant 2\tilde{C}(\|u_0\|_{H^{s,0}}) \|u_1-u_2\|_{X^{s,b}_1}.
\end{equation*}
For small initial data (attributed in the constants $C,\tilde{C}$),  we can prove local well-posedness for \eqref{eq:KPIR2} on $X^{s,b}_1$. Using scaling and subcriticality of the regularity, any large data can be scaled to be small and one obtains well-posedness on a time interval $[0,T]$, with $T$ depending on the size of the initial data.
\end{proof}

In preparation of the proof, we recall the following lemma to trade regularity in modulation to powers of time:
\begin{lemma}
\label{lem:TradingModulationRegularitySemilinear}
Let $\eta \in \R$, $-\frac{1}{2}<b' \leq b < \frac{1}{2}$, then for any $0<T<1$ and $s \in \mathbb{R}$, it holds
\begin{equation*}
    \| \eta(t/T) u \|_{X^{s,b'}} \lesssim_{\eta,b,b'} T^{b-b'} \| u \|_{X^{s,b}}.
\end{equation*}
\end{lemma}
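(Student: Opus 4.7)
The plan is to reduce the inequality to a purely temporal Sobolev-multiplier estimate and to handle the latter by a Schur-test argument on the Fourier side. Denote by $\tilde\eta$ the spatial Fourier variable (to avoid a clash with the Schwartz cutoff $\eta$). Multiplication by the time cutoff $\eta(t/T)$ commutes with the phase $e^{\pm it\omega_\alpha(\xi,\tilde\eta)}$, which depends only on $(\xi,\tilde\eta)$. Applying Plancherel's theorem in $(x,y)$ and making the change of variables $\tau\mapsto\tau+\omega_\alpha(\xi,\tilde\eta)$ in the definition of the $X^{s,b}$ norm, one finds
\begin{equation*}
\|\eta(t/T)\,u\|_{X^{s,b'}}^{2} \;=\; \int_{\R\times\D^{*}} \langle\xi\rangle^{2s}\,\bigl\|\eta(t/T)\,v_{\xi,\tilde\eta}\bigr\|_{H^{b'}(\R_{t})}^{2}\, d\xi\, d\tilde\eta,
\end{equation*}
where $v_{\xi,\tilde\eta}(t):=e^{-it\omega_\alpha(\xi,\tilde\eta)}\mathcal F_{x,y}u(t,\xi,\tilde\eta)$, and an identical identity with $b'$ replaced by $b$ holds for the right-hand side of the desired estimate. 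Freezing $(\xi,\tilde\eta)$, the lemma reduces to the one-dimensional claim
\begin{equation}\label{eq:plan1Dtrade}
\bigl\|\eta(t/T)\,f\bigr\|_{H^{b'}(\R)}\;\lesssim_{\eta,b,b'}\; T^{b-b'}\,\|f\|_{H^{b}(\R)},\qquad -\tfrac{1}{2}<b'\le b<\tfrac{1}{2},
\end{equation}
uniformly in $T\in(0,1]$.

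To prove \eqref{eq:plan1Dtrade} I would pass to the Fourier side. Setting $\eta_T(t):=\eta(t/T)$, one has $\hat\eta_T(\tau)=T\hat\eta(T\tau)$ and hence the Schwartz bound $|\hat\eta_T(\tau)|\lesssim_{N} T(1+T|\tau|)^{-N}$ for every $N$. Since $\widehat{\eta_T f}(\tau)=(\hat\eta_T*\hat f)(\tau)$, the estimate \eqref{eq:plan1Dtrade} is equivalent to the $L^{2}$-boundedness of the integral operator with kernel $K(\tau,\sigma):=\langle\tau\rangle^{b'}\langle\sigma\rangle^{-b}\,|\hat\eta_T(\tau-\sigma)|$, and this in turn follows from the Schur bounds
\begin{equation*}
\sup_{\tau}\!\int_{\R} K(\tau,\sigma)\, d\sigma\;+\;\sup_{\sigma}\!\int_{\R} K(\tau,\sigma)\, d\tau\;\lesssim\;T^{b-b'}.
\end{equation*}
Splitting each integral according to whether $|\tau-\sigma|\lesssim 1/T$ or not, the decisive factor $T^{b-b'}$ is generated on the near-diagonal, low-modulation region from the elementary bound $\langle\sigma\rangle^{b'-b}\lesssim T^{b-b'}$ valid for $|\sigma|\lesssim 1/T$; on the complementary region the rapid decay of $\hat\eta_T$ absorbs any fixed polynomial growth in $\langle\tau\rangle$.

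The main technical nuisance is to keep the two integrals finite in the region where one of $\tau,\sigma$ is small while the other is large: this is exactly where the hypotheses $|b|,|b'|<\tfrac12$ are used, via convergence of $\int_{|\sigma|\le 1}\langle\sigma\rangle^{-2b}\,d\sigma$ and its dual analogue. A cleaner route that I would pursue if the direct Schur argument becomes cumbersome is to establish \eqref{eq:plan1Dtrade} first in the two benchmark cases $b'=b$ (where it follows from the fact that $H^{b}(\R)$ is stable under multiplication by a Schwartz function with operator norm independent of the dilation parameter, for $|b|<\tfrac12$) and $b'=0$, $0\le b<\tfrac12$ (an easy consequence of the temporal support of $\eta(t/T)$ together with fractional Sobolev embedding, which yields the gain $T^{b}$), and then to reach the full range by complex interpolation in $b'$ and a duality argument for $b'<0$. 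The result is classical (cf.\ \cite{Tao2006NonlinearDispersiveEquations}) and no ideas beyond those above are needed.
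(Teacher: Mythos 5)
The paper does not actually prove this lemma; it is recalled as a classical fact (essentially Lemma~2.11 of~\cite{Tao2006NonlinearDispersiveEquations}), so there is no ``paper's proof'' to compare against. Your reduction to the $1$D temporal estimate via conjugation by the linear propagator is the standard first step and is correct. However, your primary route via an unweighted Schur test has a genuine gap. First, the inequality you invoke is reversed: for $b'\le b$ and $|\sigma|\lesssim 1/T$ one has $\langle\sigma\rangle^{b'-b}\gtrsim T^{b-b'}$ (with equality only near $|\sigma|\sim 1/T$), not $\lesssim T^{b-b'}$; at $\sigma=0$ the weight is $1$, far larger than $T^{b-b'}$. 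Second, the asserted Schur bounds are false: e.g.\ the column sum at $\sigma=0$ is $\int\langle\tau\rangle^{b'}|\hat\eta_T(\tau)|\,d\tau\sim T^{-b'}$, which for $b'>0$ vastly exceeds $T^{b-b'}$; and for $b'\le 0$ the row sum at $\tau=0$ is $\int\langle\sigma\rangle^{-b}|\hat\eta_T(\sigma)|\,d\sigma\sim T^{b}$, again exceeding $T^{b-b'}$. Working out both suprema, the plain Schur test yields at best an operator norm of order $T^{(b-b')/2}$ in general, which is strictly weaker than what is needed. One could rescue a Schur-type argument only with a carefully chosen weight; you do not indicate such a weight.

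Your alternative route is the correct one and should be promoted to the main argument: prove the endpoint $b'=0$, $0\le b<1/2$ via H\"older on the support of $\eta(\cdot/T)$ and the Sobolev embedding $H^{b}\hookrightarrow L^{2/(1-2b)}$, prove the diagonal case $b'=b$ (a $T$-uniform multiplier bound valid for $|b|<1/2$), and reach the rest by duality and complex interpolation. One caveat: a single interpolation between the two endpoint families covers the target $(b',b)$ only when $b-b'<1/2$; to cover the whole range $-1/2<b'\le b<1/2$ (where $b-b'$ can approach $1$), either factor $\eta=\eta_1\eta_2$ and apply the estimate in two stages through an intermediate exponent, or iterate the interpolation. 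With that amendment the argument is complete and agrees with the classical reference.
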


\begin{proof}[Proof of Theorem \ref{thm:SemilinearWPRT}]
For small initial data and a fixed time interval $[0,1]$, the proof of local well-posedness is the same as in the $\R^2$ case. In the large data case, we use the leeway in the modulation regularity in the proof of Proposition \ref{prop:BilinearEstimateRT} to apply Lemma \ref{lem:TradingModulationRegularitySemilinear}. This yields a modified bilinear estimate
\begin{equation*}
\| \partial_x(u v) \|_{X^{s,b-1}_T} \lesssim T^{\delta} \| u \|_{X^{s,b}} \| v \|_{X^{s,b}}.    
\end{equation*}
Thus, we see that the time of the existence of the solution will depend on the size of the initial data and the well-posedness result follows similarly as in the $\R^2$ case.
\end{proof}

\section*{Acknowledgements}

R.S. conducted initial work on this project at the Korea Institute for Advanced Study, whose
financial support through the grant No. MG093901 is gratefully acknowledged. Moreover, R.S. would like to thank the Department of Mathematics at the Tokyo Institute of Technology for kind hospitality in December 2023.

\end{document}